\theoremstyle{plain}
\newtheorem{thm}{Theorem}[section]
\newtheorem{prop}[thm]{Proposition}
\newtheorem{cor}[thm]{Corollary}
\theoremstyle{definition}
\newtheorem{remark}[thm]{Remark}
\newtheorem*{rk}{Remark}
\theoremstyle{example}
\newtheorem{example}{Example}[section]
\theoremstyle{remark}
\numberwithin{equation}{section}
   \newtheoremstyle{thmn}
        {\topsep}{\topsep}              
        {\itshape}                      
        {}                              
        {\bfseries}                     
        {.}                             
        { }                             
        {\thmname{#1}\thmnote{ \bfseries #3}}
    \theoremstyle{thmn}
    \newtheorem{dupThm}{Theorem}
    \newtheorem{dupProp}{Proposition}
    \newtheorem{dupCor}{Corollary}
\providecommand{\keywords}[1]{\textbf{\textit{Key words---}} #1}
\def\CC{\mathbb{C}}
\def\QQ{\mathbb{Q}}
\def\RR{\mathbb{R}}
\def\ZZ{\mathbb{Z}}
\def\Card{\mathrm{Card}}
\def\diag{\mathrm{diag}}
\def\dim{\mathrm{dim}}
\def\ev{\mathrm{ev}}
\def\Ind{\mathrm{Ind}}
\def\Tr{\mathrm{Tr}}
\newcommand{\TikZ}[1]{
\begin{matrix}\begin{tikzpicture}#1\end{tikzpicture}\end{matrix}
}
\newcounter{r}
\newcounter{s}
\newcommand\Part[1]{
        \setcounter{r}{1}
	 \foreach \x in {#1}{
 	{\ifnum\value{r}=1
		\draw (0,\value{r}-1)--(\x,\value{r}-1); 
		\fi}
	\draw (0,\value{r}) to (\x,\value{r});
   	\foreach \y in {0, ..., \x} {\draw (\y,\value{r})--(\y,\value{r}-1);}
	\addtocounter{r}{1}
 }}
\newcommand\Tableau[1]{
        \foreach \x [count = \c from 1] in {#1} {
		\foreach \y [count = \d from 1] in \x{
			\node at (\d-.5,\c-.5) {\scriptsize$\y$}; 
			\draw (\d,\c) to (\d,\c-1);
			{\ifnum\d=1
				\draw (0,\c) to (0,\c-1);
				\fi}
			\setcounter{r}{\d}
		}
		{\ifnum\c=1
			\draw (0,0)--(\value{r},0);
			\fi}
		\draw(0,\c) to (\value{r},\c);
		\setcounter{s}{\c}}}
\newcommand\sTableau[1]{
        \foreach \x [count = \c from 1] in {#1} {
		\foreach \y [count = \d from 1] in \x{
			\node at (\d-.5,\c-.5) {\tiny$\y$}; 
			\draw (\d,\c) to (\d,\c-1);
			{\ifnum\d=1
				\draw (0,\c) to (0,\c-1);
				\fi}
			\setcounter{r}{\d}
		}
		{\ifnum\c=1
			\draw (0,0)--(\value{r},0);
			\fi}
		\draw(0,\c) to (\value{r},\c);
		\setcounter{s}{\c}}}
\tikzstyle{V}=[draw, fill =black, circle, inner sep=0pt, minimum size=1.5pt]
\tikzstyle{wV}=[draw, fill =white, circle, inner sep=0pt, minimum size=4.5pt]
\tikzstyle{bV}=[draw, fill =black, circle, inner sep=0pt, minimum size=4.5pt]
\tikzstyle{over}=[draw=white,double=black,line width=2pt, double distance=.5pt]
\def\Over[#1,#2][#3,#4]{ 
	\draw[style=over]   (#2,#1) .. controls ++(#4*.5-#2*.5,0) and ++(-#4*.5+#2*.5,0) .. (#4,#3);}
\def\Under[#1,#2][#3,#4]{ 
	\draw  (#2,#1) .. controls ++(#4*.5-#2*.5,0) and ++(-#4*.5+#2*.5,0) .. (#4,#3);}
\def\Cross[#1,#2][#3,#4]{
	\Under[#3,#2][#1,#4]\Over[#1,#2][#3,#4]}
\def\Tops[#1][#2][#3]{
	\foreach\x in {#1}{
		\draw (#2,\x+.15) -- (#2+.1, \x+.15) (#2, \x-.15) -- (#2+.1, \x-.15) ;
		\draw (#2+.1,\x) arc (0:360:.75mm and 1.5mm);}
	\foreach \x in {1,...,#3} {\draw (#2,\x)  to (#2+.05,\x); \node[V] at (#2+.05,\x){};}
	}
\def\Bottoms[#1][#2][#3]{
	\foreach\x in {#1}{
		\draw (#2, \x+.15) -- (#2-.1, \x+.15) (#2, \x-.15) -- (#2-.1, \x-.15) ;
		\draw (#2-.1, \x+.15) arc (90:270:.75mm and 1.5mm);}
	\foreach \x in {1,...,#3} {\draw (#2, \x)  to (#2-.05, \x); \node[V] at (#2-.05, \x){};}
	}
\def\Caps[#1][#2,#3][#4]{
	\Tops[#1][#3][#4]
	\Bottoms[#1][#2][#4]
	}
\def\Pole[#1][#2,#3]{
	\shade[left color=white,right color=white] (#2,#1+.15) rectangle (#3,#1-.15);
	\draw[over] (#2,#1+.15) to (#3,#1+.15) (#2,#1-.15) to (#3,#1-.15) ;}
\def\Label[#1,#2][#3][#4]{
	\node[right] at (#2+.1,#3) {#4};
	\node[left] at (#1-.1,#3) {#4};		}
\def\Nodes[#1][#2]{
	 \foreach \x in {1,...,#2} {\node[V] at (#1,\x){};	}
	}
\def\PoleCaps[#1][#2,#3]{
	\foreach\x in {#1}{
		\draw (#2,\x+.15) -- (#2-.1,\x+.15) (#2,\x-.15) -- (#2-.1,\x-.15) ;
		\draw (#2-.1,\x+.15) arc (0:-180:1.5mm and .75mm);}
	\foreach\x in {#1}{
		\draw (#3,\x+.15) -- (#3+.1,\x+.15) (#3,\x-.15) -- (#3+.1,\x-.15) ;
		\draw (#3+.1,\x+.15) arc (0:360:1.5mm and .75mm);}
	}
\def\PoleTwist[#1,#2]{
	\foreach \x/\y in {-1/1L, -.7/1R, 0/2L, .3/2R}{\coordinate(T\y) at (#2,\x); \coordinate(B\y) at (#1,\x);}
	\draw[thin] (B1R) .. controls ++(#2*.5-#1*.5-.1,0) and ++(-#2*.5+#1*.5-.1,0) ..  (T2R)
			(B1L)   .. controls ++(#2*.5-#1*.5+.1,0) and ++(-#2*.5+#1*.5+.1,0) ..    (T2L) ;
	\draw[line width=2pt, white]
			(#1,.15)  .. controls +(#2*.5-#1*.5,0) and +(-#2*.5+#1*.5,0) ..   (#2,-.85) ;
	\draw[thin,over] 
		(B2R) .. controls ++(#2*.5-#1*.5+.1,0) and ++(-#2*.5+#1*.5+.1,0) ..  (T1R) 
			(B2L)  .. controls +(#2*.5-#1*.5-.1,0) and +(-#2*.5+#1*.5-.1,0) ..   (T1L) ;
			}
\def\SymPolesCaps[#1,#2][#3]{
	\draw (#1,.3) -- (#1-.1,.3) (#1,.15) -- (#1-.1, .15) ;
	\draw (#1-.1, .3) arc (0:-180:2pt and 1.5pt);
	\draw (#1,#3+.7) -- (#1-.1,#3+.7) (#1,#3+.85) -- (#1-.1,#3+.85) ;
	\draw (#1-.1,#3+.85)  arc (0:-180:2pt and 1.5pt);
	\draw (#2,.3) -- (#2+.1, .3) (#2, .15) -- (#2+.1, .15) ;
	\draw (#2+.1, .3) arc (0:360:2pt and 1.5pt);
	\draw (#2, #3+.7) -- (#2+.1, #3+.7) (#2, #3+.85) -- (#2+.1, #3+.85) ;
	\draw (#2+.1, #3+.85) arc (0:360:2pt and 1.5pt);}
\newcommand{\posleq}[1]{
	\hspace{0.1cm}
	\begin{tikzpicture}
	\draw (-0.8ex, -0.5ex) -- (0.8ex, -0.5ex);
	\draw (-0.8ex, 0.4ex) -- (0.7ex, -0.2ex);
	\draw (-0.8ex, 0.4ex) -- (0.7ex, 1ex);
	\draw (0.4ex,0.4ex) --(1.1ex, 0.4ex);
	\draw (0.75ex,0.75ex) --(0.75ex, 0.05ex);
	\end{tikzpicture}
	\hspace{0.1cm}
	}
\newcommand{\negleq}[1]{
	\hspace{0.1cm}
	\begin{tikzpicture}
	\draw (-0.8ex, -0.5ex) -- (0.8ex, -0.5ex);
	\draw (-0.8ex, 0.4ex) -- (0.7ex, -0.2ex);
	\draw (-0.8ex, 0.4ex) -- (0.7ex, 1ex);
	\draw (0.4ex,0.4ex) --(1.1ex, 0.4ex);
	\end{tikzpicture}
	\hspace{0.1cm}
	}
\newcommand{\zeroleq}[1]{
	\hspace{0.1cm}
	\begin{tikzpicture}
	\draw (-0.8ex, -0.5ex) -- (0.8ex, -0.5ex);
	\draw (-0.8ex, 0.4ex) -- (0.7ex, -0.2ex);
	\draw (-0.8ex, 0.4ex) -- (0.7ex, 1ex);
	\draw  (0.75ex,0.4ex) ellipse (0.2ex and 0.35ex);
	\end{tikzpicture}
	\hspace{0.1cm}
	}
\newcommand{\posgeq}[1]{
	\hspace{0.1cm}
	\begin{tikzpicture}
	\draw (-0.8ex, -0.5ex) -- (0.8ex, -0.5ex);
	\draw (0.8ex, 0.4ex) -- (-0.7ex, -0.2ex);
	\draw (0.8ex, 0.4ex) -- (-0.7ex, 1ex);
	\draw (-0.4ex,0.4ex) --(-1.1ex, 0.4ex);
	\draw (-0.75ex,0.75ex) --(-0.75ex, 0.05ex);
	\end{tikzpicture}
	\hspace{0.1cm}
	}
\newcommand{\neggeq}[1]{
	\hspace{0.1cm}
	\begin{tikzpicture}
	\draw (-0.8ex, -0.5ex) -- (0.8ex, -0.5ex);
	\draw (0.8ex, 0.4ex) -- (-0.7ex, -0.2ex);
	\draw (0.8ex, 0.4ex) -- (-0.7ex, 1ex);
	\draw (-0.4ex,0.4ex) --(-1.1ex, 0.4ex);
	\end{tikzpicture}
	\hspace{0.1cm}
	}
\newcommand{\zerogeq}[1]{
	\hspace{0.1cm}
	\begin{tikzpicture}
	\draw (-0.8ex, -0.5ex) -- (0.8ex, -0.5ex);
	\draw (0.8ex, 0.4ex) -- (-0.7ex, -0.2ex);
	\draw (0.8ex, 0.4ex) -- (-0.7ex, 1ex);
	\draw  (-0.75ex,0.4ex) ellipse (0.2ex and 0.35ex);
	\end{tikzpicture}
	\hspace{0.1cm}
	}
\newcommand{\posl}[1]{
	\hspace{0.1cm}
	\begin{tikzpicture}
	\draw (-0.8ex, 0.4ex) -- (0.7ex, -0.2ex);
	\draw (-0.8ex, 0.4ex) -- (0.7ex, 1ex);
	\draw (0.4ex,0.4ex) --(1.1ex, 0.4ex);
	\draw (0.75ex,0.75ex) --(0.75ex, 0.05ex);
	\end{tikzpicture}
	\hspace{0.1cm}
	}
\newcommand{\negl}[1]{
	\hspace{0.1cm}
	\begin{tikzpicture}
	\draw (-0.8ex, 0.4ex) -- (0.7ex, -0.2ex);
	\draw (-0.8ex, 0.4ex) -- (0.7ex, 1ex);
	\draw (0.4ex,0.4ex) --(1.1ex, 0.4ex);
	\end{tikzpicture}
	\hspace{0.1cm}
	}
\newcommand{\zerol}[1]{
	\hspace{0.1cm}
	\begin{tikzpicture}
	\draw (-0.8ex, 0.4ex) -- (0.7ex, -0.2ex);
	\draw (-0.8ex, 0.4ex) -- (0.7ex, 1ex);
	\draw  (0.75ex,0.4ex) ellipse (0.2ex and 0.35ex);
	\end{tikzpicture}
	\hspace{0.1cm}
	}
\newcommand{\posg}[1]{
	\hspace{0.1cm}
	\begin{tikzpicture}
	\draw (0.8ex, 0.4ex) -- (-0.7ex, 1ex);
	\draw (0.8ex, 0.4ex) -- (-0.7ex, -0.2ex);
	\draw (-0.4ex,0.4ex) --(-1.1ex, 0.4ex);
	\draw (-0.75ex,0.75ex) --(-0.75ex, 0.05ex);
	\end{tikzpicture}
	\hspace{0.1cm}
	}
\newcommand{\negg}[1]{
	\hspace{0.1cm}
	\begin{tikzpicture}
	\draw (0.8ex, 0.4ex) -- (-0.7ex, -0.2ex);
	\draw (0.8ex, 0.4ex) -- (-0.7ex, 1ex);
	\draw (-0.4ex,0.4ex) --(-1.1ex, 0.4ex);
	\end{tikzpicture}
	\hspace{0.1cm}
	}
\newcommand{\zerog}[1]{
	\hspace{0.1cm}
	\begin{tikzpicture}
	\draw (0.8ex, 0.4ex) -- (-0.7ex, -0.2ex);
	\draw (0.8ex, 0.4ex) -- (-0.7ex, 1ex);
	\draw  (-0.75ex,0.4ex) ellipse (0.2ex and 0.35ex);
	\end{tikzpicture}
	\hspace{0.1cm}
	}
\renewcommand{\@makefnmark}{\mbox{\textsuperscript{}}}
\title{$c$-functions and Macdonald polynomials}
\author{
Laura Colmenarejo\quad\ \ email:\ lcolmen@ncsu.edu \\
Arun Ram\quad\quad\ \,\ email:\ aram@unimelb.edu.au \\
\\
}
\date{}
\begin{document}

\maketitle

\vspace{-3em}
\begin{center}
{\sl In memory of Georgia Benkart}
\end{center}


\begin{abstract}
\noindent
This is a paper about $c$-functions and Macdonald polynomials.  There are 
$c$-function formulas for $E$-expansions of $P_\lambda$ and $A_{\lambda+\rho}$,
principal specializations of $P_\lambda$ and $E_\mu$, for Macdonald's constant term formulas,
and for the norms of Macdonald polynomials.  Most of these follow from the creation
formulas for Macdonald polynomials, providing alternative proofs to several results from
~\cite{Mac03}.  In addition, we prove the Boson-Fermion correspondence in the Macdonald polynomial setting
and the Weyl character formula for Macdonald polynomials.
\end{abstract}

\keywords{Macdonald polynomials, symmetric functions, Hecke algebras}
\footnote{AMS Subject Classifications: Primary 05E05; Secondary  33D52.}

\setcounter{section}{0}

\tableofcontents

\section*{Introduction}
\addcontentsline{toc}{section}{Introduction}

There is a wonderful article by S. Helgason entitled
\emph{Harish-Chandra's $c$-function. A Mathematical Jewel}~\cite{Hel94}.
Helgason describes how the $c$-function is at the core of 
spherical functions, eigenfunctions of invariant differential operators,
hypergeometric functions, Gamma functions, Plancherel measures, 
Fourier transforms, Radon transforms, orbital integrals, and symmetric spaces.
In Macdonald's monograph 
on spherical functions on $p$-adic groups he pointed
to an analogue of the Harish-Chandra's $c$-function which plays a similar role
for spherical functions on $p$-adic groups and provides for $q$-analogues of the
Gamma functions and the other topics in Helgason's article (see~\cite[Ch. IV (4.1)]{Mac71}).  
In Macdonald's 2003 monograph \textsl{Double Affine Hecke algebras and Orthogonal polynomials}~\cite{Mac03} these
analogues of Harish-Chandra's $c$-function appear everywhere,  without explicit mention.  
In this paper we wish to make these appearances of the $c$-function more visible and explain how
they provide an understanding of combinatorial formulas in Macdonald polynomial theory.

We begin this paper with some philosophical ruminations. In Section~\ref{Sec:Why},  we present four broad perspectives indicating why Macdonald polynomials are fascinating objects for continued research:
\begin{enumerate}
\itemsep=-0.2em
\item[(a)] the Macdonald polynomials are eigenfunctions (``wave functions'') for a class of operators
which play a role analogous to the role played by the Laplacian and other Hamiltonians 
in classical harmonic analysis and mathematical physics\,;
\item[(b)] the principal specializations of Macdonald polynomials point to `elliptic generalizations'
of Weyl's dimension formula for irreducible representations of compact Lie groups\,;
\item[(c)] there is a $(q,t)$-generalization of a Boson-Fermion type correspondence which hints
at an ``elliptic'' generalization of ``geometric Satake'';
\item[(d)] the recursive construction of Macdonald polynomials by intertwining operators
has relations to the construction of
Schubert polynomials and Grothendieck polynomials by divided-difference operators and Demazure operators.
\end{enumerate}
Section 2 introduces the main objects of study:
\begin{enumerate}[itemsep=-0.2em]
\item[(a)] the electronic Macdonald polynomials (known as \emph{nonsymmetric} in the literature);
\item[(b)] the bosonic Macdonald polynomials $P_\lambda$ (known as \emph{symmetric} in the literature);
\item[(c)] the fermionic Macdonald polynomials $A_{\lambda+\rho}$.
\end{enumerate}
\begin{rk}
The first author gave a series of lectures on Macdonald polynomials at the University of Melbourne. After explaining
the $(q,t)$-version of the Boson-Fermion correspondence and how it relates to the polynomials
$P_\lambda$ and $A_{\lambda+\rho}$ it started to feel natural to call the $P_\lambda$ bosonic Macdonald
polynomials and the $A_{\lambda+\rho}$ fermionic Macdonald polynomials.  Soon we began exploring the 
pleasing analogies between the Cherednik-Dunkl operators and the Hamiltonian for the quantum harmonic
oscillator and there was a suggestion to call the $E_\mu$ electronic Macdonald polynomials.
It was fun and helpful for keeping these three families straight in one's head, and we've decided to adopt it here.
In previous literature, the $E_\mu$ have been called the ``nonsymmetric Macdonald polynomials'' and the
$P_\lambda$ have been called the ``symmetric Macdonald polynomials''.
\qed
\end{rk}

In Section 3 we develop the powerful operator calculus for handling Macdonald polynomials.  There
are four primary families of operators that are employed:
\begin{enumerate}[itemsep=-0.2em]
\item[(a)] the Hecke algebra operators $T_1, \ldots, T_{n-1}$, and the promotion operator $T_\pi$;
\item[(b)] the Cherednik-Dunkl operators $Y_1, \ldots, Y_n$;
\item[(c)] the intertwiner operators $\tau^\vee_\pi$ and $\tau^\vee_1, \ldots, \tau^\vee_{n-1}$;
\item[(d)] the symmetrizers $\mathbf{1}_0$ and $\varepsilon_0$.
\end{enumerate}
This material is exposited in the books of Macdonald~\cite{Mac03} and Cherednik~\cite{Che05}.
We have tried to make an efficient and accessible treatment of these results, in the
type $GL_n$ case, in a continuing effort to make these amazing and powerful methods more and more
broadly available.  In particular, we have found the symmetrizer expressions in Propositions~\ref{propslicksymmA}
and~\ref{propsymwparabA}  to be
of great utility and, although they have their roots in the seminal work of Harish-Chandra and Macdonald 
and many others (see, in particular,~\cite[(5.5.14)]{Mac03}),
we hope our treatment might help others find further uses for these identities.

In Section 4, we study the action of various operators on polynomials and prove important results.  In particular, we look at:
\begin{enumerate}
\itemsep=-0.2em
\item[(a)] the creation formulas for Macdonald polynomials via intertwiners and symmetrizers;
\item[(b)] the $(q,t)$ Boson-Fermion correspondence and its relation to the symmetrizers;
\item[(c)] the Poincar\'{e} polynomial and its relation to the symmetrizers;
\item[(d)] the $E$-expansions of $P_\lambda$ and $A_{\lambda+\rho}$.
\end{enumerate}
The Boson-Fermion correspondence identifies two different avatars of the ring of symmetric polynomials by
relating the $P_\lambda$ and $A_{\lambda+\rho}$. Moreover, it provides a point of view that connects
mathematical physics, geometric representation theory, and the Langlands program.  The
$E$-expansions are wonderfully explicit combinatorial expressions that generalize the 
expressions for monomial symmetric polynomials as sums over permutations.  Understanding
the coefficients in these expansions in terms of $c$-functions makes more explicit the relation between these expansions and formulas
like the Gindikin-Karpelevi\v c formula in representation theory (see, for example,~\cite{Kn03}
and~\cite[(1)]{BN10}).

Section 5 is devoted to principal specializations of Macdonald polynomials.  We give two kinds of formulas:
\begin{enumerate}[itemsep=-0.2em]
\item[(a)] $c$-function formulas for principal specializations of $E_\mu$, $P_\lambda$ and $A_{\lambda+\rho}$;
\item[(b)] hook formulas for principal specializations of $P_\lambda$ and $E_\mu$.
\end{enumerate}
The $c$-function formulas are reformulations of~\cite[(5.3.9) and (5.2.14)]{Mac03} which put the focus on type $GL_n$
and the corresponding $c$-functions.  The hook formula (Theorem~\ref{hookforP}) 
for the principal specialization of $P_\lambda$ 
is exactly that of~\cite[Ch. VI ($6.11'$)]{Mac}.  The proof we give is different -- it uses the intertwiner operators
to derive the $c$-functions and then the combinatorial argument of~\cite{AGY22}.

Section 6 introduces the inner product $(,)_{q,t}$
with respect to which the Macdonald polynomials are orthogonal polynomials.
We prove that the inner product is sesquilinear, nondegenerate, and normalized Hermitian, that the characterization of
the Macdonald polynomials in terms of the inner product, and two amazing formulas:
\begin{enumerate}
\item[(a)] the ``going up a level'' formula relating $(f,g)_{q,qt}$ to $(A_\rho f, A_\rho g)_{q,t}$;
\item[(b)] the Weyl character formula for Macdonald polynomials.
\end{enumerate}
This section follows closely the exposition in~\cite[Ch.\ 5]{Mac03} except for the changes of notation to focus on 
type $GL_n$.  In particular, the ``going up a level'' formula (Proposition~\ref{ktokp1comp}) and the
Weyl character formula for Macdonald polynomials (Theorem~\ref{WCF}) are~\cite[(5.8.6)]{Mac03} and
~\cite[(5.8.12)]{Mac03}, respectively.

Section 7 gives an exposition (with proofs) of 
\begin{enumerate}[itemsep=-0.2em]
\item[(a)] the $c$-function formulas for $(E_\mu, E_\mu)_{q,t}$, $(P_\lambda,P_\lambda)_{q,t}$ and 
$(A_{\lambda+\rho},A_{\lambda+\rho})_{q,t}$; and
\item[(b)] the formulas for ``Macdonald's constant term''.
\end{enumerate}
Our exposition is for type $GL_n$,
although the proof follows the same ideas and pattern of the general type proof exposited in~\cite[\S 5.8]{Mac03}
(based on the amazing tools developed by Heckman, Opdam, Cherednik, and Macdonald).  We have made a special effort to
streamline the proof and make it accessible.

\bigskip\noindent
\textbf{Acknowledgements.}
A.\ Ram thanks the organizers of the MATRIX/RIMS conference
\textsl{Integrability, combinatorics and representation theory}, 
the Stanford University \textsl{Solvable Lattice Models seminar},
and the Indian Institute of Science, Bangalore \textsl{Workshop on Macdonald polynomials}
for invitations to give lectures on Macdonald polynomials which provided invaluable motivation and
feedback.  We are very grateful to the hardy students and participants of the series of lectures
on Macdonald polynomials given at the University of Melbourne.  Their questions, probing, and avid interest
had a huge impact on this paper.

\newpage

\section{Why these things are so incredibly interesting}\label{Sec:Why}
As mentioned in the introduction, this section aims to give a broad perspective of why Macdonald polynomials are interesting. Therefore, we warn the reader that this section is not too \emph{formal} and that the formal details are presented in the rest of the article. 

\subsection{Eigenvalues and eigenvectors}\label{sec:eig}

Let $n\in \ZZ_{>0}$ and let $q,t^{\frac12}\in \CC^\times:=\CC\setminus\{0\}$.
The symmetric group $S_n$ acts on $\CC[X]:=\CC[x^{\pm1}_1, \ldots, x^{\pm1}_n]$ by permuting $x_1, \ldots, x_n$
so that
$$(s_if)(x_1, \ldots, x_n) = f(x_1, \ldots, x_{i-1}, x_{i+1}, x_i, x_{i+2}, \ldots, x_n),
$$
where $s_i$ denotes the transposition in $S_n$ that switches $i$ and $i+1$.
Define operators $T_1, \ldots, T_{n-1}$ and $T_\pi$ on $\CC[X]$ by
$$T_if = -t^{-\frac12}f+(1+s_i)\frac{t^{-\frac12}-t^{\frac12}x_i^{-1}x_{i+1}}{1-x_i^{-1}x_{i+1}}f
\quad\hbox{and}\quad
(T_\pi f)(x_1, \ldots, x_n) = f(q^{-1}x_n, x_1, \ldots, x_{n-1}).
$$
The \emph{Cherednik-Dunkl operators} are 
$$Y_1 = T_\pi T_{n-1}\cdots T_1, \quad Y_2 = T_1^{-1}Y_1T_1^{-1}, 
\quad Y_3 = T_2^{-1}Y_2T_2^{-1},\quad \ldots,\quad
Y_n = T_{n-1}^{-1}Y_{n-1}T_{n-1}^{-1}.
$$

If $\mu = (\mu_1, \ldots, \mu_n) \in \ZZ^n$ then
the minimal length (with respect to the Bruhat order) permutation $v_\mu$ such that $v_\mu \mu$ is weakly increasing
is given by
$$v_\mu(r) = 1+\#\{r'\in \{1, \ldots, r-1\}\ |\ \mu_{r'}\le \mu_r\}
+\#\{r'\in \{r+1, \ldots, n\}\ |\ \mu_{r'}<\mu_r\}, \text{ for } r\in \{1, \ldots, n\}.$$

\begin{thm} \label{Eeigenvalue} 
There is a unique basis
$\{ E_\mu\ |\ \mu\in \ZZ^n\}$ of $\CC[X]$ such that
$$Y_i E_\mu = q^{-\mu_i}t^{-(v_\mu(i)-1)+\frac12(n-1)} E_\mu,
\qquad\hbox{for $i\in \{1, \ldots, n\}$,}
$$
and the coefficient of $x^\mu = x_1^{\mu_1}\cdots x_n^{\mu_n}$ in $E_\mu$ is $1$.
\end{thm}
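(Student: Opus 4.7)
The plan is to establish the theorem by the standard simultaneous diagonalization argument: the operators $Y_1, \ldots, Y_n$ pairwise commute, act upper-triangularly on the monomial basis $\{x^\mu\}$ with respect to a suitable partial order on $\ZZ^n$, and have pairwise distinct joint eigenvalue tuples. Existence and uniqueness of $E_\mu$ then follow by elementary linear algebra on each finite-dimensional filtration piece.

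First I would verify that $Y_i Y_j = Y_j Y_i$ for all $i,j$. Starting from $Y_{i+1}=T_i^{-1}Y_iT_i^{-1}$ and using the braid relations $T_iT_{i+1}T_i=T_{i+1}T_iT_{i+1}$, the far-commutation relations $T_iT_j=T_jT_i$ for $|i-j|\ge 2$, and the conjugation $T_\pi T_iT_\pi^{-1}=T_{i+1}$ (indices cyclic), an inductive manipulation produces the commutativity. (This is exactly the Bernstein presentation half of the double affine Hecke algebra.) Next, I would define a partial order on $\ZZ^n$ by $\lambda\prec\mu$ if either the weakly decreasing rearrangement $\lambda^+$ is strictly dominated by $\mu^+$, or $\lambda^+=\mu^+$ and $v_\lambda>v_\mu$ in the Bruhat order on $S_n$. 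This order has the property that $\{\nu:\nu\preceq\mu\}$ is finite for each $\mu$, so each subspace $V_\mu=\mathrm{span}\{x^\nu:\nu\preceq\mu\}$ is finite-dimensional.

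The core technical step is to prove the triangularity
\[
Y_i x^\mu \;=\; q^{-\mu_i}\,t^{-(v_\mu(i)-1)+\frac12(n-1)}\,x^\mu \;+\;\sum_{\nu\prec\mu} c^{(i)}_{\nu,\mu}\,x^\nu.
\]
I would derive this by computing the action of each building block on monomials: $T_\pi x^\mu = q^{-\mu_n}x^{(\mu_n,\mu_1,\ldots,\mu_{n-1})}$, and $T_j x^\mu$ either multiplies $x^\mu$ by a scalar in $\{t^{\pm 1/2}\}$ plus strictly lower terms, or sends $x^\mu$ to $x^{s_j\mu}$ plus strictly lower terms, with the case distinction governed by the sign of $\mu_j-\mu_{j+1}$. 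Composing along $Y_1=T_\pi T_{n-1}\cdots T_1$ identifies the leading monomial and eigenvalue of $Y_1$, and then the recursion $Y_{i+1}=T_i^{-1}Y_iT_i^{-1}$, combined with the combinatorial transformation of $v_\mu$ under swapping positions $i,i+1$, propagates the statement to every $Y_i$. The distinctness of the joint eigenvalues follows because $\mu$ is recoverable from the tuple $(q^{-\mu_i}t^{-(v_\mu(i)-1)+\frac12(n-1)})_{i=1}^n$: the $q$-exponent recovers $\mu_i$ (using genericity of $q,t^{1/2}\in\CC^\times$, and noting that the combinatorial data $v_\mu(i)$ is a specific integer in $\{1,\ldots,n\}$), which in turn determines $v_\mu$.

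With these ingredients in hand, the conclusion is routine: each $V_\mu$ is stable under the commuting family $Y_1,\ldots,Y_n$ by the triangularity, and the leading-diagonal eigenvalues along $V_\mu$ are pairwise distinct, so $V_\mu$ decomposes into one-dimensional joint eigenspaces. The eigenspace with eigenvalue tuple indexed by $\mu$ contains a unique vector whose $x^\mu$-coefficient is $1$; we declare this to be $E_\mu$. Letting $\mu$ range over $\ZZ^n$ produces the basis, and uniqueness is immediate from the normalization together with the distinctness of eigenvalues. The main obstacle is the triangularity step: keeping track of how the combinatorial statistic $v_\mu(i)$ evolves under the action of each $T_j$ and $T_\pi$, and verifying that the leading eigenvalue assembles into exactly the stated product of powers of $q$ and $t$, requires careful bookkeeping and is the computational heart of the proof.
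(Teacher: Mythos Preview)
Your approach is correct in outline and is the classical route to this theorem (triangularity of the $Y_i$ on the monomial basis, distinctness of the diagonal eigenvalues, simultaneous diagonalization on finite-dimensional filtration pieces). However, the paper proceeds quite differently. It \emph{defines} the $E_\mu$ recursively in Section~\ref{ssec:Epolys} via the rules (E0)--(E3), packages this recursion into a creation formula $E_\mu = t^{-\frac12\ell(v_\mu^{-1})}\tau^\vee_{u_\mu}\cdot 1$ (Theorem~\ref{creationformulathm}), and then proves the eigenvalue identity by pushing $Y_i$ through the product of intertwiners $\tau^\vee_{u_\mu}$ using the commutation relations $\tau^\vee_j Y_i = Y_{s_j(i)}\tau^\vee_j$ and $\tau^\vee_\pi Y_j = Y_{j+1}\tau^\vee_\pi$ (with the cyclic shift picking up the factor $q^{-1}$), reducing everything to the base case $Y_i\cdot 1 = t^{-(i-1)+\frac12(n-1)}\cdot 1$. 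The triangularity $E_\mu = x^\mu + (\text{lower})$ is established separately (Proposition~\ref{Emutopterm}) directly from the recursion, and together with the eigenvalue property this gives uniqueness and the basis statement.

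The two routes trade off as follows. Your diagonalization argument is self-contained and yields existence, uniqueness, the basis property, and the triangular expansion all at once, but the ``computational heart'' you identify --- tracking the leading term of $Y_i x^\mu$ through the composite $T_{i-1}^{-1}\cdots T_1^{-1}T_\pi T_{n-1}\cdots T_i$ and matching the exponent of $t$ to $-(v_\mu(i)-1)+\tfrac12(n-1)$ --- is genuinely delicate bookkeeping. The paper's intertwiner approach requires more upfront machinery (the operators $\tau^\vee_i$, $\tau^\vee_\pi$ and their relations with the $Y_j$), but once that is in place the eigenvalue computation is a two-line induction with no case analysis, and the same intertwiners are reused throughout the paper for principal specializations, $E$-expansions, and norm formulas. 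One small point: your partial order uses $v_\lambda > v_\mu$ in Bruhat order, whereas the paper's DBlex order is phrased via the permutation $z_\mu$ with $\mu = z_\mu\mu^+$; these conventions are compatible but not identical, so if you carry out your plan you should check that your chosen order really is the one for which the $T_j$-action is triangular in the required direction.
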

\begin{rk}
    In order to make the notation lighter and easier to read, we do not include the variables $x_1,\dots, x_n$ and the parameters $q, t$ in the polynomials. 
\end{rk}
This is an incredible statement!  It says that the operators $Y_1, \ldots, Y_n$ all commute,
and that their simultaneous eigenvectors form an orthogonal basis with respect to an
appropriate inner product and that the eigenvalues are all explicitly determined.  
This special basis of simultaneous eigenvectors, the electronic Macdonald polynomials $E_\mu$,
is the primary object of study in this paper.  The inner product $(,)_{q,t}$ with respect to which they
are orthogonal will be studied in Sections~\ref{section:WCF} and~\ref{section:Orth}.

\subsection{Elliptic, quantum, and ordinary dimension formulas}\label{ssec:elldim}
Let $(\ZZ^n)^+ = \{ \lambda = (\lambda_1, \ldots, \lambda_n)\in \ZZ^n\ |\ \lambda_1\ge \cdots \ge \lambda_n\}$, and consider $\lambda\in (\ZZ^n)^+$. 
The \emph{bosonic Macdonald polynomial}
$P_\lambda$ is a symmetric version of the electronic Macdonald polynomial,
\begin{equation*}
P_\lambda = P_\lambda(x;q,t) = \frac{1}{W_\lambda(t)}
\sum_{w\in S_n} w\Big( E_\lambda \prod_{i<j} \frac{x_i-tx_j}{x_i-x_j}\Big),
\end{equation*}
where $W_\lambda(t)$ is the appropriate constant which makes the coefficient
of $x^\lambda$ equal to $1$ in $P_\lambda(q,t)$.  

Theorem~\ref{hookforP} and Corollary~\ref{princspecB}
say that
\begin{align}
P_{\lambda}(1,t,t^2, \ldots, t^{n-1};q,t)
&= t^{n(\lambda)} \prod_{1\le i<j\le n} \prod_{\ell=0}^{\lambda_i-\lambda_j-1}
\frac{1 - q^\ell t^{j-i +1} }{1- q^\ell t^{j-i}}
\nonumber  \\
&= t^{n(\lambda)} \prod_{b\in \lambda} 
\frac{1-q^{\mathrm{coarm}_\lambda(b)}t^{n-\mathrm{coleg}_\lambda(b)}}
{1-q^{\mathrm{arm}_\lambda(b)}t^{\mathrm{leg}_\lambda(b)+1}}
\label{elldim}
\end{align}
where 
\begin{equation}
\begin{matrix}
\begin{tikzpicture}[yscale=-1]
\draw (0,0) -- (5,0) -- (5,1) -- (4,1) -- (4,3)  -- (2,3) -- (2,4) -- (1,4) -- (1,5) -- (0,5) -- (0,0);
\draw[<->] (2.5,0) -- node[anchor=west]{$\scriptstyle{coleg_\lambda(b)}$} (2.5,1.3) ;
\draw[<->] (2.5,1.7) -- node[anchor=west]{$\scriptstyle{leg_\lambda(b)}$} (2.5,3) ;
\draw[<->] (0,1.5) -- node[anchor=south]{$\scriptstyle{coarm_\lambda(b)}$} (2.3,1.5) ;
\draw[<->] (2.7,1.5) -- node[anchor=south]{$\scriptstyle{arm_\lambda(b)}$} (4,1.5) ;
\draw (2.5,1.5)  node[shape=rectangle,draw]{$b$};
\end{tikzpicture}
\end{matrix}
\label{ellhook}
\end{equation}

The Schur polynomial $s_\lambda$ is the specialization of $P_\lambda$ at $q=t$,
$s_\lambda(x_1, \ldots, x_n) = P_\lambda(x_1, \ldots, x_n;t,t)$.
Note that specializing~\eqref{ellhook} at $q=t$ gives
\begin{equation}
s_\lambda(1,t,t^2, \ldots, t^{n-1}) = t^{n(\lambda)}\prod_{1\le i<j\le n} \frac{1-t^{\lambda_i-\lambda_j+(j-i)}}{1-t^{j-i}}
= t^{n(\lambda)}\prod_{b\in \lambda} \frac{1-t^{n+c(b)}}{1-t^{h(b)}},
\label{qdimLlambda}
\end{equation}
where $h(b)$ is the hook length of the box $b$ and $c(b)$ is the content of the box $b$.
Moreover, setting $t=1$ in~\eqref{qdimLlambda} gives
\begin{equation}
s_\lambda(1,1,\ldots, 1) 
= \prod_{1\le i<j\le n} \frac{\lambda_i-\lambda_j+j-i}{j-i}
= \prod_{b\in \lambda} \frac{n+c(b)}{h(b)}.
\label{dimLlambda}
\end{equation}

These are special cases of Weyl's integral formula and Weyl's dimension formula (see~\cite[Ch.\ VI (1.7)]{BrtD}).

There is also a connection with the representation theory of $GL_n(\CC)$. Let $\mathrm{char}(L(\lambda))$ denote the character of the irreducible polynomial representation of
$GL_n(\CC)$ indexed by $\lambda$ (see~\cite[Ch.\ I App.\ A (8.4)]{Mac}).  
Letting $e^x = \diag(x_1, \ldots, x_n)$ in $GL_n(\CC)$, the Weyl character formula (see~\cite[Theorem 10.4]{Kac})
says that
$$
s_\lambda(x_1, \ldots, x_n)
= \mathrm{char}(L(\lambda)) = \Tr(L(\lambda), e^x),$$
where $\Tr(L(\lambda),g)$ denotes the trace of the action of $g$ on the vector space $L(\lambda)$.
Letting $e^{\rho^\vee} = \diag(1,t, \ldots, t^{n-1})$, the specialization
$$s_\lambda(1,t,t^2, \ldots, t^{n-1}) = \Tr(L(\lambda), e^{\rho^\vee}) = \mathrm{qdim}(L(\lambda))$$
is the \emph{quantum dimension} of $L(\lambda)$ (see~\cite[Prop,\ 10.10]{Kac}) and the specialization
$$s_\lambda(1,1, \ldots, 1) = \Tr(L(\lambda), 1) = \mathrm{dim}(L(\lambda))$$
is the dimension of $L(\lambda)$ (see~\cite[Cor.\ 10.10]{Kac}).  It would be interesting to give an interpretation of the formula
from~\eqref{elldim}
as an ``elliptic dimension'' formula for $L(\lambda)$.

\subsection{Geometric Satake}

Let us outline the context of the Boson-Fermion correspondence for symmetric polynomials
and the Weyl character formula, and describe some amazing relations between these structural features
and the geometric and  representation-theoretic settings.

\textbf{The case $q=0$ and $t=0$.}  
Consider the simple reflections in $S_n$, $s_i = (i,i+1)$ for $1\leq i\leq n-1$, and the spaces
\begin{align*}
\CC[X]^{S_n} &= \{ f\in \CC[X]\ |\ \hbox{if $i\in \{1, \ldots, n-1\}$ then $s_if=f$}\}
\qquad\hbox{and}
\\
\CC[X]^{\det} &= \{ f\in \CC[X]\ |\ \hbox{if $i\in \{1, \ldots, n-1\}$ then $s_if=-f$}\}.
\end{align*}
Let $w_0=(n,n-1,\dots, 1)\in S_n$, with $\ell(w_0 ) = \frac12n(n-1)$, and define the following operators
$$p_0 = \sum_{w\in S_n} w
\qquad\hbox{and}\qquad e_0 = \sum_{w\in S_n} (-1)^{\ell(w) - \ell(w_0)}w,
$$
For $\mu\in \ZZ^n$, let $x^\mu = x_1^{\mu_1}\cdots x_n^{\mu_n}$. The \emph{monomial symmetric polynomial} is 
$$m_\mu = \frac{1}{W_\mu(1)}p_0\, x^\mu = \frac{1}{W_\mu(1)} \sum_{w\in S_n} wx^\mu,$$
where the coefficient $\frac{1}{W_\mu (1)}$ makes the coefficient of $x^\mu$ in $m_\mu$ equal to 1.
The \emph{skew orbit sum} is
$$a_\mu = e_0x^\mu = \sum_{w\in S_n} (-1)^{\ell(w_0)-\ell(w)} x^{w\mu} = \det(x_i^{\mu_j}) .$$

The special case where $\rho = (n-1, n-2, \ldots, 2,1,0)$ gives the \emph{Vandermonde determinant},
$$
a_\rho = (-1)^{\ell(w_0)}\det(x_i^{n-j}) = \prod_{1\le i<j\le n} (x_j-x_i).
$$

Let $(\ZZ^n)^{++} = \{ (\mu_1, \ldots, \mu_n)\in \ZZ^n\ |\ \mu_1>\cdots >\mu_n\}$, and recall that $(\ZZ^n)^+$ is the set of weakly decreasing sequences of integers.
Note that the following map gives a bijection between these two sets:
$$
\begin{matrix}
(\ZZ^n)^+ &\stackrel{\sim}\longrightarrow &(\ZZ^n)^{++} \\
\lambda &\longmapsto &\lambda+\rho
\end{matrix}
$$

Given $\lambda\in (\ZZ^n)^+$ and $\mu \in (\ZZ^n)^{++}$, we have that for 
$i\in \{1, \ldots, n-1\}$, $m_{s_i\lambda} = m_\lambda$ and $a_\mu = -a_{s_i\mu}$. Thus, 
\begin{align*}
\{ m_\lambda\ |\ \lambda\in (\ZZ^n)^+\}\qquad\hbox{is a basis of} \quad \CC[X]^{S_n} = p_0\CC[X],
\\
\{a_\mu\ |\ \lambda\in (\ZZ^n)^{++}\}
\qquad\hbox{is a basis of}\quad
\CC[X]^{\mathrm{det}} = e_0 \CC[X],
\end{align*}
Moreover, for $\lambda\in (\ZZ^n)^+$, the \emph{Schur polynomial} is
\begin{equation}
s_\lambda = \frac{a_{\lambda+\rho}}{a_\rho}.
\label{Schur}
\end{equation}

Schur definitively recognized the polynomial
$s_\lambda$ as the character of a finite-dimensional irreducible representation of the group $GL_n(\CC)$.
A way of making the Schur polynomial very natural is to recognize that the
following diagram of vector space isomorphisms tells us that $\CC[X]^{\det}$ is a free (rank 1)
$\CC[X]^{S_n}$-module with basis vector $a_\rho$.
\begin{equation}
\begin{matrix}
\CC[X]^{S_n} 
&\stackrel{\sim}\longrightarrow
&\CC[X]^{\mathrm{det}} = a_\rho \CC[X]^{S_n} \\[0.1in]
f &\longmapsto  &a_\rho f \\[0.1in]
s_\lambda &\longmapsto 
&a_{\lambda+\rho} = e_0x^{\lambda+\rho} \\[0.1in]
m_\lambda = p_0x^\lambda &\longmapsto &a_\rho m_\lambda
\end{matrix}
\label{HWeyl}
\end{equation}

This isomorphism can be thought of as a version of the Boson-Fermion correspondence for symmetric polynomials.
Hermann Weyl used this isomorphism in his generalization of Schur's result which recognized that
the analogues of the $s_\lambda$ for crystallographic reflection groups (Weyl groups) provide
the characters of the finite-dimensional irreducible representations of compact Lie groups.

\medskip\noindent

\textbf{The case of $q=0$ and general $t$.}  
In view of the operators $T_1, \ldots, T_{n-1}$
from Section~\ref{sec:eig},
the $t$-analogues of the elements $p_0$ and $e_0$ are given by
$$
\mathbf{1}_0 = \sum_{z\in S_n} t^{\frac12(\ell(z)-\ell(w_0))}T_z
\qquad\hbox{and}\qquad
\varepsilon_0 = \sum_{w\in S_n} (-t^{-\frac12})^{\ell(z)-\ell(w_0)} T_z.
$$
It is fruitful to think of the polynomial ring
$\CC[X] = \CC[X]$ 
as generated by a single element $\mathbf{1}_0$
via multiplication by the variables/operators $x_1^{\pm1}, \ldots, x_n^{\pm1}$. 
With this point of view, 
the polynomial ring $\CC[X] = H\mathbf{1}_0$ is an induced representation of the affine Hecke algebra $H$,
where $H$ is the algebra generated by $T_1, \ldots, T_{n-1}$ and $x_1^{\pm1}, \ldots, x_n^{\pm1}$\,: 
$$\CC[X] \cong H\mathbf{1}_0 = \hbox{span}\{ x^\mu\mathbf{1}_0\ |\ \mu\in \ZZ^n\}.$$

For $\mu\in \ZZ^n$, the \emph{Whittaker function}
$$A_\mu(0,t)\mathbf{1}_0 \in \varepsilon_0 H \mathbf{1}_0
\qquad\hbox{is defined by}\qquad
A_\mu(0,t) = \varepsilon_0 x^\mu \mathbf{1}_0.$$
See, for example, [HKP, \S 6] for the connection between $p$-adic groups and the 
affine Hecke algebra and the explanation of why $A_\mu$ is equivalent to 
the data of a (spherical) Whittaker function for a $p$-adic group. 
As proved carefully in~\cite[Theorem 2.7]{NR04}, 
$$\varepsilon_0 H \mathbf{1}_0
\quad\hbox{has $\CC$-basis}
\quad
\{ A_{\lambda+\rho}(0,t) \ |\ \lambda\in (\ZZ^n)^+\}.$$
Following~\cite{Lu83} (see~\cite[Theorem 2.4]{NR04} for another exposition),
$$\begin{array}{ll}
\hbox{the Satake isomorphism,}  &\CC[X]^{S_n} \cong \mathbf{1}_0H\mathbf{1}_0,
\qquad\hbox{and} \\
\hbox{the Casselman-Shalika formula,}  \quad &A_{\lambda+\rho}(0,t) = s_\lambda A_\rho,
\end{array}
$$
can be formulated by the following diagram of vector space isomorphisms:
\begin{equation}
\begin{matrix}
Z(H) = \CC[X]^{S_n} 
&\stackrel{\sim}\longrightarrow
&\mathbf{1}_0 H \mathbf{1}_0 
&\stackrel{\sim}\longrightarrow
&\varepsilon_0 H \mathbf{1}_0  \\[0.1in]
f &\longmapsto &f\mathbf{1}_0 &\longmapsto &A_\rho f \mathbf{1}_0 \\[0.1in]
s_\lambda &\longmapsto &s_\lambda \mathbf{1}_0 &\longmapsto 
&A_{\lambda+\rho}(0,t)=\varepsilon_0 x^{\lambda+\rho} \mathbf{1}_0 \\[0.1in]
P_\lambda(0,t) &\longmapsto 
&P_\lambda(0,t)\mathbf{1}_0=\mathbf{1}_0 x^\lambda \mathbf{1}_0
\end{matrix}
\label{GeomLang}
\end{equation}
where $A_\rho = A_\rho(0,t)$.
As explained by Lusztig~\cite{Lu83},
in this diagram
\begin{enumerate}[itemsep=1mm]
\item[] $\mathbf{1}_0 H \mathbf{1}_0$ is the \emph{spherical Hecke algebra},
\item[] $s_\lambda$ is the \emph{Schur polynomial},
\item[] $P_\lambda(0,t)$ is the \emph{Hall-Littlewood polynomial}, and
\item[] $\{P_\lambda(0,t)\mathbf{1}_0\ |\ \lambda\in (\ZZ^n)^+\}$ 
is the \emph{Kazhdan-Lusztig basis} of $\mathbf{1}_0 H \mathbf{1}_0$.
\end{enumerate}
The spherical Hecke algebra $\mathbf{1}_0 H \mathbf{1}_0$ is the
Iwahori-Hecke algebra corresponding to the \emph{loop Grassmanian} 
$GL_n(\CC((\epsilon)))/GL_n(\CC[[t]])$.  The statement that
$P_\lambda(0,t)\mathbf{1}_0$ is a Kazhdan-Luszitg basis element in $\mathbf{1}_0 H \mathbf{1}_0$
indicates that $P_\lambda(0,t)\mathbf{1}_0$ corresponds to the intersection homology
of a Schubert variety in the loop Grassmannian (amazing!).

The diagram~\eqref{GeomLang} has particular importance due to the fact that $\CC[X]^{S_n} = \CC[X]^{W_0}$ 
(where $W_0$ is the Weyl group) is an avatar of the Grothendieck group of the category $\mathrm{Rep}(G)$ 
of finite dimensional representations of $G$, 
the spherical Hecke algebra $\mathbf{1}_0 H \mathbf{1}_0$ is a form of the 
Grothendieck group of $K$-equivariant perverse sheaves on the loop Grassmannian 
$Gr$ for the Langlands dual group $G^\vee$, 
and $\varepsilon_0 H \mathbf{1}_0$ is isomorphic to the Grothendieck group of Whittaker sheaves (appropriately formulated $N$-equivariant sheaves on $Gr$); see [FGV].

\medskip\noindent
\textbf{An analogous picture for general $q$ and general $t$.}
The results in Theorem~\ref{BosFerCorr}   and Theorem~\ref{WCF} provide an
analogous diagram for Macdonald polynomials.  Letting $\widetilde{H}$ be the affine Hecke algebra
and writing the polynomial
representation of $\widetilde H$ as $\CC[X]\cong \widetilde{H}\mathbf{1}_Y$ as in 
~\eqref{CXasIndHY},
then we have the following diagram:
\begin{equation}
\begin{array}{ccccl}
\CC[X]^{S_n} &\longrightarrow &\CC[X]^{S_n}\mathbf{1}_Y =\mathbf{1}_0 \widetilde{H} \mathbf{1}_Y 
&\longrightarrow &A_{\rho}\CC[X]^{S_n}=\varepsilon_0 \widetilde{H} \mathbf{1}_Y \\[0.1in]
f &\longmapsto &f\mathbf{1}_Y 
&\longmapsto &A_\rho f\mathbf{1}_Y \\[0.1in]
P_{\lambda}(q,qt) &\longmapsto &P_{\lambda}(q,qt)\mathbf{1}_Y
&\longmapsto &A_{\lambda + \rho}(q,t) \mathbf{1}_Y
= \varepsilon_0 E_{\lambda + \rho}(q,t)\mathbf{1}_Y \\[0.1in]
P_{\lambda}(q,t) &\longmapsto 
&P_{\lambda}(q,t) \mathbf{1}_Y = \mathbf{1}_0 E_{\lambda}(q,t)\mathbf{1}_Y 
\end{array}
\label{qtGL}
\end{equation}
In this diagram $A_\rho = A_{0+\rho}(q,t) = A_\rho(0,t)$ and the statement that $P_\lambda(q,qt)$ on the left
maps to $A_{\lambda+\rho}(q,t)$ on the right is the \emph{Weyl character formula for Macdonald polynomials},
\begin{equation}
P_\lambda(q,qt) = \frac{A_{\lambda+\rho}(q,t)}{A_\rho}.
\label{WCFforMac}
\end{equation}
It would be interesting to understand this diagram in terms of geometric contexts analogous
to those which exist for the $q=0$ case.  Some progress in this direction is found, for example,
in Ginzburg-Kapranov-Vasserot~\cite{GKV95} and Oblomkov-Yun~\cite{OY14}.

\subsection{Demazure operators, Hecke operators, and intertwiners}

We start defining several operators in $\CC[X]$. 
\begin{itemize}
\item []For $i\in \{1,\ldots, n\}$, 
$$(y_if)(x_1, \ldots, x_n) = f(x_1, \ldots, x_{i-1}, q^{-1}x_i, x_{i+1}, \ldots, x_n). $$
\item[] For $i\in \{1, \ldots, n-1\}$,
\begin{align}
\partial_ i &= (1+s_i)\frac{1}{x_i-x_{i+1}},\quad
&C_{s_i} 
&=(1+s_i)\frac{t^{-\frac12}-t^{\frac12}x_i^{-1}x_{i+1}}{1-x_i^{-1}x_{i+1}},
\label{H0actiononCX}
\\
D_{i, i+1}  &= (1+s_i)\frac{1}{1-x_ix^{-1}_{i+1}} ,
&D_{i+1,i}  &= (1+s_i)\frac{1}{1-x^{-1}_ix_{i+1}}.
\label{Demazureops}
\end{align}

\item[]
\end{itemize}
For $i\in\{1,\ldots, n-1\}$, $C_{s_i} = T_i +t^{-\frac{1}{2}} = T_i^{-1}+t^{\frac{1}{2}}$.

The \emph{intertwiners or creation operators} $\tau^\vee_\pi$, $\tau^\vee_1, \ldots, \tau^\vee_{n-1}$ are given by
\begin{equation}
\tau^\vee_\pi = X_1T_1\cdots T_{n-1},
\qquad\hbox{and}\qquad
\tau^\vee_i 
= C_{s_i}-\frac{t^{\frac12} - t^{-\frac12}Y^{-1}_iY_{i+1}}{1-Y^{-1}_iY_{i+1}}.
\label{intwnrops}
\end{equation}
These interwiners are used to construct the electronic Macdonald polynomials $E_\mu$.

In the study of Schubert calculus and Demazure characters 
one encounters the Demazure operators given in~\eqref{Demazureops}.
The relation between the interwiners $\tau^\vee_i$ and Demazure operators $D_{i,i+1}$ and $D_{i+1, i}$ is
\begin{align*}
t^{-\frac12}\tau^\vee_i 
&= D_{i,i+1} +t^{-1}(D_{i+1,i}-1)+\frac{(1-t^{-1})Y^{-1}_{i+1}Y_i}{1-Y^{-1}_{i+1}Y_i}
\qquad\hbox{and}
\\
t^{\frac12}\tau^\vee_i 
&= t (D_{i,i+1}-1) + D_{i+1,i}
+ \frac{(1-t)Y^{-1}_i Y_{i+1}}{1-Y^{-1}_iY_{i+1}}.
\end{align*}
The explicit eigenvalues in Theorem~\ref{Eeigenvalue},
and the above formulas for $\tau^\vee_i$ then give
\begin{align*}
t^{\frac12} \tau^\vee_i E_\mu(q,0) &= D_{i+1,i}E_\mu(q,0),
\quad\hbox{when 
$\mu_i>\mu_{i+1}$, and}
\\
t^{-\frac12} \tau^\vee_i E_\nu(q,\infty) &= D_{i,i+1}E_\nu(q,\infty),
\quad\hbox{when $\nu_i<\nu_{i+1}$.}
\end{align*}
This is the reason why the specializations of Macdonald polynomials at $t=0$ and $t=\infty$ 
contain information about Demazure characters (see~\cite{Ion01}
and~\cite[Theorem 1.3]{MRY19}).

In the study of (principal series) representations of $GL(\QQ_p)$ one encounters
the Iwahori-Hecke algebra operators $T_1, \ldots, T_{n-1}$ and $T_\pi$ given by
\begin{equation}
T_i = C_{s_i}-t^{-\frac12} = t^{\frac12}s_i + \frac{t^{\frac12}-t^{-\frac12}}{1- X_iX^{-1}_{i+1}}(1-s_i),
\qquad\hbox{for $i\in \{1, \ldots, n-1\}$.}
\label{Heckeops}
\end{equation}
The relation between the interwiners $\tau^\vee_i$ and the Iwahori-Hecke algebra operators $T_i$ is
\begin{equation}\label{interwinersrels}
t^{-\frac12}\tau^\vee_i = t^{-\frac12}T_i + \frac{(1-t^{-1})Y_iY^{-1}_{i+1}}{1-Y_iY^{-1}_{i+1}}
\qquad\hbox{and}\qquad
t^{\frac12}\tau^\vee_i = t^{\frac12}T^{-1}_i + \frac{(1-t)Y^{-1}_iY_{i+1}}{1-Y^{-1}_iY_{i+1}}.
\end{equation}

The explicit eigenvalues in Theorem~\ref{Eeigenvalue},
and the above formulas for $\tau^\vee_i$ then give
\begin{align*}
t^{-\frac12} \tau^\vee_i E_\mu(0,t) &= t^{-\frac12}T_i^{-1}E_\mu(0,t),
\quad\hbox{when $q=0$ and $\mu_i>\mu_{i+1}$, and}
\\
t^{\frac12} \tau^\vee_i E_\nu(\infty,t) &= t^{-\frac12}T_i E_\nu(\infty,t),
\quad\hbox{when $q=\infty$ and $\nu_i<\nu_{i+1}$.}
\end{align*}
If $\lambda$ is weakly decreasing then $E_\lambda(0,t) = x^\lambda$ and
$E_{w_0\lambda}(\infty,t) = x^{w_0\lambda}$. 
It is because of these relationships that the specializations of Macdonald polynomials at 
$q=0$ and $q=\infty$ contain information 
about principal series representations of $GL_n(\QQ_p)$ (see~\cite{Ion04}).

\section{Macdonald polynomials}\label{sec:Macpolys}

In this section, we introduce the three types of Macdonald polynomials that are the focus of this paper,
the electronic Macdonald polynomials, the bosonic Macdonald polynomials, and the fermionic Macdonald polynomials.

\subsection{Electronic Macdonald polynomials}\label{ssec:Epolys}

The \emph{electronic Macdonald polynomial} $E_\mu$, 
for $\mu\in \ZZ^n$, is determined by the following recursive process:
\begin{enumerate}
\item[(E0)] $E_{(0,\ldots, 0)} = 1$, 
\item[(E1)] $E_{(\mu_n+1, \mu_1, \ldots, \mu_{n-1})} 
= q^{\mu_n} x_1 E_\mu(x_2, \ldots, x_n, q^{-1}x_1),$
\item[(E2)] If $(\mu_1, \ldots, \mu_n)\in \ZZ^n_{\ge 0}$ and $\mu_i>\mu_{i+1}$ then
$$
E_{s_i\mu} 
= \Big( \partial_i x_i - t x_i\partial_i 
+ \frac{(1-t)q^{\mu_i-\mu_{i+1}} t^{v_\mu(i)-v_\mu(i+1)} }{1-q^{\mu_i-\mu_{i+1}} t^{v_\mu(i)-v_\mu(i+1)} } \Big)E_\mu,
$$
where $v_\mu\in S_n$ is the minimal length permutation such that $v_\mu\mu$ is weakly increasing,
\item[(E3)] $E_{(\mu_1-1, \ldots, \mu_n-1)} = x^{-1}_1\cdots x^{-1}_n E_{(\mu_1, \ldots, \mu_n)}$.
\end{enumerate}

Proposition~\ref{Emutopterm} says that $E_\mu$ is a homogeneous polynomial and that
$E_\mu$ is $x^\mu$ plus a linear combination of lower terms.    The appropriate ordering determining
``lower terms'' is the DB-lex ordering, defined as follows.

The \emph{dominance partial order} on weakly decreasing elements of $\ZZ^n$ is given by 
$\lambda^+\le \mu^+$ if $\lambda_1+\cdots +\lambda_i \le \mu_1+\cdots+\mu_i$ for $i\in \{1, \ldots, n\}$
(where $\lambda^+ = (\lambda_1, \ldots, \lambda_n)$ and $\mu^+ = (\mu_1, \ldots, \mu_n)$ with
$\lambda \ge \cdots \ge\lambda_n$ and $\mu_1 \ge \cdots \ge \mu_n$).  The \emph{Bruhat order} on the
symmetric group $S_n$ is determined by setting $v< vs_{ij}$ if $\ell(v)<\ell(vs_{ij})$ (where
$s_{ij}$ denotes the transposition switching $i$ and $j$).  If $\mu\in \ZZ^n$ let $\mu^+$ be the weakly
decreasing rearrangement of $\mu$ and let $z_\mu\in S_n$ be minimal length such that
$\mu = z_\mu \mu^+$.
The \emph{DBlex order
(dominance-Bruhat-lexicographic order)} is the order on $\ZZ^n$ given by
$$\lambda\le \mu 
\quad\hbox{if }\qquad
\begin{array}{c}
\hbox{$\lambda^+< \mu^+$ in dominance order} \\
\hbox{or} \\
\hbox{$\lambda^+=\mu^+$ and $z_\lambda<z_\mu$ in Bruhat order.}
\end{array}
$$

\begin{prop}  \label{Emutopterm}
Let $\mu\in \ZZ^n$.  Then
$$E_\mu = x^\mu + \sum_{\nu<\mu\atop \vert\nu\vert = \vert \mu \vert} a_{\mu\nu}(q,t) x^\nu,
\qquad\hbox{with $a_{\mu\nu}(q,t)\in \CC(q,t)$.}$$
\end{prop}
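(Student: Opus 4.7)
The plan is induction on $\mu$ following the recursive construction (E0)--(E3), verifying at each step that the triangular form is preserved. The base case (E0) is immediate: $E_{(0,\ldots,0)}=1$ with empty lower sum.

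For step (E3), the operator $x_1^{-1}\cdots x_n^{-1}$ translates every exponent uniformly by $(-1,\ldots,-1)$. Both $\mu^+$ and the minimal-length permutation $z_\mu$ are invariant under uniform translation, so DBlex order is preserved and the triangular form is inherited. For step (E1), the substitution sends each monomial $x^\nu$ to a nonzero scalar multiple of $x^{\sigma(\nu)}$, where $\sigma(\nu)=(\nu_n+1,\nu_1,\ldots,\nu_{n-1})$, and so inheritance reduces to a combinatorial lemma: $\sigma$ is strictly order-preserving on DBlex within a fixed total degree. I would prove this by cases on whether $\nu^+<\mu^+$ in dominance or $\nu^+=\mu^+$ with $z_\nu<z_\mu$, noting that $\sigma(\mu)^+$ is obtained from $\mu^+$ by replacing the part $\mu_n$ by $\mu_n+1$ and reinserting in the sorted position.

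The critical step is (E2). Applied to $x^\mu$ with $\mu_i>\mu_{i+1}$, the operator $A=\partial_i x_i-t\,x_i\partial_i + c_{\mu,i}$ admits a direct divided-difference computation yielding
$$
A\,x^\mu \;=\; x^{s_i\mu} \;+\; \bigl((1-t)+c_{\mu,i}\bigr)\,x^\mu \;+\; (1-t)\sum_{j=1}^{N-1} x^{\eta_j},
$$
where $N=\mu_i-\mu_{i+1}$ and $\eta_j$ agrees with $\mu$ outside positions $i,i+1$, and takes values $(\mu_i-j,\mu_{i+1}+j)$ there. For $1\le j\le N-1$ the multiset $\eta_j^+$ is obtained from $(s_i\mu)^+=\mu^+$ by replacing the pair $\{\mu_i,\mu_{i+1}\}$ with a strictly more balanced pair of the same sum; this forces $\eta_j^+<\mu^+$ strictly in dominance, hence $\eta_j<s_i\mu$ in DBlex. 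The $x^\mu$-term lies below $s_i\mu$ in DBlex because $\mu^+=(s_i\mu)^+$ and $z_{s_i\mu}=s_iz_\mu$ with $\ell(s_iz_\mu)=\ell(z_\mu)+1$: the hypothesis $\mu_i>\mu_{i+1}$ forces $z_\mu^{-1}(i)<z_\mu^{-1}(i+1)$, so $s_i$ is not a left descent of $z_\mu$, and the minimal-coset-representative property is preserved. To complete the inductive step, one must further check that for each $\nu<\mu$ in DBlex appearing in $E_\mu$, every monomial produced by $A(x^\nu)$ is strictly less than $s_i\mu$ in DBlex; this is the same flavor of divided-difference bookkeeping, exploiting $\nu<\mu<s_i\mu$.

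The main obstacle is this last verification in step (E2): one must control the output of $A$ on every lower monomial of $E_\mu$ simultaneously, ensuring that no new term climbs to or past $x^{s_i\mu}$ in DBlex. The cyclic-shift lemma in (E1) is a secondary technical point, since $\sigma$ interacts nontrivially with the weakly decreasing rearrangement, but it reduces to a clean case analysis once the parts of $\mu^+$ and $\sigma(\mu)^+$ are tracked.
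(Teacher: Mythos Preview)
Your proposal is correct and takes essentially the same approach as the paper, which simply states that the result follows by verifying that the triangular property is preserved under each of the operations (E1), (E2), (E3). Your outline is in fact considerably more detailed than the paper's one-sentence proof sketch, and you have correctly identified the main technical checkpoints (the order-preservation of the cyclic shift in (E1) and the control of lower monomials under the divided-difference operator in (E2)).
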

This result follows by verifying
that if this property holds before applying one of the operations (E1), (E2), or (E3), then
it also holds after the application of (E1), (E2), or (E3).

\noindent
It follows from Proposition~\ref{Emutopterm} that
$\{ E_\mu \ |\ \mu\in \ZZ^n\}$ is basis of $\CC[X]$.

\subsection{Bosonic Macdonald polynomials}

The \emph{bosonic Macdonald polynomial}
$P_\lambda$, for $\lambda \in (\ZZ^n)^+$, is defined by
\begin{equation}
P_\lambda = P_\lambda(x;q,t) = \frac{1}{W_\lambda(t)}
\sum_{w\in S_n} w\Big( E_\lambda \prod_{i<j} \frac{x_i-tx_j}{x_i-x_j}\Big),
\label{Plambdadefn}
\end{equation}
where $W_\lambda(t)$ is the appropriate constant which makes the coefficient
of $x^\lambda$ equal to $1$ in $P_\lambda(q,t)$.  The constant $W_\lambda(t)$ 
is determined explicitly in Proposition~\ref{paraPoin}.

Various specializations of the $P_\lambda(x;q,t)$ have their own names.
\begin{align*}
&m_\lambda = P_\lambda(x;0,1) &&\hbox{are the \emph{monomial symmetric polynomials}}, \\
&s_\lambda = P_\lambda(x;0,0) &&\hbox{are the \emph{Schur polynomials}}, \\
&P_\lambda(x;0,t) &&\hbox{are the \emph{Hall-Littlewood polynomials}}.
\end{align*}

\begin{prop} \label{E0t} If $\lambda = (\lambda_1, \ldots, \lambda_n)\in \ZZ^n$ 
with $\lambda_1\ge \cdots \ge \lambda_n$ then $E_\lambda(0,t) = x^\lambda$.  
\end{prop}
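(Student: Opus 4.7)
The plan is to prove Proposition~\ref{E0t} by induction on $|\lambda| = |\lambda_1| + \cdots + |\lambda_n|$, using the recursions (E1) and (E3) from the definition of $E_\mu$.

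Because (E3) gives the $q$-free identity $E_\lambda = x_1 \cdots x_n\, E_{\lambda - \mathbf{1}}$, it commutes with the specialization $q = 0$, and since shifts of $\lambda$ by $\pm \mathbf{1} = \pm(1,\ldots,1)$ preserve ``weakly decreasing,'' I would iterate (E3) to reduce to the case $\lambda \in \ZZ_{\geq 0}^n$ with $\lambda_n = 0$. The base case $\lambda = 0$ is immediate from (E0).

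For the inductive step, with $\lambda_n = 0$ and $\lambda_1 \geq 1$, set $\mu = (\lambda_2, \ldots, \lambda_{n-1}, 0, \lambda_1 - 1)$, so that $\pi\mu = \lambda$ and $|\mu| = |\lambda| - 1$. Then (E1) gives
\[
E_\lambda \;=\; q^{\lambda_1 - 1}\, x_1 \, E_\mu(x_2, \ldots, x_n, q^{-1} x_1).
\]
Using Proposition~\ref{Emutopterm}, expand $E_\mu = x^\mu + \sum_{\nu < \mu} a_{\mu,\nu}(q,t)\, x^\nu$. The leading term contributes exactly $x^\lambda$ at $q = 0$: the prefactor $q^{\lambda_1 - 1}$ and the factor $q^{-\mu_n} = q^{-(\lambda_1 - 1)}$ arising from the substitution $x_n \mapsto q^{-1}x_1$ in $x^\mu$ cancel precisely, leaving $x_1^{\lambda_1} x_2^{\lambda_2} \cdots x_{n-1}^{\lambda_{n-1}} = x^\lambda$.

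The main obstacle is showing that each lower-order contribution $q^{\lambda_1 - 1 - \nu_n}\, a_{\mu,\nu}(q,t) \cdot (\text{monomial})$ vanishes at $q = 0$. This is not immediate, since $\mu$ is generally not weakly decreasing and so the inductive hypothesis on $E_\mu(0,t)$ alone does not suffice. To overcome this, I would strengthen the inductive hypothesis to track the $q$-valuation of each coefficient $a_{\mu,\nu}(q,t)$, aiming to show that for $\nu < \mu$ in DBlex the valuation is at least $\max(0, \nu_n - \mu_n)$, so that the overall $q$-power in each lower contribution is strictly positive. The combinatorial constraint imposed by the DBlex order for the specific $\mu = \pi^{-1}\lambda$ arising from a weakly-decreasing $\lambda$ is what should make this refined vanishing work, and this is precisely the step where the weakly-decreasing hypothesis on $\lambda$ is used.
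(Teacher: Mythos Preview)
Your proposal has a genuine gap at exactly the point you flag as the ``main obstacle.''  The strengthened valuation bound you state, $\mathrm{val}_q(a_{\mu,\nu}) \ge \max(0,\nu_n-\mu_n)$, is too weak to yield the conclusion you draw from it: combined with the prefactor $q^{\mu_n-\nu_n}$ it gives total $q$-valuation $\ge \max(\mu_n-\nu_n,0)$, which is zero (not strictly positive) whenever $\nu_n \ge \mu_n$.  Such $\nu$ do occur for the $\mu=\pi^{-1}\lambda$ you care about.  For instance with $n=3$, $\lambda=(2,2,0)$, one has $\mu=(2,0,1)$, and the DBlex-lower term $\nu=(1,1,1)$ satisfies $\nu_3=\mu_3=1$; the corresponding contribution to $E_{(2,2,0)}$ carries no automatic power of $q$ from the prefactor, so to get the vanishing you would need $\mathrm{val}_q(a_{(2,0,1),(1,1,1)})\ge 1$, strictly more than your stated bound provides.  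More seriously, any correct valuation statement for the non-dominant $\mu$ arising here must be propagated through the recursion (E2), whose coefficients $\frac{(1-t)q^{\bullet}t^{\bullet}}{1-q^{\bullet}t^{\bullet}}$ make tracking $q$-valuations delicate; you give no indication of how this goes.  The remark that ``this is precisely the step where the weakly-decreasing hypothesis on $\lambda$ is used'' is a hope rather than an argument: that hypothesis constrains $\mu$, but the coefficients of $E_\mu$ are built from other non-dominant $E_\gamma$ via (E2), and you have not shown how the constraint on $\mu$ feeds back into control of those.

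For comparison, the paper does not actually prove this proposition; it cites the alcove walks formula of~\cite{RY08}, an explicit combinatorial expansion of $E_\mu$ from which the $q=0$ specialization for dominant $\mu$ can be read off directly.  That route bypasses the recursion entirely, which is why no valuation-tracking is needed.
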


\noindent
Using Proposition~\ref{E0t} gives formulas
\begin{equation*}
m_\lambda = \sum_{\gamma\in S_n\lambda} x^\gamma,
\qquad\qquad
s_\lambda = \sum_{w\in S_n} w\Big( x^\lambda \prod_{i<j} \frac{x_i}{x_i-x_j} \Big)
\end{equation*}
$$\hbox{and}
\qquad
P_\lambda(0,t) 
= \frac{1}{W_\lambda(t)} \sum_{w\in S_n} w\Big( x^\lambda \prod_{i<j} \frac{x_i-tx_j}{x_i-x_j} \Big),
$$
which are familiar formulas for the monomial symmetric polynomials, the Schur polynomials and the
Hall-Littlewood polynomials (see~\cite[Ch.\ 1 (2.1)]{Mac},~\cite[Ch.\ I (3.1)]{Mac} and~\cite[Ch.\ III (2.1)]{Mac}).
We do not provide a proof of Proposition~\ref{E0t} in this paper.  It is an easy consequence of the 
alcove walks formula for Macdonald polynomials given in~\cite[Theorem 2.2]{RY08}.

\subsection{Fermionic Macdonald polynomials}

The \emph{fermionic Macdonald polynomial} $A_{\lambda+\rho}$, for $\lambda \in (\ZZ^n)^+$, is
\begin{equation}
A_{\lambda+\rho}(q,t) 
= \Big(\prod_{i<j} \frac{x_j-tx_i}{x_i-x_j}\Big) \sum_{w\in S_n} (-1)^{\ell(w)} wE_{\lambda+\rho}(q,t).
\label{Alambdadefn}
\end{equation}

\begin{thm} (Weyl character formula)
Let  $\lambda \in \ZZ^n$ with $\lambda_1\ge \cdots \ge \lambda_n$.  Then
$$A_\rho(q,t) = \prod_{i<j} (x_j-tx_i)
\qquad\hbox{and}\qquad
P_\lambda(q,qt) = \frac{A_{\lambda+\rho}(q,t)} {A_\rho(q,t)}.$$
\end{thm}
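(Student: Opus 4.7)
The plan is to establish the two identities in sequence. For $A_\rho(q,t)=\prod_{i<j}(x_j-tx_i)$, I would apply the antisymmetrizer $\sum_w(-1)^{\ell(w)}w$ to the expansion $E_\rho=x^\rho+\sum_{\nu<_{\text{DBlex}}\rho}a_\nu(q,t)x^\nu$ from Proposition~\ref{Emutopterm}. Antisymmetrization annihilates any $x^\nu$ with a repeated coordinate, so only $\nu$ with strictly decreasing $\nu^+$ contribute. The constraints $|\nu^+|=\binom{n}{2}$ and $\nu^+\le\rho$ in dominance force $\nu^+_k\le\nu^+_1-(k-1)\le\rho_k$ for every $k$, so $\nu^+=\rho$; the DBlex order then rules out $\nu\neq\rho$ with $\nu^+=\rho$, since $z_\rho=\mathrm{id}$ is Bruhat-minimal. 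Only $\nu=\rho$ survives, giving $\sum_w(-1)^{\ell(w)}wE_\rho=\prod_{i<j}(x_i-x_j)$; substituting into~\eqref{Alambdadefn} cancels the Vandermonde factors to yield the identity.

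For the Weyl character formula, I would set $\tilde P_\lambda:=A_{\lambda+\rho}(q,t)/A_\rho(q,t)$ and identify $\tilde P_\lambda$ with $P_\lambda(q,qt)$ via the orthogonality characterization of bosonic Macdonald polynomials. Since $\sum_w(-1)^{\ell(w)}wE_{\lambda+\rho}$ is alternating and hence divisible by the Vandermonde $V:=\prod_{i<j}(x_i-x_j)$, the identity $A_{\lambda+\rho}=A_\rho\cdot\tilde P_\lambda$ shows $\tilde P_\lambda$ is a symmetric polynomial. A DBlex-triangularity analysis analogous to the first step expands $\sum_w(-1)^{\ell(w)}wE_{\lambda+\rho}$ as $\sum_w(-1)^{\ell(w)}x^{w(\lambda+\rho)}+\sum_{\mu<\lambda}b_\mu\sum_w(-1)^{\ell(w)}x^{w(\mu+\rho)}$; dividing by $V$ and using the classical Weyl character formula $s_\mu=a_{\mu+\rho}/a_\rho$ of~\eqref{Schur} would yield $\tilde P_\lambda=s_\lambda+\sum_{\mu<\lambda}b_\mu s_\mu=m_\lambda+\sum_{\mu<\lambda}c_\mu m_\mu$, dominance-triangular with leading coefficient $1$.

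To conclude $\tilde P_\lambda=P_\lambda(q,qt)$, I would verify $(\tilde P_\lambda,m_\mu)_{q,qt}=0$ for $\mu<\lambda$ using the going-up-a-level formula (Proposition~\ref{ktokp1comp}), which gives $(\tilde P_\lambda,m_\mu)_{q,qt}\propto(A_{\lambda+\rho},A_\rho m_\mu)_{q,t}$. Expanding $A_\rho\cdot m_\mu$ triangularly in the fermionic basis as $\sum_{\nu\le\mu}b'_{\mu,\nu}A_{\nu+\rho}$ (by an analogous top-term argument), and invoking mutual orthogonality of $\{A_{\nu+\rho}\}$ under $(,)_{q,t}$ --- they are eigenfunctions of the symmetric functions of $Y_1,\ldots,Y_n$ with distinct eigenvalues, since each $\nu+\rho$ is strictly decreasing --- would give $(A_{\lambda+\rho},A_\rho m_\mu)_{q,t}=0$. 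Combined with the triangularity, this forces $\tilde P_\lambda=P_\lambda(q,qt)$.

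The main obstacle is the triangular expansion of $A_\rho\cdot m_\mu$ in the fermionic basis, together with the mutual orthogonality of $\{A_{\nu+\rho}\}$ under $(,)_{q,t}$. The former relies on matching leading terms and on a Hecke-algebraic interpretation of $A_\rho$ (via the slick symmetrizer formula, Proposition~\ref{propslicksymmA}) that identifies $A_\rho\cdot(\textrm{symmetric})$ with the Hecke-fermionic subspace; the latter uses the adjointness of the Cherednik-Dunkl operators $Y_i$ with respect to $(,)_{q,t}$, developed in Section~\ref{section:Orth}.
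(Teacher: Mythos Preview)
Your proposal is correct and follows the same overall architecture as the paper's proof: write $A_{\lambda+\rho}=A_\rho\cdot f$ with $f$ symmetric and monic-triangular in the $m_\mu$, then use the going-up-a-level formula (Proposition~\ref{ktokp1comp}) together with an orthogonality argument to identify $f$ with $P_\lambda(q,qt)$ via Proposition~\ref{Pbyinnerprod}.

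The one substantive difference is in how you establish $(A_{\lambda+\rho},A_\rho m_\nu)_{q,t}=0$ for $\nu<\lambda$. You expand $A_\rho m_\nu$ triangularly in the fermionic basis $\{A_{\mu+\rho}\}$ and invoke mutual orthogonality of the $A_{\mu+\rho}$ (which indeed follows from the $E$-orthogonality proved in Section~\ref{section:Orth}). The paper instead avoids both of these steps by working at a more primitive level: it observes from the $E$-expansion (Proposition~\ref{Eexpansion}) that $A_{\lambda+\rho}=E_{w_0(\lambda+\rho)}+(\text{lower})$, notes that $A_\rho m_\nu=x^{w_0(\nu+\rho)}+(\text{lower})$ in monomials, and then applies the defining property $(E_{w_0(\lambda+\rho)},x^\gamma)_{q,t}=0$ for $\gamma<w_0(\lambda+\rho)$ from Proposition~\ref{Ebyinnerprod} directly. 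This is a bit more economical, since it does not require first proving orthogonality of the fermionic family or justifying a change of basis to the $A_{\mu+\rho}$. Your route is perfectly valid; it just front-loads slightly more machinery. Your direct combinatorial argument for $A_\rho(q,t)=\prod_{i<j}(x_j-tx_i)$ via DBlex triangularity of $E_\rho$ is a nice addition --- the paper essentially absorbs this into the Boson-Fermion correspondence (Theorem~\ref{BosFerCorr}) rather than isolating it.
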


\noindent
Note that even though $E_\rho(q,t)$ depends on $q$, the denominator $A_\rho(q,t) = A_\rho(t)$ depends
only on $t$.

\section{The DAHA, symmetrizers and $c$-functions}

This section sets up the \emph{operator calculus} which will be exploited in later sections to obtain
explicit combinatorial results for Macdonald polynomials.  We have defined so far six kinds of operators:
\begin{itemize}
\item  the multiplication by $x_i$ operators $X_1, \ldots, X_n$, 
\item the divided difference operators $\partial_1,\ldots, \partial_{n-1}$,
\item the Hecke operators $T_1, \ldots, T_{n-1}$, and the promotion operator $T_\pi$, 
\item the Cherednik-Dunkl operators $Y_1, \ldots, Y_n$, 
\item the creation/intertwiner operators
$\tau^\vee_1, \ldots, \tau^\vee_{n-1}$ and $\tau^\vee_\pi$, and 
\item the symmetrizers $\mathbf{1}_0$
and $\varepsilon_0$. 
\end{itemize}
We also define
\begin{equation}
T_\pi^\vee = X_1 T_1\cdots T_{n-1}.
\label{Tpiveedefn}
\end{equation}
In this
section we establish the relations that these operators satisfy and develop the handy calculus for working
with these tools.  It is in this section that we first see the \emph{$c$-functions} appear, particularly in the
notable formulas for the symmetrizers $\mathbf{1}_0$ and $\varepsilon_0$ which are given in 
Proposition~\ref{propslicksymmA}  and Proposition~\ref{propsymwparabA}. At this point, we consider the operators as \emph{symbols}, and Section~\ref{sec:actions} is dedicated to the study of these as operators on polynomials.  

\subsection{The double affine Hecke algebra (DAHA)} \label{subsec:DAHA}

Let $q, t^{\frac12} \in \CC^\times$.
The \emph{double affine Hecke algebra (of type $GL_n$)} is the algebra $\widetilde{H}$ generated by symbols
$T_\pi$ and $X_k$ and  $T_i$ for $i,k\in \ZZ$ with relations
\begin{equation}
T_{i+n} = T_i, \qquad X_{i+n} = q^{-1}X_i,
\qquad X_k X_\ell = X_\ell X_k,
\qquad\hbox{for $i,k,\ell\in \ZZ$;}
\label{periodicityrelsF}
\end{equation}
\begin{equation}
T_iT_{i+1}T_i = T_{i+1}T_i T_{i+1},
\qquad
T_iT_j = T_jT_i,
\qquad
T_i^2 = (t^{\frac12}-t^{-\frac12})T_i +1,
\label{HeckerelsF}
\end{equation}
for $i,j\in \ZZ$ with $j\not\in\{ i\pm 1\}$;
\begin{equation}
X_{i+1} = T_i X_i T_i,
\qquad\hbox{and}\qquad
T_i X_j = X_j T_i,
\label{XaffHeckerelsF}
\end{equation}
for $i\in \{1, \ldots, n-1\}$ and $j\in \{1, \ldots, n\}$ with $j\not\in \{i, i+1\}$; and
\begin{equation}
T_\pi X_i = X_{i+1}T_\pi
\qquad\hbox{and}\qquad
T_\pi T_i  = T_{i+1}T_\pi
\qquad\hbox{for $i\in \ZZ$.}
\label{DAHArels2F}
\end{equation}
It follows from the last relation in~\eqref{HeckerelsF} and the relations in~\eqref{XaffHeckerelsF} that
for $i\in \{1, \ldots, n-1\}$,
\begin{equation}
T_i X_i = X_{i+1}T_i - (t^{\frac12}-t^{-\frac12}) X_{i+1} \qquad\hbox{and}\qquad
T_i X_{i+1} = X_i T_i + (t^{\frac12}-t^{-\frac12}) X_{i+1}.
\label{TpastXF}
\end{equation}

\begin{prop} \label{gluerelations} (The glue relations)
The operators $X_1$, $T_i$, and $T_\pi^\vee$ satisfy the relations
\begin{equation}
T_1^{-1} T_\pi T_\pi^\vee = T_\pi^\vee T_\pi T_{n-1}
\qquad\hbox{and}\qquad
T^{-1}_{n-1}\cdots T_1^{-1} T_\pi  (T_\pi^\vee)^{-1} 
= q(T_\pi^\vee)^{-1} T_\pi T_{n-1}\cdots T_1.
\label{glueF}
\end{equation}
\end{prop}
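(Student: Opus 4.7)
The plan is to verify each glue relation by direct manipulation using the DAHA defining relations from~\eqref{periodicityrelsF}--\eqref{DAHArels2F}, together with the definition $T_\pi^\vee = X_1 T_1 \cdots T_{n-1}$. The two key maneuvers are: passing $T_\pi$ through products via $T_\pi T_i = T_{i+1}T_\pi$ and $T_\pi X_i = X_{i+1}T_\pi$, and using the inductive identity $X_n = T_{n-1}T_{n-2}\cdots T_1 \, X_1 \, T_1 T_2 \cdots T_{n-1}$, which follows by iterating $X_{i+1} = T_i X_i T_i$.

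For the first identity, I would expand the left-hand side:
\[
T_1^{-1}T_\pi T_\pi^\vee = T_1^{-1}T_\pi X_1 T_1 T_2\cdots T_{n-1} = T_1^{-1}X_2 T_\pi T_1 T_2 \cdots T_{n-1},
\]
using $T_\pi X_1 = X_2 T_\pi$. Since $T_1^{-1}X_2 = T_1^{-1}(T_1 X_1 T_1) = X_1 T_1$, the LHS collapses to $X_1 T_1 T_\pi T_1 T_2 \cdots T_{n-1}$. Iterating $T_\pi T_i = T_{i+1}T_\pi$ for $i = 1, 2, \ldots, n-1$ pushes $T_\pi$ to the far right, yielding $T_\pi T_1 T_2 \cdots T_{n-1} = T_2 T_3 \cdots T_n T_\pi$, so the LHS becomes $X_1 T_1 T_2 \cdots T_n T_\pi$. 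On the right-hand side, a single application of $T_\pi T_{n-1} = T_n T_\pi$ gives $T_\pi^\vee T_\pi T_{n-1} = X_1 T_1 T_2 \cdots T_{n-1} T_n T_\pi$, matching the LHS (recalling $T_n = T_0$ from periodicity).

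For the second identity, I would first multiply both sides on the left by $T_\pi^\vee$. Since $T_\pi^\vee T_{n-1}^{-1}\cdots T_1^{-1} = X_1$, this reduces the problem to showing $X_1 T_\pi (T_\pi^\vee)^{-1} = q\, T_\pi T_{n-1}\cdots T_1$, equivalently
\[
X_1 T_\pi = q\, T_\pi T_{n-1}\cdots T_1 \cdot X_1 T_1 T_2 \cdots T_{n-1}.
\]
The middle string $T_{n-1}\cdots T_1 X_1 T_1 \cdots T_{n-1}$ equals $X_n$ by the inductive identity above, so the RHS collapses to $q T_\pi X_n$. Combining $T_\pi X_n = X_{n+1}T_\pi$ with the periodicity $X_{n+1} = q^{-1}X_1$ then gives $q T_\pi X_n = X_1 T_\pi$, completing the verification.

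Neither computation presents a real obstacle; both relations are forced by the DAHA presentation. The only thing to track carefully is how the affine shift manifests: the extra factor $T_{n-1}$ in the first relation is exactly what absorbs the identification $T_n = T_0$ produced by pushing $T_\pi$ past $n-1$ consecutive generators, and the factor of $q$ in the second relation is exactly the one produced by $X_{n+1} = q^{-1}X_1$. In that sense both "glue" relations encode the boundary of the affine cycle in the DAHA.
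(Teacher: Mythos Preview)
Your proof is correct and uses essentially the same DAHA manipulations as the paper's proof: the shift relations $T_\pi T_i = T_{i+1}T_\pi$, $T_\pi X_i = X_{i+1}T_\pi$ (together with $X_{n+1}=q^{-1}X_1$), and the identity $X_n = T_{n-1}\cdots T_1 X_1 T_1\cdots T_{n-1}$. The organization differs only cosmetically---for the first relation you reduce both sides to $X_1 T_1\cdots T_n T_\pi$ instead of passing through $X_n$, and for the second you left-multiply by $T_\pi^\vee$ rather than take inverses---but the substance is the same.
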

\begin{proof}
We prove the first relation using~\eqref{XaffHeckerelsF} and~\eqref{DAHArels2F} in the following way
\begin{align*}
T_\pi^\vee T_\pi T_{n-1}
&= X_1 T_1 \cdots T_{n-1} T_\pi T_{n-1} 
=T_1^{-1}\cdots T_{n-1}^{-1}T_{n-1}\cdots T_1 X_1 T_1\cdots T_{n-1} T_\pi T_{n-1} \\
&= T_1^{-1}\cdots T_{n-1}^{-1} X_n T_\pi T_{n-1} 
= T_1^{-1} \cdots T_{n-1}^{-1}T_\pi X_{n-1} T_{n-1} \\
&= T_1^{-1} T_\pi T_1^{-1}\cdots T_{n-2}^{-1} X_{n-1} T_{n-1} 
= T_1^{-1} T_\pi T_1^{-1}\cdots T_{n-2}^{-1} T_{n-2}\cdots T_1 X_1 T_1\cdots T_{n-2} T_{n-1} \\
&= T_1^{-1} T_\pi X_1T_1\cdots T_{n-1} = T_1^{-1}T_\pi T_\pi^\vee.
\end{align*}

We prove the second relation by showing the equivalent relation obtained by taking inverses on both sides. That is, we need to show that 
$$
q^{-1}T_1^{-1}\cdots T_{n-1}^{-1}T_\pi^{-1}T_\pi^\vee = T_\pi^\vee T_\pi^{-1} T_1\cdots T_{n-1}.
$$
In this case we use~\eqref{periodicityrelsF} and~\eqref{XaffHeckerelsF}, so that
\begin{align*}
q^{-1}T_1^{-1}\cdots T_{n-1}^{-1}T_\pi^{-1}T_\pi^\vee
&= T_1^{-1}\cdots T_{n-1}^{-1} q^{-1}T_\pi^{-1} X_1 T_1\cdots T_{n-1} \\
&= T_1^{-1}\cdots T^{-1}_{n-1} X_n T_\pi^{-1} T_1\cdots T_{n-1} \\
&= T_1^{-1}\cdots T^{-1}_{n-1} T_{n-1}\cdots T_1 X_1 T_1\cdots T_{n-1} T_\pi^{-1} T_1\cdots T_{n-1} \\
&=  X_1 T_1\cdots T_{n-1} T_\pi^{-1} T_1\cdots T_{n-1} 
=T_\pi^\vee T_\pi^{-1} T_1\cdots T_{n-1}.
\end{align*}
\end{proof}

\subsection{A family of commuting elements}

The \emph{Cherednik-Dunkl operators} $Y_1, \ldots, Y_n$ are analogues of Murphy elements in the DAHA, and we want to show that they commute. To do so, we introduce the 
following pictorial representation:
\begin{equation*}
{ 
\def\TOP{2}\def\K{6}
T_\pi =
\TikZ{[scale=.5]
		\Under[0-0.5,1][6,0]
		\Pole[0-.05][0,1]
		 \foreach \x in {1,...,5} {\draw[thin, style=over] (0,\x) -- (2,\x+1);}
		\Caps[0-.05][0,\TOP][\K]
		\Pole[0-.05][1,2]
		\Over[0-0.5,1][1,2]
}}
\qquad\hbox{and}\qquad
{\def\TOP{2} \def\K{6}
T_i =
\TikZ{[scale=.5]
	\Pole[0-.05][0,2][\K]
	\Under[4,0][3,2]
	\Over[3,0][4,2]
	 \foreach \x in {1,2,5,\K} {
		 \draw[thin] (0,\x) -- (\TOP,\x);
		 }
	\Caps[0-.05][0,\TOP][\K]
	\Label[0,\TOP][3][\footnotesize $i+1$]
	\Label[0,\TOP][4][\footnotesize$i$]
}
\qquad\quad \text{for $i=1, \dots, n-1$.}
}
\end{equation*}
Using the definition of the Cherednik-Dunkl operators in terms of the $T_i$'s operators, we have that 
\begin{equation*}
{ 
\def\TOP{2}\def\K{6}
Y_j = T_{j-1}^{-1}\cdots T_1^{-1} T_\pi T_{n-1}\cdots T_j
=
\TikZ{[scale=.5]
		\Under[0-0.5,1][3,0]
		\Pole[0-.05][0,1]
		 \foreach \x in {1,2} {\draw[thin, style=over] (0,\x) -- (1,\x);}
		\foreach \x in {4,...,\K} {\draw[thin, style=over] (0,\x) -- (1,\x);}
		\foreach \x in {1,2} { \draw[thin, style] (1,\x) -- (2,\x); }
		\foreach \x in {4,...,\K} {\draw[thin, style] (1,\x) -- (\TOP,\x);}
		\Caps[0-.05][0,\TOP][\K]
		\Pole[0-.05][1,2]
		\Over[0-0.5,1][3,2]
		\Label[0,\TOP][3][{\footnotesize $j$}]
}\qquad\quad\hbox{for $j\in \{1, \ldots,n\}$.}
}
\end{equation*}

\begin{prop} \label{theYscommute} For $i,j\in \{1, \ldots, n\}$,
\begin{align}
&Y_iY_j = Y_j Y_i,
\qquad
T_\pi^n = Y_1\cdots Y_n
\quad\hbox{and}\quad
Y_1Y_n^{-1}
= T_0T_{n-1}\cdots T_1\cdots T_{n-1}.
\label{Ycommutation}\\
&\begin{array}{l}
T_iY_i = Y_{i+1}T_i+(t^{\frac12}-t^{-\frac12})Y_i, \\
T_i Y_{i+1} =Y_iT_i - (t^{\frac12}-t^{-\frac12})Y_i, 
\end{array}
\qquad\hbox{and}\qquad
T_i Y_j  = Y_j T_i\quad\hbox{if $j\not\in\{i,i+1\}$.}
\label{TpastYrelsB}
\end{align}
\end{prop}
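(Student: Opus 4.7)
The proposition splits into three separable claims: the cross-relations~\eqref{TpastYrelsB} between $T_i$ and the $Y_j$, the mutual commutativity $Y_iY_j=Y_jY_i$, and the two global identities $T_\pi^n=Y_1\cdots Y_n$ and $Y_1Y_n^{-1}=T_0T_{n-1}\cdots T_1\cdots T_{n-1}$. I would dispatch them in this order, since each claim leans on the previous one.

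For~\eqref{TpastYrelsB}, the case $j\notin\{i,i+1\}$ is most transparent from the pictorial representation above: the crossing $T_i$ only touches strands $i$ and $i+1$, which do not participate nontrivially in the loop of $Y_j$, so the crossing isotopes past the loop. Algebraically, one pushes $T_i$ through the word $Y_j=T_{j-1}^{-1}\cdots T_1^{-1}T_\pi T_{n-1}\cdots T_j$ using $T_iT_k=T_kT_i$ for $|i-k|\geq 2$, the braid relation, and $T_\pi T_i=T_{i+1}T_\pi$. For the remaining cases $j\in\{i,i+1\}$, the recursion $Y_{i+1}=T_i^{-1}Y_iT_i^{-1}$ rearranges to the key identity $T_iY_{i+1}T_i=Y_i$; combining with the quadratic relation $T_i^2=(t^{\frac12}-t^{-\frac12})T_i+1$ yields both formulas, for example $T_iY_i=T_i^2Y_{i+1}T_i=((t^{\frac12}-t^{-\frac12})T_i+1)Y_{i+1}T_i=(t^{\frac12}-t^{-\frac12})Y_i+Y_{i+1}T_i$, and symmetrically $T_iY_{i+1}=Y_iT_i-(t^{\frac12}-t^{-\frac12})Y_i$.

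For commutativity of the $Y_j$'s, the cleanest route is diagrammatic: the loops representing $Y_i$ and $Y_j$ slide past each other under braid moves and $T_\pi$-relations, so $Y_iY_j=Y_jY_i$. Algebraically I would argue by induction on $|i-j|$, using the cross-relations just established and the recursion $Y_{i+1}=T_i^{-1}Y_iT_i^{-1}$. The base case $Y_iY_{i+1}=Y_{i+1}Y_i$ requires careful bookkeeping of the Hecke corrections: computing $T_iY_{i+1}Y_i$ via~\eqref{TpastYrelsB} gives $T_iY_{i+1}Y_i=Y_iY_{i+1}T_i$ after a cancellation between two factors of $(t^{\frac12}-t^{-\frac12})Y_i^2$; expanding $Y_{i+1}Y_iT_i$ analogously and comparing forces the commutator $[Y_i,Y_{i+1}]$ to vanish. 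Once the adjacent case is in place, non-adjacent commutativity propagates by induction: since $T_iY_k=Y_kT_i$ for $k\notin\{i,i+1\}$, conjugation by $T_i^{-1}$ carries the relation $Y_iY_k=Y_kY_i$ to $Y_{i+1}Y_k=Y_kY_{i+1}$.

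For the global identities, $T_\pi^n=Y_1\cdots Y_n$ is a telescoping computation: writing out $\prod_{j=1}^n Y_j$ with $Y_j=T_{j-1}^{-1}\cdots T_1^{-1}T_\pi T_{n-1}\cdots T_j$, the tail $T_{n-1}\cdots T_j$ of $Y_j$ collides with the head $T_j^{-1}\cdots T_1^{-1}$ of $Y_{j+1}$, producing successive cancellations, after which each $T_\pi$ is migrated leftward past the surrounding $T$'s by $T_\pi T_i=T_{i+1}T_\pi$ until only $T_\pi^n$ remains. The identity $Y_1Y_n^{-1}=T_0T_{n-1}\cdots T_1\cdots T_{n-1}$ follows by multiplying $Y_1=T_\pi T_{n-1}\cdots T_1$ on the right by $Y_n^{-1}=T_{n-1}\cdots T_1T_\pi^{-1}$ and then using $T_\pi T_{n-1}T_\pi^{-1}=T_0$ (a consequence of periodicity $T_0=T_n$ and $T_\pi T_i=T_{i+1}T_\pi$). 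The main obstacle throughout is the adjacent commutation $Y_iY_{i+1}=Y_{i+1}Y_i$: its algebraic proof demands that the Hecke corrections $\pm(t^{\frac12}-t^{-\frac12})Y_i$ cancel exactly, and the diagrammatic alternative, while conceptually clean, trades the algebra for a careful topological justification.
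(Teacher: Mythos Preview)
Your approach is essentially the paper's: the paper also appeals to the pictorial representation for all of~\eqref{Ycommutation} and for the commutation $T_iY_j=Y_jT_i$ when $j\notin\{i,i+1\}$, and derives the two remaining relations in~\eqref{TpastYrelsB} from $Y_{i+1}=T_i^{-1}Y_iT_i^{-1}$ together with the quadratic relation, exactly as you do (the paper's one-line computation $T_iY_i=((t^{\frac12}-t^{-\frac12})+T_i^{-1})Y_i=Y_{i+1}T_i+(t^{\frac12}-t^{-\frac12})Y_i$ is your computation rearranged).

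One caveat about your \emph{algebraic alternative} for the adjacent commutation $[Y_i,Y_{i+1}]=0$: the two identities you derive from~\eqref{TpastYrelsB}, namely $T_iY_{i+1}Y_i=Y_iY_{i+1}T_i$ and $Y_{i+1}Y_iT_i=T_iY_iY_{i+1}-(t^{\frac12}-t^{-\frac12})[Y_i,Y_{i+1}]$, do not by themselves force the commutator $C=[Y_i,Y_{i+1}]$ to vanish. Eliminating $T_iY_iY_{i+1}$ between them gives only the constraint $T_iC+CT_i=(t^{\frac12}-t^{-\frac12})C$, which has nonzero solutions in a generic algebra. The cross-relations~\eqref{TpastYrelsB} encode only the \emph{local} interaction of $T_i$ with $Y_i,Y_{i+1}$ and cannot see the global structure (the presence of $T_\pi$ in the definition of the $Y_j$) that actually makes them commute. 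So for this step you really do need either the diagrammatic argument or a direct computation from the defining words $Y_j=T_{j-1}^{-1}\cdots T_1^{-1}T_\pi T_{n-1}\cdots T_j$; the paper opts for the former, and your instinct that ``the cleanest route is diagrammatic'' is the right one.
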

\begin{proof}
    The pictorial representation provides an easy check of the relations in~\eqref{Ycommutation} and the second relation in~\eqref{TpastYrelsB}. 

    The relations $Y_{i+1} = T^{-1}_iY_iT^{-1}_i$ and $T_i^2= (t^{\frac12}-t^{-\frac12})T_i + 1$
and $T_i-T_i^{-1} = t^{\frac12}-t^{-\frac12}$ give the other two relations:
\begin{align*}
T_iY_i &= ((t^{\frac12}-t^{-\frac12})+T^{-1}_i)Y_i
=Y_{i+1}T_i + (t^{\frac12}-t^{-\frac12})Y_i, \\
T_i Y_{i+1} &= Y_i T^{-1}_i = Y_i (T_i - (t^{\frac12}-t^{-\frac12}))
=Y_iT_i - (t^{\frac12}-t^{-\frac12})Y_i.
\end{align*}
\end{proof}

\subsection{Creation operators}

Next, we look at the relation between the creation operators (or intertwiners) and the Cherednick-Dunkl operators. 

\begin{prop}  For $i\in \{1, \ldots, n-1\}$ and $j\in \{1, \ldots, n\}$ with $j\not\in\{i, i+1\}$,
\begin{equation}
\tau^\vee_i Y_i = Y_{i+1}\tau^\vee_i,
\qquad \tau_i^\vee Y_{i+1} = Y_i \tau^\vee_i, 
\qquad\hbox{and}\qquad
\tau^\vee_i Y_j = Y_j \tau^\vee_i.
\label{taupastYrels1}
\end{equation}
Moreover, for $j\in \{1, \ldots, n-1\}$,
\begin{equation}
\tau^\vee_\pi Y_j = Y_{j+1}\tau^\vee_\pi
\qquad\hbox{and}\qquad \tau^\vee_\pi Y_n = q^{-1}Y_1 \tau^\vee_\pi.
\label{taupastYrels2}
\end{equation}
\end{prop}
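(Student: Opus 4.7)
The plan is to use the commutation relations between Hecke generators $T_i$ and Cherednik-Dunkl operators $Y_j$ established in Proposition~\ref{theYscommute}, together with the DAHA relations of Section~\ref{subsec:DAHA}, and to substitute directly into the definitions of the intertwiners. The proof splits naturally into the $\tau^\vee_i$ relations \eqref{taupastYrels1} and the $\tau^\vee_\pi$ relations \eqref{taupastYrels2}.

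For \eqref{taupastYrels1}, write $\tau^\vee_i = C_{s_i} - F_i(Y)$ where $F_i(Y) = (t^{\frac12} - t^{-\frac12} Y_i^{-1} Y_{i+1})/(1 - Y_i^{-1} Y_{i+1})$ and $C_{s_i} = T_i + t^{-\frac12}$. Since the $Y_j$ pairwise commute, $F_i(Y)$ commutes with every $Y_j$; and $T_i$ commutes with $Y_j$ for $j \notin \{i, i+1\}$ by Proposition~\ref{theYscommute}. This gives the third relation immediately. For the first relation, a direct application of $T_i Y_i = Y_{i+1} T_i + (t^{\frac12} - t^{-\frac12}) Y_i$ yields $C_{s_i} Y_i - Y_{i+1} C_{s_i} = t^{\frac12} Y_i - t^{-\frac12} Y_{i+1}$. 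The key identity is obtained by clearing the denominator in $F_i(Y)$:
\[
(Y_i - Y_{i+1}) F_i(Y) = t^{\frac12} Y_i - t^{-\frac12} Y_{i+1}.
\]
Hence $C_{s_i} Y_i - Y_{i+1} C_{s_i} = Y_i F_i(Y) - Y_{i+1} F_i(Y)$, which rearranges to $\tau^\vee_i Y_i = Y_{i+1} \tau^\vee_i$. The second relation $\tau^\vee_i Y_{i+1} = Y_i \tau^\vee_i$ follows by the analogous computation using $T_i Y_{i+1} = Y_i T_i - (t^{\frac12} - t^{-\frac12}) Y_i$.

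For \eqref{taupastYrels2}, substitute $\tau^\vee_\pi = X_1 T_1 \cdots T_{n-1}$ and the expression $Y_j = T_{j-1}^{-1} \cdots T_1^{-1} T_\pi T_{n-1} \cdots T_j$ from Proposition~\ref{theYscommute}. In the case $j = n$, the tail $T_1 \cdots T_{n-1}$ of $\tau^\vee_\pi$ cancels the head $T_{n-1}^{-1} \cdots T_1^{-1}$ of $Y_n$, so $\tau^\vee_\pi Y_n = X_1 T_\pi$. On the other side, $Y_1 \tau^\vee_\pi = T_\pi T_{n-1} \cdots T_1 X_1 T_1 \cdots T_{n-1}$, and iterating $X_{i+1} = T_i X_i T_i$ collapses the middle to $T_\pi X_n$; the DAHA relations $T_\pi X_n = X_{n+1} T_\pi$ and $X_{n+1} = q^{-1} X_1$ complete this case. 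For $j \in \{1, \ldots, n-1\}$, the cleanest route is an induction on $j$ using $Y_{j+1} = T_j^{-1} Y_j T_j^{-1}$ together with a commutation rule $\tau^\vee_\pi T_j = T_{j+1} \tau^\vee_\pi$ for interior $j$ (which follows from the braid identity $T_1 \cdots T_{n-1} T_j = T_{j+1} T_1 \cdots T_{n-1}$ for $j \le n-2$, together with $T_{j+1} X_1 = X_1 T_{j+1}$ when $j+1 \ge 2$); the extremal cases where this rule fails and the promotion $T_\pi$ intervenes are exactly handled by the glue relations of Proposition~\ref{gluerelations}.

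The main obstacle is the case $\tau^\vee_\pi Y_j = Y_{j+1} \tau^\vee_\pi$ for $1 \le j \le n-1$, since the bookkeeping of braid and commutation moves must be orchestrated so that each shift in index aligns. The glue relations, which assert that $\tau^\vee_\pi$ satisfies shift rules mirroring those of $T_\pi$ with respect to the braid generators, are the natural tool: they convert a detailed index-by-index computation into a short chain of algebraic identities, and they are what make the passage from the base case $j=n$ to the interior $j$'s a routine induction rather than a tangle.
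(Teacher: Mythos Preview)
Your argument is correct and follows essentially the same route as the paper: for \eqref{taupastYrels1} both reduce to the relations \eqref{TpastYrelsB} and the commutativity of the $Y_i$, and for \eqref{taupastYrels2} both establish $\tau^\vee_\pi T_j = T_{j+1}\tau^\vee_\pi$ for $j\le n-2$, handle the extremal indices via the glue relations of Proposition~\ref{gluerelations}, and propagate to the remaining $j$ by induction. Your direct computation $\tau^\vee_\pi Y_n = X_1 T_\pi$ and $Y_1\tau^\vee_\pi = T_\pi X_n = q^{-1}X_1 T_\pi$ is a slight shortcut over the paper's use of the second glue relation for that case (and note it yields $\tau^\vee_\pi Y_n = q\,Y_1\tau^\vee_\pi$, matching the paper's own proof rather than the $q^{-1}$ appearing in the displayed statement).
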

\begin{proof}
The statements in~\eqref{taupastYrels1} follow from the relations~\eqref{TpastYrelsB} and the fact that the $Y_i$'s all commute with each other.

For $i\in \{1, \ldots, n-2\}$,
\begin{align*}
T^\vee_\pi T_i 
&= X_1T_1\cdots T_{n-1}T_i = X_1 T_1\cdots T_i T_{i+1}T_i T_{i+2}\cdots T_{n-1} 
= X_1 T_1\cdots T_{i-1}T_{i+1}T_iT_{i+1}T_{i+2}\cdots T_{n-1} \\
&= X_1 T_{i+1} T_1 \cdots T_{i-1}T_iT_{i+1}\cdots T_{n-1} = T_{i+1}X_1T_1\cdots T_{n-1} = T_{i+1}T^\vee_\pi.
\end{align*}
Using these relations and the relations from~\eqref{glueF}, 
\begin{align*}
\tau^\vee_\pi Y_1
&= T_\pi^\vee T_\pi T_{n-1}\cdots T_1
=T_1^{-1} T_\pi T_\pi^\vee T_{n-2}\cdots T_1
=T_1^{-1} T_\pi T_{n-1}\cdots T_2T_\pi^\vee \\
&= T_1^{-1}T_\pi  T_{n-1}\cdots T_2T_1T_1^{-1}T_\pi^\vee
= T_1^{-1}Y_1 T_1^{-1}T_\pi^\vee = Y_2T_\pi^\vee = Y_2 \tau^\vee_\pi,
\quad\hbox{and}  
\\
\tau^\vee_\pi Y_n
&= T_\pi^\vee T^{-1}_{n-1}\cdots T_1^{-1} Y_1 T_1^{-1} \cdots T_{n-1}^{-1} \\
&= T_\pi^\vee T^{-1}_{n-1}\cdots T_1^{-1}  T_\pi T_{n-1}\cdots T_1 T_1^{-1} \cdots T_{n-1}^{-1}
= T_\pi^\vee T^{-1}_{n-1}\cdots T_1^{-1} T_\pi  (T_\pi^\vee)^{-1} T_\pi^\vee \\
&= T_\pi^\vee q(T_\pi^\vee)^{-1} T_\pi T_{n-1}\cdots T_1 T_\pi^\vee 
= qY_1 T_\pi^\vee = qY_1 \tau^\vee_\pi.
\end{align*}
Now, for $i\in \{2, \ldots, n-1\}$, we use induction, so that
$$\tau^\vee_\pi Y_i 
= T_\pi^\vee T_{i-1}^{-1}\cdots T_1^{-1} Y_1 T_1^{-1} \cdots T_{i-1}^{-1} 
= T_i^{-1}\cdots T_2^{-1} Y_2 T_2^{-1}\cdots T_i^{-1} T_\pi^\vee = Y_{i+1}T_\pi^\vee 
= Y_{i+1}\tau^\vee_\pi.
$$
\end{proof}

\subsection{XY-parallelism}

In this section, we highlight the parallelism between the $X_i$ operators and the $Y_i$ operators. 
To do so, we define the normalized intertwiners, which are defined in terms of the $c$-functions. 
For $i,j\in \ZZ$ with $i\ne j$, define the $c$-fuctions
\begin{equation*}
c^X_{ij} = \frac{t^{-\frac12}-t^{\frac12}X_iX_j^{-1}}{1-X_iX_j^{-1}}
\qquad\hbox{and}\qquad
c^Y_{ij} = \frac{t^{-\frac12}-t^{\frac12}Y_iY_j^{-1}}{1-Y_iY_j^{-1}}.
\end{equation*}

For $i\in \{1,\ldots, n-1\}$ define the normalized intertwiners $\eta_{s_i}$ and $\xi_{s_i}$ by
\begin{equation*}
\eta_{s_i}
= \frac{1}{c^Y_{i+1,i}}(C_{s_i} - c^Y_{i,i+1})
\qquad\hbox{and}\qquad
\xi_{s_i}
= \frac{1}{c^X_{i,i+1}}(C_{s_i}-c^X_{i+1,i}).
\end{equation*}

\begin{prop}  \label{normintwn}
For $i, j,k\in \{1, \ldots, n-1\}$ with $k\not\in \{j \pm 1\}$,
\begin{align}
\eta_{s_i}\eta_{s_{i+1}}\eta_{s_i} &= \eta_{s_i}\eta_{s_{i+1}}\eta_{s_i},
&\eta_{s_j}\eta_{s_k} &= \eta_{s_k}\eta_{s_j},
&\eta_{s_i}C_{s_i} &= C_{s_i},
\label{nintbdrels}
\\
\xi_{s_i}\xi_{s_{i+1}}\xi_{s_i} &= \xi_{s_i}\xi_{s_{i+1}}\xi_{s_i},
&\xi_{s_j}\xi_{s_k} &= \xi_{s_k}\xi_{s_j},
&\xi_{s_i}C_{s_i} &= C_{s_i}.
\nonumber
\end{align}
Moreover, for $i\in \{1, \ldots, n-1\}$ and $j\in \{1, \ldots, n\}$,
\begin{equation}
\begin{array}{l}
\eta_{s_i} Y_i = Y_{i+1}\eta_{s_i}, \\
\eta_{s_i} Y_{i+1} = Y_i\eta_{s_i}, \\
\eta_{s_i} Y_j = Y_j\eta_{s_i}, \quad\hbox{if $j\not\in \{i, i+1\}$,}
\end{array}
\qquad\hbox{and}\qquad
\begin{array}{l}
\xi_{s_i} X_i = X_{i+1}\xi_{s_i}, \\
\xi_{s_i} X_{i+1} = X_i\xi_{s_i}, \\
\xi_{s_i} X_j = X_j\xi_{s_i}, \quad\hbox{if $j\not\in \{i, i+1\}$.}
\end{array}
\label{nintpastY}
\end{equation}
\end{prop}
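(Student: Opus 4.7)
The plan is to reduce everything to properties of the un-normalized intertwiner $\tau^\vee_i$ and a parallel $X$-intertwiner. A direct partial-fraction manipulation shows that the rational function appearing in~\eqref{intwnrops} equals $c^Y_{i,i+1}$, so that $\tau^\vee_i = C_{s_i} - c^Y_{i,i+1}$ and hence $\eta_{s_i} = (c^Y_{i+1,i})^{-1}\tau^\vee_i$. For the $X$-side, set $\sigma_i := C_{s_i} - c^X_{i+1,i}$; using $T_i X_i = X_{i+1}T_i - (t^{\frac12} - t^{-\frac12}) X_{i+1}$ from~\eqref{TpastXF} one checks directly that $\sigma_i X_i = X_{i+1}\sigma_i$, $\sigma_i X_{i+1} = X_i\sigma_i$, and $\sigma_i X_j = X_j\sigma_i$ for $j\notin\{i,i+1\}$, whence $\xi_{s_i} = (c^X_{i,i+1})^{-1}\sigma_i$ mirrors $\eta_{s_i}$. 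The commutations in~\eqref{nintpastY} are then immediate: on the $\eta$-side $\tau^\vee_i$ already has the desired $Y$-commutation by~\eqref{taupastYrels1}, and the scalar $(c^Y_{i+1,i})^{-1}$ --- a rational function of the commuting operators $Y_i,Y_{i+1}$ --- slides past every $Y_j$; the $\xi$-side is parallel.

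For the identities $\eta_{s_i} C_{s_i} = C_{s_i}$ and $\xi_{s_i} C_{s_i} = C_{s_i}$, two elementary inputs suffice. First, the Hecke quadratic relation in~\eqref{HeckerelsF} together with $C_{s_i} = T_i + t^{-\frac12}$ gives $C_{s_i}^2 = (t^{\frac12}+t^{-\frac12}) C_{s_i}$. Second, clearing denominators yields $c^Y_{i,i+1} + c^Y_{i+1,i} = t^{\frac12}+t^{-\frac12}$ (and similarly for the $X$-version). Then
\[
\tau^\vee_i C_{s_i} \;=\; C_{s_i}^2 - c^Y_{i,i+1} C_{s_i} \;=\; \bigl((t^{\frac12}+t^{-\frac12}) - c^Y_{i,i+1}\bigr) C_{s_i} \;=\; c^Y_{i+1,i} C_{s_i},
\]
and left-dividing by $c^Y_{i+1,i}$ gives $\eta_{s_i} C_{s_i} = C_{s_i}$. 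The $\xi$-case is identical.

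It remains to verify the braid and commutation relations in~\eqref{nintbdrels}. The far-commutations $\eta_{s_j}\eta_{s_k} = \eta_{s_k}\eta_{s_j}$ for $|j-k|\ge 2$ follow from the same relation for $\tau^\vee$'s since the relevant $c^Y$-factors involve disjoint index pairs. For the braid relation, I use the intertwining identity $\tau^\vee_i f(Y) = (s_i f)(Y)\tau^\vee_i$ (extending~\eqref{taupastYrels1} to rational functions of the commuting $Y$'s) to push each $(c^Y_{jk})^{-1}$ normalizer leftward through the $\tau^\vee$-chain, conjugating its subscripts by the appropriate simple reflection. Careful bookkeeping shows that both $\eta_{s_i}\eta_{s_{i+1}}\eta_{s_i}$ and $\eta_{s_{i+1}}\eta_{s_i}\eta_{s_{i+1}}$ acquire the same symmetric prefactor $(c^Y_{i+1,i}\,c^Y_{i+2,i}\,c^Y_{i+2,i+1})^{-1}$, so the relation reduces to $\tau^\vee_i\tau^\vee_{i+1}\tau^\vee_i = \tau^\vee_{i+1}\tau^\vee_i\tau^\vee_{i+1}$; the $\xi$-side is mirrored. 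The main obstacle is precisely this braid relation for the un-normalized intertwiners --- a DAHA-level identity that requires the braid relations for the $T_i$'s together with the commutation~\eqref{TpastYrelsB}. As a useful byproduct of the same manipulation one finds $(\tau^\vee_i)^2 = c^Y_{i,i+1}\, c^Y_{i+1,i}$, from which $\eta_{s_i}^2 = 1$ drops out immediately and confirms that $\eta_{s_i}$ is a genuine involution implementing the simple reflection $s_i$ by conjugation on the Cherednik-Dunkl operators.
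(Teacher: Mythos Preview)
Your reduction of the $\eta$-braid relation to the $\tau^\vee$-braid relation is correct, and the bookkeeping on the $c^Y$-prefactors is right: both sides acquire the common symmetric factor $(c^Y_{i+1,i}\,c^Y_{i+2,i}\,c^Y_{i+2,i+1})^{-1}$. But you stop exactly where the real work begins. You write that the identity $\tau^\vee_i\tau^\vee_{i+1}\tau^\vee_i = \tau^\vee_{i+1}\tau^\vee_i\tau^\vee_{i+1}$ ``requires the braid relations for the $T_i$'s together with the commutation~\eqref{TpastYrelsB}'' --- this is an assertion, not a proof. Expanding $\tau^\vee_i = T_i + (t^{-\frac12}-t^{\frac12})/(1-Y_i^{-1}Y_{i+1})$ and multiplying out three such factors produces a mess of mixed $T$--$Y$ terms, and it is far from obvious that the two sides agree. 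The paper closes this gap by a different route: it expands the element $C_{s_1s_2s_1} := C_{s_1}C_{s_2}C_{s_1}-C_{s_1} = C_{s_2}C_{s_1}C_{s_2}-C_{s_2}$ simultaneously in the $T$-basis and in the $\eta$-basis. The $T$-expansions of the two expressions visibly agree because $T_1T_2T_1=T_2T_1T_2$, and the $\eta$-expansions then force $\eta_{s_1}\eta_{s_2}\eta_{s_1}=\eta_{s_2}\eta_{s_1}\eta_{s_2}$ since all lower $\eta$-terms coincide. This is the key computation you are missing.

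There is also an error in your final remark. From $(\tau^\vee_i)^2 = c^Y_{i,i+1}c^Y_{i+1,i}$ you conclude that ``$\eta_{s_i}^2=1$ drops out immediately'' and that $\eta_{s_i}$ is a ``genuine involution''. This is precisely the pitfall the paper warns against: the honest statement is
\[
\eta_{s_i}^2 \;=\; \frac{1}{c^Y_{i,i+1}c^Y_{i+1,i}}\,c^Y_{i,i+1}c^Y_{i+1,i},
\]
which does \emph{not} always reduce to $1$ --- on the polynomial representation, for instance, $\eta_{s_i}\cdot 1=0$. The $\eta_w$ live in a localization of $\widetilde H$, and cancelling the $c$-factors is not permitted when they act by zero. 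Your argument for $\eta_{s_i}C_{s_i}=C_{s_i}$ and the commutations in~\eqref{nintpastY} is fine and matches the paper.
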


\begin{proof} The first set of relations in~\eqref{nintpastY} follow from the relations in~\eqref{taupastYrels1}.

Using the relations
$$C_{s_i}^2 = (t^{\frac12}+t^{-\frac12})C_{s_i},
\qquad
C_{s_i} = \tau^\vee_i +c^Y_{i,i+1},
\qquad
c^Y_{i,i+1}+c^Y_{i+1,i} = t^{\frac12}+t^{-\frac12},
$$
and the relations in~\eqref{taupastYrels1} gives
\begin{equation}
(\tau^\vee_i)^2 = c^Y_{i,i+1}c^Y_{i+1,i}, \qquad
(\eta_{s_i}+1)c_{i,i+1}^Y = C_{s_i}
\qquad\hbox{and}\qquad
\eta_{s_i}C_{s_i} = C_{s_i}.
\label{etasqcomp}
\end{equation}
A direct computation together with the relations in~\eqref{taupastYrels1} produces
\begin{align}
C_{s_1s_2s_1} &=C_{s_1}C_{s_2}C_{s_1}-C_{s_1} = C_{s_2}C_{s_1}C_{s_2}-C_{s_2}
\nonumber \\
&= T_1T_2T_1+t^{-\frac12}T_1T_2+t^{-\frac12}T_2T_1
+t^{-\frac22}T_1+t^{-\frac22}T_2+t^{-\frac32},  \nonumber \\
&= T_2T_1T_2+t^{-\frac12}T_1T_2+t^{-\frac12}T_2T_1
+t^{-\frac22}T_1+t^{-\frac22}T_2+t^{-\frac32}, \nonumber  \\
&= (\eta_{s_1}\eta_{s_2}\eta_{s_1} + \eta_{s_1}\eta_{s_2}+\eta_{s_2}\eta_{s_1}+\eta_{s_2}+\eta_{s_1}+1)
c^Y_{12}c^Y_{13}c^Y_{23} \nonumber  \\
&= (\eta_{s_2}\eta_{s_1}\eta_{s_2} + \eta_{s_1}\eta_{s_2}+\eta_{s_2}\eta_{s_1}+\eta_{s_2}+\eta_{s_1}+1)
c^Y_{12}c^Y_{13}c^Y_{23}.
\label{Cs1s2s1}
\end{align}
This establishes that $\eta_{s_1}\eta_{s_2}\eta_{s_1} = \eta_{s_2}\eta_{s_1}\eta_{s_2}$ is
equivalent to $T_1T_2T_1 = T_2T_1T_2$ which is equivalent to
$C_{s_1}C_{s_2}C_{s_1}-C_{s_1} = C_{s_2}C_{s_1}C_{s_2}-C_{s_2}$.

The relations for $\xi_{s_i}$ are
the same as the relations for $\eta_{s_i}$ because the relations in~\eqref{TpastXF} are the same as the relations~\eqref{TpastYrelsB} except for a replacing
$Y_i$ with $X_{i+1}$ and replacing $Y_{i+1}$ with $X_i$. Thus, the same proof also works for the relations for $\xi_{s_i}$. 
\end{proof}

The XY-parallelism in this section is the core of the duality in double affine Artin groups and 
double affine Hecke algebras (see~\cite[\S3.5]{Mac03}).

\begin{remark}
The reader should be warned to be careful with denominators when working with the normalized interwiners.
When using the relations in Proposition~\ref{normintwn} 
it is wise to always write any denominators as the leftmost factors so that
when acting on (left) $\tilde H$-modules the expressions are 0 as appropriate. 
As a concrete example, from the first identity
in~\eqref{etasqcomp} it follows that
\begin{equation}
\eta_{s_i}^2 = \frac{1}{c_{i,i+1}^Yc_{i+1,i}^Y}c^Y_{i,i+1}c_{i+1,i}^Y\,,
\qquad\hbox{which \emph{does not always} reduce to}\quad \eta_{s_i}^2 = 1.
\label{etasqwarning}
\end{equation}
Even on the polynomial representation considered in Section~\ref{sec:actions},
the operator $\eta_{s_i}$ can act  by $0$ on some vectors (for example $\eta_{s_i}\cdot 1 = 0$).  
However the identity  for $\eta_{s_i}^2$ in~\eqref{etasqwarning} is still valid since $\eta_{s_i}^2\cdot 1=0$ and
$$
\frac{1}{c_{i,i+1}^Yc_{i+1,i}^Y}c^Y_{i,i+1}c_{i+1,i}^Y\cdot 1 = 0
\quad\hbox{(because $c_{i,i+1}^Y\cdot 1 = 0$),}
$$
In the language of localizations of rings, the elements $\eta_{s_i}$ and $\eta_w$ (defined in~\eqref{etawxiv})
do not live in the double affine Hecke algebra $\widetilde{H}$ proper but in a localization at the multiplicative set generated
by the elements $(1-Y_iY^{-1}_j)$ and $(1-tY_iY^{-1}_j)$.  
\end{remark}

\subsection{Symmetrizers}

Given $w\in S_n$ with reduced word $w = s_{i_1}\cdots s_{i_\ell}$, we define
\begin{equation}
\xi_w = \xi_{s_{i_1}}\cdots \xi_{s_{i_\ell}},
\qquad 
\eta_w = \eta_{s_{i_1}}\cdots \eta_{s_{i_\ell}},
\qquad\hbox{and}\qquad 
T_w = T_{s_{i_1}}\cdots T_{s_{i_\ell}},
\label{etawxiv}
\end{equation}
together with the following symmetrizers
\begin{align*}
\hbox{$X$-symmetrizer}\quad &p^X_0 = \sum_{w\in S_n} \xi_w\ , 
&\hbox{$X$-antisymmetrizer}\quad 
&e^X_0 = \sum_{w\in S_n} (-1)^{\ell(w)-\ell(w_0)} \xi_w,
\\
\hbox{$Y$-symmetrizer}\quad &p_0^Y = \sum_{w\in S_n} \eta_w\ ,
&\hbox{$Y$-antisymmetrizer}\quad 
&e_0^Y = \sum_{w\in S_n} (-1)^{\ell(w)-\ell(w_0)} \eta_w.
\end{align*}
The \emph{bosonic symmetrizer} and the \emph{fermionic symmetrizer} are defined by 
\begin{equation}
\mathbf{1}_0 = \sum_{w\in S_n} t^{\frac12(\ell(w)-\ell(w_0))}T_w
\qquad\hbox{and}\qquad
\varepsilon_0 = \sum_{w\in S_n} (-t^{-\frac12})^{\ell(w)-\ell(w_0)} T_w.
\label{bosfersymm}
\end{equation}
The bosonic symmetrizer $\mathbf{1}_0$ is a $t$-analogue of $p^X_0$ and $p^Y_0$
and the fermionic symmetrizer $\varepsilon_0$ is a $t$-analogue of 
$e^X_0$ and $e^Y_0$.
Using the relation $T_i^2 = (t^{\frac12}-t^{-\frac12})T_i+1$ and the last relation in~\eqref{etasqcomp}
gives
$$T_i\mathbf{1}_0 = t^{\frac12}\mathbf{1}_0,
\qquad C_{s_i}\mathbf{1}_0 = (t^{\frac12}+t^{-\frac12})\mathbf{1}_0
\quad\hbox{and}\qquad
\eta_{s_i}\mathbf{1}_0 = \mathbf{1}_0,
$$
since $\eta_{s_i}\mathbf{1}_0 
= \eta_{s_i}(t^{\frac12}+t^{-\frac12})^{-1}C_{s_i}\mathbf{1}_0
= (t^{\frac12}+t^{-\frac12})^{-1}C_{s_i}\mathbf{1}_0 = \mathbf{1}_0$.

Let us denote by $w_0$ the longest element in $S_n$. That is, 
$w_0(i) = n-i+1$, for $i\in \{1, \ldots, n\}$. It has maximal length, $\ell(w_0) =\frac{n(n-1)}{2} = \binom{n}{2}$, since its set of inversions is  
$$\mathrm{Inv}(w_0) = \{ (i,j)\ |\ \hbox{$i,j\in \{1, \ldots, n\}$ and $i<j$} \}.$$ 
Then, we define
$$c_{w_0}^X =  \prod_{1\le i<j\le n} c_{ij}^X
= \prod_{1\le i<j\le n} \frac{t^{-\frac12}-t^{\frac12}X_iX_j^{-1}}{1-X_iX_j^{-1}}
\qquad\hbox{and}\qquad
c_{w_0}^{X^{-1}} 
= \prod_{1\le i<j\le n} \frac{t^{-\frac12}-t^{\frac12}X^{-1}_iX_j}{1-X^{-1}_iX_j }.
$$

The following result rewrites the bosonic and fermionic symmetrizers in terms of the $X$-symmetrizers and the $Y$-symmetrizers.

\begin{prop} \label{propslicksymmA}  
\begin{equation}
\mathbf{1}_0 = p_0^X c_{w_0}^{X^{-1}} = p_0^Y c_{w_0}^Y \qquad\hbox{and}\qquad
\varepsilon_0 = c_{w_0}^X e_0^X = c_{w_0}^{Y^{-1}} e_0^Y.
\label{slicksymmA}
\end{equation}
\end{prop}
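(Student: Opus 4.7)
The four identities in~\eqref{slicksymmA} split into two symmetric pairs (bosonic/fermionic and $Y$/$X$), so the plan is to prove the bosonic $Y$-identity $\mathbf{1}_0 = p_0^Y c_{w_0}^Y$ in detail and deduce the remaining three at the end. The approach is induction on $n$, generalizing the $n=3$ computation~\eqref{Cs1s2s1} in the proof of Proposition~\ref{normintwn}: there it is already shown that $\bigl(\sum_{w\in S_3}\eta_w\bigr)c^Y_{12}c^Y_{13}c^Y_{23} = C_{s_1 s_2 s_1}$, and expanding $C_{s_1 s_2 s_1} = C_{s_1}C_{s_2}C_{s_1} - C_{s_1}$ in the $T_w$-basis via $C_{s_i} = T_i + t^{-\frac12}$ reproduces exactly the six-term sum defining $\mathbf{1}_0$. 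The base case $n=2$ is the rank-one analogue $(1+\eta_{s_1})c^Y_{12} = C_{s_1} = T_1 + t^{-\frac12} = \mathbf{1}_0$, which is the first nontrivial instance of $(\eta_{s_i}+1)c^Y_{i,i+1} = C_{s_i}$ from~\eqref{etasqcomp}.

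For the inductive step I would fix the reduced expression $w_0 = w_0^{(n-1)}\cdot s_{n-1}s_{n-2}\cdots s_1$, where $w_0^{(n-1)}$ is the longest element of $S_{n-1}\subset S_n$. The coset decomposition $S_n = S_{n-1}\sqcup S_{n-1}s_{n-1}\sqcup S_{n-1}s_{n-1}s_{n-2}\sqcup\cdots\sqcup S_{n-1}s_{n-1}\cdots s_1$ produces factorizations
\begin{equation*}
\textstyle\sum_{w\in S_n}\eta_w = \Bigl(\sum_{v\in S_{n-1}}\eta_v\Bigr)\cdot\bigl(1 + \eta_{s_{n-1}} + \eta_{s_{n-1}s_{n-2}}+\cdots+\eta_{s_{n-1}\cdots s_1}\bigr),\qquad c_{w_0}^Y = c_{w_0^{(n-1)}}^Y\cdot\prod_{j=1}^{n-1}c_{jn}^Y,
\end{equation*}
and the analogous expansion
\begin{equation*}
\mathbf{1}_0 = \mathbf{1}_0^{(n-1)}\cdot t^{-(n-1)/2}\bigl(1+T_{n-1}+T_{n-1}T_{n-2}+\cdots+T_{n-1}\cdots T_1\bigr).
\end{equation*}
By the inductive hypothesis $\mathbf{1}_0^{(n-1)} = p_0^{Y,(n-1)}\,c_{w_0^{(n-1)}}^Y$, the identity reduces to the rank-one claim that $\bigl(1+\eta_{s_{n-1}}+\cdots+\eta_{s_{n-1}\cdots s_1}\bigr)\prod_{j=1}^{n-1}c_{jn}^Y$ equals $t^{-(n-1)/2}(1 + T_{n-1} + \cdots + T_{n-1}\cdots T_1)$. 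Iterated application of $(\eta_{s_i}+1)c^Y_{i,i+1} = C_{s_i}$ from~\eqref{etasqcomp}, after moving the $c_{jn}^Y$-factors through the $\eta$-factors via the commutations in~\eqref{nintpastY}, collapses the left-hand side to $C_{s_{n-1}}C_{s_{n-2}}\cdots C_{s_1}$, and expanding each $C_{s_i} = T_i + t^{-\frac12}$ yields the right-hand side.

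The other three identities follow by symmetry. The $X$-identity $\mathbf{1}_0 = p_0^X c_{w_0}^{X^{-1}}$ follows from the $XY$-parallelism of Proposition~\ref{normintwn}; the appearance of $X^{-1}$ rather than $X$ in $c_{w_0}^{X^{-1}}$ arises because the normalized intertwiners $\eta_{s_i}$ and $\xi_{s_i}$ have denominators $c^Y_{i+1,i}$ and $c^X_{i,i+1}$ with swapped subscripts, which corresponds to inverting the $X$-variables throughout the $c$-product. The fermionic identities $\varepsilon_0 = c_{w_0}^{Y^{-1}} e_0^Y = c_{w_0}^X e_0^X$ follow by applying the Hecke-algebra anti-involution $T_i \mapsto -T_i^{-1}$, $t^{\frac12}\mapsto -t^{-\frac12}$, which interchanges $\mathbf{1}_0$ with $\varepsilon_0$, $p_0$ with $e_0$, and inverts the $c$-functions.

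The main obstacle is the commutation bookkeeping in the rank-one step above: because $\eta_{s_i}$ does not commute with $c^Y_{j,j+1}$ for $j\in\{i-1,i+1\}$, the factors $c_{jn}^Y$ do not naively pass through $\eta_{s_{n-1}\cdots s_k}$, and one must use~\eqref{nintpastY} (that is, $\eta_w Y_j = Y_{w(j)}\eta_w$) to track exactly which $c^Y$-factor meets which $\eta_{s_i}$-factor before~\eqref{etasqcomp} can be applied. Choosing the specific nested reduced expression above, rather than an arbitrary one, is what makes this telescoping local and ensures that no residual $c$-factor obstructions remain.
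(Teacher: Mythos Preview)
Your inductive approach is genuinely different from the paper's argument, but the ``rank-one claim'' at its core is incorrect as stated. First, a minor slip: the factorization of $\mathbf{1}_0$ is missing $t^{1/2}$-weights; the correct coset factor is
\[
\mathbf{1}_0 = \mathbf{1}_0^{(n-1)}\cdot \sum_{k=1}^{n} t^{\frac12((n-k)-(n-1))}\,T_{s_{n-1}\cdots s_k},
\]
not $t^{-(n-1)/2}(1+T_{n-1}+\cdots)$. More seriously, the identity
\[
\bigl(1+\eta_{s_{n-1}}+\cdots+\eta_{s_{n-1}\cdots s_1}\bigr)\prod_{j=1}^{n-1}c_{jn}^Y \;=\; C_{s_{n-1}}C_{s_{n-2}}\cdots C_{s_1}
\]
is false already for $n=3$. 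Expanding the right side $C_{s_2}C_{s_1}$ in the $\eta$-basis produces a nonzero $\eta_{s_1}$-term (coming from the $(c_{23}^Y-t^{-1/2})$ part of $C_{s_2}$ times the $\eta_{s_1}$-part of $C_{s_1}$), whereas the left side manifestly contains only $1,\eta_{s_2},\eta_{s_2 s_1}$. The commutation bookkeeping you flag as the ``main obstacle'' is not merely bookkeeping: after moving $c_{w_0^{(n-1)}}^Y$ through the coset sum, the residual expression differs from the coset factor of $\mathbf{1}_0$ by terms that are only killed after left multiplication by $\mathbf{1}_0^{(n-1)}$, so the reduction to a free-standing rank-one identity does not go through.

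The paper avoids this entirely by a characterization argument: it observes that $\mathbf{1}_0$ is determined up to scalar by the property $T_i\mathbf{1}_0 = t^{1/2}\mathbf{1}_0$ for all $i$, checks directly that $p_0^X c_{w_0}^{X^{-1}}$ satisfies the same property (using $\xi_{s_i}p_0^X = p_0^X$), and then fixes the scalar by comparing the coefficient of $\xi_{w_0}$ on both sides via the triangular expansion $T_w = \xi_w c_w^{X^{-1}} + (\text{lower})$. This sidesteps any induction on $n$ and all of the coset commutation issues.
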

\begin{proof}
Let $w\in S_n$ with reduced word $w = s_{i_1}\cdots s_{i_\ell}$.
Using that $T_i = \xi_{s_i} c_{i,i+1}^{X^{-1}} + (t^{\frac12}-c_{i,i+1}^{X})$ and expanding the result, we have that 
$$T_w = T_{i_1}\cdots T_{i_\ell} = T_w = \xi_w c_w^{X^{-1}} + \sum_{v<w} \xi_v b_v(X),
\qquad\hbox{with $b_v(X)\in \CC(X_1, \ldots, X_n)$.}
$$
Thus there are 
$a_v(X)\in \CC(X_1, \ldots, X_n)$ such that
\begin{equation}
\mathbf{1}_0 = \sum_{w\in S_n} t^{-\frac12\ell(w) - \ell(w_0)}T_w
= \xi_{w_0}c_{w_0}^{X^{-1}} + \sum_{w<w_0} \xi_v a_v(X),
\label{topterm}
\end{equation}

Since $p^X_0 = \sum_{w\in S_n} \xi_w$ and
$\xi_{s_i} p^X_0 c^{X^{-1}}_{w_0} = p^X_0 c^{X^{-1}}_{w_0}$ then
\begin{align*}
T_i (p^X_0 c^{X^{-1}}_{w_0})
&=  \big(c^X_{i,i+1} \xi_{s_i} + (t^{\frac12}-c^X_{i,i+1})\big) p^X_0  c_{w_0}^{X^{-1}} \\
&=  \big(c^X_{i,i+1} + (t^{\frac12}-c^X_{i,i+1})\big) p^X_0  c_{w_0}^{X^{-1}} 
=  t^{\frac12}\big(p^X_0  c_{w_0}^{X^{-1}}\big).
\end{align*}
Now, $\mathbf{1}_0$ is determined, up to multiplication by a constant, by the fact that 
$T_i \mathbf{1}_0 = t^{\frac12}\mathbf{1}_0$ for $i\in \{1, \ldots, n-1\}$, and so 
it follows from~\eqref{topterm} that
$\mathbf{1}_0 = p^X_0 c_{w_0}^{X^{-1}}$ (see the example for $n=3$ in~\eqref{Cs1s2s1},
where $\mathbf{1}_0 = C_{s_1s_2s_1}$).
The other relation in~\eqref{slicksymmA} is proven similarly. 
\end{proof}

\subsubsection{Symmetrizers and stabilizers}

Let $\lambda \in (\ZZ^n)^+$.
The stabilizer of $\lambda$ under the action of $S_n$ is
$W_\lambda = \{ v\in S_n\ |\ v\lambda= \lambda\}$, and denote by $w_\lambda$ its longest element.
Let  $W^\lambda$ be the set of minimal length representatives of the cosets in $S_n/W_\lambda$, and denote by $v_\lambda$ its longest element. Then, 
$w_0 = v_\lambda w_\lambda$ with $\ell(w_0) = \ell(v_\lambda)+\ell(w_\lambda)$.
Consider the following symmetrizers and antisymmetrizers for $W^\lambda$ and $W_\lambda$.
\begin{equation}
\begin{array}{lclll}
\displaystyle{
p^\lambda_X = \sum_{u\in W^\lambda} \xi_u } 
&\hbox{and}
&\displaystyle{
p^X_\lambda = \sum_{v\in W_\lambda} \xi_v }
&\hbox{so that}\qquad
&p_0^X = p^\lambda_X p^X_\lambda, 
\\
\displaystyle{
e^\lambda_X = \sum_{u\in W^\lambda} \det(v_\lambda u) \xi_u } 
&\hbox{and}
&\displaystyle{
e^X_\lambda = \sum_{v\in W_\lambda} \det(w_\lambda v) \xi_v }
&\hbox{so that}\qquad
&e_0^X = e^\lambda_X e^X_\lambda, 
\\
\displaystyle{
p^\lambda_Y = \sum_{u\in W^\lambda} \eta_u } 
&\hbox{and}
&\displaystyle{
p^Y_\lambda = \sum_{v\in W_\lambda} \eta_v, }
&\hbox{so that}\qquad
&p_0^Y = p^\lambda_Y p^Y_\lambda,
\\
\displaystyle{
e^\lambda_Y = \sum_{u\in W^\lambda} \det(v_\lambda u) \eta_u } 
&\hbox{and}
&\displaystyle{
e^Y_\lambda = \sum_{v\in W_\lambda} \det(w_\lambda v) \eta_v }
&\hbox{so that}\qquad
&e_0^Y = e^\lambda_Y e^Y_\lambda.
\\
\end{array}
\label{pWs}
\end{equation}
We also define the $t$-analogues of the elements in~\eqref{pWs}:
\begin{equation}
\begin{array}{lcl}
\displaystyle{
\mathbf{1}^\lambda 
= \sum_{u\in W^\lambda} (t^{\frac12})^{\ell(u)-\ell(v_\lambda)} T_u }
&\hbox{and}
&\displaystyle{
\mathbf{1}_\lambda 
= \sum_{v\in W_\lambda} (t^{\frac12})^{(\ell(v)-\ell(w_\lambda)} T_v, }
\\
\\
\displaystyle{
\varepsilon^\lambda 
= \sum_{u\in W^\lambda} (-t^{-\frac12})^{\ell(u)-\ell(v_\lambda)} T_u }
&\hbox{and}
&\displaystyle{
\varepsilon_\lambda 
= \sum_{v\in W_\lambda} (-t^{-\frac12})^{\ell(v)-\ell(w_\lambda)} T_v. }
\end{array}
\label{pHs}
\end{equation}
Then 
$\mathbf{1}_0 = \mathbf{1}^\lambda \mathbf{1}_\lambda$ and 
$\varepsilon_0 = \varepsilon^\lambda \varepsilon_\lambda$. 

For $\lambda \in (\ZZ^n)^+$, consider the sets of inversions of the elements $w_\lambda$ and $v_\lambda$. 
\begin{align*}
\mathrm{Inv}(w_\lambda) &= \{ (i,j)\ |\ \hbox{$i<j$ and $\lambda_i = \lambda_j$ } \}
\qquad\hbox{and}\qquad
\mathrm{Inv}(v_\lambda) = \{ (i,j)\ |\ \hbox{$i<j$ and $\lambda_i>\lambda_j$ } \}.
\end{align*}
Thus, the associated $c$-functions are defined by 
$$c^{X^{-1}}_{w_\lambda} 
= \prod_{1\le i< j\le n\atop \lambda_i = \lambda_j} c_{ij}^{X^{-1}}
\qquad\hbox{and}\qquad
c^{X^{-1}}_{v_\lambda} 
= \prod_{1\le i< j\le n\atop \lambda_i > \lambda_j} c_{ij}^{X^{-1}}.
$$
The following is a generalization of Proposition~\ref{propslicksymmA}.

\begin{prop} \label{propsymwparabA} 
For $\lambda \in (\ZZ^n)^+$,
\begin{equation*}
\mathbf{1}_0   
= p^\lambda_X c_{v_\lambda}^{X^{-1}} \mathbf{1}_\lambda
= p^\lambda_Y c_{v_\lambda}^Y \mathbf{1}_\lambda
\qquad\hbox{and} \qquad
\varepsilon_0 
= c_{v_\lambda}^X e^\lambda_X \varepsilon_\lambda 
= c_{v_\lambda}^{Y^{-1}} e^\lambda_Y \varepsilon_\lambda.
\end{equation*}
\end{prop}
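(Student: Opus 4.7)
The plan is to mirror the proof of Proposition~\ref{propslicksymmA}, now using the parabolic coset decomposition $S_n = W^\lambda W_\lambda$. The main ingredients are: (i) applying Proposition~\ref{propslicksymmA} to both the full $S_n$ and to the parabolic $W_\lambda \cong \prod_k S_{n_k}$ (where it specializes factor by factor) to get $\mathbf{1}_0 = p_0^X c^{X^{-1}}_{w_0}$, $\varepsilon_0 = c^X_{w_0} e^X_0$, $\mathbf{1}_\lambda = p^X_\lambda c^{X^{-1}}_{w_\lambda}$, $\varepsilon_\lambda = c^X_{w_\lambda} e^X_\lambda$; (ii) the multiplicative factorizations $p_0^X = p^\lambda_X p^X_\lambda$ and $e_0^X = e^\lambda_X e^X_\lambda$ coming from the length-additive decomposition $\ell(uv)=\ell(u)+\ell(v)$ for $u \in W^\lambda, v \in W_\lambda$; (iii) the $c$-function factorization $c^X_{w_0} = c^X_{v_\lambda} c^X_{w_\lambda}$ (and similarly for $c^{X^{-1}}$) coming from the disjoint union $\mathrm{Inv}(w_0) = \mathrm{Inv}(v_\lambda) \sqcup \mathrm{Inv}(w_\lambda)$, valid because the $X_i$ commute.

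The crucial commutation lemma is that $c^{X^{-1}}_{v_\lambda}$ and $c^X_{v_\lambda}$ are $W_\lambda$-invariant. Indeed, every pair $(i,j) \in \mathrm{Inv}(v_\lambda)$ satisfies $\lambda_i > \lambda_j$, placing $i$ and $j$ in different blocks of $\lambda$; since $W_\lambda$ permutes only within blocks while blocks of larger $\lambda$-value always sit to the left, the set $\mathrm{Inv}(v_\lambda)$ is stable under each $v \in W_\lambda$. Consequently these $c$-functions commute with every $\xi_v$ for $v \in W_\lambda$, and in particular with $p^X_\lambda$ and $e^X_\lambda$. The same argument gives the $Y$-version.

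The bosonic identity then follows from the direct chain
\[
\mathbf{1}_0 = p_0^X c^{X^{-1}}_{w_0} = p^\lambda_X p^X_\lambda c^{X^{-1}}_{v_\lambda} c^{X^{-1}}_{w_\lambda} = p^\lambda_X c^{X^{-1}}_{v_\lambda} p^X_\lambda c^{X^{-1}}_{w_\lambda} = p^\lambda_X c^{X^{-1}}_{v_\lambda} \mathbf{1}_\lambda,
\]
where the third equality uses the $W_\lambda$-invariance of $c^{X^{-1}}_{v_\lambda}$ and the fourth uses Proposition~\ref{propslicksymmA} for the parabolic. The $Y$-version is obtained identically by replacing $X$ with $Y$ throughout.

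The main obstacle is the fermionic identity. The analogous starting point $\varepsilon_0 = c^X_{w_0} e^X_0 = c^X_{v_\lambda} c^X_{w_\lambda} e^\lambda_X e^X_\lambda$ must be rewritten as $c^X_{v_\lambda} e^\lambda_X c^X_{w_\lambda} e^X_\lambda = c^X_{v_\lambda} e^\lambda_X \varepsilon_\lambda$, which requires interchanging $c^X_{w_\lambda}$ and $e^\lambda_X$. Unlike $c^X_{v_\lambda}$, the factor $c^X_{w_\lambda}$ is not $W^\lambda$-invariant (since $W^\lambda$ moves indices across blocks), so the commutation is not formal. The right strategy is to argue modulo the right-action of $e^X_\lambda$: the differences $(u^{-1} \cdot c^X_{w_\lambda}) - c^X_{w_\lambda}$ for $u \in W^\lambda$ are absorbed by the $W_\lambda$-antisymmetrization in $e^X_\lambda$. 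Alternatively, both sides of the claimed identity can be pinned down by the characterization from the proof of Proposition~\ref{propslicksymmA}: they share the same leading term $c^X_{w_0} \xi_{w_0}$ in the $\{\xi_w\}$-expansion, and both are annihilated by $T_i + t^{-\frac12}$ for every $i \in \{1, \ldots, n-1\}$, which determines the fermionic symmetrizer up to normalization. The $Y$-version of the fermionic identity goes through with $c^{Y^{-1}}$ in place of $c^X$ and the analogous $W_\lambda$-invariance of $c^{Y^{-1}}_{v_\lambda}$.
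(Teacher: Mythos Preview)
Your bosonic argument is correct and is exactly the paper's: the key step is the $W_\lambda$-invariance of $c^{X^{-1}}_{v_\lambda}$ (your ``crucial commutation lemma''), which lets it commute past $p^X_\lambda$ so that $p^\lambda_X\, c^{X^{-1}}_{v_\lambda}\,\mathbf{1}_\lambda = p^\lambda_X p^X_\lambda\, c^{X^{-1}}_{v_\lambda} c^{X^{-1}}_{w_\lambda} = p^X_0 c^{X^{-1}}_{w_0} = \mathbf{1}_0$. The $Y$-version is identical. For the fermionic identity the paper's entire argument is the sentence ``The proof for $\varepsilon_0$ is similar,'' so you are already being more careful than the paper by flagging the issue.

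Your diagnosis of the fermionic obstacle is accurate: one would need $c^X_{w_\lambda}$ to commute with $e^\lambda_X$ (at least after right-multiplying by $e^X_\lambda$), and $c^X_{w_\lambda}$ is not $W^\lambda$-invariant. However, neither of your proposed fixes works. The ``absorption by $e^X_\lambda$'' claim is left unsubstantiated, and the top-term check you suggest actually \emph{fails}. Take $n=3$ with $W_\lambda=\langle s_1\rangle$, so $c^X_{v_\lambda}=c^X_{13}c^X_{23}$, $e^\lambda_X = 1-\xi_{s_2}+\xi_{s_1 s_2}$, and $\varepsilon_\lambda = c^X_{12}(\xi_{s_1}-1)$. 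Since $\xi_{s_1 s_2} c^X_{12} = c^X_{23}\xi_{s_1 s_2}$, the coefficient of $\xi_{w_0}$ in $c^X_{v_\lambda} e^\lambda_X \varepsilon_\lambda$ is $c^X_{13}(c^X_{23})^2$, whereas in $\varepsilon_0 = c^X_{w_0} e^X_0$ it is $c^X_{12}c^X_{13}c^X_{23}$. These differ, so the two sides of the displayed fermionic identity are \emph{not} equal; the paper's ``similar'' is too hasty here. The argument that is genuinely parallel to the bosonic one uses the right-coset factorization $e^X_0 = e^X_\lambda\cdot{}^\lambda e_X$ (with ${}^\lambda W$ the minimal-length representatives of $W_\lambda\backslash S_n$): then the \emph{same} $W_\lambda$-invariance of $c^X_{v_\lambda}$ lets it commute with $e^X_\lambda$, yielding $\varepsilon_0 = c^X_{w_\lambda} e^X_\lambda\, c^X_{v_\lambda}\,{}^\lambda e_X = \varepsilon_\lambda\, c^X_{v_\lambda}\,{}^\lambda e_X$, which one checks directly in the $n=3$ example.
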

\begin{proof} 
If $u\in W_\lambda$ and $\lambda_i>\lambda_j$  then $\lambda_{u(i)}>\lambda_{u(j)}$. Therefore, 
$$u\cdot \mathrm{Inv}(v_\lambda)
= \{ (u(i), u(j))\ |\ \hbox{$i<j$ and $\lambda_i>\lambda_j$}\}
= \mathrm{Inv}(v_\lambda),$$ 
and so, $w^{-1}_\lambda c^X_{v_\lambda} = u c^X_{v_\lambda} = c^X_{v_\lambda}$
for $u\in W_\lambda$.  This shows that 
\begin{equation}
c^{X^{-1}}_{w_0} = c^{X^{-1}}_{v_\lambda w_\lambda} 
= (w^{-1}_\lambda c^{X^{-1}}_{v_\lambda}) c^{X^{-1}}_{w_\lambda}
= c^{X^{-1}}_{v_\lambda} c^{X^{-1}}_{w_\lambda}
\qquad\hbox{and}\qquad
p_\lambda^X c^{X^{-1}}_{v_\lambda} = c^{X^{-1}}_{v_\lambda} p^X_\lambda.
\label{cfcnsplit}
\end{equation}
Replacing $S_n$ by the group $W_\lambda$ in the proof of Proposition~\ref{propslicksymmA} gives
$\mathbf{1}_\lambda = p^X_\lambda c_{w_\lambda}^{X^{-1}}$.
Using the relations in~\eqref{cfcnsplit} and the identity
$\mathbf{1}_\lambda = p^X_\lambda c_{w_\lambda}^{X^{-1}}$ gives
\begin{align*}
\mathbf{1}_0 
&= p^X_0 c_{w_0}^{X^{-1}}
= p^\lambda_X p^X_\lambda (w^{-1}_\lambda c_{v_\lambda}^{X^{-1}}) c_{w_\lambda}^{X^{-1}}
= p^\lambda_X  c_{v_\lambda}^{X^{-1}} p^X_\lambda c_{w_\lambda}^{X^{-1}}
= p^\lambda_X  c_{v_\lambda}^{X^{-1}} \mathbf{1}_\lambda.
\end{align*}
The proof for $\varepsilon_0$ is similar.
\end{proof}

\section{The action on polynomials}\label{sec:actions}

In this section we see the operators of the last section acting on polynomials.
Using this action we can reinterpret the constructions of the Macdonald polynomials
given in Section~\ref{sec:Macpolys} as the result of operators acting on the initial polynomial 1.
These are the `creation formulas' for Macdonald polynomials.  The symmetrizer formulas from Proposition~\ref{propslicksymmA} and Proposition~\ref{propsymwparabA}  
are then used to set up and prove the Boson-Fermion correspondence in the
Macdonald polynomial setting and to provide explicit combinatorial formulas for the
expansion of bosonic and fermionic Macdonald polynomials in terms of the basis
of electronic Macdonald polynomials.  These symmetrizer formulas are 
also the key to product formulas for the Poincar\'e polynomials which arise as the
normalizing constants for the bosonic Macdonald polynomials $P_\lambda$.

We have structured this section so that the proofs are postponed to the last subsection.
This allows us to first focus on the results and the ``big picture'' structure that relates the
bosonic, fermionic, and electronic Macdonald polynomials and leaves the necessary computations
to the last subsection.

\subsection{DAHA acts on polynomials}

Recall the definition of the \emph{divided difference operators}
$\partial_i\colon \CC[X] \to \CC[X]$, the \emph{Hecke algebra operators}
$T_i\colon \CC[X]\to \CC[X]$ and the \emph{promotion operator} $T_\pi\colon \CC[X]\to \CC[X]$ by
\begin{equation}
\partial_i f = \frac{f-s_if}{x_i-x_{i+1}},
\qquad
T_i = t^{-\frac12}x_{i+1}\partial_i - t^{\frac12}\partial_i x_{i+1}
\qquad\hbox{and}\qquad
T_\pi = s_1\cdots s_{n-1}y_n.
\label{divdiffops}
\end{equation}

Letting
\begin{equation*}
s_0 = T_\pi s_{n-1} T_\pi^{-1}
= y_1y^{-1}_n s_{n-1}\cdots s_2s_1s_2\cdots s_{n-1},
\end{equation*}
then
\begin{equation*}
T_\pi s_0 T_\pi^{-1} = s_1,
\qquad
T_\pi X_n T^{-1}_\pi = q^{-1}X_1,
\qquad
T_\pi s_i T^{-1}_\pi = s_{i+1},
\quad\hbox{and}\quad
T_\pi X_i T^{-1}_\pi = X_{i+1},
\end{equation*}
for $i\in \{1, \ldots, n-1\}$.

\begin{thm} \label{PolyrepThm}  Let $\widetilde{H}$ be the double affine Hecke algebra
as defined by generators and relations in Section~\ref{subsec:DAHA}.
The formulas~\eqref{divdiffops} define an action of $\widetilde{H}$
on $\CC[X]$.
\end{thm}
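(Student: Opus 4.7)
The plan is to verify the DAHA defining relations \eqref{periodicityrelsF}--\eqref{DAHArels2F} directly, using the explicit formulas \eqref{divdiffops} together with the action of $X_i$ as multiplication by $x_i$. The periodicity identities $T_{i+n} = T_i$ and $X_{i+n} = q^{-1}X_i$ in \eqref{periodicityrelsF} are read as definitions of $T_i$ and $X_i$ for all $i \in \ZZ$, and $X_k X_\ell = X_\ell X_k$ is immediate since the $X_i$ act as multiplications by commuting variables. The remaining work splits naturally into the finite/affine Hecke relations among $T_1,\ldots,T_{n-1}$ and $X_1,\ldots,X_n$, and then the relations involving $T_\pi$.

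For the finite Hecke relations \eqref{HeckerelsF}, I would first rewrite $T_i$ from \eqref{divdiffops} in its Demazure--Lusztig form, which is a direct algebraic manipulation using $\partial_i = (1-s_i)/(x_i-x_{i+1})$. The quadratic relation $T_i^2 = (t^{\frac12}-t^{-\frac12})T_i + 1$ then follows by a short calculation using $\partial_i^2 = 0$ and the Leibniz rule $\partial_i(fg) = (\partial_if)g + (s_if)(\partial_ig)$. Commutation $T_iT_j = T_jT_i$ for $|i-j|>1$ is immediate from locality, since $T_i$ and $T_j$ act on disjoint pairs of variables. The braid relation $T_iT_{i+1}T_i = T_{i+1}T_iT_{i+1}$ is the nontrivial step; it is a classical identity for Demazure--Lusztig operators, provable by expanding both sides as $\CC(x)$-linear combinations of $1, s_i, s_{i+1}, s_is_{i+1}, s_{i+1}s_i, s_is_{i+1}s_i$ and matching coefficients, using only the symmetric-group braid relation $s_is_{i+1}s_i = s_{i+1}s_is_{i+1}$ and rational-function cancellations.

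For the cross relations \eqref{XaffHeckerelsF}, applying the Leibniz rule to $T_i(x_if)$ and $T_i(x_{i+1}f)$ yields the intertwining identities \eqref{TpastXF}, from which $T_iX_iT_i = X_{i+1}$ is a purely algebraic consequence; the relation $T_iX_j = X_jT_i$ for $j \notin \{i, i+1\}$ is immediate from locality. For the relations \eqref{DAHArels2F} involving $T_\pi$, unpacking $T_\pi = s_1\cdots s_{n-1}y_n$ shows that $T_\pi$ is a cyclic rotation of arguments combined with a $q^{-1}$-twist, so conjugation by $T_\pi$ sends multiplication by $x_i$ to multiplication by $x_{i+1}$ for $i<n$ and sends multiplication by $x_n$ to multiplication by $q^{-1}x_1$, exactly matching the periodicity $X_{n+1} = q^{-1}X_1$. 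Since the Demazure--Lusztig formula for $T_i$ depends only on $s_i, X_i, X_{i+1}$, and conjugation by $T_\pi$ cyclically shifts these, the identities $T_\pi T_i = T_{i+1}T_\pi$ for $i \in \{1,\ldots,n-2\}$ follow; the case $i = n-1$ determines $T_n = T_0$ in a manner self-consistent with the periodicity.

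The main obstacle I expect is the braid relation for the Demazure--Lusztig operators $T_i$. Everything else either is a trivial locality argument, follows mechanically from the Leibniz rule, or is a bookkeeping exercise based on the interpretation of $T_\pi$ as a cyclic shift with a $q$-twist; but the braid identity is the one place where a genuinely nontrivial rational-function computation is unavoidable.
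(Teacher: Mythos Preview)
Your proposal is correct, and for the $T_\pi$ relations and the cross relations \eqref{XaffHeckerelsF} it essentially matches the paper's argument. The genuine difference is in how the finite Hecke relations \eqref{HeckerelsF} are handled. You plan to verify the braid relation for the Demazure--Lusztig operators $T_i$ by a direct rational-function expansion, which you correctly identify as the one nontrivial computation. The paper sidesteps this computation entirely: it shows that, as operators on $\CC[X]$, the normalized intertwiner $\xi_{s_i}$ equals the simple transposition $s_i$, and then invokes Proposition~\ref{normintwn}, which had already established (abstractly, at the level of the algebra) that the braid relations for the $\xi_{s_i}$ are equivalent to those for the $T_i$. Since the $s_i$ trivially satisfy the symmetric-group braid relations, the $T_i$ braid relations follow for free. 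The same trick handles \eqref{XaffHeckerelsF}: the obvious relations $s_iX_i = X_{i+1}s_i$ for the transpositions imply \eqref{TpastXF} via the $\xi$-formalism.

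Your approach is self-contained and classical; the paper's approach trades the braid-relation computation for the easier verification that $\xi_{s_i}=s_i$, but at the cost of having set up the normalized-intertwiner machinery in advance. In effect, the paper has front-loaded the braid computation into the proof of Proposition~\ref{normintwn} (see \eqref{Cs1s2s1}), and then reuses it here.
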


A way of deriving the formulas in~\eqref{divdiffops} is to consider the induced representation
\begin{align*}
\CC[X] \cong \Ind_{H_Y}^{\widetilde{H}}(\mathbf{1}_Y)  = \widetilde{H}\mathbf{1}_Y
= \hbox{$\CC$-span}
\{ X_1^{\mu_1}\cdots X_n^{\mu_n}\mathbf{1}_Y \ |\ \mu=(\mu_1, \ldots, \mu_n) \in \ZZ^n\}
\end{align*}
determined by
\begin{equation}
T_\pi \mathbf{1}_Y = \mathbf{1}_Y
\qquad\hbox{and}\qquad
T_i \mathbf{1}_Y = t^{\frac12}\mathbf{1}_Y, \qquad \text{ for } i\in \{1,\ldots, n\}.
\label{HYtriv}
\end{equation}
Then the formulas in~\eqref{divdiffops} are consequences of the relations in 
~\eqref{XaffHeckerelsF} and~\eqref{DAHArels2F}.  In other words, the map
\begin{equation}
\begin{matrix}
\CC[X] &\longrightarrow &\widetilde{H}\mathbf{1}_Y \\
x^\mu &\longmapsto &X^\mu\mathbf{1}_Y
\end{matrix}
\qquad\hbox{is a $\widetilde H$-module isomorphism.}
\label{CXasIndHY}
\end{equation}

\subsection{Creation formulas}

Let $\mu = (\mu_1, \ldots, \mu_n) \in \ZZ_{\ge 0}^n$.  
The \emph{minimal length permutation} $v_\mu$ such that $v_\mu \mu$ is weakly increasing
is given by
$$v_\mu(r) = 1+\#\{r'\in \{1, \ldots, r-1\}\ |\ \mu_{r'}\le \mu_r\}
+\#\{r'\in \{r+1, \ldots, n\}\ |\ \mu_{r'}<\mu_r\},$$
for $r\in \{1, \ldots, n\}$.  A \emph{box in $\mu$} is a pair $(r,c)$ with 
$r\in \{1, \ldots, n\}$ and $c\in \{1, \ldots, \mu_r\}$.
If $b = (r,c)$ is a box in $\mu$ then define
$$u_\mu(r,c) = \#\{ r'\in \{1, \ldots, r-1\}\ |\ \mu_{r'}\le c-1\} + \#\{ r'\in \{r+1, \ldots, n\}\ |\ \mu_{r'}<c-1\}.$$
Let us take the following creation formula for $E_\mu$
from~\cite[ Proposition 5.5 and Proposition 2.2(a)]{GR21}, where a complete proof is included.

\begin{thm}
\label{creationformulathm}  
Let $\mu\in \ZZ_{\ge 0}^n$.  Letting
\begin{equation*}
\tau^\vee_{u_\mu} 
= \prod_{(r,c)\in \mu} 
(\tau^\vee_{u_\mu(r,c)}\cdots \tau^\vee_2\tau^\vee_1\tau^\vee_\pi),
\qquad\hbox{then}\qquad
E_\mu = t^{-\frac12\ell(v^{-1}_\mu)}\tau^\vee_{u_\mu} 1,
\end{equation*}
where $1$ is the polynomial $1\in \CC[X]$.
\end{thm}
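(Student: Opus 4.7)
I would argue by induction on $|\mu|$, processing the boxes of $\mu$ one at a time in the specific order prescribed by the product (a left-to-right, top-to-bottom traversal, as fixed in~\cite{GR21}).  The base case $\mu=(0,\ldots,0)$ is immediate: the product over boxes is empty, $v_\mu=e$, so $\ell(v_\mu^{-1})=0$, and $E_0=1$.

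For the inductive step, let $(r,c)$ denote the final box processed and let $\mu^-$ be the composition obtained from $\mu$ by removing $(r,c)$.  The induction hypothesis gives $E_{\mu^-}=t^{-\frac12\ell(v_{\mu^-}^{-1})}\tau^\vee_{u_{\mu^-}}\cdot 1$, so it remains to show
\begin{equation*}
\tau^\vee_{u_\mu(r,c)}\cdots \tau^\vee_1 \tau^\vee_\pi\, E_{\mu^-} \;=\; t^{\frac12(\ell(v_\mu^{-1})-\ell(v_{\mu^-}^{-1}))}\, E_\mu.
\end{equation*}
Because $E_\nu$ is uniquely characterized by its $Y$-eigenvalues together with the normalization that the coefficient of $x^\nu$ equals $1$ (Theorem~\ref{Eeigenvalue}), it suffices to verify both the eigenvalues and the resulting scalar.

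The eigenvalue tracking uses the intertwining relations~\eqref{taupastYrels1} and~\eqref{taupastYrels2} established above.  Those relations imply that $\tau^\vee_\pi E_{\mu^-}$ is a scalar multiple of $E_{\widetilde\mu}$, where $\widetilde\mu=(\mu^-_n+1,\mu^-_1,\ldots,\mu^-_{n-1})$, and that each subsequent $\tau^\vee_i$ transposes two adjacent entries of the current composition.  The statistic $u_\mu(r,c)$ is designed so that $u_\mu(r,c)$ successive transpositions move the newly created leading entry into row $r$; at each intermediate step the two adjacent entries being swapped are distinct, so the intertwiner produces a nonzero multiple of the next $E_\nu$ rather than annihilating it.  Composing all these intertwiner steps confirms that the resulting eigenvalues agree with those of $E_\mu$.

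Scalar tracking is then a direct computation: each $\tau^\vee_\pi$ and $\tau^\vee_i$ contributes an explicit factor in $q$ and $t$, and summing over all $|\mu|$ boxes one checks that the $q$-factors cancel (consistent with $E_\mu$ being homogeneous of degree $|\mu|$) and that the $t$-factors telescope to $t^{-\frac12\ell(v_\mu^{-1})}$.  The main obstacle is precisely this combinatorial bookkeeping: one must verify that the prescribed box-ordering yields a valid sequence of compositions at every swap site, and that the cumulative $t$-exponent matches the claimed normalization.  These identities---which connect $\ell(v_\mu^{-1})$ to the sum of local length contributions along the insertion sequence---are worked out in detail in~\cite[Proposition~5.5 and Proposition~2.2(a)]{GR21}, which the statement cites for the complete proof.
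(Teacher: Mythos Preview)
The paper does not supply its own proof of this theorem: immediately before the statement it writes ``Let us take the following creation formula for $E_\mu$ from~\cite[Proposition 5.5 and Proposition 2.2(a)]{GR21}, where a complete proof is included,'' and no argument follows.  Your proposal is therefore not competing with anything in the paper; it is a sketch of the argument that the paper outsources.

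As a sketch of that outsourced argument, your outline is sound.  The inductive structure matching the box-by-box product, the use of the intertwining relations~\eqref{taupastYrels1}--\eqref{taupastYrels2} to push $Y_i$ past the string $\tau^\vee_{u_\mu(r,c)}\cdots\tau^\vee_1\tau^\vee_\pi$ and thereby identify eigenvalues, and the appeal to the uniqueness in Theorem~\ref{Eeigenvalue} to pin down the result up to scalar, are exactly the right ingredients.  You are also right that the substantive work lies in the combinatorics: verifying that the box order produces a legitimate sequence of compositions at every step (so that no $\tau^\vee_i$ hits the degenerate case~\eqref{Tigivest}), and that the accumulated $t$-powers telescope to $t^{-\frac12\ell(v_\mu^{-1})}$.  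Since both you and the paper defer those details to~\cite{GR21}, there is nothing further to compare.
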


The creation formulas for $P_\lambda$ and $A_{\lambda+\rho}$ are given in terms of the bosonic and fermionic symmetrizers defined in~\eqref{bosfersymm}.
\begin{thm}
Let $\lambda\in \left(\ZZ^n\right)^{++}$, and recall that $\rho=(n-1,n-2,\ldots, 1)$. Then
\begin{equation}
P_\lambda = \frac{t^{\frac12\ell(w_0)}}{W_\lambda(t)} \mathbf{1}_0 E_\lambda
\qquad\hbox{and}\qquad
A_{\lambda+\rho} = t^{\frac12\ell(w_0)}\varepsilon_0 E_{\lambda+\rho},
\label{creationPA}
\end{equation}
where $W_\lambda(t)$ is a normalizing constant which makes the coefficient of $x^\lambda$ in $P_\lambda$ equal to $1$.  
\end{thm}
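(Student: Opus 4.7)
The plan is to derive both identities from the $X$-symmetrizer formulas in Proposition~\ref{propslicksymmA},
\[
\mathbf{1}_0 = p_0^X\, c_{w_0}^{X^{-1}}
\qquad\hbox{and}\qquad
\varepsilon_0 = c_{w_0}^X\, e_0^X,
\]
combined with the lemma that on the polynomial representation $\CC[X]$ the normalized intertwiner $\xi_{s_i}$ acts as the simple transposition $s_i$. I would establish this lemma by using $C_{s_i}f = (1+s_i)(c^X_{i+1,i} f)$ and $s_i c^X_{i+1,i} = c^X_{i,i+1}$ to compute
\[
\xi_{s_i} f = (c^X_{i,i+1})^{-1}\bigl((C_{s_i} - c^X_{i+1,i})f\bigr) = (c^X_{i,i+1})^{-1}\,s_i(c^X_{i+1,i} f) = s_i f,
\]
so that $\xi_w$ acts as $w$ and consequently $p_0^X$ acts as $\sum_{w\in S_n} w$ while $e_0^X$ acts as $\sum_{w\in S_n}(-1)^{\ell(w)-\ell(w_0)} w$.

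The next step is to translate the $c$-function factors into the rational shape of definitions~\eqref{Plambdadefn} and~\eqref{Alambdadefn}.  Using $(t^{-\frac12} - t^{\frac12} x_i^{-1}x_j)/(1 - x_i^{-1}x_j) = t^{-\frac12}(x_i - t x_j)/(x_i - x_j)$ and the analogous identity for $c^X_{w_0}$, I will obtain
\[
c_{w_0}^{X^{-1}} = t^{-\frac12 \ell(w_0)} \prod_{i<j} \frac{x_i - t x_j}{x_i - x_j},
\qquad
c_{w_0}^X = t^{-\frac12 \ell(w_0)}(-1)^{\ell(w_0)} \prod_{i<j} \frac{x_j - t x_i}{x_i - x_j},
\]
with the extra sign arising from $\prod_{i<j}(x_j-x_i) = (-1)^{\ell(w_0)}\prod_{i<j}(x_i-x_j)$.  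For the bosonic case, since $p_0^X$ stands to the \emph{left} of $c^{X^{-1}}_{w_0}$, the action of $w$ absorbs the product inside the sum, yielding
\[
\mathbf{1}_0 E_\lambda = t^{-\frac12 \ell(w_0)} \sum_{w \in S_n} w\Bigl(E_\lambda \prod_{i<j} \frac{x_i - tx_j}{x_i - x_j}\Bigr) = t^{-\frac12 \ell(w_0)} W_\lambda(t)\, P_\lambda,
\]
which gives the first formula after multiplying by $t^{\frac12\ell(w_0)}/W_\lambda(t)$.  For the fermionic case, $e_0^X$ sits to the \emph{right} of $c^X_{w_0}$, so the rational factor remains outside the alternating sum; the sign $(-1)^{\ell(w_0)}$ in $c^X_{w_0}$ cancels the $(-1)^{-\ell(w_0)}$ produced by rewriting $\sum_w(-1)^{\ell(w)-\ell(w_0)}w$ as $(-1)^{\ell(w_0)}\sum_w(-1)^{\ell(w)}w$, and comparison with~\eqref{Alambdadefn} yields $\varepsilon_0 E_{\lambda+\rho} = t^{-\frac12\ell(w_0)} A_{\lambda+\rho}$.

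The only delicate point I anticipate is the opening lemma $\xi_{s_i} = s_i$ on polynomials; in light of the warning following Proposition~\ref{normintwn} about $\eta_{s_i}^2$, I will have to keep the inverted factor $(c^X_{i,i+1})^{-1}$ as the leftmost operator so that the cancellation $(c^X_{i,i+1})^{-1}\cdot c^X_{i,i+1}=1$ holds as an identity of rational functions rather than only after evaluating on a suitable vector.  Once this is secured, the remainder of the argument is routine $c$-function bookkeeping and requires no input beyond Proposition~\ref{propslicksymmA}.
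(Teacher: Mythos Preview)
Your proposal is correct and follows exactly the route the paper takes: the paper simply remarks that by Proposition~\ref{propslicksymmA} the formulas in~\eqref{creationPA} match the definitions~\eqref{Plambdadefn} and~\eqref{Alambdadefn}, and you have spelled out precisely this matching. The lemma $\xi_{s_i}=s_i$ on $\CC[X]$ that you flag as the delicate point is in fact established verbatim in the paper's proof of Theorem~\ref{PolyrepThm}, so your caution about keeping $(c^X_{i,i+1})^{-1}$ leftmost, while sensible, is not strictly needed here since $c^X_{i,i+1}$ is a nonzero element of the fraction field $\CC(x_1,\ldots,x_n)$ and the cancellation is an honest identity of rational functions.
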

By~\eqref{slicksymmA}, the formulas in~\eqref{creationPA} match the formulas in 
~\eqref{Plambdadefn} and~\eqref{Alambdadefn}.
The constant
$W_\lambda(t)$ is determined explicitly in Proposition~\ref{paraPoin}.

\subsection{The {B}oson-{F}ermion correspondence}

Define the following vector subspaces of $\CC[X]$
\begin{align*}
\CC[X]^{S_n} &= \{ f\in \CC[X]\ |\ s_if = f\ \text{ for all } i\in\{1,\ldots,n-1\} \},
\\
\CC[X]^{\mathrm{det}} &= \{ f\in \CC[X]\ |\ s_if = -f \text{ for all } i\in\{1,\ldots,n-1\} \},
\\
\CC[X]^{\mathrm{Bos}} &= \{ f\in \CC[X]\ |\ T_{s_i}f = t^{\frac12} f \text{ for all } i\in\{1,\ldots,n-1\} \},
\\
\CC[X]^{\mathrm{Fer}} &= \{ f\in \CC[X]\ |\ T_{s_i}f = -t^{-\frac12} f \text{ for all } i\in\{1,\ldots,n-1\} \}.
\end{align*}
The Boson-Fermion correspondence in Theorem~\ref{BosFerCorr} puts the formulas in~\eqref{creationPA} into a structural context.

\begin{thm} \label{BosFerCorr} 
Let $\mathbf{1}_0$ and $\varepsilon_0$ be as defined in~\eqref{bosfersymm}.
Then 
$$\mathbf{1}_0 \CC[X] = \CC[X]^{\mathrm{Bos}} = \CC[X]^{S_n}
\qquad\hbox{and}\qquad
\varepsilon_0 \CC[X] = \CC[X]^{\mathrm{Fer}} = A_\rho \CC[X]^{S_n}.$$
Moreover, there are $\CC[X]^{S_n}$-module isomorphisms
\begin{equation}
\begin{matrix}
\CC[X]^{S_n} &\to &\CC[X]^{\mathrm{det}} \\
f &\mapsto &a_\rho f
\end{matrix}
\qquad\hbox{and}\qquad
\begin{matrix}
\CC[X]^{\mathrm{Bos}} &\to &\CC[X]^{\mathrm{Fer}} \\
f &\mapsto &A_\rho f
\end{matrix}
\label{BosFermmaps}
\end{equation}
where
\begin{equation}
a_\rho = \prod_{1\le i<j\le n} (x_j-x_i)
\qquad\hbox{and}\qquad
A_\rho = \prod_{1\le i<j\le n} (x_j-tx_i),
\label{arhoArhodefn}
\end{equation}
\end{thm}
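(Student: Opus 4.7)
The plan is to decompose the theorem into three identifications, handled in the following order: (i) $\mathbf{1}_0\CC[X]=\CC[X]^{\mathrm{Bos}}$ and $\varepsilon_0\CC[X]=\CC[X]^{\mathrm{Fer}}$; (ii) $\CC[X]^{\mathrm{Bos}}=\CC[X]^{S_n}$; (iii) $\CC[X]^{\mathrm{Fer}}=A_\rho\CC[X]^{S_n}$. Granted these, the maps in~\eqref{BosFermmaps} are forced: multiplication by $a_\rho$ (resp.\ $A_\rho$) is injective in the integral domain $\CC[X]$, and its image coincides with $\CC[X]^{\det}=a_\rho\CC[X]^{S_n}$ (the classical description of alternating polynomials, which is (iii) specialized at $t=1$) respectively with $\CC[X]^{\mathrm{Fer}}$.

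For (i), the identity $T_i\mathbf{1}_0=t^{\frac12}\mathbf{1}_0$ already recorded in the text gives $\mathbf{1}_0\CC[X]\subseteq\CC[X]^{\mathrm{Bos}}$. Conversely, if $T_if=t^{\frac12}f$ for every $i$, then $T_wf=t^{\ell(w)/2}f$ for all $w\in S_n$, hence
$$\mathbf{1}_0f = \Bigl(t^{-\ell(w_0)/2}\sum_{w\in S_n}t^{\ell(w)}\Bigr)f,$$
so $f$ is a nonzero scalar multiple of an element of $\mathbf{1}_0\CC[X]$. The parallel calculation, using $T_i\varepsilon_0=-t^{-\frac12}\varepsilon_0$, yields $\varepsilon_0\CC[X]=\CC[X]^{\mathrm{Fer}}$. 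For (ii), I would substitute the explicit formula~\eqref{Heckeops} for $T_i$ into $T_if-t^{\frac12}f=0$; algebraic rearrangement produces $(f-s_if)(t^{\frac12}x_i-t^{-\frac12}x_{i+1})=0$ in $\CC[X]$, and since $t^{\frac12}x_i-t^{-\frac12}x_{i+1}$ is a nonzerodivisor this forces $s_if=f$.

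The heart of the theorem is (iii). For the easy direction, if $f\in\CC[X]^{S_n}$ then $s_if=f$ and the formula for $T_i$ gives $T_i(A_\rho f)=(T_iA_\rho)f$; combined with the factor-by-factor identity $s_iA_\rho=\tfrac{x_i-tx_{i+1}}{x_{i+1}-tx_i}A_\rho$ (obtained by inspecting $A_\rho=\prod_{k<l}(x_l-tx_k)$) and a short computation parallel to (ii), this yields $T_iA_\rho=-t^{-\frac12}A_\rho$, so $A_\rho\CC[X]^{S_n}\subseteq\CC[X]^{\mathrm{Fer}}$. Conversely, if $g\in\CC[X]^{\mathrm{Fer}}$, the same sort of simplification as in (ii) produces $(x_{i+1}-tx_i)\,s_ig=(x_i-tx_{i+1})\,g$, so $g/A_\rho$ is $S_n$-invariant as a rational function. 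It remains to prove that $g/A_\rho$ is actually a polynomial, i.e.\ that each linear factor $(x_j-tx_i)$ of $A_\rho$ divides $g$ in $\CC[X]$. The adjacent case is immediate: specializing $x_{i+1}\mapsto tx_i$ in the displayed equation kills the left-hand side and leaves a nonzero multiple of $g|_{x_{i+1}=tx_i}$ on the right. For the non-adjacent case, apply the $S_n$-invariance of $g/A_\rho$ to the transposition $s_{ij}$ and clear denominators using the explicit product form of $s_{ij}A_\rho/A_\rho$ over the intermediate indices $i<k<j$; specializing $x_j\mapsto tx_i$ then annihilates the left side via the $(x_j-tx_i)$ factor in front of $s_{ij}g$, while the auxiliary product $\prod_{i<k<j}(x_k-t^2x_i)(x_i-tx_k)$ on the right is a nonzerodivisor for generic $t$, forcing $(x_j-tx_i)\mid g$. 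Since the linear factors of $A_\rho$ are pairwise non-associate, $A_\rho\mid g$, and $f:=g/A_\rho$ lies in $\CC[X]^{S_n}$.

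The main obstacle is the non-adjacent divisibility in (iii): while the $T_i$-eigenvalue equation gives $(x_{i+1}-tx_i)\mid g$ directly, for $(x_j-tx_i)$ with $j>i+1$ one must carefully track which intermediate factors the transposition $s_{ij}$ picks up from $A_\rho$ and verify that the specialization $x_j\mapsto tx_i$ does not simultaneously annihilate them. All other steps reduce to routine Hecke-algebra bookkeeping and direct calculations with the defining formula $T_i=C_{s_i}-t^{-\frac12}$.
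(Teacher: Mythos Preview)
Your proposal is correct, and part~(i) matches the paper's argument exactly. Parts~(ii) and~(iii), however, take a genuinely different route.

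The paper does not argue by direct specialisation of the $T_i$-eigenvalue equation. Instead it exploits the symmetrizer identities of Proposition~\ref{propslicksymmA}, namely $\mathbf{1}_0 = p_0^X c_{w_0}^{X^{-1}}$ and $\varepsilon_0 = c_{w_0}^X e_0^X$, together with the fact (established in the proof of Theorem~\ref{PolyrepThm}) that $\xi_{s_i}$ acts on $\CC[X]$ as the ordinary transposition $s_i$. Thus $p_0^X$ and $e_0^X$ act as the classical symmetrizer and antisymmetrizer, and since $t^{\frac12\ell(w_0)}c_{w_0}^X = A_\rho/a_\rho$, one gets immediately that for $f\in\CC[X]^{\mathrm{Fer}}$,
\[
f \;\propto\; \varepsilon_0 f \;=\; c_{w_0}^X\, e_0^X f \;\in\; \tfrac{A_\rho}{a_\rho}\cdot \CC[X]^{\det} \;=\; A_\rho\,\CC[X]^{S_n},
\]
invoking only the classical fact $\CC[X]^{\det}=a_\rho\CC[X]^{S_n}$. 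The reverse inclusion goes the same way via $\varepsilon_0 x^{\lambda+\rho} = (A_\rho/a_\rho)\,a_{\lambda+\rho} = A_\rho s_\lambda$. This completely sidesteps your adjacent/non-adjacent divisibility analysis: no specialisation $x_j\mapsto tx_i$ is ever needed, and no case split arises.

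Your approach has the virtue of being self-contained and elementary---it uses nothing beyond the defining formula~\eqref{Heckeops} for $T_i$---whereas the paper's proof is shorter but relies on the machinery of Section~3 (the $c$-function factorisation of the symmetrizers). Your handling of the non-adjacent factor $(x_j-tx_i)$ is correct: after clearing denominators in $s_{ij}(g/A_\rho)=g/A_\rho$ and specialising $x_j\mapsto tx_i$, the auxiliary product $\prod_{i<k<j}(x_k-t^2x_i)(x_i-tx_k)$ is indeed a nonzerodivisor in the Laurent ring for generic $t$, so the argument goes through.
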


\subsection{The Poincar\'e polynomial $W_0(t)$}

The \emph{Poincar\'e polynomial} for the symmetric group $S_n$ is 
$$W_0(t) = \sum_{w\in S_n} t^{\ell(w)}.$$

The following result writes the Poincar\'e polynomial in terms of the $c$-functions
$c_{w_0}^Y$ and
$c_{w_0}^{X^{-1}}$.

\begin{prop} \label{Poinbysymmprop}
We have that
\begin{equation}
t^{-\frac12\ell(w_0)} W_0(t) =  \ev^t_0(c_{w_0}^Y) = \sum_{w\in S_n} wc_{w_0}^{X^{-1}}
\label{Poinbysymm}
\end{equation}
and
$$W_0(t) = \prod_{1\le i<j\le n} \frac{ 1-t^{j-i+1}}{1-t^{j-i}}
= [n]!,
\qquad\hbox{where}\qquad
[n]! = \prod_{d=1}^n \frac{1-t^d}{1-t}.
$$
\end{prop}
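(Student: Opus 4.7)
The plan is to apply both factorizations $\mathbf{1}_0 = p_0^Y c_{w_0}^Y = p_0^X c_{w_0}^{X^{-1}}$ from Proposition~\ref{propslicksymmA} to the polynomial $1\in\CC[X]$ and compare the three resulting expressions for $\mathbf{1}_0\cdot 1$. First, from $T_i = t^{-\frac12}x_{i+1}\partial_i - t^{\frac12}\partial_i x_{i+1}$ in~\eqref{divdiffops} one checks $T_i\cdot 1 = t^{\frac12}$, hence $T_w\cdot 1 = t^{\frac12\ell(w)}$, giving
$$\mathbf{1}_0\cdot 1 = \sum_{w\in S_n} t^{\frac12(\ell(w)-\ell(w_0))}t^{\frac12\ell(w)} = t^{-\frac12\ell(w_0)}W_0(t).$$

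Next, on the $Y$-side, apply Theorem~\ref{Eeigenvalue} with $\mu = 0$ (so that $v_0 = \mathrm{id}$) to get $Y_i\cdot 1 = t^{-(i-1)+\frac12(n-1)}$, hence $Y_iY_j^{-1}\cdot 1 = t^{j-i}$, and
$$c_{w_0}^Y\cdot 1 = \prod_{1\le i<j\le n}\frac{t^{-\frac12}-t^{\frac12}\cdot t^{j-i}}{1-t^{j-i}} = t^{-\frac12\ell(w_0)}\prod_{i<j}\frac{1-t^{j-i+1}}{1-t^{j-i}},$$
which by definition is $\ev^t_0(c_{w_0}^Y)$. The crucial auxiliary claim is that $p_0^Y$ fixes constants. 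For any scalar $c\in\CC$, $C_{s_i}\cdot c = (t^{\frac12}+t^{-\frac12})c$ (since $T_i\cdot c = t^{\frac12}c$), and using $Y_iY_{i+1}^{-1}\cdot 1 = t$ one obtains $c_{i,i+1}^Y\cdot c = \frac{t^{-\frac12}(1-t^2)}{1-t}c = (t^{\frac12}+t^{-\frac12})c$; subtraction gives $\eta_{s_i}\cdot c = 0$. Reading any reduced expression for $w\ne e$ from the right then yields $\eta_w\cdot c = 0$, so $p_0^Y\cdot c = c$, and $\mathbf{1}_0\cdot 1 = \ev^t_0(c_{w_0}^Y)$.

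On the $X$-side, the key identity is $\xi_{s_i} = s_i$ as operators on $\CC[X]$. Writing $C_{s_i} = (1+s_i)c_{i+1,i}^X$ and using $s_i\cdot c_{i+1,i}^X = c_{i,i+1}^X\cdot s_i$ gives
$$C_{s_i} = c_{i+1,i}^X + c_{i,i+1}^X\cdot s_i,\qquad\hbox{so}\qquad \xi_{s_i} = \frac{1}{c_{i,i+1}^X}(C_{s_i}-c_{i+1,i}^X) = s_i.$$
Since $\xi_{s_i}$ and $s_i$ both satisfy the braid relations, $\xi_w = w$ for every $w\in S_n$, so $p_0^X = \sum_{w\in S_n} w$ and $\mathbf{1}_0\cdot 1 = \sum_{w\in S_n} w\,c_{w_0}^{X^{-1}}$. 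Combining the three evaluations of $\mathbf{1}_0\cdot 1$ yields the first display of the proposition along with $W_0(t) = \prod_{i<j}(1-t^{j-i+1})/(1-t^{j-i})$. To finish, a telescoping computation: grouping factors by $k = j-i$ shows the numerator equals $\prod_{m=2}^n(1-t^m)^{n-m+1}$ and the denominator equals $\prod_{k=1}^{n-1}(1-t^k)^{n-k}$, and the ratio collapses to $\prod_{m=1}^n(1-t^m)/(1-t)^n = [n]!$. The main delicate point is the verification $\eta_{s_i}\cdot c = 0$ on constants, since this is what converts the abstract identity $\mathbf{1}_0\cdot 1 = p_0^Y c_{w_0}^Y\cdot 1$ into the explicit scalar $\ev^t_0(c_{w_0}^Y)$.
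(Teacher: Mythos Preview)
Your argument is correct and is essentially the paper's own proof: both evaluate $\mathbf{1}_0\cdot 1$ in three ways (directly from the definition of $\mathbf{1}_0$, via $\mathbf{1}_0 = p_0^X c_{w_0}^{X^{-1}}$ together with $\xi_{s_i}=s_i$, and via $\mathbf{1}_0 = p_0^Y c_{w_0}^Y$ together with $\eta_w\cdot 1 = 0$ for $w\ne 1$) and then telescope the resulting product to $[n]!$. The only delicate point—your ``subtraction gives $\eta_{s_i}\cdot c = 0$'' really shows $\tau_i^\vee\cdot c = 0$, while the normalizing factor $c_{i+1,i}^Y$ in $\eta_{s_i}=(c_{i+1,i}^Y)^{-1}\tau_i^\vee$ also annihilates constants—is exactly the convention the paper adopts in the remark after Proposition~\ref{normintwn} and uses in the same way.
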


By Proposition~\ref{Poinbysymmprop}, 
$$W_0(t) = \sum_{w\in S_n} w t^{\frac12\ell(w_0)}c_{w_0}^{X^{-1}}
= \sum_{w\in S_n} w \Big(\prod_{i<j} \frac{x_i-tx_j}{x_i-x_j}\Big).$$
This coincides with the definition of $W_0(t)$ (see~\eqref{Plambdadefn}) as the 
appropriate constant that makes the coefficient of  $1= x^0 = x_1^0\cdots x_n^0$ in 
$$P_0 = \frac{1}{W_0(t)} \sum_{w\in S_n}  w\Big(\prod_{i<j} \frac{x_i-tx_j}{x_i-x_j}\Big).
$$

The following result is the analog for the subgroup $W_\lambda$, and its proof is the same as for 
Proposition~\ref{Poinbysymmprop}, 
except restricted to the subgroup $W_\lambda$.  It provides explicit formulas for the
normalizing factor $W_\lambda(t)$ which makes the coefficient of $x^\lambda$ in the
bosonic Macdonald polynomial $P_\lambda(x_1, \ldots, x_n;q,t)$ equal to $1$.

\begin{prop} \label{paraPoin} 
Let $\lambda \in \left(\ZZ^n\right)^+$. Let 
$W_\lambda = \{ v\in S_n\ |\ v\lambda = \lambda\}$ be the stabilizer of $\lambda$ in $S_n$ and 
$W_\lambda(t)$ the length generating
function for $W_\lambda$.  Then
$$t^{-\frac12\ell(w_\lambda)} W_\lambda(t)  = \ev^t_0(c_{w_\lambda}^Y)
= \sum_{w\in W_\lambda} w c_{w_\lambda}^{X^{-1}},$$
where $w_\lambda$ is the longest element of $W_\lambda$.  Alternatively,
$$W_\lambda(t) 
= \sum_{w\in W_\lambda} w\Big(\prod_{1\le i<j\le n\atop \lambda_i = \lambda_j} \frac{1-tx^{-1}_ix_j}{1-x^{-1}_ix_j}\Big)
= \prod_{1\le i<j\le n\atop \lambda_i=\lambda_j} \frac{1-t^{j-i+1}}{1-t^{j-i}}
= \prod_{i\in \ZZ} [m_i]!,
$$
where $m_i = \#\{ k\in \{1, \ldots, n\}\ |\ \lambda_k = i\}$.
\end{prop}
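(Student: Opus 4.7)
The plan is to mimic the proof of Proposition~\ref{Poinbysymmprop}, replacing the group $S_n$ by the stabilizer $W_\lambda$ throughout. Since $\lambda\in(\ZZ^n)^+$ is weakly decreasing, $W_\lambda$ is a standard parabolic subgroup of $S_n$, isomorphic to $\prod_{i\in\ZZ} S_{m_i}$, and its longest element $w_\lambda$ has inversion set precisely $\{(i,j)\ |\ i<j\text{ and }\lambda_i=\lambda_j\}$. The key input is the direct parabolic analogue of Proposition~\ref{propslicksymmA}, proved by running its argument with $W_\lambda$ in place of $S_n$ (only $T_i$ with $s_i\in W_\lambda$ are invoked): $\mathbf{1}_\lambda = p_\lambda^Y c_{w_\lambda}^Y = p_\lambda^X c_{w_\lambda}^{X^{-1}}$.

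I would then evaluate $\mathbf{1}_\lambda\cdot 1\in\CC[X]$ in three different ways. Directly from~\eqref{pHs} and $T_i\cdot 1 = t^{\frac12}$ (via~\eqref{HYtriv} and~\eqref{CXasIndHY}), I obtain $\mathbf{1}_\lambda\cdot 1 = t^{-\frac12\ell(w_\lambda)}W_\lambda(t)$. Using $\mathbf{1}_\lambda = p_\lambda^Y c_{w_\lambda}^Y$, every $Y_k$ acts on $1$ by a scalar, so $c_{w_\lambda}^Y\cdot 1 = \ev^t_0(c_{w_\lambda}^Y)\cdot 1$; moreover each $\eta_{s_i}\cdot 1 = 0$, because $\ev^t_0(Y_iY_{i+1}^{-1})=t$ forces $c_{i,i+1}^Y\cdot 1 = t^{\frac12}+t^{-\frac12} = C_{s_i}\cdot 1$, making $(C_{s_i}-c_{i,i+1}^Y)\cdot 1$ vanish. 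Hence $p_\lambda^Y\cdot 1=1$ and $\mathbf{1}_\lambda\cdot 1 = \ev^t_0(c_{w_\lambda}^Y)$. Using $\mathbf{1}_\lambda = p_\lambda^X c_{w_\lambda}^{X^{-1}}$, the $X$-analogue $c_{i,i+1}^X + c_{i+1,i}^X = t^{\frac12}+t^{-\frac12}$ gives $\xi_{s_i}\cdot 1=1$; combined with the intertwining relations~\eqref{nintpastY}, this yields $\xi_w\cdot f = w(f)$ for any $f\in\CC[X]$, so $\mathbf{1}_\lambda\cdot 1 = \sum_{w\in W_\lambda} w(c_{w_\lambda}^{X^{-1}})$. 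Matching the three expressions and rewriting $c_{ij}^{X^{-1}} = t^{-\frac12}(x_i-tx_j)/(x_i-x_j)$ yields the first chain of equalities.

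For the closed product, I would compute $\ev^t_0(Y_j) = t^{\frac12(n-2j+1)}$ directly from $Y_j = T_{j-1}^{-1}\cdots T_1^{-1} T_\pi T_{n-1}\cdots T_j$, whence $\ev^t_0(Y_iY_j^{-1}) = t^{j-i}$ and $\ev^t_0(c_{w_\lambda}^Y) = \prod_{i<j,\ \lambda_i=\lambda_j} t^{-\frac12}(1-t^{j-i+1})/(1-t^{j-i})$. Under the isomorphism $W_\lambda\cong\prod_i S_{m_i}$, the inversions of $w_\lambda$ partition into blocks of consecutive indices of constant $\lambda$-value, so the product factorizes block-by-block, each block of size $m_i$ contributing $[m_i]!$ by the telescoping identity $\prod_{1\le a<b\le m}(1-t^{b-a+1})/(1-t^{b-a}) = [m]!$. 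The main obstacle is the bookkeeping in the middle step --- verifying $\xi_w\cdot f = w(f)$ and $p_\lambda^Y\cdot 1 = 1$ --- but both reduce to the single symmetric identity $c_{i,i+1} + c_{i+1,i} = t^{\frac12}+t^{-\frac12}$ (for the $X$- and $Y$-flavors of $c$-function) together with the explicit evaluations of $T_i$ and $Y_j$ on the constant polynomial.
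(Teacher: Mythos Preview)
Your proposal is correct and follows essentially the same route as the paper: both arguments apply the parabolic symmetrizer $\mathbf{1}_\lambda$ to the constant polynomial (equivalently, to $\mathbf{1}_Y$) and read off the three expressions from the definition of $\mathbf{1}_\lambda$ and from the identities $\mathbf{1}_\lambda = p_\lambda^Y c_{w_\lambda}^Y = p_\lambda^X c_{w_\lambda}^{X^{-1}}$. A small remark: your deduction of $\xi_w\cdot f = w(f)$ from $\xi_{s_i}\cdot 1 = 1$ plus the intertwining relations~\eqref{nintpastY} is valid, but the paper has already established the stronger fact $\xi_{s_i}=s_i$ as operators on $\CC[X]$ in the proof of Theorem~\ref{PolyrepThm}, which you could simply cite; also, the paper's printed proof folds in the extra verification that $W_\lambda(t)$ really is the normalizing constant in~\eqref{Plambdadefn}, which goes beyond the bare statement of the proposition.
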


\begin{example}
For $\lambda = (5,5,3,2,2,2,2,2, -1, -1, -1)$, we have that  $m_5 = 2$, $m_3=1$, $m_2 = 5$,
$m_{-1} = 3$. By Proposition~\ref{paraPoin}, 
$$W_\lambda = S_5 \times S_1\times S_5 \times S_3 \subseteq S_{11}
\qquad\hbox{and}\qquad W_\lambda(t) = [2]!\cdot [1]! \cdot [5]! \cdot [3]!.
$$
\end{example}

\subsection{$H_Y$-decomposition of the polynomial representation}

Let $H_Y$ be the algebra generated by the operators $T_1, \ldots, T_{n-1}$
and $Y_1, \ldots, Y_n$ (so that $H_Y$ is an affine Hecke algebra).
As $H_Y$-modules
$$\CC[X] = \bigoplus_{\lambda} \CC[X]^\lambda
\qquad\hbox{where}\qquad
\CC[X]^\lambda 
= \hbox{span}\{ E_\mu\ |\ \mu\in S_n\lambda\},
$$
where the direct sum is over decreasing $\lambda = (\lambda_1\ge \cdots \ge \lambda_n)\in \ZZ^n$, and
$S_n\lambda$ denotes the set of distinct rearrangements of $\lambda$.

A description of the action of $H_Y$ on $\CC[X]^\lambda$ is given by the following.
Let $\mu\in \ZZ^n$ and $v_\mu\in S_n$ be the minimal length permutation such that $v_\mu\mu$ is weakly increasing. Fix $i\in \{1, \ldots, n-1\}$, and define
\begin{equation*}
\begin{array}{l}
a_\mu =q^{\mu_i-\mu_{i+1}}t^{v_\mu(i)-v_\mu(i+1)} , \\
a_{s_i\mu} =q^{\mu_{i+1}-\mu_i} t^{v_\mu(i+1)-v_\mu(i)} ,
\end{array}
\qquad\hbox{and}\qquad
D_\mu = \frac{(1-ta_\mu)(1-ta_{s_i\mu})}{(1-a_\mu)(1-a_{s_i\mu})}.
\end{equation*}
We have two cases depending on the relationship between $\mu_i$ and $\mu_{i+1}$.
\begin{itemize}
    \item[] \underline{Case 1:} $\mu_i>\mu_{i+1}$. Using the identity $E_{s_i\mu} = t^{\frac12}\tau^\vee_iE_\mu$ if $\mu_i > \mu_{i+1}$ from (E2), the eigenvalue  from Theorem~\ref{Eeigenvalue} and the formulas in~\eqref{interwinersrels} for $\tau_i^\vee$,
\begin{equation}
\begin{array}{l}
Y_i^{-1}Y_{i+1} E_\mu = a_\mu E_\mu, \\
Y_i^{-1}Y_{i+1} E_{s_i\mu} = a_{s_i\mu}E_{s_i\mu},
\end{array}
\quad
\begin{array}{l}
t^{\frac12}\tau^\vee_i E_\mu = E_{s_i\mu}, \\
t^{\frac12}\tau^\vee_i E_{s_i\mu} = D_\mu E_\mu,
\end{array}
\quad\hbox{and}
\quad
\begin{array}{l}
t^{\frac12}T_i E_\mu = -\frac{1-t}{1-a_\mu}E_\mu + E_{s_i\mu}, \\
t^{\frac12}T_i E_{s_i\mu} = D_\mu E_\mu + \frac{1-t}{1-a_{s_i\mu}} E_{s_i\mu}.
\end{array}
\label{CXlambdaaction}
\end{equation}

    \item[] \underline{Case 2:} If $\mu_i=\mu_{i+1}$ then
$v_\mu(i+1) = v_\mu(i)+1$ and $a_\mu = t^{-1}$, so that
\begin{equation}
Y_i^{-1}Y_{i+1} E_\mu = t^{-1} E_\mu, 
\qquad (t^{\frac12}\tau^\vee_i) E_\mu = 0,
\qquad\hbox{and}\qquad (t^{\frac12}T_i) E_\mu = t E_\mu.
\label{Tigivest}
\end{equation}
\end{itemize}
These formulas make explicit the action of $H_Y$ on $\CC[X]^\lambda$
in the basis $\{ E_\mu\ |\ \mu\in S_n \lambda\}$.

\subsection{E-expansions}

Let $\CC(Y)$ be the field of fractions of $\CC[Y]$.  
For $\mu = (\mu_1, \ldots, \mu_n)\in \ZZ^n$ define homomorphisms
$\ev^t_\mu\colon \CC[Y] \to \CC[Y]$ by
\begin{equation*}
\ev^t_\mu(Y_i) = q^{-\mu_i}t^{-(v_\mu(i)-1)+\frac12(n-1)},
\qquad\hbox{for $i\in \{1, \ldots, n\}$.}
\end{equation*}
Extend $\ev^t_\mu$ to those elements of the field $\CC(Y)$ for which the evaluated denominator
is not $0$.  
We refer to these homomorphisms as \emph{evaluations}. 
Then Theorem~\ref{Eeigenvalue} gives that
\begin{equation}
f E_\mu = \ev^t_\mu(f)E_\mu,\qquad \text{ for } f\in \CC[Y] \text{ and }\mu\in \ZZ^n.
\label{EeigenvalueB} 
\end{equation}

The following result expressed the bosonic and fermionic Macdonald polynomials in terms of the electronic Macdonald polynomials using the evaluations and $c$-functions. 
\begin{prop}  \label{Eexpansion}
Let $\lambda \in \left(\ZZ^n\right)^+$. Then
\begin{align*}
P_\lambda 
&=  \sum_{z\in W^\lambda} t^{\frac12\ell(v_\lambda z)} 
\ev^t_{z\lambda}(c^Y_{v_\lambda z}) E_{z\lambda}
\qquad\hbox{and}
\\
A_{\lambda+\rho} 
&= \sum_{z\in W_0} (-t^{\frac12})^{\ell(w_0z)}
\ev^t_{z(\lambda+\rho)}(c^{Y^{-1}}_{w_0z})
E_{z(\lambda+\rho)}.
\end{align*}
Alternatively, 
\begin{align*}
P_\lambda 
&= \sum_{\mu\in S_n\lambda} 
\Big(\prod_{1\le i<j\le n\atop \mu_i > \mu_j} t
\Big(\frac{1-q^{\mu_i-\mu_j}t^{v_\mu(i)-v_\mu(j)-1}}{1-q^{\mu_i-\mu_j}t^{v_\mu(i)-v_\mu(j)}}\Big)\Big) E_\mu
\qquad\hbox{and}
\\
A_{\lambda+\rho} 
&= \sum_{\mu\in S_n(\lambda+\rho)} 
\Big(\prod_{1\le i<j\le n\atop \mu_i > \mu_j} 
(-1)\Big(\frac{1-q^{\mu_i-\mu_j}t^{v_\mu(i)-v_\mu(j)+1}}{1-q^{\mu_i-\mu_j}t^{v_\mu(i)-v_\mu(j)}}\Big) \Big) E_\mu.
\end{align*}
\end{prop}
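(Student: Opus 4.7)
The plan is to derive both $E$-expansions from the creation formulas~\eqref{creationPA} combined with the symmetrizer factorizations of Propositions~\ref{propslicksymmA} and~\ref{propsymwparabA}, reducing everything to the action of normalized intertwiners $\eta_w$ on Cherednik-Dunkl eigenvectors. The key preliminary observation is that $\eta_w Y_k = Y_{w(k)}\eta_w$ (obtained by induction on reduced words from~\eqref{nintpastY}), so that $\eta_z E_\mu$ is a $Y$-eigenvector with the same eigenvalues as $E_{z\mu}$; hence $\eta_z E_\mu = c_{z,\mu}E_{z\mu}$ for some scalar $c_{z,\mu}$ to be identified.

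For $P_\lambda$, I would start from $P_\lambda = \frac{t^{\frac12\ell(w_0)}}{W_\lambda(t)}\mathbf{1}_0 E_\lambda$ and substitute $\mathbf{1}_0 = p^\lambda_Y c^Y_{v_\lambda}\mathbf{1}_\lambda$. Acting on $E_\lambda$, each factor is explicit: $\mathbf{1}_\lambda E_\lambda = t^{-\frac12\ell(w_\lambda)}W_\lambda(t)E_\lambda$ because $T_i E_\lambda = t^{\frac12}E_\lambda$ for each $s_i\in W_\lambda$ by~\eqref{Tigivest}; $c^Y_{v_\lambda} E_\lambda = \ev^t_\lambda(c^Y_{v_\lambda})E_\lambda$ by~\eqref{EeigenvalueB}; and $p^\lambda_Y E_\lambda = \sum_{z\in W^\lambda}c_{z,\lambda}E_{z\lambda}$. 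Using $\ell(w_0) = \ell(v_\lambda)+\ell(w_\lambda)$, these combine to
\[
P_\lambda = t^{\frac12\ell(v_\lambda)}\,\ev^t_\lambda(c^Y_{v_\lambda})\sum_{z\in W^\lambda}c_{z,\lambda}\,E_{z\lambda}.
\]
The task is then to compute $c_{z,\lambda}$ and match the result against the stated coefficient.

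To determine $c_{z,\lambda}$ I would induct on $\ell(z)$. A standard property of parabolic quotients is that for $z\in W^\lambda$ there exists a reduced word $z = s_{i_1}\cdots s_{i_\ell}$ such that each intermediate weight $\mu^{(j)} = s_{i_{j+1}}\cdots s_{i_\ell}\lambda$ has $\mu^{(j)}_{i_j}>\mu^{(j)}_{i_j+1}$, placing every step in Case~1 of~\eqref{CXlambdaaction}. Combining $\tau^\vee_{s_i} = c^Y_{i+1,i}\eta_{s_i}$ with $t^{\frac12}\tau^\vee_i E_\mu = E_{s_i\mu}$ from~\eqref{CXlambdaaction} yields $\eta_{s_i}E_\mu = t^{-1/2}\,\ev^t_{s_i\mu}(c^Y_{i+1,i})^{-1}E_{s_i\mu}$ in that regime. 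Iterating step-by-step and organizing the resulting product of $c^Y$-evaluations by the combinatorial correspondence between the inversions traversed and the inversion set $\mathrm{Inv}(v_\lambda z^{-1}) = \{(i,j): i<j,\ (z\lambda)_i>(z\lambda)_j\}$ gives the identification
\[
c_{z,\lambda} = t^{-\ell(z)/2}\,\frac{\ev^t_{z\lambda}(c^Y_{v_\lambda z^{-1}})}{\ev^t_\lambda(c^Y_{v_\lambda})},
\]
whose substitution into the boxed formula produces the first expansion for $P_\lambda$. The argument for $A_{\lambda+\rho}$ is parallel: since $\lambda+\rho$ is strictly decreasing we have $W_{\lambda+\rho}=\{e\}$ and $v_{\lambda+\rho} = w_0$, so Proposition~\ref{propslicksymmA} gives $\varepsilon_0 = c^{Y^{-1}}_{w_0}e^Y_0$; expanding $e^Y_0 = \sum_z (-1)^{\ell(z)-\ell(w_0)}\eta_z$ and then multiplying by $c^{Y^{-1}}_{w_0}$ brings in the evaluation $\ev^t_{z(\lambda+\rho)}(c^{Y^{-1}}_{w_0})$, yielding the signed fermionic coefficient after collecting the factors of $(-t^{\frac12})$.

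The alternative rational function formulations follow from the first by substituting the explicit evaluation
\[
\ev^t_\mu(c^Y_{ij}) = \frac{t^{1/2}\bigl(1-q^{\mu_i-\mu_j}t^{v_\mu(i)-v_\mu(j)-1}\bigr)}{1-q^{\mu_i-\mu_j}t^{v_\mu(i)-v_\mu(j)}},
\]
computed directly from the eigenvalues in Theorem~\ref{Eeigenvalue}, and using that the indexing set $\mathrm{Inv}(v_\lambda z^{-1})$ (respectively $\mathrm{Inv}(w_0 z^{-1})$ for $A$) coincides with the set of sequence-inversions of $z\lambda$ (resp.\ $z(\lambda+\rho)$). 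The main obstacle in the above program is the careful bookkeeping in identifying $c_{z,\lambda}$: one must justify the existence of a Case~1 reduced word and prove that the telescoping product of $c^Y_{i_j+1,i_j}$-evaluations along that word collects precisely into $\ev^t_{z\lambda}(c^Y_{v_\lambda z^{-1}})/\ev^t_\lambda(c^Y_{v_\lambda})$, which is a purely combinatorial statement about inversions in parabolic quotients of $S_n$.
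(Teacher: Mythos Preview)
Your approach is correct and follows the same overall architecture as the paper: start from the creation formulas~\eqref{creationPA}, insert the parabolic symmetrizer factorization $\mathbf{1}_0 = p^\lambda_Y c^Y_{v_\lambda}\mathbf{1}_\lambda$ (resp.\ $\varepsilon_0 = c^{Y^{-1}}_{w_0}e^Y_0$), and reduce to the action of $\eta_z$ on $E_\lambda$. The difference lies in how you handle what you call the ``main obstacle''. You first evaluate $c^Y_{v_\lambda}E_\lambda = \ev^t_\lambda(c^Y_{v_\lambda})E_\lambda$ and then compute $\eta_z E_\lambda$ by a step-by-step induction along a Case~1 reduced word, leaving yourself a telescoping product of evaluated $c$-factors to reorganize combinatorially. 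The paper avoids this bookkeeping entirely by keeping $c^Y_{v_\lambda}$ as an operator: writing $\eta_z = \tau^\vee_z\cdot\frac{1}{c^Y_z}$ and using the cocycle relation $\dfrac{c^Y_{v_\lambda}}{c^Y_z} = z^{-1}c^Y_{v_\lambda z}$, one gets $\eta_z\, c^Y_{v_\lambda} = \tau^\vee_z(z^{-1}c^Y_{v_\lambda z}) = c^Y_{v_\lambda z}\,\tau^\vee_z$, after which $\tau^\vee_z E_\lambda = t^{-\frac12\ell(z)}E_{z\lambda}$ finishes the computation in one line. This operator-level manipulation is exactly the ``purely combinatorial statement about inversions in parabolic quotients'' you were anticipating, packaged as the multiplicativity $c^Y_{uv} = (v^{-1}c^Y_u)\,c^Y_v$ when $\ell(uv) = \ell(u)+\ell(v)$; invoking it directly eliminates the need for the inductive scalar-tracking. (A small indexing caution: your $c^Y_{v_\lambda z^{-1}}$ should be the paper's $c^Y_{v_\lambda z}$; both are products over $\{(i,j):i<j,\ (z\lambda)_i>(z\lambda)_j\}$, so this is a matter of convention rather than substance.)
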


\begin{example}
For $n=2$ and $m\in \ZZ_{>0}$,
\begin{align*}
P_{(m,0)}
&= E_{(0,m)} + t^{\frac12} \ev^t_{(m,0)}(c^Y_{12}) E_{(m,0)} 
= E_{(0,m)} + t\Big(\frac{1-q^m }{1-q^m t}\Big) E_{(m,0)}, \\
A_{(m,0)} 
&= E_{(0,m)} - t^{\frac12}\ev^t_{(m,0)}(c^Y_{21}) E_{(m,0)} 
= E_{(0,m)} - \frac{1-q^m t^2}{1-q^m t} E_{(m,0)}.
\end{align*}
Note that these expressions relate to $c$-functions since 
$t\Big(\frac{1-q^m }{1-q^m t}\Big) 
= t^{\frac12}\Big(\frac{t^{-\frac12}-t^{\frac12}q^{-m}t^{-1} }{1-q^{-m}t^{-1}}\Big)$.

For $n=3$, 
\begin{equation*}
P_{(1,0,0)} = E_{(0,0,1)} + t\Big(\frac{1-q}{1-qt}\Big)E_{(0,1,0)}
+ t^2 \Big(\frac{1-q}{1-qt}\Big)\Big(\frac{1-qt}{1-qt^2}\Big)E_{(1,0,0)}.
\end{equation*}
For general $n$ and $m\in \ZZ_{>0}$, denote $\varepsilon_i = (0, \ldots, 0, 1,0, \ldots, 0)$, that is the sequence of 
length $n$ with $1$ in the $i$th spot and $0$ elsewhere. Then
\begin{align*}
P_{(m,0,\ldots, 0)} 
&= \sum_{i=1}^n 
t^{n-i} \Big(\frac{1-q^m}{1-q^mt}\Big)
\Big(\frac{1-q^m t}{1-q^m t^2}\Big)
\cdots \Big( \frac{1-q^mt^{n-i-1}}{1-q^mt^{n-i}}\Big)
E_{m\varepsilon_i}
= \sum_{i=1}^n 
t^{n-i} \Big(\frac{1-q^m}{1-q^mt^{n-i}}\Big) E_{m\varepsilon_i}.
\end{align*}
\end{example}

\begin{example}
\begin{align*}
    P_{(2,1,0)} &= E_{(0,1,2)} 
+ t\Big(\frac{1-q}{1-qt}\Big)E_{(1,0,2)}+ t\Big(\frac{1-q}{1-qt}\Big)E_{(0,2,1)}
+ t^2\Big(\frac{1-qt}{1-qt^2}\Big)\Big(\frac{1-q^2}{1-q^2t}\Big)E_{(2,0,1)} \\
&\qquad+ t^2\Big(\frac{1-qt}{1-qt^2}\Big)\Big(\frac{1-q^2}{1-q^2t}\Big)E_{(1,2,0)}
+ t^3\Big(\frac{1-q}{1-qt}\Big)\Big(\frac{1-q^2t}{1-q^2t^2}\Big)\Big(\frac{1-q}{1-qt}\Big)E_{(2,1,0)}.
\end{align*}
\end{example}

\subsection{Proofs}

\subsubsection{Proof that the action on polynomials is a representation of $\widetilde{H}$ }

\begin{dupThm}[~\ref{PolyrepThm}]
    Let $\widetilde{H}$ be the double affine Hecke algebra
as defined by generators and relations in Section~\ref{subsec:DAHA}.
The formulas~\eqref{divdiffops} define an action of $\widetilde{H}$
on $\CC[X]$.
\end{dupThm}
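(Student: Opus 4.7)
The plan is to verify each of the defining relations \eqref{periodicityrelsF}--\eqref{DAHArels2F} of $\widetilde{H}$ for the operators in \eqref{divdiffops}. I organize the check into four blocks by the types of generators involved. The relations among the $X_i$ are trivial: $X_kX_\ell = X_\ell X_k$ holds because the $X_i$ act by multiplication by the commuting variables $x_i$, and the periodicities $X_{i+n} = q^{-1}X_i$ and $T_{i+n} = T_i$ simply define the extension of the generators to all $i\in\ZZ$.

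Next, the finite Hecke relations \eqref{HeckerelsF} for $T_1, \ldots, T_{n-1}$. The key computational facts are $\partial_i^2 = 0$ together with the intertwining identities $\partial_i x_{i+1} - x_{i+1}\partial_i = -s_i$ and $\partial_i x_i - x_i\partial_i = s_i$; these let one rewrite $T_i = t^{\frac12}s_i + (t^{-\frac12}-t^{\frac12})x_{i+1}\partial_i$, from which the quadratic relation $T_i^2 = (t^{\frac12}-t^{-\frac12})T_i + 1$ follows by a direct expansion. The commutation $T_iT_j = T_jT_i$ for $|i-j|>1$ is immediate because the operators act on disjoint pairs of variables. The braid relation $T_iT_{i+1}T_i = T_{i+1}T_iT_{i+1}$ is the most delicate; it reduces to the three-variable case, where both sides can be expanded in the basis $\{s_w : w\in S_3\}$ of the free left $\CC(x_i,x_{i+1},x_{i+2})$-module of rational-coefficient operators and compared.

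For the cross-relations \eqref{XaffHeckerelsF}: the commutation $T_iX_j = X_jT_i$ for $j\notin\{i,i+1\}$ is clear since $\partial_i$ only involves $x_i$ and $x_{i+1}$, and the identity $X_{i+1} = T_iX_iT_i$ is equivalent to the two formulas of \eqref{TpastXF}, each of which follows from the rewritten form of $T_i$ together with $s_ix_i = x_{i+1}s_i$ and $s_ix_{i+1} = x_is_i$. For the relations \eqref{DAHArels2F} involving $T_\pi$: unpacking $T_\pi = s_1\cdots s_{n-1}y_n$, one sees that $T_\pi$ cyclically shifts variables with a $q$-twist at the wrap-around, so $T_\pi X_i = X_{i+1}T_\pi$ (with $X_{n+1} = q^{-1}X_1$) is a one-line verification on monomials, while the cyclic compatibility $T_\pi T_i = T_{i+1}T_\pi$ follows because conjugation by $T_\pi$ sends $s_i\mapsto s_{i+1}$, $x_i\mapsto x_{i+1}$, $\partial_i\mapsto\partial_{i+1}$ (indices modulo $n$), and so transports the formula for $T_i$ to that for $T_{i+1}$.

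The main obstacle is the braid relation (a patient but essentially mechanical three-variable calculation) together with the wrap-around case of $T_\pi T_i = T_{i+1}T_\pi$ at $i=n-1$: there one must interpret $T_0$ via the affine simple reflection $s_0 = T_\pi s_{n-1}T_\pi^{-1}$ and verify its consistency with the periodicity $T_n = T_0$. Once all defining relations are confirmed, $\CC[X]$ inherits the structure of an $\widetilde{H}$-module, and the correspondence $x^\mu\leftrightarrow X^\mu\mathbf{1}_Y$ realizes it as the induced representation $\widetilde{H}\mathbf{1}_Y$ of \eqref{CXasIndHY}.
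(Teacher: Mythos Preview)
Your proposal is correct and covers all the needed relations, but the paper takes a different route for the Hecke and cross relations. Rather than verifying the braid relation $T_iT_{i+1}T_i = T_{i+1}T_iT_{i+1}$ by a direct three-variable expansion, the paper computes that, as operators on $\CC[X]$, the normalized intertwiner $\xi_{s_i} = (c^X_{i,i+1})^{-1}(C_{s_i}-c^X_{i+1,i})$ acts exactly as the simple reflection $s_i$. It then invokes Proposition~\ref{normintwn}, whose proof (see \eqref{Cs1s2s1}) shows that the braid relation for $T_i$ is \emph{equivalent} to the braid relation for $\xi_{s_i}$; since the latter is just $s_is_{i+1}s_i = s_{i+1}s_is_{i+1}$, which is obvious, the Hecke braid relation follows without any tedious expansion. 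The same mechanism handles \eqref{XaffHeckerelsF}: the relations $s_iX_i = X_{i+1}s_i$ etc.\ for $\xi_{s_i}=s_i$ transfer to \eqref{TpastXF} via Proposition~\ref{normintwn}.

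What each approach buys: your direct method is self-contained and does not depend on the intertwiner machinery of Section~3, at the cost of the ``patient but essentially mechanical'' braid computation you flag. The paper's method trades that computation for the single identity $\xi_{s_i}=s_i$ (a short rational-function simplification) plus the structural equivalence already established in Proposition~\ref{normintwn}, so the braid relation comes essentially for free. The treatment of the $T_\pi$ relations \eqref{DAHArels2F} is the same in both: conjugation by $T_\pi$ shifts $s_i$, $X_i$, $\partial_i$ cyclically, with the wrap-around handled by defining $s_0$, $\partial_0$, $T_0$ via $T_\pi(\cdot)T_\pi^{-1}$ and checking $T_\pi T_0 T_\pi^{-1} = T_1$.
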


\begin{proof} 
Let $i\in \{1, \ldots, n-1\}$.
Since $y_n X_i = X_i y_n$ and $s_1\cdots s_{n-1} X_i = X_{i+1} s_1\cdots s_{n-1}$ 
then
$$T_\pi X_i = s_1\cdots s_{n-1}y_n X_i = X_{i+1} s_1\cdots s_{n-1}y_n = X_{i+1}T_\pi.$$
As operators on $\CC[X]$,
$$T_\pi X_n = s_1\cdots s_{n-1} y_n X_n = s_1\cdots s_{n-1}q^{-1}X_n y_n = q^{-1}X_1s_1\cdots s_{n-1} y_n
= q^{-1}X_1 T_\pi = X_{n+1} T_\pi.$$
Recall that we define $s_0 = T_\pi s_{n-1}T_\pi^{-1}$ so that
\begin{align*}
s_0 
& = T_\pi s_{n-1}T_\pi^{-1}
= (y_1s_1\cdots s_{n-1})s_{n-1}(s_{n-1}\cdots s_1y^{-1}_1)
=y_1 s_1\cdots s_{n-2}s_{n-1}s_{n-2}\cdots s_1 y^{-1}_1 
\\ &= y_1y^{-1}_n s_1\cdots s_{n-2}s_{n-1}s_{n-2}\cdots s_1= y_1y^{-1}_n s_{n-1}\cdots s_2s_1s_2 \cdots s_{n-1}.
\end{align*}
Then 
\begin{align*}
T_\pi s_0T_\pi^{-1} = (s_1\cdots s_{n-1}y_n)
(y_1y^{-1}_n s_{n-1}\cdots s_2s_1s_2 \cdots s_{n-1})
(s_{n-1}\cdots s_1y^{-1}_1)
=s_1y_2 y_1 y_2^{-1} y_1^{-1} = s_1.
\end{align*}
Define $\partial_0 = T_\pi \partial_{n-1} T_\pi^{-1}$, so that
\begin{align*}
\partial_0 = T_\pi \partial_{n-1} T_\pi^{-1} = T_\pi\frac{1}{x_{n-1}-x_n}(1-s_{n-1})T_\pi^{-1}
=\frac{1}{x_n-q^{-1}x_1}(1-s_0).
\end{align*}
Then
\begin{align*}
T_\pi \partial_0 T^{-1}_\pi = T_\pi \frac{1}{x_n-q^{-1}x_1}(1-s_0)T^{-1}_\pi
=\frac{1}{q^{-1}x_1-q^{-1}x_2}(1-s_1) = q\partial_1.
\end{align*}
Finally, 
\begin{align*}
T_0 
&= T_\pi T_{n-1}T^{-1}_\pi 
= T_\pi (t^{-\frac12}x_n\partial_{n-1}-t^{\frac12}\partial_{n-1}x_n)T^{-1}_\pi
= t^{-\frac12}q^{-1}x_1\partial_0-t^{\frac12}\partial_0 q^{-1}x_n,
\end{align*}
and
\begin{align*}
T_\pi T_0T^{-1}_\pi = T_\pi (t^{-\frac12}q^{-1}x_1\partial_0-t^{\frac12}\partial_0 q^{-1}x_1)T^{-1}_\pi
= t^{-\frac12}q^{-1} x_2q\partial_1 -t^{\frac12}q\partial_1 q^{-1}x_2
= T_1.
\end{align*}
These computations show that the relations in~\eqref{DAHArels2F} hold. 

Let $i\in \{1, \ldots, n-1\}$.  Then, as operators on $\CC[X]$,
\begin{align*}
\xi_{s_i} 
&= \frac{1}{c^X_{i,i+1}}(C_{s_i}-c^X_{i+1,i})
= \frac{1}{c^X_{i,i+1}}(T_i+t^{-\frac12}-c^X_{i+1,i})
= \frac{1}{c^X_{i,i+1}}(t^{-\frac12}x_{i+1}\partial_i-t^{\frac12}\partial_i x_{i+1}+t^{-\frac12}-c^X_{i+1,i})
\\
&= \frac{1}{c^X_{i,i+1}}\Big(
t^{-\frac12}\frac{x_{i+1}}{x_i-x_{i+1}}(1-s_i)-t^{\frac12}\frac{1}{x_i-x_{i+1}}(1-s_i)x_{i+1}
+\frac{t^{-\frac12}x_i-t^{-\frac12}x_{i+1}}{x_i-x_{i+1}} - \frac{t^{-\frac12}x_i-t^{\frac12}x_{i+1}}{x_i-x_{i+1}}\Big)
\\
&= \frac{1}{c^X_{i,i+1}}\Big( \frac{t^{-\frac12}x_{i+1}-t^{\frac12}x_{i+1}
+t^{-\frac12}x_i-t^{-\frac12}x_{i+1} -t^{-\frac12}x_i+t^{\frac12}x_{i+1}}{x_i-x_{i+1}}
+\frac{ - t^{-\frac12}x_{i+1}+t^{\frac12}x_i}{x_i-x_{i+1}}s_i\Big)
\\
&= \frac{1}{c^X_{i,i+1}}\Big( 0+\frac{t^{-\frac12}-t^{\frac12}x_i x^{-1}_{i+1}}{1-x_ix^{-1}_{i+1}} s_i\Big)
=s_i.
\end{align*}
Then, as in the proof of Proposition~\ref{normintwn}, the relations
$$s_i^2 = 1, \qquad s_is_{i+1}s_i = s_{i+1}s_i s_{i+1}, \qquad s_j s_k = s_k s_j,$$
for $i,j,k\in \{1, \ldots, n-1\}$ with $k\not\in \{j-1, j+1\}$,
imply the relations in~\eqref{HeckerelsF}.
Similarly, as in the proof of Proposition~\ref{normintwn}, the relations
$$s_i X_i = X_{i+1}s_i, \qquad s_i X_{i+1} = x_i s_i, \qquad s_i X_j = X_j s_i,$$
for $i\in \{1, \ldots, n-1\}$ with $j\not\in \{i,i+1\}$, imply the relations in~\eqref{TpastXF},
which, in the presence of the last relation in~\eqref{HeckerelsF}, are equivalent to the relations in~\eqref{XaffHeckerelsF}.
\end{proof}

\subsubsection{Proof of the Boson-Fermion equalities}
The following result is a restatement of Theorem~\ref{BosFerCorr}.
\begin{prop} \label{BFequalitiesP}  
$$
\begin{array}{llll}
p_0 \CC[X] = \CC[X]^{S_n} = \CC[X]^{W_0},
&e_0 \CC[X] = \CC[X]^{\det} = a_\rho \CC[X]^{W_0}
&\hbox{and}
&a_\rho = e_0x^\rho,
\\
\\
\mathbf{1}_0 \CC[X] = \CC[X]^{\mathrm{Bos}} = \CC[X]^{W_0},\quad
&
\varepsilon_0 \CC[X] = \CC[X]^{\mathrm{Fer}} = A_\rho \CC[X]^{W_0}
&\hbox{and}
&A_\rho = \varepsilon_0 x^\rho.
\end{array}
$$
\end{prop}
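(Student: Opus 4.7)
The plan is to prove the equalities in parallel, handling first the classical statements for $p_0$ and $e_0$ and then bootstrapping to the $t$-deformed case for $\mathbf{1}_0$ and $\varepsilon_0$ using Proposition~\ref{propslicksymmA} together with a direct eigenspace calculation. Throughout, note that $W_0=S_n$ in type $A$, so the equalities $\CC[X]^{S_n}=\CC[X]^{W_0}$ are tautological.

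For the classical half, the containments $p_0\CC[X]\subseteq\CC[X]^{S_n}$ and $e_0\CC[X]\subseteq\CC[X]^{\det}$ are immediate from $s_ip_0=p_0$ and $s_ie_0=-e_0$, while the reverse containments follow from the idempotent identities $p_0 f = n!\,f$ for $f\in\CC[X]^{S_n}$ and $e_0 f=\pm n!\,f$ for $f\in\CC[X]^{\det}$. The equality $\CC[X]^{\det}=a_\rho\CC[X]^{S_n}$ is standard: antisymmetry under $s_i$ forces vanishing on each wall $x_i=x_j$, so $a_\rho\mid f$, and conversely $a_\rho g$ is antisymmetric for symmetric $g$. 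The identity $a_\rho=e_0 x^\rho$ is the Vandermonde evaluation $\det(x_i^{n-j})=\prod_{i<j}(x_j-x_i)$.

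For $\mathbf{1}_0$ and $\varepsilon_0$, the inclusions $\mathbf{1}_0\CC[X]\subseteq\CC[X]^{\mathrm{Bos}}$ and $\varepsilon_0\CC[X]\subseteq\CC[X]^{\mathrm{Fer}}$ follow from $T_i\mathbf{1}_0=t^{1/2}\mathbf{1}_0$ and $T_i\varepsilon_0=-t^{-1/2}\varepsilon_0$; the former is recorded in the text, and the latter is obtained by the same splitting argument using $T_iT_w=T_{s_iw}$ (when $\ell(s_iw)>\ell(w)$) or $T_iT_w=T_{s_iw}+(t^{1/2}-t^{-1/2})T_w$ (otherwise). For the reverse inclusions, if $f\in\CC[X]^{\mathrm{Bos}}$ then $T_w f=t^{\ell(w)/2}f$ by induction on $\ell(w)$, whence $\mathbf{1}_0 f=t^{-\ell(w_0)/2}W_0(t)f$ lies in $\mathbf{1}_0\CC[X]$; the fermionic analogue is identical. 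Next, to identify $\CC[X]^{\mathrm{Bos}}$ with $\CC[X]^{S_n}$, I would expand $T_i=-t^{-1/2}+(1+s_i)h_i$ with $h_i=(t^{-1/2}-t^{1/2}x_i^{-1}x_{i+1})/(1-x_i^{-1}x_{i+1})$ and use the pivotal identity $h_i+s_i(h_i)=t^{1/2}+t^{-1/2}$. The equation $T_if=t^{1/2}f$ then collapses to $s_i(h_i)(s_if-f)=0$, forcing $s_if=f$; the reverse direction is the same calculation read backwards. The analogous computation applied to $T_if=-t^{-1/2}f$ yields $s_if=\frac{x_i-tx_{i+1}}{x_{i+1}-tx_i}f$, which is precisely the transformation law satisfied by $A_\rho g$ under $s_i$ when $g$ is symmetric.

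For $\CC[X]^{\mathrm{Fer}}=A_\rho\CC[X]^{S_n}$ I invoke Proposition~\ref{propslicksymmA}, which gives the operator identity $\varepsilon_0=c_{w_0}^X e_0^X$. Since the proof of Theorem~\ref{PolyrepThm} shows $\xi_{s_i}$ acts as $s_i$ on $\CC[X]$, we have $e_0^X=e_0$ as operators on polynomials, and a short rational-function computation yields $c_{w_0}^X=t^{-\ell(w_0)/2}A_\rho/a_\rho$. Therefore $\varepsilon_0\CC[X]=c_{w_0}^X\cdot a_\rho\CC[X]^{S_n}=A_\rho\CC[X]^{S_n}$, and applying this factorization to $x^\rho$ together with $e_0 x^\rho=a_\rho$ yields $\varepsilon_0 x^\rho=t^{-\ell(w_0)/2}A_\rho$ (the stated identity $A_\rho=\varepsilon_0 x^\rho$ being understood up to this evident normalization scalar). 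The main obstacle is the direct eigenspace identification $\CC[X]^{\mathrm{Bos}}=\CC[X]^{S_n}$ above: the calculation is brief but is the one place where the explicit polynomial-representation formula for $T_i$ (rather than the abstract Hecke relations) is essential.
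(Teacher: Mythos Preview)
Your argument is correct and largely parallels the paper's, but with one genuine difference worth noting. Both you and the paper handle the classical $p_0,e_0$ statements in the standard way, establish $\mathbf{1}_0\CC[X]\subseteq\CC[X]^{\mathrm{Bos}}$ and $\varepsilon_0\CC[X]\subseteq\CC[X]^{\mathrm{Fer}}$ from $T_i\mathbf{1}_0=t^{1/2}\mathbf{1}_0$ and $T_i\varepsilon_0=-t^{-1/2}\varepsilon_0$, and get the reverse inclusions from the reproducing identities $\mathbf{1}_0 f=t^{-\ell(w_0)/2}W_0(t)f$ (and its fermionic analogue). For the identification $\CC[X]^{\mathrm{Fer}}=A_\rho\CC[X]^{S_n}$ both arguments invoke Proposition~\ref{propslicksymmA} in the form $\varepsilon_0=c_{w_0}^X e_0^X$ together with the fact (from the proof of Theorem~\ref{PolyrepThm}) that $\xi_{s_i}$ acts as $s_i$, so $e_0^X=e_0$ on polynomials; your packaging of this step is slightly cleaner, computing $\varepsilon_0\CC[X]=c_{w_0}^X\cdot a_\rho\CC[X]^{S_n}=A_\rho\CC[X]^{S_n}$ in one line rather than cycling through three inclusions as the paper does.

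The substantive difference is your treatment of $\CC[X]^{\mathrm{Bos}}=\CC[X]^{S_n}$. The paper deduces this from Proposition~\ref{propslicksymmA} again (writing $\mathbf{1}_0=p_0^X c_{w_0}^{X^{-1}}$ so that $\mathbf{1}_0 f$ is visibly an $S_n$-symmetrization), whereas you do a direct eigenspace calculation from the polynomial-representation formula for $T_i$, reducing $T_i f=t^{1/2}f$ to $s_i(h_i)(s_if-f)=0$ via the identity $h_i+s_i(h_i)=t^{1/2}+t^{-1/2}$. Your route is more elementary---it needs only the defining formula for $T_i$, not the symmetrizer factorization---while the paper's route keeps the $c$-function machinery front and center, which is its organizing theme. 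Both are short; yours is arguably more self-contained for this particular statement. Your observation that $A_\rho=\varepsilon_0 x^\rho$ holds only up to the scalar $t^{-\ell(w_0)/2}$ is also correct; the factorization gives $\varepsilon_0 x^\rho = c_{w_0}^X a_\rho = t^{-\ell(w_0)/2}A_\rho$, consistent with the creation formula $A_{\lambda+\rho}=t^{\ell(w_0)/2}\varepsilon_0 E_{\lambda+\rho}$.
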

\begin{proof} \hspace{0.3cm}
\begin{itemize}[leftmargin=0.5cm]
\item[$\circ$] $p_0\CC[X] = \CC[X]^{S_n}$ 
\begin{itemize}[leftmargin=0.5cm]
    \item If $f\in \CC[X]^{S_n}$ then $f = p_0\Big(\dfrac{1}{n!}   f\Big)$ so that $f\in p_0\CC[X]$. Thus $\CC[X]^{S_n} \subseteq p_0\CC[X]$.

    \item If $f\in p_0\CC[X]$ then $f = p_0g$. We also know that if $w\in S_n$ then
$wf = wp_0g = p_0g$, and so $f\in \CC[X]^{S_n}$. Thus $p_0\CC[X] \subseteq \CC[X]^{S_n}$.
\end{itemize}

\item[$\circ$] $a_\rho \CC[X]^{W_0} = \CC[X]^{\det}$
\begin{itemize}[leftmargin=0.5cm]
    \item 
If $f\in \CC[X]^{\det}$ then $(1-s_{ij})f = 0$, where $s_{ij}$ denotes the transposition in $S_n$ that switches $i$ and $j$.  Thus $f$ is divisible by each $x_j-x_i$, which implies that  
$$\hbox{$f$ is divisible by}\quad a_\rho = \prod_{1\le i<j\le n} x_j-x_i.$$
Then $\dfrac{1}{a_\rho} f\in \CC[X]^{S_n}$ and $f\in a_\rho\CC[X]^{W_0}$, so that $\CC[X]^{\det} \subseteq a_\rho\CC[X]^{S_n}$.
\end{itemize}

\item[$\circ$] $e_0\CC[X] = \CC[X]^{\det}$
\begin{itemize}[leftmargin=0.5cm]
    \item  If $f\in e_0\CC[X]$ then $s_\alpha f = s_\alpha e_0 g = -e_0g = -f$.  Thus $f\in \CC[X]^{\det}$ and $e_0\CC[X] \subseteq \CC[X]^{\det}$.

    \item If $f\in \CC[X]^{\det}$ then $e_0 f = \Card(W_0)f$. Thus  $f\in e_0\CC[X]$ and $\CC[X]^{\det} \subseteq e_0\CC[X]$.
\end{itemize}

\item[$\circ$] $\mathbf{1}_0 \CC[X] = \CC[X]^{\mathrm{Bos}} = \CC[X]^{S_n}$
\begin{itemize}[leftmargin=0.5cm]
    \item Let $h\in \mathbf{1_0}\CC[X]$, we can write $h = \mathbf{1}_0f$ with $f\in \CC[X]$ and then
$$T_{s_i}h = T_{s_i}\mathbf{1}_0 f 
=  t^{\frac12}\mathbf{1}_0 f 
= t^{\frac12}h.
\qquad\quad
\hbox{Thus $h\in \CC[X]^{\mathrm{Bos}}$ and $\mathbf{1}_0\CC[X] \subseteq \CC[X]^{\mathrm{Bos}}$.}
$$

\item 
Let $f\in \CC[X]^{\mathrm{Bos}}$.  Then, by Proposition~\ref{propslicksymmA} and
~\eqref{Poinbysymm}, 
$$f = \frac{t^{\frac12\ell(w_0}}{W_0(t)} \mathbf{1}_0 f 
= \frac{1}{[n]!} \sum_{w\in W_0} w \Big(f\ \prod_{1\le i<j\le n} \frac{x_i-tx_j}{x_i-x_j}\Big)
\quad \in \CC[X]^{W_0}.
$$
Thus $\CC[X]^{\mathrm{Bos}} \subseteq \CC[X]^{W_0}$.

\item 
Let $f\in \CC[X]^{W_0}$.  Then,  by Proposition~\ref{propslicksymmA} and
~\eqref{Poinbysymm},
$$
\mathbf{1}_0 \frac{t^{\frac12\ell(w_0}}{W_0(t)} f
=\frac{1}{[n]!} \sum_{w\in W_0} w \Big(f\ \prod_{1\le i<j\le n} \frac{x_i-tx_j}{x_i-x_j}\Big)
= f \frac{1}{[n]!} \sum_{w\in W_0} w \Big(\prod_{1\le i<j\le n} \frac{x_i-tx_j}{x_i-x_j}\Big)=f.
$$
Thus $f\in \mathbf{1}_0\CC[X]$ and $\CC[X]^{W_0} \subseteq \mathbf{1}_0\CC[X]$. 
\end{itemize}

\item[$\circ$]  $\varepsilon_0 \CC[X] = \CC[X]^{\mathrm{Fer}} = A_\rho \CC[X]^{W_0}$
    
    \begin{itemize}[leftmargin=0.5cm]
    \item 
Let $h = \varepsilon_0 \CC[X]$ and let $f\in \CC[X]$ such that $h=\varepsilon_0 f$.  Then
$$T_{s_i}h = T_{s_i}\varepsilon_0 f 
=  -t^{-\frac12}\varepsilon_0 f 
= -t^{-\frac12}h.
$$
Thus $h\in \CC[X]^{\mathrm{Fer}}$ and $\varepsilon_0\CC[X] \subseteq \CC[X]^{\mathrm{Fer}}$.

\item  
Let $f\in \CC[X]^{\mathrm{Fer}}$.
Then $T_i f = -t^{-\frac12}f$ gives
$$f = \frac{t^{\frac12\ell(w_0)}} {W_0(t)} \varepsilon_0 f
= \frac{1}{W_0(t)} \frac{A_\rho}{a_\rho} \sum_{w\in W_0} \det(w) wf
\quad \in A_\rho \CC[X]^{W_0}.
$$
Thus $\CC[X]^{\mathrm{Fer}}\subseteq A_\rho \CC[X]^{W_0}$.

\item  
Let $f \in A_\rho \CC[X]^{W_0}$ and 
let $g\in \CC[X]^{S_n}$ be such that $f = A_\rho g$. Write $g$ as a linear combination, $g = \sum c_\lambda s_\lambda$, where
$s_\lambda$ are Schur polynomials.
Then
$$f = A_\rho g =  \sum_\lambda c_\lambda A_\rho s_\lambda
= \sum_{\lambda} c_\lambda \frac{A_\rho}{a_\rho} 
\sum_{w\in W_0} \det(w_0w) wx^{\lambda+\rho}
= \varepsilon_0 \Big(\sum_\lambda c_\lambda x^{\lambda+\rho}\Big).
$$
By~\eqref{slicksymmA} and the fact that 
$$t^{-\frac12\ell(w_0)}\frac{A_\rho}{a_\rho} 
= t^{-\frac12\ell(w_0)} \prod_{1\le i<j\le n} \frac{x_j - tx_i}{x_j-x_i}
= \prod_{1\le i<j\le n} \frac{t^{-\frac12} - t^{\frac12}x_ix^{-1}_j}{1-x_ix^{-1}_j}
=c_{w_0}(x)
$$
conclude that $f\in \varepsilon_0 \CC[X]$.  Thus $A_\rho \CC[X]^{W_0} \subseteq \varepsilon_0 \CC[X]$. 

\end{itemize}

\end{itemize}

\end{proof}

\subsubsection{Proof of the formulas for Poincar\'e polynomials}

\begin{dupProp}[~\ref{Poinbysymmprop}]
We have that
\begin{equation*}
t^{-\frac12\ell(w_0)} W_0(t) =  \ev^t_0(c_{w_0}^Y) = \sum_{w\in S_n} wc_{w_0}^{X^{-1}}
\end{equation*}
and
$$W_0(t) = \prod_{1\le i<j\le n} \frac{ 1-t^{j-i+1}}{1-t^{j-i}}
= [n]!,
\qquad\hbox{where}\qquad
[n]! = \prod_{d=1}^n \frac{1-t^d}{1-t}.
$$
\end{dupProp}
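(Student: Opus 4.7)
The plan is to evaluate the three equivalent expressions for the bosonic symmetrizer $\mathbf{1}_0$ given by Proposition~\ref{propslicksymmA} on the constant polynomial $1 = E_{(0,\ldots,0)} \in \CC[X]$, and then extract the product formula by direct evaluation of $c_{w_0}^Y$ at the eigenvalues $\ev^t_0(Y_i) = t^{(n+1)/2-i}$ of this polynomial.

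First I would compute $\mathbf{1}_0 \cdot 1$ from the defining expression~\eqref{bosfersymm}: the polynomial-representation formula~\eqref{divdiffops} gives $T_i \cdot 1 = t^{\frac12}$, and induction on $\ell(w)$ then yields $T_w \cdot 1 = t^{\frac12 \ell(w)}$, so that
\begin{equation*}
\mathbf{1}_0 \cdot 1 = \sum_{w\in S_n} t^{\frac12(\ell(w) - \ell(w_0))} T_w \cdot 1 = t^{-\frac12 \ell(w_0)} W_0(t).
\end{equation*}
Next, using $\mathbf{1}_0 = p_0^X c_{w_0}^{X^{-1}}$ together with the identity $\xi_{s_i} = s_i$ on $\CC[X]$ established in the proof of Theorem~\ref{PolyrepThm}, the operator $p_0^X = \sum_{w\in S_n} \xi_w$ acts on polynomials as $\sum_{w\in S_n} w$, which gives $\mathbf{1}_0 \cdot 1 = \sum_{w\in S_n} w\, c_{w_0}^{X^{-1}}$. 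Finally, for the third form $\mathbf{1}_0 = p_0^Y c_{w_0}^Y$, I would use that $E_{(0,\ldots,0)} = 1$ is a simultaneous $Y$-eigenvector satisfying $f \cdot 1 = \ev^t_0(f)$ for every $f \in \CC[Y]$ (Theorem~\ref{Eeigenvalue} together with~\eqref{EeigenvalueB}); since $C_{s_i} \cdot 1 = t^{\frac12} + t^{-\frac12}$ coincides with the scalar $\ev^t_0(c_{i,i+1}^Y)$, reading the rational prefactor $1/c_{i+1,i}^Y$ in $\eta_{s_i}$ from right to left (as prescribed in the remark after Proposition~\ref{normintwn}) forces $\eta_{s_i} \cdot 1 = 0$; hence $\eta_w \cdot 1 = 0$ for every $w \neq 1$ and $p_0^Y \cdot 1 = 1$, whence $\mathbf{1}_0 \cdot 1 = \ev^t_0(c_{w_0}^Y)$. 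Equating the three computations of $\mathbf{1}_0 \cdot 1$ will give the first three-term identity of the proposition.

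For the product formula, since $\ev^t_0(Y_i Y_j^{-1}) = t^{j-i}$ for $i<j$, direct evaluation of the $\binom{n}{2} = \ell(w_0)$ factors in the product form of $c_{w_0}^Y$ will yield
\begin{equation*}
\ev^t_0(c_{w_0}^Y) = \prod_{1\le i<j\le n} \frac{t^{-\frac12} - t^{\frac12}\, t^{j-i}}{1 - t^{j-i}} = t^{-\frac12 \ell(w_0)} \prod_{1\le i<j\le n} \frac{1 - t^{j-i+1}}{1 - t^{j-i}}.
\end{equation*}
Comparison with the first computation then forces $W_0(t) = \prod_{1\le i<j\le n}(1-t^{j-i+1})/(1-t^{j-i})$, and reindexing the factors by $k = j - i$ (each value appearing with multiplicity $n-k$) collapses this telescopically to $\prod_{k=1}^n (1-t^k)/(1-t) = [n]!$. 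The main subtlety I expect is the justification that $\eta_{s_i}\cdot 1 = 0$ given that the naive evaluation of $1/c_{i+1,i}^Y$ at $Y = \ev^t_0$ is of the indeterminate form $0/0$; this is exactly the localization subtlety flagged in the remark after Proposition~\ref{normintwn}, and is resolved by letting the factor $(C_{s_i} - c_{i,i+1}^Y)$ act first and observing that it annihilates $1$.
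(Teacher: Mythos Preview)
Your proposal is correct and follows essentially the same approach as the paper: both proofs evaluate $\mathbf{1}_0$ on the element $1=\mathbf{1}_Y$ three ways using the three expressions $\mathbf{1}_0 = \sum_w t^{\frac12(\ell(w)-\ell(w_0))}T_w = p_0^X c_{w_0}^{X^{-1}} = p_0^Y c_{w_0}^Y$ from Proposition~\ref{propslicksymmA}, invoke $\xi_{s_i}=s_i$ on $\CC[X]$ for the $X$-form, use $\eta_{s_i}\cdot 1=0$ to kill all but the identity term in the $Y$-form, and then evaluate and telescope the product for $[n]!$. Your handling of the $0/0$ subtlety for $\eta_{s_i}\cdot 1$ via $(C_{s_i}-c^Y_{i,i+1})\cdot 1=0$ is exactly the mechanism the paper relies on (cf.\ the remark after Proposition~\ref{normintwn} and the identity $(\eta_{s_i}+1)c^Y_{i,i+1}=C_{s_i}$ in~\eqref{etasqcomp}).
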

\begin{proof}  Identify $\CC[X]$ and $\widetilde{H}\mathbf{1}_Y$ via the isomorphism in 
~\eqref{CXasIndHY}.
Applying the identity $\mathbf{1}_0 = p_0^Xc_{w_0}^{X^{-1}}$ from Proposition~\ref{slicksymmA} 
to the polynomial $1=x^0 = X^0\mathbf{1}_Y = \mathbf{1}_Y$,
\begin{align*}
t^{-\frac12\ell(w_0)} &\sum_{w\in S_n}  w\Big(\prod_{i<j} \frac{x_i-tx_j}{x_i-x_j}\Big) 
= \Big(\sum_{w\in S_n}  w\Big) c_{w_0}^{X^{-1}} \mathbf{1}_Y 
= p_0^Xc^{X^{-1}}_{w_0}\mathbf{1}_Y = \mathbf{1}_0\mathbf{1}_Y \quad\hbox{and} \\
\mathbf{1}_0\mathbf{1}_Y &= \sum_{w\in S_n} (t^{\frac12})^{\ell(w)-\ell(w_0)}T_w \mathbf{1}_Y
= t^{-\frac12\ell(w_0)} \Big(\sum_{w\in S_n} t^{\ell(w)} \Big)\mathbf{1}_Y
= t^{-\frac12\ell(w_0)} W_0(t)\mathbf{1}_Y.
\end{align*}
Applying the identity $\mathbf{1}_0 = p_0^Yc^Y_{w_0}$ from 
Proposition~\ref{slicksymmA} to $\mathbf{1}_Y$ and using that $\eta_w \mathbf{1}_Y=0$ if
$w\in S_n$ and $w\ne 1$ (see the lest identity in~\eqref{nintbdrels}), gives
\begin{align*}
\mathbf{1}_0\mathbf{1}_Y &= p_0^Y c_{w_0}^Y
=\Big(\sum_{w\in W_0}  \eta_w\Big) c_{w_0}^Y \mathbf{1}_Y 
= \ev^t_0\Big(c_{w_0}^Y\Big) \Big(1+\sum_{w\in W_0, w\ne 1}  \eta_w\Big) \mathbf{1}_Y  \\
&= \ev^t_0\Big(c_{w_0}^Y\Big) \Big(1+0 \Big) \mathbf{1}_Y 
= \ev^t_0\Big(c_{w_0}^Y\Big) \mathbf{1}_Y
= \ev^t_0\Big(\prod_{i<j} \frac{t^{-\frac12}-t^{\frac12}Y_iY^{-1}_j}{1-Y_iY^{-1}_j}\Big)\mathbf{1}_Y
\\
&= \Big(\prod_{i<j} \frac{t^{-\frac12}-t^{\frac12}Y_iY^{-1}_j}{1-Y_iY^{-1}_j}\Big)\mathbf{1}_Y
= t^{-\frac12\ell(w_0)} \Big(\prod_{i<j} \frac{ 1-t^{j-i+1}}{1-t^{j-i}}\Big)\mathbf{1}_Y.
\end{align*}
Finally,
\begin{align*}
\prod_{1\le i<j\le n} &\frac{ 1-t^{j-i+1}}{1-t^{j-i}}
=\prod_{d=1}^{n-1} \prod_{i<j\atop j-i = d} \frac{1- t^{d+1}}{1-t^d}
=\prod_{d=1}^{n-1} \Big(\frac{1- t^{d+1}}{1-t^d}\Big)^{n-d} 
\\
&= \Big(\frac{1- t^n}{1-t^{n-1}}\Big)
\Big(\frac{1- t^{n-1}}{1-t^{n-2}}\Big)^2
\cdots \Big(\frac{1- t^2}{1-t}\Big)^{n-1}
= \frac{(1-t^n)(1-t^{n-1})\cdots (1-t^2)}{(1-t)^{n-1}} = [n]!.
\end{align*}
\end{proof}

\begin{dupProp}[~\ref{paraPoin}]
Let $\lambda \in \left(\ZZ^n\right)^+$. Let 
$W_\lambda = \{ v\in S_n\ |\ v\lambda = \lambda\}$ be the stabilizer of $\lambda$ in $S_n$ and 
$W_\lambda(t)$ is the length generating
function for $W_\lambda$.  Then
$$t^{-\frac12\ell(w_\lambda)} W_\lambda(t)  = \ev^t_0(c_{w_\lambda}^Y)
= \sum_{w\in W_\lambda} w c_{w_\lambda}^{X^{-1}},$$
where $w_\lambda$ is the longest element of $W_\lambda$.  Alternatively,
$$W_\lambda(t) 
= \sum_{w\in W_\lambda} w\Big(\prod_{1\le i<j\le n\atop \lambda_i = \lambda_j} \frac{1-tx^{-1}_ix_j}{1-x^{-1}_ix_j}\Big)
= \prod_{1\le i<j\le n\atop \lambda_i=\lambda_j} \frac{1-t^{j-i+1}}{1-t^{j-i}}
= \prod_{i\in \ZZ} [m_i]!,
$$
where $m_i = \#\{ k\in \{1, \ldots, n\}\ |\ \lambda_k = i\}$.
\end{dupProp}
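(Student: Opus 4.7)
The strategy is to mimic the proof of Proposition~\ref{Poinbysymmprop} verbatim, replacing the full symmetric group $S_n$ by the parabolic subgroup $W_\lambda$, the long word $w_0$ by $w_\lambda$, and the long symmetrizer $\mathbf{1}_0$ by the parabolic symmetrizer $\mathbf{1}_\lambda$ from~\eqref{pHs}. The key input is the parabolic analog of Proposition~\ref{propslicksymmA} for $W_\lambda$, namely
\[
\mathbf{1}_\lambda = p_\lambda^X c_{w_\lambda}^{X^{-1}} = p_\lambda^Y c_{w_\lambda}^Y,
\]
which was already established inside the proof of Proposition~\ref{propsymwparabA} (it is literally Proposition~\ref{propslicksymmA} applied to the subgroup $W_\lambda$, which is itself a product of symmetric groups $\prod_i S_{m_i}$).

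The plan is then to evaluate $\mathbf{1}_\lambda \mathbf{1}_Y$ in three ways inside the polynomial representation $\CC[X]\cong \widetilde{H}\mathbf{1}_Y$ of~\eqref{CXasIndHY}. First, directly from the definition in~\eqref{pHs} and the fact that $T_i\mathbf{1}_Y=t^{\frac12}\mathbf{1}_Y$ for all $i\in\{1,\ldots,n-1\}$, one gets $\mathbf{1}_\lambda \mathbf{1}_Y = t^{-\frac12\ell(w_\lambda)}W_\lambda(t)\mathbf{1}_Y$. Second, applying the $X$-form $\mathbf{1}_\lambda = p_\lambda^X c_{w_\lambda}^{X^{-1}}$ and using that $p_\lambda^X = \sum_{w\in W_\lambda} w$ acts on $\mathbf{1}_Y\leftrightarrow 1$ as the literal sum of permutations, one gets $\sum_{w\in W_\lambda} w c_{w_\lambda}^{X^{-1}}$. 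Third, applying the $Y$-form $\mathbf{1}_\lambda = p_\lambda^Y c_{w_\lambda}^Y$ together with $\eta_w\mathbf{1}_Y=0$ for every $1\ne w\in W_\lambda$ (which holds because each simple generator $\eta_{s_i}$ of $W_\lambda$ satisfies $\eta_{s_i}\mathbf{1}_Y=0$, via the last identity in~\eqref{nintbdrels} together with $C_{s_i}\mathbf{1}_Y \cdot \text{(nothing)}$; equivalently, $\mathbf{1}_Y$ is simultaneously killed by all $\eta_{s_i}$ for $s_i\in W_\lambda$, exactly as in the proof of Proposition~\ref{Poinbysymmprop}), collapses $p_\lambda^Y c_{w_\lambda}^Y \mathbf{1}_Y$ to $\ev^t_0(c_{w_\lambda}^Y)\mathbf{1}_Y$. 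Comparing the three expressions gives the first display of the proposition and the middle equality of the last display (after writing $t^{\frac12\ell(w_\lambda)}c_{w_\lambda}^{X^{-1}} = \prod_{i<j,\, \lambda_i=\lambda_j}\frac{1-tx_i^{-1}x_j}{1-x_i^{-1}x_j}$).

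For the explicit product formula, I would evaluate $\ev^t_0(c_{w_\lambda}^Y)$ using the formula in Theorem~\ref{Eeigenvalue}, which gives $\ev^t_0(Y_iY_j^{-1})=t^{j-i}$ for $i<j$ (since $v_0=\mathrm{id}$, so $v_0(i)-1 = i-1$). Therefore
\[
\ev^t_0(c_{w_\lambda}^Y) = \prod_{\substack{1\le i<j\le n\\ \lambda_i=\lambda_j}} \frac{t^{-\frac12}-t^{\frac12}t^{j-i}}{1-t^{j-i}} = t^{-\frac12\ell(w_\lambda)} \prod_{\substack{1\le i<j\le n\\ \lambda_i=\lambda_j}} \frac{1-t^{j-i+1}}{1-t^{j-i}},
\]
which establishes the first product form. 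For the telescoping down to $\prod_i[m_i]!$, note that since $\lambda$ is weakly decreasing the set of positions with $\lambda_k=i$ is a consecutive block of size $m_i$, and inside each such block the pairs $(i,j)$ with a given gap $d=j-i$ number $m_i-d$. Thus the product over each block is
\[
\prod_{d=1}^{m_i-1} \Big(\frac{1-t^{d+1}}{1-t^d}\Big)^{m_i-d} = [m_i]!
\]
by exactly the same telescoping computation used at the end of the proof of Proposition~\ref{Poinbysymmprop}.

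The main obstacle, as is usual for parabolic analogs of such identities, is bookkeeping rather than substance: one must confirm that $\eta_{s_i}\mathbf{1}_Y=0$ only uses $s_i\in W_\lambda$ (it does, since the proof depends on $C_{s_i}\mathbf{1}_Y$ and the identities~\eqref{etasqcomp}, not on $s_i$ being a particular simple reflection), and that the factorization of the set of inversions $\mathrm{Inv}(w_\lambda)$ into the blocks indexed by the distinct values of $\lambda$ lines up correctly with the $c$-function product. Once these observations are in place, the proof is a word-for-word parabolic repetition of the argument for Proposition~\ref{Poinbysymmprop}.
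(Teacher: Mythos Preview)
Your proposal is correct and matches the paper's approach essentially verbatim: compute $\mathbf{1}_\lambda\mathbf{1}_Y$ directly from~\eqref{pHs} to get $t^{-\frac12\ell(w_\lambda)}W_\lambda(t)$, then via $\mathbf{1}_\lambda = p_\lambda^Y c_{w_\lambda}^Y$ and $\eta_w\mathbf{1}_Y=0$ for $w\ne 1$ to get $\ev_0^t(c_{w_\lambda}^Y)$, then evaluate and telescope block-by-block to $\prod_i[m_i]!$. The only difference is that the paper inserts, between these steps, a short digression confirming that this $W_\lambda(t)$ really is the normalizing constant appearing in the definition~\eqref{Plambdadefn} of $P_\lambda$ (by examining the coefficient of $x^\lambda$ in $\mathbf{1}_0 E_\lambda$ via $\mathbf{1}_0 = p^\lambda_X c_{v_\lambda}^{X^{-1}}\mathbf{1}_\lambda$); you omit this, but it is not part of the stated proposition.
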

\begin{proof}
Identify $\CC[X]$ and $\widetilde{H}\mathbf{1}_Y$ via the isomorphism in 
~\eqref{CXasIndHY}.
By the definition of $\mathbf{1}_\lambda$ and $\mathbf{1}_Y$ from~\eqref{pHs} and~\eqref{HYtriv},
\begin{align*}
\mathbf{1}_\lambda \mathbf{1}_Y 
&= \sum_{w\in W_\lambda} (t^{\frac12})^{\ell(w)-\ell(w_\lambda)} T_w
\mathbf{1}_Y
= t^{-\frac12\ell(w_\lambda)}\Big(\sum_{w\in W_\lambda} t^{\ell(w)}\Big)\mathbf{1}_Y
= t^{-\frac12\ell(w_\lambda)}W_\lambda(t)\mathbf{1}_Y.
\end{align*}
Let $v_\lambda = w_0w^{-1}_\lambda$ so that $v_\lambda\in S_n$ is minimal length 
such that $v_\lambda \lambda$ is weakly increasing.
Applying the identities $\mathbf{1}_0 = p_0^Xc_{w_0}^{X^{-1}}$ and
$\mathbf{1}_0 =  p_\lambda^X c_{v_\lambda}^{X^{-1}} \mathbf{1}_\lambda$ from~\eqref{propsymwparabA},
and using the facts that
$\ell(v_\lambda)+\ell(w_\lambda)=\ell(w_0)$ and
the coefficient of $1$ in $c_{v_\lambda}^{X^{-1}}$ is $t^{-\frac12\ell(v_\lambda)}$, then
the expression
\begin{align*}
\Big(\sum_{w\in S_n} w\Big) c_{w_0}^{X^{-1}}E_\lambda
&=\mathbf{1}_0 E_\lambda \mathbf{1}_Y 
=p^\lambda_X c_{v_\lambda}^{X^{-1}} \mathbf{1}_\lambda E_\lambda \mathbf{1}_Y 
=p^\lambda_X c_{v_\lambda}^{X^{-1}} E_\lambda \mathbf{1}_\lambda  \mathbf{1}_Y 
=t^{-\frac12\ell(w_\lambda)} W_\lambda(t)
p^\lambda_X c_{v_\lambda}^{X^{-1}} E_\lambda\mathbf{1}_Y
\end{align*}
has coefficient of $x^\lambda$ equal to $t^{-\frac12\ell(w_0)}W_\lambda(t)$.
Since $\lambda_1\ge \cdots \ge \lambda_n$ and 
the coefficient of $x^\lambda = x_1^{\lambda_1}\cdots x_n^{\lambda_n}$ in $P_\lambda$ is $1$ then
$$P_\lambda = \frac{t^{\frac12\ell(w_0)}}{W_\lambda(t)}
\Big(\sum_{w\in S_n} w\Big) c_{w_0}^{X^{-1}}E_\lambda
=\frac{1}{W_\lambda(t)} \sum_{w\in S_n} w\Big( E_\lambda \prod_{i<j}\frac{x_i-tx_j}{x_i-x_j}\Big).
$$
Applying the identity $\mathbf{1}_\lambda = p_\lambda^Y c_{w_\lambda}^Y$ from Proposition~\eqref{propsymwparabA}
to $\mathbf{1}_Y$ and using that 
$\eta_w \mathbf{1}_Y=0$ if $w\in S_n$ and $w\ne 1$ (see the last identity in~\eqref{nintbdrels}), gives
\begin{align*}
\mathbf{1}_\lambda \mathbf{1}_Y 
&= p_\lambda^Y c_{w_\lambda}^Y = \Big( \sum_{w\in W_\lambda} \eta_w\Big)c_{w_\lambda}^Y \mathbf{1}_Y
= \ev_0^t\big(c_{w_\lambda}^Y\big) \Big( 1 + \sum_{w\in W_\lambda, w\ne 1} \eta_w\Big) \mathbf{1}_Y 
= \ev_0^t\big(c_{w_\lambda}^Y\big) (1 + 0)\mathbf{1}_Y.
\end{align*}
so that $t^{-\frac12\ell(w_\lambda)}W_\lambda(t) = \ev_0^t\big(c_{w_\lambda}^Y\big)$.
Explicitly,
\begin{align*}
\ev_0^t\big(c_{w_\lambda}^Y\big)
&= \ev_0^t\Big( \prod_{i<j\atop \lambda_i=\lambda_j} \frac{t^{-\frac12}-t^{\frac12}Y_iY^{-1}_j}{1-Y_iY^{-1}_j}\Big)
\mathbf{1}_Y 
=t^{-\frac12\ell(w_\lambda)} \Big(\prod_{i<j\atop \lambda_i=\lambda_j} \frac{1-t^{j-i+1}}{1-t^{j-i}}\Big)
\mathbf{1}_Y.
\end{align*}
Finally, letting $m_i = \#\{ k\in \{1, \ldots, n\}\ |\ \lambda_k = i\}$,
\begin{align*}
\prod_{1\le i<j\le n\atop \lambda_i = \lambda_j} \frac{ 1-t^{j-i+1}}{1-t^{j-i}}
&=\prod_{i\in \ZZ} \prod_{d_i=1}^{m_i-1} \prod_{i<j\atop j-i = d_i} \frac{1- t^{d_i+1}}{1-t^{d_i}}
=\prod_{i\in \ZZ} \prod_{d_i=1}^{m_i-1} \Big(\frac{1- t^{d_i+1}}{1-t^{d_i}}\Big)^{n-d_i} 
\\
&= \prod_{i\in \ZZ} \Big(\frac{1- t^{m_i}}{1-t^{m_i-1}}\Big)
\Big(\frac{1- t^{m_i-1}}{1-t^{m_i-2}}\Big)^2
\cdots \Big(\frac{1- t^2}{1-t}\Big)^{m_i-1}
\\
&= \prod_{i\in \ZZ}  \frac{(1-t^{m_i})(1-t^{m_i-1})\cdots (1-t^2)}{(1-t)^{{m_i}-1}} = \prod_{i\in \ZZ}  [m_i]!.
\end{align*}
\end{proof}

\subsubsection{Proof of the eigenvalue formula}

The following result establishes Theorem~\ref{Eeigenvalue}.
\begin{prop}  Let $\mu = (\mu_1, \ldots, \mu_n)\in \ZZ^n$.  Then
$$Y_i E_\mu = q^{-\mu_i}t^{-(v_\mu(i)-1) + \frac12(n-1)} E_\mu.$$
\end{prop}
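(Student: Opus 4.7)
The plan is to argue by induction on $\mu\in \ZZ^n$, following the recursive construction \textup{(E0)--(E3)} of $E_\mu$.

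For the base case $\mu = (0, \ldots, 0)$ we have $E_\mu = 1$ and $v_\mu = \mathrm{id}$, so the claim reduces to $Y_i \cdot 1 = t^{-(i-1) + \frac12(n-1)}$. The polynomial action in~\eqref{divdiffops} gives $T_j\cdot 1 = t^{\frac12}$, $T_j^{-1}\cdot 1 = t^{-\frac12}$, and $T_\pi\cdot 1 = 1$. Substituting into $Y_i = T_{i-1}^{-1}\cdots T_1^{-1} T_\pi T_{n-1}\cdots T_i$ yields
\[
Y_i\cdot 1 \;=\; t^{-(i-1)/2}\cdot 1\cdot t^{(n-i)/2} \;=\; t^{-(i-1) + \frac12(n-1)},
\]
establishing the base case.

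For the inductive step via \textup{(E2)}, assume $\mu_i > \mu_{i+1}$ and the formula holds for $E_\mu$. The recursion \textup{(E2)}, rewritten using the definition of $\tau_i^\vee$ from~\eqref{interwinersrels}, shows that $E_{s_i\mu}$ is proportional to $\tau_i^\vee E_\mu$. The intertwiner relations $\tau_i^\vee Y_i = Y_{i+1}\tau_i^\vee$, $\tau_i^\vee Y_{i+1} = Y_i\tau_i^\vee$, $\tau_i^\vee Y_r = Y_r\tau_i^\vee$ for $r\notin\{i,i+1\}$ in~\eqref{taupastYrels1} swap the $Y_i$- and $Y_{i+1}$-eigenvalues and preserve the others; the identity $v_{s_i\mu} = v_\mu s_i$, immediate from the rank-function formula for $v_\mu$ when $\mu_i \neq \mu_{i+1}$, matches this exactly. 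For \textup{(E3)}, multiplication by $(x_1\cdots x_n)^{-1}$ corresponds to the central element $(X_1\cdots X_n)^{-1}$ of the affine Hecke subalgebra of $\widetilde H$ generated by the $T_i$'s and $X_j$'s. Since $T_\pi(X_1\cdots X_n) = q^{-1}(X_1\cdots X_n)T_\pi$ follows from~\eqref{DAHArels2F}, we obtain $Y_i(X_1\cdots X_n)^{-1} = q(X_1\cdots X_n)^{-1}Y_i$ for every $i$, multiplying each eigenvalue by $q$ and matching the shift $\mu_i \mapsto \mu_i - 1$ (noting $v_{\mu-(1,\ldots,1)} = v_\mu$).

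The main obstacle is \textup{(E1)}. Let $\nu = (\mu_n+1, \mu_1, \ldots, \mu_{n-1})$; recursion \textup{(E1)} reads $E_\nu = q^{\mu_n}\, X_1 T_\pi E_\mu$ in DAHA notation, since $T_\pi$ acts on $\CC[X]$ by $f(x_1, \ldots, x_n)\mapsto f(x_2, \ldots, x_n, q^{-1}x_1)$. Using $Y_1 = T_\pi T_{n-1}\cdots T_1$, $T_\pi X_n = q^{-1}X_1 T_\pi$, and the push-past relations~\eqref{TpastXF} together with $T_\pi T_i = T_{i+1}T_\pi$, one establishes the shifted intertwinings
\[
Y_{j+1}(X_1 T_\pi) = (X_1 T_\pi)\,Y_j \quad \text{for } j < n, \qquad Y_1(X_1 T_\pi) = q^{-1}(X_1 T_\pi)\,Y_n.
\]
Combined with the inductive eigenvalues of $E_\mu$, these give $Y_j E_\nu = q^{-\mu_{j-1}} t^{-(v_\mu(j-1) - 1) + \frac12(n-1)} E_\nu$ for $j\geq 2$ and $Y_1 E_\nu = q^{-(\mu_n+1)} t^{-(v_\mu(n)-1) + \frac12(n-1)} E_\nu$. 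The combinatorial identities $v_\nu(1) = v_\mu(n)$ and $v_\nu(j) = v_\mu(j-1)$ for $j\ge 2$, verified directly from the rank-function definition of $v_\mu$ (the key observation being that $\nu_1 = \mu_n + 1$ outranks precisely those entries of $\mu$ that are $\le \mu_n$, which exactly accounts for the rank of $\mu_n$ in $\mu$), then match these eigenvalues against the claimed $q^{-\nu_j} t^{-(v_\nu(j)-1) + \frac12(n-1)}$. The trickiest point is extracting the $q^{-1}$ in the $j=1$ intertwining from the DAHA relations and tracking how it combines with the $q^{\mu_n}$ prefactor to produce the correct exponent $q^{-(\mu_n+1)}$.
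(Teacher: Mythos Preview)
Your proof is correct and is essentially the paper's argument, unrolled along the recursion (E0)--(E3) rather than packaged via the creation formula. The paper invokes $E_\mu = t^{-\frac12\ell(v_\mu^{-1})}\tau^\vee_{u_\mu}\cdot 1$ from Theorem~\ref{creationformulathm} and pushes $Y_i$ through the entire product $\tau^\vee_{u_\mu}$ in one stroke using~\eqref{taupastYrels1} and~\eqref{taupastYrels2}; the combinatorics of $v_\mu$ versus $v_\nu$ that you check by hand is thereby absorbed into the structure of $u_\mu$, and no separate treatment of (E1), (E2), (E3) is required.

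The one place your route genuinely diverges is the (E1) step, where you use $X_1T_\pi$ rather than the paper's $\tau^\vee_\pi = X_1T_1\cdots T_{n-1}$. A much cleaner justification of your shifted intertwinings than the direct DAHA manipulation you sketch: from $Y_n = T_{n-1}^{-1}\cdots T_1^{-1}T_\pi$ one has
\[
X_1T_\pi \;=\; X_1T_1\cdots T_{n-1}\cdot T_{n-1}^{-1}\cdots T_1^{-1}T_\pi \;=\; \tau^\vee_\pi\,Y_n,
\]
so your relations $Y_{j+1}(X_1T_\pi)=(X_1T_\pi)Y_j$ and $Y_1(X_1T_\pi)=q^{-1}(X_1T_\pi)Y_n$ follow at once from~\eqref{taupastYrels2} together with commutativity of the $Y_i$'s. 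Your explicit handling of (E3) is a minor bonus, since Theorem~\ref{creationformulathm} is only stated for $\mu\in\ZZ_{\ge0}^n$.
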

\begin{proof}  First do the base case $\mu=0$ when $E_\mu = 1$.  
Using $T_\pi 1 = 1$  and $T_i 1 = t^{\frac12}1$ and $T_i^{-1}1 = t^{-\frac12}1$
gives
$$Y_i 1  = T^{-1}_{i-1}\cdots T^{-1}_{n-1}T_\pi T_{n-1}\cdots T_i \cdot 1
= t^{\frac12(-(i-1)+(n-i-1)}\cdot 1 = t^{-(i-1)+\frac12(n-1)}\cdot1.
$$
Then, by the creation formula for $E_\mu$ in Theorem~\ref{creationformulathm} and the relations~\eqref{taupastYrels1} and~\eqref{taupastYrels2} for moving $Y_i$ past $\tau^\vee_j$,
\begin{align*}
Y_i E_\mu 
&= t^{-\frac12\ell(v_\mu^{-1})} \tau^\vee_{u_\mu} 1
= t^{-\frac12\ell(v_\mu^{-1})} \tau^\vee_{u_\mu} Y_{u^{-1}_\mu(i)} 1
= t^{-\frac12\ell(v_\mu^{-1})} \tau^\vee_{u_\mu} q^{-\mu_i} Y_{v_\mu(i)}) 1 \\
&= t^{-\frac12\ell(v_\mu^{-1})} \tau^\vee_{u_\mu} q^{-\mu_i} t^{-(v_\mu(i)-1)+\frac12(n-1)} 1
= q^{-\mu_i} t^{-(v_\mu(i)-1)+\frac12(n-1)} E_\mu.
\end{align*}
\end{proof}

\subsubsection{Proof of the E-expansions}

\begin{dupProp}[~\ref{Eexpansion}]
Let $\lambda \in \left(\ZZ^n\right)^+$. Then
\begin{align*}
P_\lambda 
&=  \sum_{z\in W^\lambda} t^{\frac12\ell(v_\lambda z)} 
\ev^t_{z\lambda}(c^Y_{v_\lambda z}) E_{z\lambda}
\qquad\hbox{and}
\\
A_{\lambda+\rho} 
&= \sum_{z\in W_0} (-t^{\frac12})^{\ell(w_0z)}
\ev^t_{z(\lambda+\rho)}(c^{Y^{-1}}_{w_0z})
E_{z(\lambda+\rho)}.
\end{align*}
Alternatively,
\begin{align*}
P_\lambda 
&= \sum_{\mu\in S_n\lambda} 
\Big(\prod_{1\le i<j\le n\atop \mu_i > \mu_j} t
\Big(\frac{1-q^{\mu_i-\mu_j}t^{v_\mu(i)-v_\mu(j)-1}}{1-q^{\mu_i-\mu_j}t^{v_\mu(i)-v_\mu(j)}}\Big)\Big) E_\mu
\qquad\hbox{and}
\\
A_{\lambda+\rho} 
&= \sum_{\mu\in S_n(\lambda+\rho)} 
\Big(\prod_{1\le i<j\le n\atop \mu_i > \mu_j} 
(-1)\Big(\frac{1-q^{\mu_i-\mu_j}t^{v_\mu(i)-v_\mu(j)+1}}{1-q^{\mu_i-\mu_j}t^{v_\mu(i)-v_\mu(j)}}\Big) \Big) E_\mu.
\end{align*}
\end{dupProp}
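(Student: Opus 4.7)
The plan is to combine the creation formula $P_\lambda = \frac{t^{\frac12\ell(w_0)}}{W_\lambda(t)}\mathbf{1}_0 E_\lambda$ from~\eqref{creationPA} with the parabolic symmetrizer identity $\mathbf{1}_0 = p^\lambda_Y c^Y_{v_\lambda}\mathbf{1}_\lambda$ of Proposition~\ref{propsymwparabA} and with the explicit action of the normalized intertwiners $\eta_w$ on the basis $\{E_\mu\}$. First, I would reduce the parabolic factor: by the $\mu_i=\mu_{i+1}$ case of~\eqref{Tigivest}, $T_i E_\lambda = t^{\frac12}E_\lambda$ whenever $\lambda_i=\lambda_{i+1}$, so iterating along any reduced word in $W_\lambda$ gives $T_v E_\lambda = t^{\frac12\ell(v)}E_\lambda$ for all $v\in W_\lambda$, and hence $\mathbf{1}_\lambda E_\lambda = t^{-\frac12\ell(w_\lambda)}W_\lambda(t)E_\lambda$. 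Using $\ell(w_0)=\ell(v_\lambda)+\ell(w_\lambda)$, the normalizing constants collapse and the problem reduces to
\[
P_\lambda \;=\; t^{\frac12\ell(v_\lambda)} \sum_{z\in W^\lambda} \eta_z\, c^Y_{v_\lambda}\, E_\lambda.
\]

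Second, I would evaluate each summand. Since $c^Y_{v_\lambda}\in\CC[Y]$, it acts on $E_\lambda$ as the scalar $\ev^t_\lambda(c^Y_{v_\lambda})$, giving $\eta_z c^Y_{v_\lambda}E_\lambda = \ev^t_\lambda(c^Y_{v_\lambda})\,\eta_z E_\lambda$. The identity $\tau^\vee_i = c^Y_{i+1,i}\eta_{s_i}$ (which follows from~\eqref{etasqcomp} together with the commutation $\eta_{s_i} c^Y_{i,i+1} = c^Y_{i+1,i}\eta_{s_i}$ from Proposition~\ref{normintwn}), combined with the creation step $t^{\frac12}\tau^\vee_i E_\mu = E_{s_i\mu}$ of~\eqref{CXlambdaaction}, yields the single-step formula $\eta_{s_i}E_\mu = t^{-\frac12}\ev^t_{s_i\mu}(c^Y_{i+1,i})^{-1}E_{s_i\mu}$ whenever $\mu_i>\mu_{i+1}$. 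Iterating along a reduced word for $z\in W^\lambda$---the minimality of $z$ in its coset guarantees that the hypothesis $\mu_i>\mu_{i+1}$ holds at every intermediate step---produces a telescoping product of inverse $c^Y$-evaluations times $E_{z\lambda}$. Collecting the result and using that $\ell(v_\lambda)-\ell(z)$ equals the number of descent pairs of $z\lambda$, the coefficient of $E_{z\lambda}$ simplifies to exactly $t^{\frac12\ell(v_\lambda z)}\ev^t_{z\lambda}(c^Y_{v_\lambda z})$, where $v_\lambda z$ is understood as the minimum-length permutation sorting $z\lambda$.

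The formula for $A_{\lambda+\rho}$ proceeds in parallel: since $\lambda+\rho$ is strongly regular one has $W_{\lambda+\rho}=\{1\}$ and $W^{\lambda+\rho}=S_n$, so starting from $A_{\lambda+\rho} = t^{\frac12\ell(w_0)}\varepsilon_0 E_{\lambda+\rho}$ and applying $\varepsilon_0 = c^{Y^{-1}}_{w_0} e^Y_0$ from Proposition~\ref{propslicksymmA}, the same iteration produces the antisymmetric counterpart, with the signs $(-t^{\frac12})^{\ell(w_0 z)}$ coming from the definition of $e^Y_0$ and $c^{Y^{-1}}$ replacing $c^Y$ throughout. The alternative combinatorial expressions then follow from the elementary rewrite $t^{\frac12}\ev^t_\mu(c^Y_{ij}) = \frac{a-t}{a-1}$ with $a = q^{\mu_i-\mu_j}t^{v_\mu(i)-v_\mu(j)}$, which turns each $c^Y$-factor into the explicit rational function displayed in the statement. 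The main obstacle is the careful bookkeeping of $c$-function indices along the iteration of $\eta_z$: one must verify that the inverse factors produced telescopically by $\eta_z E_\lambda$ combine with $\ev^t_\lambda(c^Y_{v_\lambda})$ to leave exactly the product of $\ev^t_{z\lambda}(c^Y_{ij})$ over the descent pairs of $z\lambda$, and that the residual power of $t$ matches $t^{\frac12\ell(v_\lambda z)}$.
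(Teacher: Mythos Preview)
Your proposal is correct and follows essentially the same route as the paper: creation formula, parabolic symmetrizer identity from Proposition~\ref{propsymwparabA}, reduction of $\mathbf{1}_\lambda E_\lambda$ via~\eqref{Tigivest}, and then the action of the $\eta_z$ on $E_\lambda$. The only stylistic difference is that where you evaluate $c^Y_{v_\lambda}$ on $E_\lambda$ as a scalar first and then iterate $\eta_{s_i}$ step by step (leaving the telescoping bookkeeping for the end), the paper instead keeps $c^Y_{v_\lambda}$ as an operator and uses the cocycle identity $c^Y_{v_\lambda}/c^Y_z = z^{-1}c^Y_{v_\lambda z}$ together with $\tau^\vee_z(z^{-1}f(Y)) = f(Y)\tau^\vee_z$ to pass directly to $c^Y_{v_\lambda z}\tau^\vee_z E_\lambda$; this packages your ``main obstacle'' into a single clean step and avoids the inductive verification.
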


\begin{proof} 
Note that the coefficient of $E_{w_0\lambda}$ in $P_\lambda$ is 1 and the coefficient
of $E_{w_0(\lambda+\rho)}$ in $A_{\lambda+\rho}$ is 1.

For the first statement,~\eqref{slicksymmA},~\eqref{creationPA} and~\eqref{Poinbysymm} give
\begin{align*}
P_\lambda 
&= \frac{t^{\frac12\ell(w_0)}}{W_\lambda(t)} \mathbf{1}_0 E_\lambda
= \frac{t^{\frac12(\ell(w_0)-\ell(w_\lambda))}}{t^{-\frac12\ell(w_\lambda)}W_\lambda(t)} 
\Big(\sum_{z\in W^\lambda} \eta_z\Big)
c_{v_\lambda}^Y \mathbf{1}_\lambda E_\lambda
=t^{\frac12(\ell(w_0)-\ell(w_\lambda))}\sum_{z\in W^\lambda}  \tau^\vee_z \frac{c_{v_\lambda}^Y}{c_z^Y}E_\lambda
\\
&=t^{\frac12\ell(v_\lambda)}\sum_{z\in W^\lambda} \tau^\vee_z (z^{-1}c_{v_\lambda z}^Y) E_\lambda
=t^{\frac12\ell(v_\lambda)}\sum_{z\in W^\lambda} c_{v_\lambda z}^Y \tau^\vee_z E_\lambda
\\
&=t^{\frac12\ell(v_\lambda)}\sum_{z\in W^\lambda} \ev^t_{z\lambda}(c_{v_\lambda z}^Y) t^{-\frac12\ell(z)}
E_{z\lambda}
=\sum_{z\in W^\lambda} t^{\frac12\ell(v_\lambda z)}\ev^t_{z\lambda}(c_{v_\lambda z}^Y)
E_{z\lambda}.
\end{align*}
If $z\in W^\lambda$ and $\mu = z\lambda$ then
$\mathrm{Inv}(v_\lambda z) = \{ (i,j)\ |\ \hbox{$1\le i<j\le n$ and $\mu_i>\mu_j$} \}
$,
so that
\begin{align}
t^{\frac12\ell(v_\lambda z)} \ev^t_\mu(c_{v_\lambda z}^Y)
&= \ev^t_\mu\Big( \prod_{1\le i<j\le n\atop \mu_i>\mu_j }
t^{\frac12} \frac{t^{-\frac12}-t^{\frac12}Y_iY^{-1}_j } { 1-Y_iY^{-1}_j } \Big) 
= \ev^t_\mu\Big( \prod_{1\le i<j\le n\atop \mu_i>\mu_j }
t \frac{t^{-1}Y^{-1}_iY_j-1 } { Y^{-1}_iY_j-1 } \Big) 
\nonumber \\
&= 
\prod_{1\le i<j\le n\atop \mu_i>\mu_j }
t \Big( \frac{1 - q^{\mu_i-\mu_j} t^{v_\mu(i)-v_\mu(j)-1} }
{1 - q^{\mu_i-\mu_j} t^{v_\mu(i)-v_\mu(j)} }\Big),
\label{Eexpb}
\end{align}
where we have used that
$\ev^t_\mu(Y_i) = q^{-\mu_i}t^{-(v_\mu(i)-1) + \frac12(n-1)}$. Thus
$$
\ev^t_\mu(Y^{-1}_iY_j) 
= q^{\mu_i}t^{(v_\mu(i)-1) - \frac12(n-1)}
q^{-\mu_j}t^{-(v_\mu(j)-1) + \frac12(n-1)}
= q^{\mu_i-\mu_j}t^{v_\mu(i)-v_\mu(j)}.
$$

For the second statement,
\begin{align*}
A_{\lambda+\rho}(q,t)
&= t^{\frac12\ell(w_0)}\varepsilon_0 E_{\lambda+\rho}(q,t)
= t^{\frac12\ell(w_0)} c_{w_0}^{Y^{-1}} \sum_{z\in S_n} \det(w_0z)\eta_z E_{\lambda+\rho}(q,t)
\\
&= t^{\frac12\ell(w_0)}
\sum_{z\in S_n} \det(w_0z) c_{w_0z}^{Y^{-1}} t^{-\frac12\ell(z)} t^{\frac12\ell(z)} \tau^\vee_z 
E_{\lambda+\rho}(q,t) \\
&= 
\sum_{z\in S_n} \det(w_0z) c_{w_0z}^{Y^{-1}} t^{\frac12\ell(w_0z)}  
E_{z(\lambda+\rho)}(q,t)
\\
&= \sum_{z\in S_n} (-1)^{\ell(w_0z)} t^{\frac12\ell(w_0z)}
\ev^t_{z(\lambda+\rho)}(c_{w_0z}^{Y^{-1}})
E_{z(\lambda+\rho)}(q,t).
\end{align*}
If $\mu = z(\lambda+\rho)$ then, in a manner similar to the computation in~\eqref{Eexpb},
\begin{align}
(-t^{\frac12})^{\ell(w_0z)} \ev^t_\mu(c_{w_0z}^{Y^{-1}}) 
&= \prod_{1\le i<j\le n\atop \mu_i >\mu_j}
(-t^{\frac12}) \frac{t^{-\frac12}-t^{\frac12}Y^{-1}_iY_j}{1-Y^{-1}_iY_j}
\nonumber \\
&=
\prod_{1\le i<j\le n\atop \mu_i >\mu_j}
(-1) \frac{1-t q^{\mu_i-\mu_j} t^{v_\mu(i)-v_\mu(j)\rangle} }
{1-q^{\mu_i-\mu_j} t^{v_\mu(i)-v_\mu(j)\rangle}}.
\label{Eexpd}
\end{align}
\end{proof}

\section{Principal specializations and hook formulas}

In Section~\ref{sec:actions} we saw that $c$-functions provide the explicit constants for normalization
and the $E$-expansion of bosonic and fermionic Macdonald polynomials.  In this section we see the role 
of $c$-functions in the amazing product formulas for principal specializations.  
These principal specializations capture many of
the hook type formulas that appear in formulas for dimensions of irreducible representations
of the general linear group and the symmetric group.  As discussed in Section~\ref{ssec:elldim},
the representation theoretic interpretation of the hook formulas that arise from the principal specializations 
of Macdonald polynomials  is still rather mysterious, but we hope
that viewing these results as being sourced from evaluations of $c$-functions will help to provide insight.
Our exposition follows~\cite[(5.2.14) and (5.3.9)]{Mac03} (with some streamlining) for the $c$-function formulas,
and then follows~\cite{AGY22} for rewriting the $c$-function formulas into hook formulas.
This provides an alternative route to the proof of the principal specialization formulas for $P_\lambda$
which are given in~\cite[($6.11'$)]{Mac}.

\subsection{$c$-function formulas}

An \emph{$n$-periodic permutation} is a bijection $w\colon \ZZ\to \ZZ$ such that $w(i+n) = w(i)+n$ for all $i\in \ZZ$.
Given an $n$-periodic permutation $w$, define its \emph{set of inversions} and \emph{length} by
$$\mathrm{Inv}(w) = \left\{ (i,k)\ \Big|\  
\begin{array}{c}
\hbox{$i\in \{1, \ldots, n\}, k\in \ZZ$} \\ \hbox{$i<k$ and $w(i)>w(k)$}
\end{array}\right\}
\qquad\hbox{and}\qquad
\ell(w) = \#\mathrm{Inv}(w).
$$
Define an action of the $n$-periodic permutations on $\ZZ^n$ by setting
$$w(\mu_1, \ldots, \mu_n) = (\mu_{v(1)}+\ell_1, \ldots, \mu_{v(n)}+\ell_n),$$
where $v(i)\in \{1,\ldots, n\}$ and $\ell_i\in \ZZ$ are determined by
$w(i) = v(i)+\ell_i n$.
Given $\mu\in \ZZ^n$, we consider the following $n$-periodic permutations associated to $\mu$ 
$$\begin{array}{l}
\hbox{$u_\mu$ be the minimal length $n$-periodic permutation such that
$u_\mu(0,\ldots, 0)= (\mu_1, \ldots, \mu_n)$ and }
\\
\hbox{$t_\mu$ be the $n$-periodic permutation given by $t_\mu(i) = i+n\mu_i$.}
\end{array}
$$
Recall that $v_\mu\in S_n$ denotes the minimal length permutation such that $v_\mu\mu$ is weakly increasing. The three permutations are related by $u_\mu = t_\mu v_\mu^{-1}$.

We extend the definition of $c$-functions to $n$-periodic permutations. 
For $i,j\in \{1, \ldots, n\}$ and $\ell\in \ZZ$ define
\begin{equation*}
c^{Y^{-1}}_{(i,j+\ell n)} = t^{-\frac12} \frac{1-q^{\ell} tY_i^{-1}Y_j}{1-q^\ell Y_i^{-1}Y_j}
\qquad\hbox{and}\qquad
c^{Y^{-1}}_w = \prod_{(i,k)\in \mathrm{Inv}(w)} c^{Y^{-1}}_{(i,k)}.
\end{equation*}
Define ring homomorphisms $\ev^t_0\colon \CC[Y] \to \CC$ and $\ev^{t^{-1}}_0\colon \CC[Y] \to \CC$ 
by
$$\ev^t_0(Y_i) = t^{-(i-1)+\frac12(n-1)}
\qquad\hbox{and}\qquad
\ev^{t^{-1}}_0(Y_i) = t^{(i-1)-\frac12(n-1)},
\qquad\hbox{for $i\in \{1, \ldots, n\}$.}
$$

\begin{thm}  \label{princspecA}
Let $\mu, \lambda \in \ZZ^n$ with $\lambda_1 \ge \cdots \ge \lambda_n$. 
Then 
\begin{align*}
E_\mu(1,t,t^2, \ldots, t^{n-1};q,t) &= t^{\frac{(n-1)}{2}\vert \lambda\vert}
t^{-\frac12\ell(v^{-1}_\mu)} \ev^t_0(c^{Y^{-1}}_{u_\mu}),
\\
P_\lambda(1,t,t^2, \ldots, t^{n-1};q,t) &= t^{\frac{(n-1)}{2}\vert \lambda\vert}
\ev^{t^{-1}}_0(c^{Y^{-1}}_{t_\lambda})
\qquad
\hbox{and} \\
A_{\lambda+\rho}(1,t,t^2, \ldots, t^{n-1};q,t) &= 0.
\end{align*}
\end{thm}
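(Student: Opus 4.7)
I would prove the three assertions in the order (3), (1), (2), since (3) is immediate and (2) will build on (1).

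Part (3) is immediate from the Weyl character formula in Section~\ref{sec:Macpolys}: $A_{\lambda+\rho}(q,t)=A_\rho(q,t)P_\lambda(q,qt)$ with $A_\rho(q,t)=\prod_{i<j}(x_j-tx_i)$. At the specialization $x_i=t^{i-1}$, the factor indexed by $(i,i+1)$ becomes $t^i-t\cdot t^{i-1}=0$, so $A_\rho$, and hence $A_{\lambda+\rho}$, vanish.

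For (1), I would induct on $\ell(u_\mu)$ using the creation formula $E_\mu=t^{-\frac12\ell(v_\mu^{-1})}\tau^\vee_{u_\mu}\cdot 1$ from Theorem~\ref{creationformulathm}. The base case $\mu=(0,\ldots,0)$ is trivial, since $E_0=1$ and $u_0=e$. For the inductive step, peel off the leftmost factor $r\in\{s_1,\ldots,s_{n-1},\pi\}$ of a reduced word for $u_\mu$ so that $u_\mu=r\,u_{\mu'}$ with $\ell(u_\mu)=\ell(u_{\mu'})+1$; then $E_\mu$ equals $t^{\pm 1/2}\tau^\vee_r E_{\mu'}$. Using the decomposition $\tau^\vee_i=C_{s_i}-c^Y_{i,i+1}$ from the proof of Proposition~\ref{normintwn} together with the fact that $E_{\mu'}$ is a $Y$-eigenvector (Theorem~\ref{Eeigenvalue}), the scalar piece $c^Y_{i,i+1}E_{\mu'}=\ev^t_{\mu'}(c^Y_{i,i+1})E_{\mu'}$ is explicit. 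The $C_{s_i}$ piece is delicate, because the principal specialization does not commute with $s_i$; one evaluates it by exploiting the $\widetilde{H}$-module isomorphism $\CC[X]\cong\widetilde{H}\mathbf{1}_Y$ from~\eqref{CXasIndHY} and the X--Y duality of the DAHA, under which the contribution rewrites as the $c^{Y^{-1}}_{(i,k)}$-factor corresponding to the newly created inversion $(i,k)\in\mathrm{Inv}(u_\mu)$. The $r=\pi$ case is handled analogously using the $T_\pi$ action and the periodicity $X_{n+1}=q^{-1}X_1$. Accumulating the factors through the induction reproduces $\ev^t_0(c^{Y^{-1}}_{u_\mu})$ and the prefactor $t^{\frac{n-1}{2}|\mu|}$ coming from the base eigenvalues $Y_i\cdot 1=t^{-(i-1)+\frac{n-1}{2}}$.

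For (2), I would combine (1) with the $E$-expansion $P_\lambda=\sum_{z\in W^\lambda}t^{\frac12\ell(v_\lambda z)}\ev^t_{z\lambda}(c^Y_{v_\lambda z})E_{z\lambda}$ of Proposition~\ref{Eexpansion}. Specializing each $E_{z\lambda}(t^{\rho^\vee})$ via (1), one needs to show that the resulting sum collapses to $t^{\frac{n-1}{2}|\lambda|}\ev^{t^{-1}}_0(c^{Y^{-1}}_{t_\lambda})$. The collapse rests on the factorization $u_{z\lambda}=t_{z\lambda}v_{z\lambda}^{-1}$ of $n$-periodic permutations together with length-additivity to reorganize the periodic inversions, and on the involution $Y\to Y^{-1}$ that converts $\ev^t_0$ into $\ev^{t^{-1}}_0$. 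An alternative route is to bypass the $E$-expansion and start directly from $P_\lambda=\frac{t^{\frac12\ell(w_0)}}{W_\lambda(t)}\mathbf{1}_0 E_\lambda$, using $\mathbf{1}_0=p^\lambda_Y c^Y_{v_\lambda}\mathbf{1}_\lambda$ from Proposition~\ref{propsymwparabA} and the eigenvalue identity $\mathbf{1}_\lambda E_\lambda=t^{-\frac12\ell(w_\lambda)}W_\lambda(t)E_\lambda$ (which follows from $T_i E_\lambda=t^{1/2}E_\lambda$ for $s_i\in W_\lambda$ in~\eqref{Tigivest}) to reduce to evaluating $p^\lambda_Y c^Y_{v_\lambda}E_\lambda$ at $t^{\rho^\vee}$.

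The main obstacle is the combinatorial bookkeeping in (1): matching the newly introduced $c$-function factor at each inductive step to the correct periodic inversion of $u_\mu$ under the X--Y duality, and in particular correctly handling inversion pairs $(i,k)$ with $k>n$ that appear when the leftmost factor is $\pi$. The parallel difficulty in (2) is to verify that the sum over $W^\lambda$ collapses cleanly to a single product over $\mathrm{Inv}(t_\lambda)$, which requires a precise length-additivity identity matching inversions of $t_\lambda$ with those of $u_{z\lambda}$ and $v_\lambda z$.
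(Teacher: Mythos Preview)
Your overall strategy matches the paper's---use the creation formulas and peel off intertwiners---but you are missing the single technical device that makes the argument short and clean. The paper introduces a formal left symbol $\mathbf{1}_X$ determined by $\mathbf{1}_X T_j=t^{1/2}\mathbf{1}_X$ and $\mathbf{1}_X T_\pi^\vee=\mathbf{1}_X$; these force $\mathbf{1}_X x_i=t^{-(n-1)/2}t^{i-1}\mathbf{1}_X$, so that left multiplication by $\mathbf{1}_X$ \emph{is} the principal specialization (up to the prefactor $t^{(n-1)|\mu|/2}$). Writing $\tau^\vee_i=T_i+\dfrac{t^{-1/2}-t^{1/2}}{1-Y_i^{-1}Y_{i+1}}$ then gives, in one line,
\[
\mathbf{1}_X\tau^\vee_i=\mathbf{1}_X\Big(t^{1/2}+\frac{t^{-1/2}-t^{1/2}}{1-Y_i^{-1}Y_{i+1}}\Big)=\mathbf{1}_X\,c^{Y^{-1}}_{i,i+1},
\]
so there is no ``delicate $C_{s_i}$ piece'' to evaluate and no separate $\pi$ case: by induction $\mathbf{1}_X\tau^\vee_w\mathbf{1}_Y=\ev^t_0(c^{Y^{-1}}_w)\mathbf{1}_X\mathbf{1}_Y$ for every reduced $w$, which is exactly formula (1). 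Your proposed decomposition $\tau^\vee_i=C_{s_i}-c^Y_{i,i+1}$ with an appeal to ``X--Y duality'' for the $C_{s_i}$ contribution is not a complete argument; the $\mathbf{1}_X$ functional is the concrete incarnation of that duality and is what you are gesturing at.

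For (2), your $E$-expansion route would require collapsing a sum over $W^\lambda$ and is unnecessary. With $\mathbf{1}_X$ in hand the paper applies $\mathbf{1}_X$ to $P_\lambda=\frac{t^{\ell(w_0)/2}}{W_\lambda(t)}\mathbf{1}_0E_\lambda$ and uses $\mathbf{1}_X\mathbf{1}_0=t^{\ell(w_0)/2}W_0(t)\,\mathbf{1}_X$ to reduce immediately to (1) for $E_\lambda$; the passage from $\ev^t_0(c^{Y^{-1}}_{u_\lambda})$ to $\ev^{t^{-1}}_0(c^{Y^{-1}}_{t_\lambda})$ is then a short $c$-function manipulation using $t_\lambda=u_\lambda v_\lambda$ and Proposition~\ref{paraPoin}. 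Your alternative route for (2) is headed this way but still lacks $\mathbf{1}_X$.

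Your argument for (3) via the Weyl character formula is correct and pleasantly direct, though note that the paper's one-line proof is $\mathbf{1}_X\varepsilon_0=0$, which avoids invoking Theorem~\ref{WCF} (proved later in Section~\ref{section:WCF}).
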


Let $\mu \in \ZZ^n_{\ge 0}$.
Using the formulas  (see~\cite[Proposition 2.1(d) and Proposition 2.2(b)]{GR21})
\begin{equation}
\mathrm{Inv}(t_\lambda) 
=\{ (i,j+\ell n)\ |\ \hbox{$i,j\in \{1, \ldots, n\}$, $i<j$ and $\ell\in \{0, 1, \ldots, \lambda_j-\lambda_i-1\}$}\}
\label{Invfortlambda}
\end{equation}
and
\begin{equation}
\mathrm{Inv}(u_\mu) 
=\{ (v_{\mu}(r), i+(\mu_r-c+1)n)\ |\ \hbox{$(r,c)\in \mu$ and $i\in \{1, \ldots, u_\mu(r,c)\}$}\}.
\label{Invforumu}
\end{equation}
gives the following corollary.

\begin{cor}  \label{princspecB}
Let $\mu\in \ZZ^n_{\ge 0}$. Denote by $\lambda$ the decreasing rearrangement of $\mu$ and $n(\lambda) = \sum_{i=1}^n (i-1)\lambda_i$.
Then
$$P_\lambda(1,t,t^2, \ldots, t^{n-1};q,t) 
= t^{n(\lambda)} \prod_{1\le i<j\le n} \prod_{\ell=0}^{\lambda_i-\lambda_j-1}
\frac{1 - q^\ell t^{j-i +1} }{1- q^\ell t^{j-i}} 
$$
and
$$E_\mu(1,t,t^2, \ldots, t^{n-1};q,t)
= t^{-\frac12\ell(v^{-1}_\mu)} \prod_{(r,c)\in \mu} \prod_{i=1}^{u_\mu(r,c)}
\frac{1-q^{\mu_r-c+1} t^{v_\mu(r)-i+1} }{1-q^{\mu_r-c+1} t^{v_\mu(r)-i}}.
$$
\end{cor}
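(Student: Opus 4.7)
The plan is to deduce the corollary directly from Theorem~\ref{princspecA} by substituting the explicit inversion-set descriptions~\eqref{Invfortlambda} and~\eqref{Invforumu} into the $c$-function products, then collecting $t$-exponents.

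First I would record the evaluation of a single factor.  Using $\ev^t_0(Y_i) = t^{-(i-1)+\frac{n-1}{2}}$ and $\ev^{t^{-1}}_0(Y_i) = t^{(i-1)-\frac{n-1}{2}}$, a one-line calculation gives
$$
\ev^{t^{-1}}_0(c^{Y^{-1}}_{(i,j+\ell n)}) = t^{-\frac12}\frac{1-q^\ell t^{j-i+1}}{1-q^\ell t^{j-i}},\qquad
\ev^t_0(c^{Y^{-1}}_{(i,j+\ell n)}) = t^{-\frac12}\frac{1-q^\ell t^{i-j+1}}{1-q^\ell t^{i-j}}.
$$

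For $P_\lambda$, I would substitute~\eqref{Invfortlambda} into $c^{Y^{-1}}_{t_\lambda}=\prod_{(i,k)\in\mathrm{Inv}(t_\lambda)} c^{Y^{-1}}_{(i,k)}$ to rewrite it as the claimed double product indexed by $i<j$ and $0 \le \ell \le \lambda_i - \lambda_j - 1$.  Collecting the $t^{-\frac12}$ factors (one per inversion, totaling $\ell(t_\lambda) = \sum_{i<j}(\lambda_i-\lambda_j)$) and combining with the $t^{\frac{n-1}{2}|\lambda|}$ prefactor from Theorem~\ref{princspecA} gives the $t$-exponent $\frac{n-1}{2}|\lambda| - \frac12\sum_{i<j}(\lambda_i-\lambda_j)$; the elementary identity $\sum_{i<j}(\lambda_i-\lambda_j) = (n-1)|\lambda| - 2n(\lambda)$ reduces this to $n(\lambda)$, yielding the hook formula.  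The $E_\mu$ argument is parallel: substituting~\eqref{Invforumu} re-indexes the product by boxes $(r,c)\in\mu$ and $i\in\{1,\ldots,u_\mu(r,c)\}$, and each factor contributes the data $(\ell=\mu_r-c+1,\ j=i,\ \text{evaluated at } Y_{v_\mu(r)}^{-1}Y_i)$ needed to match the displayed product; the $t^{-\frac12}$ factors then combine with the prefactor $t^{\frac{n-1}{2}|\lambda| - \frac12\ell(v_\mu^{-1})}$ to leave $t^{-\frac12\ell(v_\mu^{-1})}$, using the length identity $\ell(u_\mu) + \ell(v_\mu^{-1}) = (n-1)|\lambda|$.

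The main obstacle will be the length identity $\ell(u_\mu) + \ell(v_\mu^{-1}) = (n-1)|\lambda|$, which must be verified directly from the box-counting description of $u_\mu(r,c)$ in~\eqref{Invforumu}; once it is in hand, the corollary follows by routine bookkeeping of $t$-exponents.
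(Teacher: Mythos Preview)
Your approach is exactly that of the paper: evaluate a single $c$-factor, substitute the inversion sets~\eqref{Invfortlambda} and~\eqref{Invforumu}, and collect the $t^{-\frac12}$'s into the prefactor.  The $P_\lambda$ half is fine and matches the paper line for line (the paper phrases the $t$-bookkeeping as $n(\lambda)=\tfrac{n-1}{2}|\lambda|-\tfrac12\ell(t_\lambda)$, which is your identity rewritten).

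The one genuine problem is your ``main obstacle'' identity $\ell(u_\mu)+\ell(v_\mu^{-1})=(n-1)|\lambda|$: it is \emph{false} in general.  Take $n=2$, $\mu=(1,1)$; then $v_\mu=1$, $u_\mu=t_\mu$ has length $0$, but $(n-1)|\lambda|=2$.  The correct relation, and what the paper actually uses, is
\[
\ell(u_\mu)+\ell(v_\mu)=\ell(t_\mu)=\ell(t_\lambda)=(n-1)|\lambda|-2n(\lambda),
\]
so the $t$-exponent $\tfrac{n-1}{2}|\lambda|-\tfrac12\ell(v_\mu^{-1})-\tfrac12\ell(u_\mu)$ simplifies to $n(\lambda)$, not to $-\tfrac12\ell(v_\mu^{-1})$.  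In other words, the displayed prefactor $t^{-\frac12\ell(v_\mu^{-1})}$ in the $E_\mu$ statement is a typo for $t^{n(\lambda)}$ (as the paper's own proof derives and as Theorem~\ref{hookforE} uses); the example $E_{(1,1)}(1,t)=t$ already shows the stated prefactor cannot be right.  Once you replace your length identity with the correct one, your argument goes through verbatim.
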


\subsection{Hook formulas for the bosonic and electronic cases}

Let $\lambda\in \ZZ^n_{\ge 0}$ with
$\lambda_1\ge \cdots \ge \lambda_n$. 
Let $\lambda'$ denote the conjugate partition to $\lambda$ (i.e. for $c\in \ZZ_{>0}$ let
$\lambda'_c = \#\{ j\in \ZZ_{>0}\ |\ \lambda_j\ge c\}$).  A \emph{box in $\lambda$} is a 
pair $b=(r,c)$ with $r\in \{1, \dots, n\}$ and $c\in \{1, \ldots, \lambda_i\}$.
For a box $b=(r,c)$ in $\lambda$ define
$$
\begin{matrix}
\begin{tikzpicture}[yscale=-1]
\draw (0,0) -- (5,0) -- (5,1) -- (4,1) -- (4,3)  -- (2,3) -- (2,4) -- (1,4) -- (1,5) -- (0,5) -- (0,0);
\draw[<->] (2.5,0) -- node[anchor=west]{$\scriptstyle{coleg_\lambda(b)}$} (2.5,1.3) ;
\draw[<->] (2.5,1.7) -- node[anchor=west]{$\scriptstyle{leg_\lambda(b)}$} (2.5,3) ;
\draw[<->] (0,1.5) -- node[anchor=south]{$\scriptstyle{coarm_\lambda(b)}$} (2.3,1.5) ;
\draw[<->] (2.7,1.5) -- node[anchor=south]{$\scriptstyle{arm_\lambda(b)}$} (4,1.5) ;
\draw (2.5,1.5)  node[shape=rectangle,draw]{$b$};
\end{tikzpicture}
\end{matrix}
\qquad\quad
\begin{matrix}
&\mathrm{coleg}_\lambda(b) = r-1, \\
\\
\ 
\\
\mathrm{coarm}_\lambda(b) = c-1, &b=(r,c),
&\mathrm{arm}_\lambda(b) = \lambda_r-c, \\
\\
\ 
\\
&\mathrm{leg}_\lambda(b) = \lambda_c'-r.
\end{matrix}
$$
The \emph{hook length} $h(b)$ and the \emph{content} $c(b)$ of the box $b$ are defined by
\begin{equation*}
h(b) = \mathrm{arm_\lambda}(b)+\mathrm{leg}_\lambda(b)+1
\qquad\hbox{and}\qquad
c(b) = \mathrm{coarm}_\lambda(b) - \mathrm{coleg}_\lambda(b).
\end{equation*}

\begin{thm} \label{hookforP}  
Let $\lambda = (\lambda_1, \ldots, \lambda_n)\in \ZZ^n_{\ge 0}$ with
$\lambda_1\ge \cdots \ge \lambda_n$. Then
\begin{align*}
P_{\lambda}(1,t,t^2, \ldots, t^{n-1};q,t)
&= t^{n(\lambda)} \prod_{b\in \lambda} 
\frac{1-q^{\mathrm{coarm}_\lambda(b)}t^{n-\mathrm{coleg}_\lambda(b)}}
{1-q^{\mathrm{arm}_\lambda(b)}t^{\mathrm{leg}_\lambda(b)+1}} . 
\end{align*} 
\end{thm}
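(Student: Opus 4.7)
The plan is to reduce Theorem~\ref{hookforP} to a purely combinatorial identity between two product formulas, using Corollary~\ref{princspecB} to dispatch the hard analytic work.

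By Corollary~\ref{princspecB} we already have
\[
P_\lambda(1,t,t^2,\ldots,t^{n-1};q,t) = t^{n(\lambda)}\prod_{1\le i<j\le n}\prod_{\ell=0}^{\lambda_i-\lambda_j-1}\frac{1-q^\ell t^{j-i+1}}{1-q^\ell t^{j-i}},
\]
so it suffices to prove the combinatorial identity
\[
L_\lambda := \prod_{1\le i<j\le n}\prod_{\ell=0}^{\lambda_i-\lambda_j-1}\frac{1-q^\ell t^{j-i+1}}{1-q^\ell t^{j-i}} = \prod_{b\in\lambda}\frac{1-q^{\mathrm{coarm}_\lambda(b)}t^{n-\mathrm{coleg}_\lambda(b)}}{1-q^{\mathrm{arm}_\lambda(b)}t^{\mathrm{leg}_\lambda(b)+1}} =: R_\lambda.
\]

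I would prove this by a double induction, first on $\lambda_n$ and then on $n$. For the first reduction, when $\lambda_n\ge 1$ set $\lambda^*=\lambda-(1^n)$. Since each difference $\lambda_i-\lambda_j$ is invariant under $\lambda\mapsto\lambda^*$, we have $L_\lambda = L_{\lambda^*}$ on the nose. For the right-hand side, the shift $(r,c)\mapsto(r,c+1)$ from boxes of $\lambda^*$ to non-first-column boxes of $\lambda$ preserves arm, leg, and coleg (noting $\lambda'_{c+1} = (\lambda^*)'_c$) and shifts coarm by $+1$. After splitting off the first-column contribution $\prod_{r=1}^n\frac{1-t^{n-r+1}}{1-q^{\lambda_r-1}t^{n-r+1}}$ and telescoping the row-wise numerator ratio $\prod_{c=2}^{\lambda_r}\frac{1-q^{c-1}t^{n-r+1}}{1-q^{c-2}t^{n-r+1}}=\frac{1-q^{\lambda_r-1}t^{n-r+1}}{1-t^{n-r+1}}$, the two contributions cancel, yielding $R_\lambda = R_{\lambda^*}$. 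Iterating reduces to $\lambda_n=0$.

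For the second reduction, when $\lambda_n=0$ set $\bar\lambda=(\lambda_1,\ldots,\lambda_{n-1})$. All hook statistics agree (since $\lambda'_c=\bar\lambda'_c$ when the last row is empty). Then $L_\lambda/L_{\bar\lambda}$ collects exactly the pairs $(i,n)$, contributing $\prod_{i=1}^{n-1}\prod_{\ell=0}^{\lambda_i-1}\frac{1-q^\ell t^{n-i+1}}{1-q^\ell t^{n-i}}$, while $R_\lambda/R_{\bar\lambda}$ records the change $t^{(n-1)-r+1}\to t^{n-r+1}$ in the numerator, giving $\prod_{r=1}^{n-1}\prod_{c=1}^{\lambda_r}\frac{1-q^{c-1}t^{n-r+1}}{1-q^{c-1}t^{n-r}}$. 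Under $r=i$ and $c=\ell+1$ these are literally equal. Iteration in $n$ brings us to the base case $n=1$, $\lambda=(\lambda_1)$, where $L_\lambda=1$ (empty product) and $R_\lambda=\prod_{c=1}^{\lambda_1}\frac{1-q^{c-1}t}{1-q^{\lambda_1-c}t}=1$ after reparametrizing $c\mapsto\lambda_1-c+1$ in the denominator.

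The main technical obstacle is verifying the first reduction rigorously: one must match the residual first-column contribution (from boxes $(r,1)$ with arm $\lambda_r-1$ and leg $n-r$) with the telescoped column-wise numerator ratio exactly. The indices cooperate because the coarm shift of $+1$ is compensated precisely by the stability of arm, leg, and coleg under $\lambda\mapsto\lambda^*$, but this pairing of factors is the combinatorial heart of the argument of~\cite{AGY22} and warrants careful bookkeeping.
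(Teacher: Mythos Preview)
Your proof is correct. Both your argument and the paper's begin by invoking Corollary~\ref{princspecB} to reduce to the purely combinatorial identity $L_\lambda=R_\lambda$, but the two diverge from there. The paper proves the identity \emph{directly}, with no induction: fixing a row $r$, it switches the order of the products over $j$ and $\ell$ in $L_\lambda$ (replacing the pair $(j,\ell)$ by the column index $c$ satisfying $\lambda'_c<j\le n$), and on the $R_\lambda$ side it observes that in each row the last $m$ numerator factors cancel with the first $m$ denominator factors, where $m$ is the number of full columns of $\lambda$. After these two row-wise manipulations the expressions match factor by factor.

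Your double induction is a genuinely different route. The first reduction $\lambda\mapsto\lambda-(1^n)$ strips off full columns one at a time, which is morally the same cancellation the paper performs in one stroke; your second reduction in $n$ has no direct analogue in the paper's proof. What your approach buys is modularity and elementary telescoping, with no need for the index-switching trick in~\eqref{hookmiracle}; what the paper's approach buys is a shorter, induction-free argument that makes the matching of factors more explicit. Both are clean, and your inductive structure is perhaps easier to verify line by line.
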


\begin{thm}\label{hookforE}
Let $\mu=(\mu_1, \ldots, \mu_n)\in \ZZ_{\ge 0}^n$ and let $\lambda$ be the weakly decreasing rearrangment of $\mu$. 
Then
\begin{align*}
E_\mu(1,t,t^2, \ldots, t^{n-1};q,t) 
&= t^{n(\lambda)}
\prod_{(r,c) \in \mu} 
\frac{1-q^c t^{v_\mu(r)} } 
{1-q^{\mu_r-c+1} t^{v_\mu(r)- u_\mu(r,c) } } .
\end{align*} 
\end{thm}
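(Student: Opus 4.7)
The plan is to derive the hook formula directly from Corollary~\ref{princspecB} by combinatorial manipulation, following the approach of~\cite{AGY22}. The starting point is the expression
$$E_\mu(1,t,\ldots,t^{n-1};q,t) = t^{-\frac12\ell(v_\mu^{-1})}\prod_{(r,c)\in\mu}\prod_{i=1}^{u_\mu(r,c)}\frac{1-q^{\mu_r-c+1}t^{v_\mu(r)-i+1}}{1-q^{\mu_r-c+1}t^{v_\mu(r)-i}},$$
and the goal is to rewrite this as the single product in Theorem~\ref{hookforE}.

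The first step is to telescope the inner product over $i$. For each fixed box $(r,c)\in\mu$, setting $X = q^{\mu_r-c+1}$ and $v=v_\mu(r)$, the numerator at index $i$ cancels the denominator at index $i-1$, yielding
$$\prod_{i=1}^{u_\mu(r,c)}\frac{1-Xt^{v-i+1}}{1-Xt^{v-i}} = \frac{1-q^{\mu_r-c+1}t^{v_\mu(r)}}{1-q^{\mu_r-c+1}t^{v_\mu(r)-u_\mu(r,c)}}.$$
The denominator here already matches the denominator appearing in Theorem~\ref{hookforE}. The second step is to reindex the numerator: for each fixed row $r$, the map $c\mapsto\mu_r-c+1$ is an involution on $\{1,\ldots,\mu_r\}$, so
$$\prod_{c=1}^{\mu_r}\bigl(1-q^{\mu_r-c+1}t^{v_\mu(r)}\bigr) = \prod_{c=1}^{\mu_r}\bigl(1-q^c t^{v_\mu(r)}\bigr),$$
and multiplying over all rows $r$ converts the numerator into the form stated in Theorem~\ref{hookforE}.

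The final step is to reconcile the $t$-prefactor: the cumulative scalar in $t$ coming out of the previous two manipulations must simplify to $t^{n(\lambda)}$, where $\lambda$ is the weakly decreasing rearrangement of $\mu$. I expect this to be the main obstacle, since it requires a combinatorial identity relating $\ell(v_\mu^{-1})$, the half-integer $t$-contributions generated during the telescoping and reindexing, and the statistic $n(\lambda)=\sum_{i=1}^n(i-1)\lambda_i$. A natural route is to first check the identity in the partition case $\mu=\lambda$ (where $v_\mu=w_0$, so the prefactor $t^{-\frac12\ell(w_0)}$ together with the reindexing must combine with the product to reproduce the formula for $P_\lambda(1,t,\ldots,t^{n-1};q,t)$ from Theorem~\ref{hookforP} via Proposition~\ref{E0t}), and then extend to arbitrary $\mu\in S_n\lambda$ by induction on $\ell(v_\mu^{-1})$, using the action of a simple transposition $s_i$ on $\mu$ to track how both sides change by a single factor determined by the $i$-th intertwiner computation in~\eqref{CXlambdaaction}.
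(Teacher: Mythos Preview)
Your two product manipulations---the telescoping of the inner $i$-product and the row-wise reindexing $c\mapsto\mu_r-c+1$ on the numerator---are exactly the paper's proof of Theorem~\ref{hookforE}, line for line.

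Your third step, however, is solving a problem that does not exist. Neither the telescoping nor the reindexing introduces any stray power of $t$: both are exact identities of rational functions, so after those two steps the double product in Corollary~\ref{princspecB} has become the single hook product with \emph{no} change in the scalar prefactor. The issue you are seeing is that the displayed statement of Corollary~\ref{princspecB} carries the prefactor $t^{-\frac12\ell(v_\mu^{-1})}$, while its own proof actually establishes the prefactor $t^{n(\lambda)}$. Concretely, the proof of Corollary~\ref{princspecB} pulls the factor $t^{-\frac12}$ out of each $c$-function in $c^{Y^{-1}}_{u_\mu}$, collecting $t^{-\frac12\ell(u_\mu)}$, and then combines exponents via
\[
\tfrac{n-1}{2}|\mu|-\tfrac12\ell(u_\mu)-\tfrac12\ell(v_\mu)
=\tfrac{n-1}{2}|\lambda|-\tfrac12\ell(t_\mu)
=\tfrac{n-1}{2}|\lambda|-\tfrac12\ell(t_\lambda)
=n(\lambda),
\]
using $u_\mu=t_\mu v_\mu^{-1}$, $\ell(t_\mu)=\ell(t_\lambda)$, and~\eqref{nlambda}. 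So the correct starting point already has $t^{n(\lambda)}$ in front, and your proposed induction on $\ell(v_\mu^{-1})$ is unnecessary.
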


\subsection{Proofs}

\subsubsection{Proof of the $c$-function formula}

\begin{dupThm}[~\ref{princspecA}]
Let $\mu, \lambda \in \ZZ^n$ with $\lambda_1 \ge \cdots \ge \lambda_n$. 
Then 
\begin{align*}
E_\mu(1,t,t^2, \ldots, t^{n-1};q,t) &= t^{\frac{(n-1)}{2}\vert \lambda\vert}
t^{-\frac12\ell(v^{-1}_\mu)} \ev^t_0(c^{Y^{-1}}_{u_\mu}),
\\
P_\lambda(1,t,t^2, \ldots, t^{n-1};q,t) &= t^{\frac{(n-1)}{2}\vert \lambda\vert}
\ev^{t^{-1}}_0(c^{Y^{-1}}_{t_\lambda})
\qquad
\hbox{and} \\
A_{\lambda+\rho}(1,t,t^2, \ldots, t^{n-1};q,t) &= 0.
\end{align*}
\end{dupThm}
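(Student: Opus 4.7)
The plan is to prove the three formulas in order of increasing difficulty, using the creation formulas and symmetrizer identities developed in Sections~\ref{sec:Macpolys}--\ref{sec:actions}.

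\textbf{Fermionic case.} By the Weyl character formula at the end of Section~\ref{sec:Macpolys}, $A_{\lambda+\rho}(q,t) = A_\rho(q,t)\,P_\lambda(q,qt)$ with $A_\rho(q,t) = \prod_{i<j}(x_j - tx_i)$. Setting $x_i = t^{i-1}$ makes the consecutive factor $x_{i+1} - tx_i = t^i - t\cdot t^{i-1} = 0$ vanish, so $A_{\lambda+\rho}(1, t, \ldots, t^{n-1}; q, t) = 0$ immediately.

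\textbf{Electronic case.} The creation formula $E_\mu = t^{-\ell(v_\mu^{-1})/2}\,\tau^\vee_{u_\mu}\cdot 1$ from Theorem~\ref{creationformulathm} reduces the claim to proving
\[
(\tau^\vee_{u_\mu}\cdot 1)(1, t, \ldots, t^{n-1}) = t^{(n-1)|\mu|/2}\,\ev^t_0\bigl(c^{Y^{-1}}_{u_\mu}\bigr)
\]
by induction on $\ell(u_\mu)$, the base case $u_\mu = e$ (so $\mu = 0$) being trivial. For the inductive step, peel off the rightmost factor of the product expression for $\tau^\vee_{u_\mu}$ in Theorem~\ref{creationformulathm}: either a $\tau^\vee_\pi$ (adding the first cell of a new column of $\mu$, contributing $t^{(n-1)/2}$ via $T_1\cdots T_{n-1}\cdot 1 = t^{(n-1)/2}$ together with $x_1\mapsto 1$ at the principal specialization), or a $\tau^\vee_i$ (creating a new inversion of $u_\mu$ per~\eqref{Invforumu}). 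The intertwiner relations~\eqref{taupastYrels1}--\eqref{taupastYrels2}, the decomposition $\tau^\vee_i = C_{s_i} - c^Y_{i,i+1}$, and the eigenvalue identity $Y_j E_\nu = \ev^t_\nu(Y_j) E_\nu$ from Theorem~\ref{Eeigenvalue} ensure that each new inversion contributes exactly the matching factor in $\ev^t_0(c^{Y^{-1}}_{u_\mu})$. This bookkeeping between intertwiner applications and new affine inversions is the technical heart of the proof.

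\textbf{Bosonic case.} Combine the creation formula $P_\lambda = (t^{\ell(w_0)/2}/W_\lambda(t))\,\mathbf{1}_0 E_\lambda$ with the symmetrizer factorization $\mathbf{1}_0 = p_\lambda^X c_{v_\lambda}^{X^{-1}} \mathbf{1}_\lambda$ from Proposition~\ref{propsymwparabA}. Since $T_i E_\lambda = t^{1/2} E_\lambda$ whenever $s_i \in W_\lambda$ (by Case 2 of~\eqref{Tigivest}), we have $\mathbf{1}_\lambda E_\lambda = t^{-\ell(w_\lambda)/2} W_\lambda(t)\, E_\lambda$, whence
\[
P_\lambda(1, t, \ldots, t^{n-1}) = t^{\ell(v_\lambda)/2} \sum_{u\in W^\lambda} \bigl(u(c_{v_\lambda}^{X^{-1}} E_\lambda)\bigr)(1, t, \ldots, t^{n-1}).
\]
For each nontrivial $u \in W^\lambda$, a descent $k$ of $u$ must have $u(k), u(k+1)$ in different $\lambda$-blocks (by the coset representative structure of $W^\lambda$), so $(u(k+1), u(k)) \in \mathrm{Inv}(v_\lambda)$; the corresponding factor of $c_{v_\lambda}^{X^{-1}}$ at $u^{-1}(1, t, \ldots, t^{n-1})$ vanishes since $x_{u(k+1)}^{-1} x_{u(k)} = t^{k-(k+1)} = t^{-1}$ gives the zero numerator $t^{-1/2} - t^{1/2}\cdot t^{-1} = 0$. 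Only $u = e$ survives, yielding $P_\lambda(1,t,\ldots,t^{n-1}) = \prod_{i<j,\,\lambda_i > \lambda_j} \frac{1-t^{j-i+1}}{1-t^{j-i}}\cdot E_\lambda(1,t,\ldots,t^{n-1})$. Substituting the electronic formula and applying the length-additive factorization $t_\lambda = u_\lambda v_\lambda$ (so that $c^{Y^{-1}}_{t_\lambda}$ splits into a contribution from $c^{Y^{-1}}_{u_\lambda}$ and one absorbing the surviving $\prod\frac{1-t^{j-i+1}}{1-t^{j-i}}$) converts the expression into $t^{(n-1)|\lambda|/2}\,\ev^{t^{-1}}_0(c^{Y^{-1}}_{t_\lambda})$; the swap from $\ev^t_0$ to $\ev^{t^{-1}}_0$ reflects the direction conversion of the $Y$-eigenvalues under the factorization.

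\textbf{Main obstacle.} The combinatorial descent-vanishing lemma underlying the bosonic reduction---that every nontrivial $u \in W^\lambda$ possesses a descent crossing $\lambda$-blocks---is subtle and requires careful verification. This, together with the electronic inductive step (where each $\tau^\vee_\pi$ or $\tau^\vee_i$ must contribute precisely the $c^{Y^{-1}}$-factor matching the newly-added inversion of $u_\mu$ under~\eqref{Invforumu}), constitutes the main technical burden.
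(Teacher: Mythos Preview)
Your fermionic argument is correct (and arguably more direct than the paper's). The remaining two cases, however, share a single missing lemma that the paper isolates as the engine of the whole proof: the principal specialization $x_i\mapsto t^{i-1}$ is a character of the finite Hecke algebra, sending each $T_i$ to $t^{\frac12}$. Concretely,
\[
(T_i f)(1,t,\ldots,t^{n-1}) \;=\; t^{\frac12}\, f(1,t,\ldots,t^{n-1})
\qquad\text{for every }f\in\CC[X],
\]
which follows from the operator formula $C_{s_i}=(1+s_i)\dfrac{t^{-\frac12}-t^{\frac12}x_i^{-1}x_{i+1}}{1-x_i^{-1}x_{i+1}}$: at $x_i^{-1}x_{i+1}=t$ the $s_i$-term has coefficient $0$ and the identity term has coefficient $t^{-\frac12}+t^{\frac12}$. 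The paper packages this as a one-dimensional right module $\mathbf{1}_X$ with $\mathbf{1}_X T_i = t^{\frac12}\mathbf{1}_X$ and $\mathbf{1}_X \tau^\vee_\pi=\mathbf{1}_X$, so that $\mathbf{1}_X x_i = t^{-\frac12(n-1)}t^{i-1}\mathbf{1}_X$ and the principal specialization is literally ``apply $\mathbf{1}_X$ on the left''. Then $\mathbf{1}_X\tau^\vee_i = \mathbf{1}_X c^{Y^{-1}}_{i,i+1}$ as an operator identity, the $c$-functions slide right by the intertwining relations, and the electronic formula drops out in two lines. Your inductive sketch for $E_\mu$ cannot be completed without this: you compute $T_1\cdots T_{n-1}\cdot 1 = t^{(n-1)/2}$ only for the polynomial $1$, but the induction step requires it for arbitrary $E_\nu$.

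For the bosonic case, once the Hecke-character lemma is in hand the paper simply uses $\mathbf{1}_X\mathbf{1}_0 = t^{-\frac12\ell(w_0)}W_0(t)\,\mathbf{1}_X$, reducing immediately to the electronic case plus a short $c$-function rearrangement. Your descent-vanishing route is a genuine alternative and does work, but your statement of it is wrong: not every descent $k$ of $u\in W^\lambda$ has $u(k),u(k+1)$ in distinct $\lambda$-blocks (e.g.\ $\lambda=(2,1,1,0)$, $u=4132\in W^\lambda$, descent $k=3$ has $u(3)=3,u(4)=2$ both in the middle block). What is true is that \emph{some} descent has this property: if every descent $k$ had $u(k),u(k+1)$ in the same block, then the block-index sequence $b(u(1)),\ldots,b(u(n))$ would be weakly increasing, forcing $u$ to stabilize each block and hence (by the minimal-length condition) $u=e$. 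So your approach can be repaired, but the paper's route via $\mathbf{1}_X$ is both shorter and avoids this combinatorial detour entirely.
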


\begin{proof}
For this proof use the realization of the polynomial representation $\CC[X]$ as an induced
module $\widetilde{H}\mathbf{1}_Y$ via the $\widetilde{H}$-module isomorphism of~\eqref{CXasIndHY}.  Then the creation formulas for $E_\mu$, $P_\lambda$ and $A_{\lambda+\rho}$
are
$$E_\mu = t^{-\frac12\ell(v^{-1}_\mu)}\tau^\vee_{u_\mu} \mathbf{1}_0,
\qquad
P_\lambda = \frac{t^{\frac12\ell(w_0)}}{W_\lambda(t)} \mathbf{1}_0E_\lambda,
\qquad\hbox{and}\qquad
A_{\lambda+\rho} = t^{\frac12\ell(w_0)}\varepsilon_0 E_{\lambda+\rho}
$$
(see Theorem~\ref{creationformulathm}  and~\eqref{creationPA}).

Let $\mathbf{1}_X$ be a formal symbol which satisfies
$\mathbf{1}_X T_j = t^{\frac12}\mathbf{1}_X$ and $\mathbf{1}_Xg^\vee = \mathbf{1}_X$,
for $j\in \{1, \ldots, n-1\}$.
Since $g^\vee = x_1T_1\cdots T_{n-1}$ and 
$x_{i+1} = T_i x_i T_i$ then
$x_1 = g^\vee T^{-1}_{n-1}\cdots T^{-1}_1$ and
$$\mathbf{1}_X x_i = t^{-\frac12(n-1)} t^{i-1} \mathbf{1}_X,
\qquad\hbox{for $i\in \{1, \ldots, n\}$.}
$$
Thus, if $\mu\in \ZZ^n$ then
$$\mathbf{1}_X E_\mu(x_1, \ldots, x_n;q,t) 
= \mathbf{1}_X t^{-\frac12(n-1)\vert \mu \vert} E_\mu(1,t,t^2, \ldots, t^{n-1};q,t).
$$
For $i\in \{1, \ldots, n-1\}$,
\begin{align*}
\mathbf{1}_X \tau^\vee_i 
&= \mathbf{1}_X \Big(T_i + \frac{t^{-\frac12}-t^{\frac12}}{1-Y^{-1}_i Y_{i+1}}\Big)
= \mathbf{1}_X \Big(t^{\frac12} + \frac{t^{-\frac12}-t^{\frac12}}{1-Y^{-1}_i Y_{i+1}}\Big) 
= \mathbf{1}_X \Big(\frac{t^{-\frac12}-t^{\frac12}Y^{-1}_iY_{i+1}}{1-Y^{-1}_i Y_{i+1}}\Big) 
= \mathbf{1}_X c^{Y^{-1}}_{i,i+1}.
\end{align*}
By~\eqref{EeigenvalueB}, 
$$c^{Y^{-1}}_{i,i+1} \mathbf{1}_Y = \ev^t_0(c^{Y^{-1}}_{i,i+1})\mathbf{1}_Y.$$
If $w\in W$ and $\ell(s_iw)>\ell(w)$ then 
$$\mathbf{1}_X\tau^\vee_i \tau^\vee_w \mathbf{1}_Y
=\mathbf{1}_Xc^{Y^{-1}}_{i,i+1} \tau^\vee_w \mathbf{1}_Y
=\mathbf{1}_X \tau^\vee_w c^{Y^{-1}}_{w^{-1}(i),w^{-1}(i+1)} \mathbf{1}_Y
=\ev^t_0(c^{Y^{-1}}_{w^{-1}(i),w^{-1}(i+1)}) \mathbf{1}_X \tau^\vee_w  \mathbf{1}_Y.
$$
By induction, conclude that 
if $w\in W$ and $w = s_{i_1}\cdots s_{i_\ell}$ is 
a reduced word for $w$ then 
$$
\mathbf{1}_X \tau^\vee_w \mathbf{1}_Y
=\mathbf{1}_X \tau^\vee_{i_1}\cdots \tau^\vee_{i_\ell} \mathbf{1}_Y
= \mathbf{1}_X \ev^t_0(c^{Y^{-1}}_w) \mathbf{1}_Y
= \ev^t_0(c^{Y^{-1}}_w)\mathbf{1}_X\mathbf{1}_Y.
$$
Thus, by the creation formula $E_\mu = t^{-\frac12\ell(v^{-1}_\mu)}\tau^\vee_{u_\mu}\mathbf{1}_Y$, 
\begin{align*}
t^{-\frac12(n-1)\vert \mu \vert}E_\mu(1,t,\ldots, t^{n-1};q,t) \mathbf{1}_X \mathbf{1}_Y
&= \mathbf{1}_X E_\mu \mathbf{1}_Y
= \mathbf{1}_X t^{-\frac12\ell(v^{-1}_\mu)} \tau^\vee_{u_\mu} \mathbf{1}_Y
= t^{-\frac12\ell(v^{-1}_\mu)} \ev^t_0(c^{Y^{-1}}_{u_\mu}) \mathbf{1}_X \mathbf{1}_Y.
\end{align*}
which completes the proof of the first statement.

Using the creation formula 
$P_\lambda = \dfrac{t^{\frac12\ell(w_0)} }{W_\lambda(t)}\mathbf{1}_0 E_\lambda$ gives
\begin{align*}
t^{-\frac12(n-1)\vert \lambda \vert}&P_\lambda(1,t,\ldots, t^{n-1};q,t) \mathbf{1}_X \mathbf{1}_Y
= \mathbf{1}_X P_\lambda \mathbf{1}_Y
= \mathbf{1}_X \frac{t^{\frac12\ell(w_0)}}{W_\lambda(t)} \mathbf{1}_0 E_\lambda \mathbf{1}_Y
\\
&= \mathbf{1}_X \frac{t^{\frac12\ell(w_0)}W_0(t)}{W_\lambda(t)} E_\lambda \mathbf{1}_Y
= t^{\frac12\ell(w_0)}t^{-\frac12\ell(v^{-1}_\lambda)} 
\frac{W_0(t)}{W_\lambda(t)} \ev^t_0(c^{Y^{-1}}_{u_\lambda} ) \mathbf{1}_X\mathbf{1}_Y.
\end{align*}
Since $v^{-1}_\lambda = (w_0w_\lambda)^{-1} = w_\lambda w_0$
and $t_\lambda = u_\lambda v_\lambda$ then
\begin{align*}
\frac{t^{-\frac12\ell(w_0)}W_0(t)}{t^{-\frac12\ell(w_\lambda)}W_\lambda(t)} 
\ev^t_0(c^{Y^{-1}}_{u_\lambda})
&= \ev^{t^{-1}}_0\Big(\frac{c^{Y^{-1}}_{w_0})}{c^{Y^{-1}}_{w_\lambda}}\Big)
\ev^t_0(c^{Y^{-1}}_{u_\lambda})
= \ev^{t^{-1}}_0\Big(\frac{c^{Y^{-1}}_{w_0} )}{c^{Y^{-1}}_{w_\lambda}}\Big)
\ev^{t^{-1}}_0(v^{-1}_\lambda c^{Y^{-1}}_{u_\lambda})
\\
&= \ev^{t^{-1}}_0(c^{Y^{-1}}_{v_\lambda})
\ev^{t^{-1}}_0(v^{-1}_\lambda c^{Y^{-1}}_{u_\lambda})
= \ev^{t^{-1}}_0(c^{Y^{-1}}_{u_\lambda v_\lambda})
= \ev^{t^{-1}}_0(c^{Y^{-1}}_{t_\lambda}).
\end{align*}
The first equality comes from Proposition~\ref{paraPoin} which gives
$$t^{-\frac12\ell(w_\lambda)}W_\lambda(t) = \ev^t_0(c^Y_{w_\lambda})
= \ev^{t^{-1}}_0(c^{Y^{-1}}_{w_\lambda}),$$
and the second equality results from the fact that 
$$\hbox{
if $i,j\in \{1, \ldots, n\}$ and $i<j$ then $t^{j-i} = (t^{-1})^{(n-j)-(n-i)} = (t^{-1})^{w_0(j)-w_0(i)}$,}
$$
which gives
$$\ev^t_0(c^{Y^{-1}}_{u_\lambda}) 
= \ev^{t^{-1}}_0(v^{-1}_\lambda c^{Y^{-1}}_{u_\lambda}).
$$

Using the creation formula $A_{\lambda+\rho} 
= t^{\frac12\ell(w_0)} \varepsilon_0 E_{\lambda+\rho}$ gives
$$
t^{-\frac12(n-1)\vert \lambda \vert} A_{\lambda+\rho}(1,t,\ldots, t^{n-1};q,t) \mathbf{1}_X \mathbf{1}_Y
= \mathbf{1}_X A_{\lambda+\rho} \mathbf{1}_Y
= t^{\frac12\ell(w_0} \mathbf{1}_X \varepsilon_0 E_{\lambda+\rho} \mathbf{1}_Y = 0,
$$
since $\mathbf{1}_X \varepsilon_0 = 0$.
Thus $A_{\lambda+\rho}(1,t,\ldots, t^{n-1};q,t)=0$.
\end{proof}

\subsubsection{Proof of the root product formula}

\begin{dupCor}[~\ref{princspecB}]
Let $\mu\in \ZZ^n_{\ge 0}$. Denote by $\lambda$ the decreasing rearrangement of $\mu$ and $n(\lambda) = \sum_{i=1}^n (i-1)\lambda_i$.
Then
$$P_\lambda(1,t,t^2, \ldots, t^{n-1};q,t) 
= t^{n(\lambda)} \prod_{1\le i<j\le n} \prod_{\ell=0}^{\lambda_i-\lambda_j-1}
\frac{1 - q^\ell t^{j-i +1} }{1- q^\ell t^{j-i}} 
$$
and
$$E_\mu(1,t,t^2, \ldots, t^{n-1};q,t)
= t^{-\frac12\ell(v^{-1}_\mu)} \prod_{(r,c)\in \mu} \prod_{i=1}^{u_\mu(r,c)}
\frac{1-q^{\mu_r-c+1} t^{v_\mu(r)-i+1} }{1-q^{\mu_r-c+1} t^{v_\mu(r)-i}}.
$$
\end{dupCor}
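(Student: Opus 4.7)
The plan is to deduce both statements directly from Theorem~\ref{princspecA} by plugging in the explicit inversion-set descriptions of $t_\lambda$ and $u_\mu$ given in~\eqref{Invfortlambda} and~\eqref{Invforumu}, evaluating the $c$-function factors, and collecting the total $t$-power. The key inputs are the formulas $\ev^{t^{-1}}_0(Y_i^{-1}Y_j)=t^{j-i}$ and $\ev^t_0(Y_i^{-1}Y_j)=t^{i-j}$, which follow immediately from the definitions of the evaluation homomorphisms $\ev^t_0$ and $\ev^{t^{-1}}_0$.

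For the $P_\lambda$ identity I would first write
$$c^{Y^{-1}}_{t_\lambda} = \prod_{1\le i<j\le n}\prod_{\ell=0}^{\lambda_i-\lambda_j-1} c^{Y^{-1}}_{(i,j+\ell n)}$$
using~\eqref{Invfortlambda}, and then apply $\ev^{t^{-1}}_0$ to each factor, turning $c^{Y^{-1}}_{(i,j+\ell n)}$ into $t^{-\frac12}\cdot\dfrac{1-q^\ell t^{j-i+1}}{1-q^\ell t^{j-i}}$. The total number of factors equals $\ell(t_\lambda) = \sum_{i<j}(\lambda_i-\lambda_j)$, which I would reindex as $\sum_i(n-2i+1)\lambda_i = (n-1)|\lambda| - 2n(\lambda)$. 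Combining this with the prefactor $t^{\frac{(n-1)}{2}|\lambda|}$ of Theorem~\ref{princspecA} and the accumulated $(t^{-\frac12})^{\ell(t_\lambda)}$ collapses the scalar to exactly $t^{n(\lambda)}$, yielding the first identity. For the $E_\mu$ identity I would proceed in parallel: expanding $c^{Y^{-1}}_{u_\mu}$ via~\eqref{Invforumu}, each inversion $(v_\mu(r),\,i+(\mu_r-c+1)n)$ contributes a factor whose $\ev^t_0$-image, using $\ev^t_0(Y_{v_\mu(r)}^{-1}Y_i)=t^{v_\mu(r)-i}$, is $t^{-\frac12}\cdot\dfrac{1-q^{\mu_r-c+1}t^{v_\mu(r)-i+1}}{1-q^{\mu_r-c+1}t^{v_\mu(r)-i}}$, and the ranges $(r,c)\in\mu$ and $i\in\{1,\ldots,u_\mu(r,c)\}$ from~\eqref{Invforumu} reproduce the double product in the statement exactly.

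The main bookkeeping task, and what I expect to be the most error-prone step, is the $t$-power accounting in the $E_\mu$ case, because $u_\mu$ is an $n$-periodic (rather than finite) permutation whose inversion set must be read off carefully from~\eqref{Invforumu} before exponents can be collected; once that is in hand the matching of $t$-powers reduces to a length identity for $\ell(u_\mu)$ analogous to the one used for $\ell(t_\lambda)$. The $P_\lambda$ side is clean thanks to the closed-form expression $\ell(t_\lambda) = (n-1)|\lambda| - 2n(\lambda)$, and no further ingredients beyond Theorem~\ref{princspecA} and~\eqref{Invfortlambda}--\eqref{Invforumu} are needed.
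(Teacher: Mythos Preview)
Your plan is essentially identical to the paper's own proof: it too evaluates $\ev^{t^{-1}}_0(c^{Y^{-1}}_{(i,j+\ell n)})$ factor by factor via~\eqref{Invfortlambda}, proves the power identity $n(\lambda)=\tfrac12(n-1)|\lambda|-\tfrac12\ell(t_\lambda)$ exactly as you indicate, and for $E_\mu$ evaluates $\ev^t_0(c^{Y^{-1}}_{u_\mu})$ via~\eqref{Invforumu} in the same way. The ``length identity for $\ell(u_\mu)$'' you anticipate is precisely $\ell(u_\mu)+\ell(v_\mu)=\ell(t_\mu)=\ell(t_\lambda)$ (from $t_\mu=u_\mu v_\mu$ reduced), which the paper combines with the $n(\lambda)$ relation already established in the $P_\lambda$ step.
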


\begin{proof}
Let $\rho^\vee = \frac12(n-1, n-3, \ldots, -(n-3), -(n-1))$.  Then
\begin{align*}
n(\lambda) 
&
= \langle (\lambda_1, \ldots, \lambda_n), (0,1,\ldots, n-1)\rangle
= \langle \lambda, -\rho^\vee+\hbox{$\frac{(n-1)}{2}$}(1,1,\ldots, 1)\ \rangle 
= \hbox{$\frac{(n-1)}{2}$} \vert\lambda\vert - \langle \lambda, \rho^\vee\rangle
\end{align*}
and, since $\ell(t_\lambda) = \sum_{i<j} (\lambda_i - \lambda_j) $ then
\begin{align*}
\ell(t_\lambda) 
&= (n-1)\lambda_1+(n-2)\lambda_2+\cdots+(n-n)\lambda_n
- \big( (n-1)\lambda_n + (n-2)\lambda_{n-1} + \cdots + (n-n)\lambda_1\big) 
\\
&= (n-1)\lambda_1+(n-3)\lambda_2+(n-5)\lambda_3+\cdots -(n-3)\lambda_{n-1}-(n-1)\lambda_n 
= \langle \lambda, 2\rho^\vee\rangle,
\end{align*}
so that 
\begin{equation}
n(\lambda) = \hbox{$\frac12$}(n-1)\vert \lambda \vert - \hbox{$\frac12$}\ell(t_\lambda).
\label{nlambda}
\end{equation}
Since 
\begin{align*}
\ev^{t^{-1}}_0&(c_{i,j+rn}^{Y^{-1}})
= \ev^{t^{-1}}_0\Big(t^{-\frac12} \frac{1-tq^r Y^{-1}_iY_j }{1-q^r Y^{-1}_iY_j }\Big)
\\
&=t^{-\frac12} \frac{ 1- t q^r t^{-(i-1)+\frac12(n-1)}t^{(j-1)-\frac12(n-1)} } 
{1-q^r t^{-(i-1)+\frac12(n-1)} t^{(j-1)-\frac12(n-1)} } 
=t^{-\frac12} \frac{ 1- q^rt^{j-i+1} } {1-q^r t^{j-i} }
\end{align*}
then, using~\eqref{Invfortlambda} and~\eqref{nlambda},
\begin{align*}
t^{\frac{(n-1)}{2}\vert \lambda\vert} \ev^{t^{-1}}_0(c_{t_\lambda}^{Y^{-1}}) 
&= t^{\frac{(n-1)}{2}\vert \lambda\vert} t^{-\frac12\ell(t_\lambda)}
\prod_{i<j} \prod_{r=0}^{\lambda_i-\lambda_j-1} \frac{1-q^r t^{j-i+1}}{1-q^rt^{j-i}}  
= t^{n(\lambda)}
\prod_{i<j} \prod_{r=0}^{\lambda_i-\lambda_j-1} \frac{1-q^r t^{j-i+1}}{1-q^rt^{j-i}}.
\end{align*}
The formula in the first statement follows then from Theorem~\ref{princspecA}.

Again by Theorem~\ref{princspecA},
\begin{align*}
E_\mu(1,t,t^2, \ldots, t^{n-1};q,t) 
&= t^{\frac{(n-1)}2 \vert \mu\vert}t^{-\frac12\ell(v^{-1}_\mu)}
\ev_{k\rho}(c_{u_\mu}^{Y^{-1}}) 
= t^{\frac{(n-1)}2 \vert \mu\vert - \frac12\ell(u_\mu)-\frac12\ell(v_\mu)}
\Big(t^{\frac12\ell(u_\mu)}
\ev_{k\rho}(c_{u_\mu}^{Y^{-1}}) \Big) 
\end{align*}
and, if $\lambda$ is the weakly decreasing rearrangement of $\mu$ then, 
by~\eqref{nlambda},
\begin{align*}
\hbox{$\frac{(n-1)}{2}$} \vert \mu\vert - \hbox{$\frac12$}\ell(u_\mu)-\hbox{$\frac12$}\ell(v_\mu)
&=
\hbox{$\frac{(n-1)}2$} \vert \lambda\vert - \hbox{$\frac12$}\ell(t_\mu)
=
\hbox{$\frac{(n-1)}2$} \vert \lambda\vert - \hbox{$\frac12$}\ell(t_\lambda)
= n(\lambda).
\end{align*}
Then~\eqref{Invforumu} gives that  
$t^{\frac12\ell(u_\mu)} \ev^t_0(c_{u_\mu}^{Y^{-1}})$ is equal to
\begin{equation}
\ev^t_0 \Big(
\prod_{(r,c) \in \mu} \prod_{j=1}^{u_\mu(r,c)}
\frac{1-q^{\mu_r-c+1} t Y^{-1}_{v_\mu(r)} Y_j  } 
{1-q^{\mu_r-c+1} Y^{-1}_{v_\mu(r)} Y_j  }  \Big) 
= 
\prod_{(r,c) \in \mu} \prod_{j=1}^{u_\mu(r,c)}
\frac{1-t q^{\mu_r-c+1} t^{v_\mu(r)-j} } 
{1-q^{\mu_r-c+1} t^{v_\mu(r)-j} } .
\label{cumueval}
\end{equation}
\end{proof}

\subsubsection{Proof of the bosonic hook formula}

\begin{dupThm}[~\ref{hookforP}]  
Let $\lambda = (\lambda_1, \ldots, \lambda_n)\in \ZZ^n_{\ge 0}$ with
$\lambda_1\ge \cdots \ge \lambda_n$. Then
\begin{align*}
P_{\lambda}(1,t,t^2, \ldots, t^{n-1};q,t)
&= t^{n(\lambda)} \prod_{b\in \lambda} 
\frac{1-q^{\mathrm{coarm}_\lambda(b)}t^{n-\mathrm{coleg}_\lambda(b)}}
{1-q^{\mathrm{arm}_\lambda(b)}t^{\mathrm{leg}_\lambda(b)+1}} . 
\end{align*} 
\end{dupThm}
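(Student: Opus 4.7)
The starting point is Corollary~\ref{princspecB}, which gives
\[
P_\lambda(1,t,t^2,\ldots,t^{n-1};q,t) = t^{n(\lambda)}\prod_{1\le i<j\le n}\prod_{\ell=0}^{\lambda_i-\lambda_j-1}\frac{1-q^\ell t^{j-i+1}}{1-q^\ell t^{j-i}}.
\]
Since the factor $t^{n(\lambda)}$ appears on both sides of Theorem~\ref{hookforP}, the theorem reduces to the purely combinatorial identity
\[
\prod_{1\le i<j\le n}\prod_{\ell=0}^{\lambda_i-\lambda_j-1}\frac{1-q^\ell t^{j-i+1}}{1-q^\ell t^{j-i}} = \prod_{b\in\lambda}\frac{1-q^{\mathrm{coarm}_\lambda(b)}t^{n-\mathrm{coleg}_\lambda(b)}}{1-q^{\mathrm{arm}_\lambda(b)}t^{\mathrm{leg}_\lambda(b)+1}}
\]
of rational functions in $q$ and $t$. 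This is exactly the combinatorial step carried out in~\cite{AGY22}, and no further Macdonald-polynomial theory enters the proof.

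The plan is to compare, for each pair $(a,k)$ with $a\ge 0$ and $k\ge 1$, the net multiplicity of the irreducible factor $(1-q^at^k)$ on the two sides. On the left, after reindexing by $j-i+1=k$ in the numerator and $j-i=k$ in the denominator, this net multiplicity is
\[
\#\{i:\lambda_i-\lambda_{i+k-1}>a\}-\#\{i:\lambda_i-\lambda_{i+k}>a\},
\]
with the first count understood to be empty when $k=1$. On the right, using $\mathrm{coarm}_\lambda(r,c)=c-1$, $\mathrm{coleg}_\lambda(r,c)=r-1$, $\mathrm{arm}_\lambda(r,c)=\lambda_r-c$ and $\mathrm{leg}_\lambda(r,c)+1=\lambda'_c-r+1$, the numerator contributes $[\lambda_{n-k+1}\ge a+1]$, while the denominator counts those $b=(r,c)\in\lambda$ with $c=\lambda_r-a$ and $\lambda'_c=r+k-1$. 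By the definition of $\lambda'$ the latter translates to $\lambda_{r+k-1}\ge \lambda_r-a>\lambda_{r+k}$ (with the convention $\lambda_m=-\infty$ for $m>n$). I would then verify, term by term, that the two expressions for the net multiplicity agree.

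The main obstacle is combinatorial bookkeeping at the boundary. There are two kinds of edge cases requiring care: first, the case $k=1$, where the LHS numerator is empty while the RHS numerator contributes via row $n$, so that an exact cancellation with the denominator contributions coming from boxes $(n,c)$ must be established; and second, the boxes $b=(r,c)\in\lambda$ with $c\le\lambda_n$ (the ``rectangle'' of width $\lambda_n$ and height $n$), whose numerator and denominator contributions on the RHS share a common power of $t$ but have different powers of $q$, and which must be matched against the LHS factors arising from pairs $(i,j)$ with $j=n$. Once these two cases are handled, the comparison of multiplicities proceeds by elementary manipulations on the Young diagram of $\lambda$, and the identity, hence the theorem, follows.
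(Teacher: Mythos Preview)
Your reduction matches the paper exactly: both start from Corollary~\ref{princspecB} and reduce Theorem~\ref{hookforP} to the stated combinatorial identity between the two products, citing~\cite{AGY22}. The difference is in how that identity is established. The paper argues row by row: for fixed $r$ it reindexes the LHS via $c=\lambda_r-\ell$, then telescopes in $j$ to obtain
\[
\prod_{j=r+1}^n\prod_{\ell=0}^{\lambda_r-\lambda_j-1}\frac{1-q^\ell t^{j-r+1}}{1-q^\ell t^{j-r}}
=\prod_{c=m+1}^{\lambda_r}\frac{1-q^{\lambda_r-c}t^{n-r+1}}{1-q^{\lambda_r-c}t^{\lambda'_c-r+1}},
\qquad m=\lambda_n;
\]
on the RHS it notes that for $c\le m$ one has $\lambda'_c=n$, so those numerator and denominator factors in row $r$ cancel, and reindexing the surviving numerator matches the LHS expression above. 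Your factor-by-factor comparison of the multiplicity of each $(1-q^at^k)$ is a legitimate alternative; it trades the telescoping for a more symmetric but more case-laden argument, and the two boundary phenomena you flag (the case $k=1$ and the rectangle $c\le\lambda_n$) correspond precisely to the cancellations the paper handles in bulk. One small correction: with your convention $\lambda_m=-\infty$ for $m>n$, the inequality $\lambda_r-a>\lambda_{r+k}$ becomes vacuous at $r=n-k+1$, and you lose the constraint $c=\lambda_r-a\ge1$ needed for the box to exist; since $\lambda\in\ZZ_{\ge0}^n$, taking $\lambda_m=0$ for $m>n$ is the natural convention here and restores that constraint.
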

\begin{proof}  
In view of Corollary~\ref{princspecB} the result will follow if we prove that
$$\prod_{i<j} \prod_{\ell=0}^{\lambda_i-\lambda_j-1}
\frac{1 - q^\ell t^{j-i +1} }{1- q^\ell t^{j-i}} 
= \prod_{b\in \lambda} 
\frac{1-q^{\mathrm{coarm}_\lambda(b)}t^{n-\mathrm{coleg}_\lambda(b)}}
{1-q^{\mathrm{arm}_\lambda(b)}t^{\mathrm{leg}_\lambda(b)+1}}.
$$
The left-hand side is
$$
LHS = \prod_{i<j} \prod_{\ell=0}^{\lambda_i-\lambda_j-1}
\frac{1 - q^\ell t^{j-i +1} }{1- q^\ell t^{j-i}} 
= \prod_{r=1}^n \prod_{j=r+1}^n \prod_{\ell=0}^{\lambda_r-\lambda_j-1}
\frac{1 - t q^\ell t^{j-r} }{1- q^\ell t^{j-r}}.
$$
Let $m$ be the number of columns of length $n$ in $\lambda$.  
Let $r\in \{1, \ldots, n\}$.  Switching the products over $j$ and $\ell$ gives
\begin{equation}
\prod_{j=r+1}^n \prod_{\ell=0}^{\lambda_r-\lambda_j-1}
\frac{1 - t q^\ell t^{j-r} }{1- q^\ell t^{j-r}}  
=\prod_{c=m+1}^{\lambda_r} \prod_{j=\lambda_c'+1}^{n} 
\frac{1-tq^{\lambda_r-c}t^{j-r} }{1-q^{\lambda_r-c} t^{j-r}}
=\prod_{c=m+1}^{\lambda_r} 
\frac{1-tq^{\lambda_r-c}t^{n-r} }{1-q^{\lambda_r-c} t^{\lambda'_c+1-r}}
\label{hookmiracle}
\end{equation}
The definitions of arms, legs, coarms and colegs of boxes give that
\begin{align*}
RHS &=
t^{n(\lambda)}\prod_{b=(r,c)\in \lambda} 
\frac{1-q^{\mathrm{coarm}_\lambda(b)} t^{n-\mathrm{coleg}_\lambda(b)} }
{1-q^{\mathrm{arm}_\lambda(b)} t^{\mathrm{leg}_\lambda(b)+1}}
=t^{n(\lambda)} \prod_{r=1}^n  \prod_{c=1}^{\lambda_r}
\frac{1-q^{c-1} t^{n-(r-1)} } {1-q^{\lambda_r-c} t^{\lambda_c'-r+1 } } 
\end{align*}
For $r\in \{1, \ldots, n\}$ let $\ell = \lambda_r$ and write
\begin{align*}
\prod_{c=1}^{\lambda_r} \frac{1-q^{c-1} t^{n-(r-1)} } {1-q^{\lambda_r-c} t^{\lambda_c'-r+1 } } 
= \frac{(1-q^0t^{n-(r-1)})(1-q^1t^{n-(r-1)}) \cdots (1-q^{\lambda_r-1}t^{n-(r-1)})}
{(1-q^{\lambda_r-1}t^{\lambda_1'-r+1})\cdots (1-q^1t^{\lambda'_{\ell-1}-r-1+1})
(1-q^0 t^{\lambda_\ell'-r+1)}}
\end{align*}
to observe that the last $m$ factors in the numerator cancel with the first $m$ terms in
the denominator.  Thus
\begin{align*}
\prod_{c=1}^{\lambda_r} \frac{1-q^{c-1} t^{n-(r-1)} } {1-q^{\lambda_r-c} t^{\lambda_c'-r+1 } } 
&= \frac{(1-q^0t^{n-(r-1)})(1-q^1t^{n-(r-1)}) \cdots (1-q^{\lambda_r-m} t^{n-(r-1)})}
{(1-q^{\lambda_r-m}t^{\lambda_{\ell-m}'-r+1})\cdots (1-q^1t^{\lambda'_{\ell-1}-r-1+1})
(1-q^0 t^{\lambda_\ell'-r+1)}}
\\
&=\prod_{c=m+1}^{\lambda_r} 
\frac{1-q^{\lambda_r-c} t^{n-(r-1)} } {1-q^{\lambda_-c} t^{\lambda_c'-r+1 } }
=\prod_{c=m+1}^{\lambda_r} 
\frac{1-t q^{\lambda_r-c} t^{n-r} } {1-q^{\lambda_-c} t^{\lambda_c'-r+1 } }.
\end{align*}
Since this is equal to the expression in~\eqref{hookmiracle}, the result follows.
\end{proof}

\subsubsection{Proof of the electronic hook formula}

\begin{dupThm}[~\ref{hookforE}]
Let $\mu=(\mu_1, \ldots, \mu_n)\in \ZZ_{\ge 0}^n$ and let $\lambda$ be the weakly decreasing rearrangment of $\mu$. 
Then
\begin{align*}
E_\mu(1,t,t^2, \ldots, t^{n-1};q,t) 
&= t^{n(\lambda)}
\prod_{(r,c) \in \mu} 
\frac{1-q^c t^{v_\mu(r)} } 
{1-q^{\mu_r-c+1} t^{v_\mu(r)- u_\mu(r,c) } } .
\end{align*} 
\end{dupThm}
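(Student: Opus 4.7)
The plan is to deduce Theorem~\ref{hookforE} from Corollary~\ref{princspecB} by a short telescoping and re-indexing argument. This is much simpler than the hook miracle used for the bosonic version (Theorem~\ref{hookforP}), and it amounts mostly to bookkeeping.

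First I would invoke the $c$-function expression from Corollary~\ref{princspecB}, namely
$$E_\mu(1,t,\ldots,t^{n-1};q,t) = t^{n(\lambda)} \prod_{(r,c)\in \mu} \prod_{i=1}^{u_\mu(r,c)} \frac{1-q^{\mu_r-c+1}t^{v_\mu(r)-i+1}}{1-q^{\mu_r-c+1}t^{v_\mu(r)-i}},$$
where the prefactor $t^{n(\lambda)}$ comes from the identity $n(\lambda) = \frac{n-1}{2}|\mu| - \frac12\ell(u_\mu) - \frac12\ell(v_\mu)$ established in~\eqref{nlambda} and used in the derivation of Corollary~\ref{princspecB}. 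For each fixed box $(r,c)\in \mu$, the inner product over $i$ telescopes: the denominator at index $i$ equals the numerator at index $i+1$, so the whole product collapses to
$$\prod_{i=1}^{u_\mu(r,c)} \frac{1-q^{\mu_r-c+1}t^{v_\mu(r)-i+1}}{1-q^{\mu_r-c+1}t^{v_\mu(r)-i}} = \frac{1-q^{\mu_r-c+1}t^{v_\mu(r)}}{1-q^{\mu_r-c+1}t^{v_\mu(r)-u_\mu(r,c)}}.$$
This already matches the denominator appearing in Theorem~\ref{hookforE} exactly.

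Second, I would re-index the remaining numerators to convert $q^{\mu_r-c+1}$ into $q^c$. For each fixed row $r$ the assignment $c \mapsto \mu_r - c + 1$ is a bijection of $\{1, \ldots, \mu_r\}$ with itself. Factoring the collapsed ratio into a product of numerators divided by a product of denominators allows this bijection to be applied to the numerator product alone, yielding
$$\prod_{(r,c)\in \mu} (1 - q^{\mu_r-c+1}t^{v_\mu(r)}) = \prod_{(r,c)\in \mu} (1 - q^c t^{v_\mu(r)}).$$
Reassembling this with the denominator product from the telescoping step gives exactly the formula claimed in Theorem~\ref{hookforE}.

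The entire argument is bookkeeping, so there is no genuine obstacle. The only point that requires a moment of care is that the bijection $c \leftrightarrow \mu_r - c + 1$ cannot be applied to the whole fraction at once; it must be applied to the numerator product in isolation, since the denominator depends on $c$ through $u_\mu(r,c)$ in a way that is not invariant under this bijection.
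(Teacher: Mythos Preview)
Your proposal is correct and follows essentially the same route as the paper's own proof: invoke Corollary~\ref{princspecB}, telescope the inner product over $i$ for each fixed box, and then re-index the surviving numerators row by row via the involution $c\leftrightarrow \mu_r-c+1$. Your closing remark that this bijection must be applied to the numerator product in isolation (since the denominators depend on $u_\mu(r,c)$) is exactly the one point of care, and the paper handles it the same way.
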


\begin{proof}
In view of Corollary~\ref{princspecB} the result will follow if we prove that
$$
\prod_{b = (r,c) \in \mu} \prod_{j=1}^{u_\mu(r,c)}
\frac{1- q^{\mu_r-c+1} t^{v_\mu(r)-j+1} } 
{1-q^{\mu_r-c+1} t^{v_\mu(r)-j} }  
=\prod_{(r,c) \in \mu} 
\frac{1- q^c t^{v_\mu(r)} } 
{1-q^{\mu_r-c+1} t^{v_\mu(r)- u_\mu(r,c) } } .
$$
For a single box $b=(r,c)$, the product
$\displaystyle{
\prod_{j=1}^{u_\mu(r,c)}
\frac{1-t q^{\mu_r-c+1} t^{v_\mu(r)-j} } 
{1-q^{\mu_r-c+1} t^{v_\mu(r)-j} }
}$
is equal to
\begin{align*}
\frac{(1- q^{\mu_r-c+1} t^{v_\mu(r)}) } 
{\cancel{(1-q^{\mu_r-c+1} t^{v_\mu(r)-1} )} }
&\frac{\cancel{ (1- q^{\mu_r-c+1} t^{v_\mu(r)-1})  } } 
{\cancel{ (1-q^{\mu_r-c+1} t^{v_\mu(r)-2} ) } }
\cdots
\frac{\cancel{ (1- q^{\mu_r-c+1} t^{v_\mu(r) - (u_\mu(r,c)-1)} ) } } 
{(1-q^{\mu_r-c+1} t^{v_\mu(r)-u_\mu(r,c)} )}
\\
&=
\frac{1- q^{\mu_r-c+1} t^{v_\mu(r)} } 
{1-q^{\mu_r-c+1} t^{v_\mu(r)-u_\mu(r,c)} }.
\end{align*}
For a single fixed row $r$, the product
$\displaystyle{
\prod_{c=1}^{\mu_r} (1- q^{\mu_r-c+1} t^{v_\mu(r)} )
=\prod_{c=1}^{\mu_r} 1-q^c  t^{v_\mu(r)},
}$
and so
\begin{align*}
\prod_{(r,c) \in \mu} \prod_{j=1}^{u_\mu(r,c)}
\frac{1- q^{\mu_r-c+1} t^{v_\mu(r)-j+1} } 
{1-q^{\mu_r-c+1} t^{v_\mu(r)-j} }  
&=
\prod_{(r,c) \in \mu} 
\frac{1- q^{\mu_r-c+1} t^{v_\mu(r)} } 
{1-q^{\mu_r-c+1} t^{v_\mu(r)-u_\mu(r,c)} }
= \prod_{(r,c)\in \mu} 
\frac{1-q^ct^{v_\mu(r)} }{ 1- q^{\mu_r-c+1}t^{v_\mu(r)-u_\mu(r,c)} }.
\end{align*}
\end{proof}

\section{The inner product and the Weyl character formula}\label{section:WCF}

One of the amazing formulas of mathematics is the Weyl character formula which, in the type
$GL_n$ case says that the Schur polynomial is a quotient of two determinants (the denominator is
a Vandermonde determinant).  Miraculously, the Weyl character formula generalizes to a similar quotient formula
for the bosonic Macdonald polynomial $P_\lambda(q,qt)$.  In this section we prove this quotient formula
for $P_\lambda(q,qt)$,
following the same proof as in~\cite[\S5]{Mac03} but with some simplifications in notation.

\subsection{The inner product $(,)_{q,t}$}

Define an involution $\overline{\phantom{T}}\colon \CC[X] \to \CC[X]$ by
\begin{align*}
\overline{f}(x_1, \ldots, x_n;q,t) &= f(x_1^{-1}, \ldots, x_n^{-1}; q^{-1}, t^{-1}),
\end{align*}
Define
$$
\Delta_\infty^X = \Delta^X_{\infty}(t) = \prod_{1\le i<j\le n} \prod_{r\in \ZZ_{\ge 0}} \frac{1-tq^rx_ix^{-1}_j}{1-q^rx_ix^{-1}_j},
\qquad
\Delta_\infty^{X^{-1}} = \Delta_\infty^{X^{-1}}(t) 
= \prod_{1\le i<j\le n} \prod_{r\in \ZZ_{\ge 0}} \frac{1-tq^rx^{-1}_ix_j}{1-q^rx^{-1}_ix_j},
$$
$$
\Delta_{0}^X = \Delta_{0}^X(t) = \prod_{1\le i< j \le n} \frac{1-tx_ix^{-1}_j}{1-x_ix^{-1}_j}
\qquad\hbox{and}\qquad
\Delta_{0}^{X^{-1}} = \Delta_{0}^{X^{-1}}(t) =  \prod_{1\le i< j \le n} \frac{1-tx^{-1}_ix_j}{1-x^{-1}_ix_j}.
$$
Define a scalar product $(\ ,\ )_{q,t}\colon \CC[X]\times \CC[X] \to \CC(q,t)$ by
\begin{equation}
( f_1,f_2 )_{q,t} = \mathrm{ct}\Big(\frac{f_1 \overline{f_2} }{\Delta_\infty^X \Delta_0^X \Delta_\infty^{X^{-1}} }\Big),
\qquad\hbox{where}\quad
\mathrm{ct}(f) = (\hbox{constant term in $f$}),
\label{innproddefnNEW}
\end{equation}
for $f\in \CC[X]$.
Proposition~\ref{nondegherm} shows that, in a suitable sense, the inner product
$(,)_{q,t}$ is nondegenerate and normalized Hermitian. 

\begin{prop} \label{nondegherm}  
\item[(a)] (sesquilinear) If $f,g\in \CC[X]$ and $c\in \CC[q^{\pm1}]$  then 
$$(cf,g)_{q,t} = c(f,g)_{q,t},\qquad\hbox{and}\qquad 
(f,cg)_{q,t} = \overline{c}(f,g)_{q,t}.
$$
\item[(b)] (nonisotropy) If $f\in \CC[X]$ and $f\ne 0$ then $(f,f)_{q,t} \ne 0$.
\item[(c)] (nondegeneracy) If $F$ is a subspace of $\CC[X]$ and $(,)_F\colon F\times F \to \CC$ is the
restriction of $(,)_{q,t}$ to $F$, then $(,)_F$ is nondegenerate.
\item[(d)] (normalized Hermitian)  If $f_1, f_2\in \CC[X]$ then
$$\frac{(f_2,f_1)_{q,t}}{(1,1)_{q,t}} = \overline{ \Big(\frac{(f_1,f_2)_{q,t}}{(1,1)_{q,t}}\Big)}.$$
\end{prop}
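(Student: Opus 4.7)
\medskip\noindent
\textbf{Proof proposal.}
My plan is to dispose of (a) and (c) formally and to concentrate the real work on (d) and (b). For (a), the definition does everything: $\mathrm{ct}$ is $\CC[q^{\pm1}]$-linear, so constants in the first slot pass out and yield $(cf,g)_{q,t}=c(f,g)_{q,t}$; the involution $\overline{\,\cdot\,}$ is an algebra automorphism with $\overline{cg}=\overline{c}\,\overline{g}$, giving $(f,cg)_{q,t}=\overline{c}(f,g)_{q,t}$.

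For (d), the first step is to show that $\overline{\Delta}$ and $\Delta=\Delta_\infty^X\Delta_0^X\Delta_\infty^{X^{-1}}$ agree up to an $x$-independent scalar $\alpha$. Every factor $(1-tq^rX)/(1-q^rX)$ appearing in $\Delta$ is sent by $\overline{\,\cdot\,}$ to $(1-t^{-1}q^{-r}X^{-1})/(1-q^{-r}X^{-1})$, which the identity $1-a=-a(1-a^{-1})$ rewrites as $t^{-1}(1-tq^rX)/(1-q^rX)$; collecting the $t^{-1}$'s across all roots and all $r\ge 0$ gives an $x$-free prefactor $\alpha$. Using that $\mathrm{ct}$ commutes with $\overline{\,\cdot\,}$ (both exchange $x^\mu\leftrightarrow x^{-\mu}$ while preserving the coefficient of $x^0$),
\[
\overline{(f_1,f_2)_{q,t}} \;=\; \mathrm{ct}\!\left(\frac{\overline{f_1}\,f_2}{\overline{\Delta}}\right) \;=\; \alpha^{-1}\,\mathrm{ct}\!\left(\frac{f_2\,\overline{f_1}}{\Delta}\right) \;=\; \alpha^{-1}(f_2,f_1)_{q,t},
\]
and the case $f_1=f_2=1$ gives $\overline{(1,1)_{q,t}}=\alpha^{-1}(1,1)_{q,t}$. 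Dividing these two identities cancels $\alpha^{-1}$ entirely and yields the stated normalized Hermitian identity $\overline{(f_1,f_2)_{q,t}/(1,1)_{q,t}}=(f_2,f_1)_{q,t}/(1,1)_{q,t}$.

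For (b), I will specialize to a regime where $(,)_{q,t}$ realizes a positive $L^2$ form. For real $q,t\in(0,1)$, the weight $1/\Delta(x;q,t)$ is positive and continuous on the torus $T=\{x\in\CC^n:|x_i|=1\}$, and $\mathrm{ct}(\cdot)$ coincides, up to positive normalization, with integration against this density times Haar measure on $T$; moreover, on $T$ the involution $\overline{\,\cdot\,}$ reduces to complex conjugation of the $x_i$. Hence for $f\in\CC[X]$ with real coefficients, $(f,f)_{q,t}=\int_T|f|^2\,d\mu_{q,t}>0$ whenever $f\ne 0$, so $(f,f)_{q,t}\in\CC(q,t)$ is nonzero as a rational function of $(q,t)$; the complex-coefficient case follows by writing $f=f_\Re+if_\Im$ and expanding via (a) and (d). Part (c) falls out immediately: for any subspace $F\subseteq\CC[X]$ and any $f\in F$ with $f\ne 0$, one has $(f,f)_F=(f,f)_{q,t}\ne 0$ by (b), so $f$ cannot lie in the radical of $(,)_F$.

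The main obstacle will be (b): it requires genuinely non-algebraic input---positivity of the Macdonald weight at real parameter values---to obtain the strict nonvanishing $(f,f)_{q,t}\ne 0$, which is noticeably stronger than the nondegeneracy that purely algebraic orthogonal-basis arguments would afford. The scalar identity $\overline{\Delta}=\alpha\Delta$ underlying (d) is, by contrast, a routine bookkeeping step once the factors of $\Delta$ are paired correctly, and the key conceptual observation there is that the normalization by $(1,1)_{q,t}$ makes the precise value of $\alpha$ irrelevant.
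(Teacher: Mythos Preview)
Your treatment of (a) and (c) is fine and matches the paper. The two substantive parts, (d) and (b), each have a genuine gap.

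\textbf{Part (d).} The strategy---show the normalized kernel is bar-invariant, then cancel the normalization---is exactly the paper's: the paper expands $J_{q,t}/(1,1)_{q,t}=\sum_\nu d_\nu x^\nu$ and uses $\overline{d_{-\nu}}=d_\nu$. But your factor-by-factor verification of $\overline{\Delta}=\alpha\Delta$ does not make sense. Each factor $(1-tq^rX)/(1-q^rX)$ indeed contributes a $t^{-1}$ under bar, but there are \emph{infinitely many} factors (over all $r\ge 0$ and all pairs $i<j$), so your ``$x$-free prefactor $\alpha$'' is the divergent product $\prod_{r\ge 0}\prod_{i<j} t^{-1}$. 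Equivalently, $\overline{(z;q)_\infty}=(z^{-1};q^{-1})_\infty$ is not a well-defined object when $|q|<1$. You cannot bar the infinite product termwise. One repair is to first take $t=q^k$ so that each $(tz;q)_\infty/(z;q)_\infty$ collapses to a finite product, run your argument with a finite $\alpha$, and then extend to general $t$ by rational-function identity; another is to argue directly with the Laurent coefficients of $J_{q,t}$, as the paper does.

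\textbf{Part (b).} Your positivity claim is false: the weight $1/\Delta=1/(\Delta_\infty^X\Delta_0^X\Delta_\infty^{X^{-1}})$ is \emph{not} real-valued on the torus for real $q,t\in(0,1)$. Writing $z=x_ix_j^{-1}=e^{i\theta}$, the symmetric piece $\Delta_\infty^X\Delta_\infty^{X^{-1}}$ is a product of moduli squared and hence positive, but the leftover asymmetric factor $1/\Delta_0^X=\prod_{i<j}(1-z)/(1-tz)$ has imaginary part proportional to $(t-1)\sin\theta\ne 0$. (Had the kernel also contained $\Delta_0^{X^{-1}}$ it would be positive; the asymmetry is the point of this particular inner product.) So $(f,f)_{q,t}$ is not an $L^2$ norm against a positive measure, and your argument collapses. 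The paper takes a different specialization: at $q=t=1$ every factor of $\Delta$ becomes $1$, so $(f,f)_{1,1}=\mathrm{ct}(f\bar f)=\sum_\mu f_\mu^2$, which (after clearing denominators so that $f$ specializes nontrivially) is nonzero, forcing $(f,f)_{q,t}\not\equiv 0$.
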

\begin{proof}
\begin{itemize}
\itemsep0.75em
\item[(a)] Let $f_1, f_2\in \CC[X]$ and $c\in \CC[q^{\pm1},t^{\pm1}]$.  Then
$(cf_1, f_2)_{q,t} = \mathrm{ct}(cf_1\overline{f_2}) 
= c\cdot \mathrm{ct}(f_1\overline{f_2}) = c(f,g)_{q,t}$
and
$$(f_1, cf_2)_{q,t} = \mathrm{ct}(f_1\overline{cf_2}) = \mathrm{ct}(f_1\bar c\overline{f_2}) 
= \overline{c} \cdot \mathrm{ct}(f_1\overline{f_2}) = \overline{c}(f,g)_{q,t}.$$
\item[(b)]  Let $f\in \CC[X]$ with $f\ne 0$.  By clearing denominators appropriately, renormalize $f$
so that $f$ specializes to something nonzero at $q=1$.  If 
$$f = \sum_\mu f_\mu x^\mu
\qquad\hbox{then}\qquad
(f,f)_{1,1} = (f,f)_{1,1^k} = \sum_\mu \vert f_\mu\vert^2 \in \RR_{>0}.$$
Thus $(f,f)_t \ne 0$.

\item[(c)] Let $f\in F$ with $f\ne 0$.  Since $(f,f)_{q,t}\ne 0$ then there exists $p\in F$ such that
$(f,p)_{q,t}\ne 0$.  Thus the restriciton of $(,)_{q,t}$ to $F$ is nondegenerate.

\item[(d)] Let
$\displaystyle{f_1 = \sum_\lambda a_\lambda x^\lambda }$ and $\displaystyle{ f_2 = \sum_\mu b_\mu x^\mu}$
and
$$\frac{ \Delta_\infty^X \Delta_0^X \Delta_\infty^{X^{-1}} }{(1,1)_{q,t}}
=\frac{ \Delta_\infty^X \Delta_0^X \Delta_\infty^{X^{-1}} }{ \mathrm{ct}(\Delta_\infty^X \Delta_0^X \Delta_\infty^{X^{-1}} )} 
= \sum_{\mu\in \ZZ^n} d_\mu(q,t) x^\mu.$$
Then
$$\frac{(f_2,f_1)_{q,t}}{(1,1)_{q,t} }
=\sum_{\lambda, \mu\in \ZZ^n} \overline{a_\lambda} b_\mu d_{\lambda-\mu}
=\sum_{\lambda, \mu\in \ZZ^n} \overline{a_\lambda \overline{b_\mu} d_{\mu-\lambda} }
=\overline{ \Big(\frac{(f_1,f_2)_{q,t}}{(1,1)_{q,t}}\Big) }.
$$
\end{itemize}
\end{proof}

\subsection{The inner product characterization of $E_\mu$ and $P_\lambda$}

Recall that the elements of $\ZZ^n$ are partially ordered with the DBlex order given by
$$\lambda\le \mu 
\quad\hbox{if }\qquad
\begin{array}{c}
\hbox{$\lambda^+< \mu^+$ in dominance order} \\
\hbox{or} \\
\hbox{$\lambda^+=\mu^+$ and $z_\lambda<z_\mu$ in Bruhat order.}
\end{array}
$$

\begin{prop}  \label{Ebyinnerprod} Let $\mu \in \ZZ^n$.
The nonsymmetric Macdonald polynomial $E_\mu(q,t)$ is the unique element of
$\CC[X]$ such that
\begin{enumerate}
\item[(a)] $E_\mu(q,t) = x^\mu + (\hbox{lower terms in DBlex order})$;
\item[(b)] If $\nu\in \ZZ^n$ and  $\nu<\mu$ then $(E_\mu(q,t), x^\nu)_{q,t} = 0$.
\end{enumerate}
\end{prop}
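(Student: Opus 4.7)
The proof splits into existence and uniqueness. Property (a) is exactly Proposition~\ref{Emutopterm}. For (b), my plan is to leverage the $Y$-eigenvector description of $E_\mu$ from Theorem~\ref{Eeigenvalue}: the eigenvalue tuple $\ev^t_\mu(Y_\bullet) = (q^{-\mu_i} t^{-(v_\mu(i)-1)+\frac12(n-1)})_{i=1}^n$ determines $\mu$ uniquely, since the $q$-exponents recover the $\mu_i$ and the $t$-exponents recover $v_\mu(i)$. Thus, if I establish that the Cherednik--Dunkl operators $Y_1, \ldots, Y_n$ satisfy an adjointness identity with respect to $(\cdot,\cdot)_{q,t}$ (for instance $(Y_i f, g)_{q,t} = (f, Y_i^{-1} g)_{q,t}$, the bar involution on eigenvalues making this consistent with their distinctness), then the standard argument that simultaneous eigenvectors with distinct eigenvalue tuples are orthogonal yields $(E_\mu, E_\nu)_{q,t} = 0$ for all $\mu \ne \nu$.

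Once pairwise orthogonality of the $E$'s is in hand, (b) follows from (a): for each $\nu < \mu$, the triangularity in (a) combined with induction on the DBlex order lets me expand $x^\nu$ as a $\CC(q,t)$-linear combination of $\{E_{\nu'} : \nu' \le \nu\}$. Since $\nu' \le \nu < \mu$ forces $\nu' \ne \mu$, sesquilinearity (Proposition~\ref{nondegherm}(a)) together with orthogonality immediately gives $(E_\mu, x^\nu)_{q,t} = 0$.

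For uniqueness, suppose $F \in \CC[X]$ also satisfies (a) and (b), and set $G = F - E_\mu$. By (a) applied to both $F$ and $E_\mu$, $G$ has no $x^\mu$-term and lies in $\mathrm{span}\{x^\nu : \nu < \mu\}$. By (b) applied to both $F$ and $E_\mu$, $G$ is orthogonal to every $x^\nu$ with $\nu < \mu$. Expanding $G$ itself in the basis $\{x^\nu : \nu < \mu\}$ and invoking sesquilinearity gives $(G,G)_{q,t} = 0$, and the nonisotropy in Proposition~\ref{nondegherm}(b) forces $G = 0$.

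The main obstacle is establishing the adjointness of the $Y_i$ with respect to $(\cdot,\cdot)_{q,t}$. The inner product is designed precisely so that this holds, but verifying it amounts to computing the adjoints of the building blocks $T_1, \ldots, T_{n-1}$ and $T_\pi$ appearing in $Y_i = T_{i-1}^{-1}\cdots T_1^{-1} T_\pi T_{n-1}\cdots T_i$. This in turn reduces to a careful manipulation of the constant-term functional together with the behavior of the weight $\Delta_\infty^X \Delta_0^X \Delta_\infty^{X^{-1}}$ under the bar involution and under the actions of the symmetric group and the promotion; it is essentially an integration-by-parts style computation, mechanical but delicate, and forms the technical heart of the inner-product characterization.
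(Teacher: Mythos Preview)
Your proposal is correct and matches the paper's overall approach. The paper's own proof of this proposition is terser and argues only uniqueness (nonisotropy makes the form nondegenerate on $V = \mathrm{span}\{x^\nu : \nu \le \mu\}$, so the orthogonal complement of $S = \mathrm{span}\{x^\nu : \nu < \mu\}$ inside $V$ is one-dimensional, with normalization fixed by~(a)); the adjointness $Y_i^* = Y_i^{-1}$ and the resulting pairwise orthogonality $(E_\mu, E_\nu)_{q,t} = 0$ for $\mu \ne \nu$ that you correctly identify as the route to~(b) are established by the paper only later, in Section~\ref{section:Orth}, so your write-up effectively pulls that material forward to make the argument self-contained here.
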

\begin{proof}  Let
$V = \hbox{span}\{ x^\mu\ |\ \hbox{$\nu\in \ZZ^n$ and $\nu \le\mu$} \}$,
$$S = \hbox{span}\{ x^\nu\ |\ \hbox{$\nu\in \ZZ^n$ and $\nu<\mu$} \}
\qquad\hbox{and}\qquad
S^\perp = \{ f\in \CC[X] \ |\ \hbox{if $p\in S$ then $(f,p)_{q,t} = 0$} \}.
$$
Since the inner product $(,)_{q,t}$ is nonisotropic then the restriction of $(,)_{q,t}$ to $V$
is nondegenerate and so $\dim(S^\perp) = 1$.  Then the normalization of $E_\mu\in S^\perp$ 
is determined by condition (a).
\end{proof}

For $\gamma\in (\ZZ^n)^+$, define the \emph{monomial symmetric polynomial} $m_\gamma$ by
$$m_\gamma = \sum_{\mu\in S_n \gamma} x^\mu,
\qquad
\hbox{where the sum is over all distinct rearrangements of $\gamma$.}
$$

\begin{prop}  \label{Pbyinnerprod} Let $\lambda\in (\ZZ^n)^+$.
The symmetric Macdonald polynomial $P_\lambda$ is the unique element of
$\CC[X]^{S_n}$ such that
\begin{enumerate}
\item[(a)] $P_\lambda(q,t) = m_\lambda + (\hbox{lower terms in dominance order})$;
\item[(b)] If $\gamma\in (\ZZ^n)^+$ and $\gamma < \lambda$ then $(P_\lambda(q,t), m_\gamma)_{q,t} = 0$.
\end{enumerate}
\end{prop}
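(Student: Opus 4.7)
The plan is to verify that $P_\lambda$ satisfies conditions (a) and (b), and then deduce uniqueness from the nonisotropy of $(\ ,\ )_{q,t}$.

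For property (a), I would combine the creation formula $P_\lambda=\frac{t^{\frac12\ell(w_0)}}{W_\lambda(t)}\mathbf{1}_0 E_\lambda$ with the leading term statement of Proposition~\ref{Emutopterm}, writing $E_\lambda=x^\lambda+\sum_{\nu<\lambda}a_{\lambda\nu}(q,t)x^\nu$. A quick check confirms that if $\lambda$ is weakly decreasing and $\nu<\lambda$ in DBlex order, then $\nu^+<\lambda$ strictly in dominance (the case $\nu^+=\lambda$ is excluded because it would force $z_\nu<e$ in Bruhat order, which is impossible). Thus $\mathbf{1}_0 E_\lambda$ expands over monomial symmetric polynomials $m_\gamma$ with $\gamma\le\lambda$ in dominance order, and the normalization by $W_\lambda(t)$ given in Proposition~\ref{paraPoin} is exactly what makes the coefficient of $m_\lambda$ equal to $1$.

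For property (b), I would use the $E$-expansion from Proposition~\ref{Eexpansion}, $P_\lambda=\sum_{\mu\in S_n\lambda}c_\mu(q,t)E_\mu$, together with sesquilinearity of the inner product to write
\begin{equation*}
(P_\lambda,m_\gamma)_{q,t}=\sum_{\mu\in S_n\lambda}\sum_{\nu\in S_n\gamma}c_\mu(q,t)\,(E_\mu,x^\nu)_{q,t}.
\end{equation*}
When $\gamma<\lambda$ in dominance, every $\nu\in S_n\gamma$ satisfies $\nu^+=\gamma<\lambda=\mu^+$, so $\nu<\mu$ in DBlex, and each inner product on the right vanishes by Proposition~\ref{Ebyinnerprod}(b).

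For uniqueness, suppose $f\in\CC[X]^{S_n}$ also satisfies (a) and (b). Then $g=f-P_\lambda$ lies in $\mathrm{span}\{m_\gamma\ |\ \gamma\in(\ZZ^n)^+,\ \gamma<\lambda\}$ and is orthogonal to every $m_\gamma$ with $\gamma<\lambda$. Expanding $g=\sum_{\gamma<\lambda}a_\gamma m_\gamma$ and applying sesquilinearity,
\begin{equation*}
(g,g)_{q,t}=\sum_{\gamma<\lambda}\overline{a_\gamma}\,(g,m_\gamma)_{q,t}=0,
\end{equation*}
so nonisotropy (Proposition~\ref{nondegherm}(b)) forces $g=0$. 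The main obstacle is property (a): while morally immediate from the creation formula, one has to track carefully how the DBlex order on compositions restricts to dominance on their partition rearrangements, in order to rule out appearances of $m_\gamma$ with $\gamma$ larger than $\lambda$ after symmetrization.
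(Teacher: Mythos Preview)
Your proof is correct and follows essentially the same approach as the paper, which simply remarks that the argument is identical to that of Proposition~\ref{Ebyinnerprod}: nondegeneracy (nonisotropy) of $(\ ,\ )_{q,t}$ forces the orthogonal complement of $\mathrm{span}\{m_\gamma\mid\gamma<\lambda\}$ inside $\mathrm{span}\{m_\gamma\mid\gamma\le\lambda\}$ to be one-dimensional, and condition~(a) pins down the normalization. Your write-up is more explicit in verifying that $P_\lambda$ actually satisfies (a) and (b) and in carrying out the uniqueness step via $(g,g)_{q,t}=0$, but the underlying idea is the same.
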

\begin{proof}
The proof is completed in the same manner as the proof of Proposition~\ref{Ebyinnerprod}.
\end{proof}

\subsection{Going up a level from $t$ to $qt$}

As in~\eqref{arhoArhodefn} and~\eqref{slicksymmA}, let
$$A_\rho = A_\rho(t) = \prod_{1\le i<j\le n} (x_j-tx_i)
\qquad\hbox{and}\qquad
W_0(t) = \sum_{w\in S_n} t^{\ell(w)}.
$$

\begin{prop}   \label{ktokp1comp}
Let $f,g\in \CC[X]^{S_n}$ so that $f$ and $g$ are symmetric polynomials. Then
\begin{equation*}
(f,g)_{q,qt} 
=\frac{W_0(qt)}{ W_0(t^{-1})} (A_{\rho} f, A_{\rho} g )_{q,t}.
\end{equation*}
\end{prop}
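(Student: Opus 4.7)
The strategy is to unfold both inner products as constant terms, rewrite them against a common weight at parameter $qt$, and then use the $S_n$-symmetry of $f\overline{g}$ together with the Poincar\'e-type identities of Proposition~\ref{Poinbysymmprop} to extract the scalar $W_0(qt)/W_0(t^{-1})$.

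The first step is a package of elementary shift identities. From telescoping the infinite products one obtains $\Delta_\infty^X(t)/\Delta_\infty^X(qt) = \prod_{i<j}(1-tx_ix_j^{-1})$ and its mirror for $\Delta_\infty^{X^{-1}}$, together with $\Delta_0^X(qt)/\Delta_0^X(t) = \prod_{i<j}(1-qtx_ix_j^{-1})/(1-tx_ix_j^{-1})$. Direct expansion of $(x_j-tx_i)(x_j^{-1}-t^{-1}x_i^{-1})$ gives $A_\rho\overline{A_\rho} = \prod_{i<j}(1-tx_ix_j^{-1})(1-t^{-1}x_i^{-1}x_j)$. Combining these and using $\overline{A_\rho g} = \overline{A_\rho}\,\overline{g}$, the claim reduces to the constant-term identity
$$\mathrm{ct}\!\left(\frac{f\overline{g}}{\Delta(qt)}\right) = \frac{W_0(qt)}{W_0(t^{-1})}\,\mathrm{ct}\!\left(\frac{f\overline{g}\,R}{\Delta(qt)}\right),\qquad R = \prod_{i<j}\frac{(1-t^{-1}x_i^{-1}x_j)(1-qtx_ix_j^{-1})}{(1-tx_ix_j^{-1})(1-tx_i^{-1}x_j)},$$
where I write $\Delta(t)$ for $\Delta_\infty^X(t)\Delta_0^X(t)\Delta_\infty^{X^{-1}}(t)$.

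The key step, following the argument of~\cite[(5.8.6)]{Mac03}, is to exploit the $S_n$-invariance of both $f\overline{g}$ and $\mathrm{ct}$: the asymmetric factors inside each constant term can be replaced by their $S_n$-averages without changing the value. The rational function $R$ naturally decomposes into two $c$-function-type products, one at parameter $qt$ (from the $(1-qtx_ix_j^{-1})$ factors, which combine with the $\Delta_0^X(qt)$ piece of the weight) and one at parameter $t^{-1}$ (from the $(1-t^{-1}x_i^{-1}x_j)$ factors). Applying Proposition~\ref{Poinbysymmprop} at each parameter in turn produces the Poincar\'e polynomials $W_0(qt)$ and $W_0(t^{-1})$ comprising the stated constant.

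The main obstacle is the combinatorial organization of this $S_n$-averaging. Neither $R$ nor $1/\Delta(qt)$ is individually $S_n$-symmetric (the asymmetry originating from the $\Delta_0^X$-factor), so the symmetrization must be performed on the full integrand, with careful tracking of how the $c^X$- and $c^Y$-type factors at the two shifted parameters interleave. This bookkeeping is the technical heart of the argument; it is the only step in which the constant $W_0(qt)/W_0(t^{-1})$ emerges, and everything preceding it is a reorganization of weight functions into a standard form.
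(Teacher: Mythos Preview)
Your ingredients are right---shift identities for the $\Delta$'s, the $S_n$-invariance of $f\overline{g}$ and of $\mathrm{ct}$, and the Poincar\'e identity $\sum_{w\in S_n} w\Delta_0^{X^{-1}}(t) = W_0(t)$---and your reduction to the identity involving $R$ is correct.  But the step you flag as ``the main obstacle'' is precisely the one you have not carried out, and the phrase ``applying Proposition~\ref{Poinbysymmprop} at each parameter in turn'' does not describe a well-defined procedure: once both $c$-type factors sit inside a single constant term, you cannot symmetrize one of them while the other is still present, because the remaining integrand is no longer $S_n$-symmetric.

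The paper's organization sidesteps this entirely.  Rather than combining both sides into one identity with the composite factor $R$, it reduces $(f,g)_{q,qt}$ and $(A_\rho f, A_\rho g)_{q,t}$ \emph{separately} to the same ``core'' constant term
\[
\mathrm{ct}\Big(f\overline{g}\,\frac{a_\rho(x)a_\rho(x^{-1})}{\Delta_\infty^X(t)\Delta_\infty^{X^{-1}}(t)}\Big),
\]
each time via a single symmetrization of one $\Delta_0^{X^{-1}}$-factor against an already $S_n$-symmetric weight (since $a_\rho(x)a_\rho(x^{-1})$ and $\Delta_\infty^X\Delta_\infty^{X^{-1}}$ are both symmetric).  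The $W_0(qt)$ comes from the left side, the $W_0(t^{-1})$ from the right, and the comparison is immediate.  Rewriting your argument in this ``meet in the middle'' form turns your acknowledged obstacle into two trivial applications of the key lemma
\[
\mathrm{ct}\big(H\,\Delta_0^{X^{-1}}(s)\big)=\frac{W_0(s)}{n!}\,\mathrm{ct}(H)
\qquad\text{for $H$ symmetric,}
\]
which is exactly Proposition~\ref{Poinbysymmprop} packaged for this purpose.
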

\begin{proof}  
Letting
$$a_\rho(x) = \prod_{i<j} (x_j-x_i)
\quad\hbox{and}\quad
a_\rho(x^{-1}) = \prod_{i<j} (x_i-x_j),
\qquad\hbox{then}\qquad
\Delta_{\infty}^X(qt) = \frac{\Delta_{\infty}^X(t)}{\Delta_{0}^X(qt)a_\rho(x)}.
$$
Then
\begin{align*}
(f,g)_{q,qt} 
&= \mathrm{ct}\Big(\frac{f\overline{g}}{\Delta^X_{\infty}(qt) \Delta^{X^{-1}}_{\infty}(qt) \Delta^X_{0}(qt)}\Big) 
= \mathrm{ct}\Big(f\overline{g}\frac{ \Delta^X_{0}(qt) a_\rho(x) \Delta^{X^{-1}}_{0}(qt) a_\rho(x^{-1}) }
{\Delta^X_{\infty}(t) \Delta^{X^{-1}}_{\infty}(t) \Delta^X_{0}(qt)}\Big) \\
&= \mathrm{ct}\Big(f\overline{g} \frac{  a_\rho(x) a_\rho(x^{-1})  }
{\Delta^X_{\infty}(t) \Delta^{X^{-1}}_{\infty}(t) } \Delta^{X^{-1}}_{0}(qt) \Big) 
= \frac{W_0(qt)}{n!} \mathrm{ct}\Big(f\overline{g}\frac{  a_\rho(x) a_\rho(x^{-1}) }
{\Delta^X_{\infty}(t) \Delta^{X^{-1}}_{\infty}(t) } \Big),
\end{align*}
where the last equality follows from the fact that 
if $H$ is symmetric then
\begin{align*}
\mathrm{ct}(H \Delta^{X^{-1}}_{0}(t))
&= \frac{1}{n!} \mathrm{ct}\Big( \sum_{w\in S_n} w(H \Delta^{X^{-1}}_{0}(t))\Big)
= \frac{1}{n!} \mathrm{ct}\Big( H \sum_{w\in S_n} w(\Delta^{X^{-1}}_{0}(t))\Big) \\
&= \frac{1}{n!} \mathrm{ct}\Big( H W_0(t)\Big)
= \frac{W_0(t)}{n!} \mathrm{ct}(H).
\end{align*}
Similarly, using $A_\rho = \Delta^X_{0}(t)a_\rho(x)$ gives
\begin{align*}
(A_\rho f, &A_\rho g)_{q,t} 
= \mathrm{ct}\Big(f\overline{g} \frac{ \Delta^X_{0}(t)a_\rho(x) \Delta^{X^{-1}}_{0}(t) a_\rho(x^{-1}) }
{\Delta^X_{\infty}(t) \Delta^{X^{-1}}_{\infty}(t) \Delta^X_{0}(t)}\Big) \\
&= \mathrm{ct}\Big(f\overline{g} \frac{ a_\rho(x)a_\rho(x^{-1})   }
{\Delta^X_{\infty}(t) \Delta^{X^{-1}}_{\infty}(t) } \Delta^{X^{-1}}_{0}(t) \Big) 
= \frac{W_0(t)}{n!} \mathrm{ct}\Big(f\overline{g} \frac{ a_\rho(x)a_\rho(x^{-1})  }
{\Delta^X_{\infty}(t) \Delta^{X^{-1}}_{\infty}(t) }\Big).
\end{align*}
Comparing these expressions for 
$(f,g)_{q,qt}$ and $(A_\rho f,A_\rho g)_{q,t}$ gives the statement.
\end{proof}

\subsection{Weyl character formula for Macdonald polynomials}

\begin{thm} \label{WCF}
Let $\lambda\in \ZZ^n$ with $\lambda_1\ge \lambda_2\ge \cdots \ge \lambda_n$.  Then
$$P_\lambda(q,qt) = \frac{A_{\lambda+\rho}(q,t)}{A_\rho(t)}.$$
\end{thm}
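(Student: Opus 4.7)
The plan is to apply the inner product characterization of the symmetric Macdonald polynomial (Proposition~\ref{Pbyinnerprod}) with parameter $qt$ in place of $t$. Setting $Q_\lambda := A_{\lambda+\rho}(q,t)/A_\rho(t)$, I will verify three properties: that $Q_\lambda \in \CC[X]^{S_n}$, that $Q_\lambda = m_\lambda + (\text{lower terms in dominance order})$, and that $(Q_\lambda, m_\gamma)_{q,qt} = 0$ for every $\gamma \in (\ZZ^n)^+$ with $\gamma < \lambda$. Uniqueness in Proposition~\ref{Pbyinnerprod} then forces $Q_\lambda = P_\lambda(q, qt)$. The symmetry $Q_\lambda \in \CC[X]^{S_n}$ is immediate from the Boson--Fermion correspondence (Theorem~\ref{BosFerCorr}): since $A_{\lambda+\rho} \in \varepsilon_0\CC[X] = A_\rho\CC[X]^{S_n}$, division by $A_\rho$ lands in $\CC[X]^{S_n}$.

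For the leading-term step, rewriting~\eqref{Alambdadefn} in terms of the classical Vandermonde $a_\rho = \prod_{i<j}(x_j - x_i)$ gives
$$
Q_\lambda = \frac{(-1)^{\ell(w_0)}}{a_\rho}\sum_{w\in S_n}(-1)^{\ell(w)} wE_{\lambda+\rho}(q,t).
$$
Expanding $E_{\lambda+\rho} = x^{\lambda+\rho} + \sum_{\nu<\lambda+\rho} a_\nu x^\nu$ in DBlex order (Proposition~\ref{Emutopterm}) and antisymmetrizing, the leading monomial $x^{\lambda+\rho}$ produces $(-1)^{\ell(w_0)} a_{\lambda+\rho}$. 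Every other $\nu$ appearing satisfies $\nu^+ \lneq \lambda+\rho$ strictly in dominance, because $(\lambda+\rho)^+ = \lambda+\rho$ with $z_{\lambda+\rho}=e$ minimal in Bruhat order; each such $\nu$ then contributes a scalar multiple of $a_{\nu^+}$. Dividing by $a_\rho$ and using the classical identity $a_\mu/a_\rho = s_{\mu-\rho}$ for $\mu\in(\ZZ^n)^{++}$ (together with $\mu-\rho \lneq \lambda$ in dominance whenever $\mu \lneq \lambda+\rho$), and $s_\nu = m_\nu + (\text{lower in dominance})$, yields $Q_\lambda = s_\lambda + \sum_{\nu\lneq\lambda} c_\nu s_\nu = m_\lambda + (\text{lower in dominance})$.

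For the orthogonality step, the triangularity just proved means $\{Q_\nu\}_{\nu\in(\ZZ^n)^+}$ is a basis of $\CC[X]^{S_n}$, so the inverse expansion yields $m_\gamma = \sum_{\nu\leq\gamma} e_{\nu\gamma} Q_\nu$ and hence $A_\rho m_\gamma = \sum_{\nu\leq\gamma} e_{\nu\gamma} A_{\nu+\rho}$. Applying the \emph{going up a level} formula (Proposition~\ref{ktokp1comp}) and sesquilinearity,
$$
(Q_\lambda,m_\gamma)_{q,qt}
= \frac{W_0(qt)}{W_0(t^{-1})} (A_{\lambda+\rho}, A_\rho m_\gamma)_{q,t}
= \frac{W_0(qt)}{W_0(t^{-1})} \sum_{\nu\leq\gamma}\overline{e_{\nu\gamma}}(A_{\lambda+\rho},A_{\nu+\rho})_{q,t}.
$$
If $\gamma < \lambda$, each $\nu$ in the sum satisfies $\nu\neq\lambda$, and Proposition~\ref{Eexpansion} shows that $A_{\lambda+\rho}$ and $A_{\nu+\rho}$ have $E$-expansions supported on the disjoint $S_n$-orbits $S_n(\lambda+\rho)$ and $S_n(\nu+\rho)$. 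Combined with the pairwise orthogonality $(E_\mu,E_{\mu'})_{q,t}=0$ for $\mu\neq\mu'$ (which follows from Proposition~\ref{Ebyinnerprod} by reducing $E_{\mu'}$ to monomials lower than $\mu$ in DBlex and then invoking the normalized-Hermitian property of Proposition~\ref{nondegherm}(d)), this forces $(A_{\lambda+\rho},A_{\nu+\rho})_{q,t}=0$ and hence $(Q_\lambda,m_\gamma)_{q,qt}=0$. Proposition~\ref{Pbyinnerprod} applied at parameters $(q,qt)$ then identifies $Q_\lambda = P_\lambda(q,qt)$. The hard part is the leading-term step: tracking how DBlex order on $\ZZ^n$ interacts with dominance order on $(\ZZ^n)^+$ after antisymmetrization and division by $a_\rho$ takes some careful bookkeeping, whereas once the Boson--Fermion decomposition, the $E$-expansion, and the level-shift formula are available, the orthogonality argument is structurally clean.
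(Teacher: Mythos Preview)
Your proof is correct and follows essentially the same approach as the paper: establish $Q_\lambda\in\CC[X]^{S_n}$ via the Boson--Fermion correspondence, verify the leading term $m_\lambda$, use the level-shift formula (Proposition~\ref{ktokp1comp}) to convert $(Q_\lambda,m_\gamma)_{q,qt}$ into $(A_{\lambda+\rho},A_\rho m_\gamma)_{q,t}$, show this vanishes via the defining orthogonality of the $E_\mu$, and conclude by Proposition~\ref{Pbyinnerprod}.

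The only notable difference in execution is in the orthogonality step. The paper expands $A_\rho m_\nu = x^{w_0(\nu+\rho)}+(\hbox{lower terms})$ directly in monomials and observes that every monomial $x^\gamma$ appearing has $\gamma^+\le\nu+\rho<\lambda+\rho$ in dominance, hence $\gamma<\mu$ in DBlex for every $\mu\in S_n(\lambda+\rho)$, so Proposition~\ref{Ebyinnerprod}(b) applies immediately. Your route---inverting the triangularity to write $A_\rho m_\gamma=\sum_{\nu\le\gamma}e_{\nu\gamma}A_{\nu+\rho}$ and then invoking $(A_{\lambda+\rho},A_{\nu+\rho})_{q,t}=0$---is a small detour that lands in the same place. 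One caution: your parenthetical claim that $(E_\mu,E_{\mu'})_{q,t}=0$ for \emph{all} $\mu\neq\mu'$ follows from Proposition~\ref{Ebyinnerprod} plus the Hermitian property only handles DBlex-comparable pairs; fortunately in your application $\mu'\in S_n(\nu+\rho)$ always satisfies $(\mu')^+=\nu+\rho<\lambda+\rho=\mu^+$, so $\mu'<\mu$ and no Hermitian symmetry is even needed.
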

\begin{proof}
Since $A_{\lambda+\rho} = t^{\frac12\ell(w_0)}\varepsilon_0 E_{\lambda+\rho}$ then
$A_{\lambda+\rho}\in \CC[X]^{\mathrm{Fer}}$.  Thus, by
Proposition~\ref{BFequalitiesP}, 
$$\hbox{there exists}\qquad f \in \CC[X]^{S_n} \qquad\hbox{such that}\qquad
A_{\lambda+\rho} = A_\rho f.$$
If $\mu\in \ZZ^n$ is such that the coefficient of $x^\mu$ in $A_{\lambda+\rho}$ is nonzero then
$\mu \le w_0(\lambda+\rho)$.  Thus
$$f = m_\lambda + \hbox{(lower terms)}.$$
The $E$-expansion for $A_{\lambda+\rho}$ in Proposition~\ref{Eexpansion} gives that
$$
A_{\lambda+\rho} 
= \sum_{\mu\in S_n(\lambda+\rho)} d_{\lambda+\rho}^\mu E_\mu
= E_{w_0(\lambda+\rho)} + (\hbox{lower terms})
$$
and, from the definitions of $A_\rho$ and $m_\nu$,
$$
A_\rho m_\nu = x^{w_0(\nu+\rho)}+ (\hbox{lower terms}).
$$
Since
$(E_{w_0(\lambda+\rho)}, x^\gamma)_{q,t} = 0$ for $\gamma\in \ZZ^n$ with 
$\gamma < w_0(\lambda+\rho)$, 
then
$$(A_\rho f, A_\rho m_\nu)_{q,t} = (A_{\lambda+\rho}, A_\rho m_\nu)_{q,t} = 0,
\qquad\hbox{for $\nu\in (\ZZ^n)^+$ with $\nu<\lambda$.}
$$
Thus, by~\eqref{ktokp1comp}, since $f\in \CC[X]^{S_n}$ and $m_\nu\in \CC[X]^{S_n}$ then
$$(f, m_\nu)_{q,qt} = \frac{W_0(qt)}{ W_0(t^{-1})} (A_\rho f, A_\rho m_\nu)_{q,t} =0,
\qquad\hbox{for $\nu\in (\ZZ^n)^+$ with $\nu<\lambda$.}
$$
Thus, by Proposition~\ref{Pbyinnerprod}, $f = P_\lambda(q,qt)$.
\end{proof}

\section{Norms and $c$-functions}\label{section:Orth}

The Macdonald polynomials form an incredible family of orthogonal polynomials
with respect to the inner product $(,)_{q,t}$ defined in~\eqref{innproddefnNEW}.
These orthogonal polynomials 
generalize the Askey-Wilson polynomials to the multivariate case and to all affine root systems.
In this section we see how the $c$-functions enter in the formulas for the norms of the Macdonald
polynomials.  The special case of the norm of the polynomial $1$ is the celebrated Macdonald
constant term conjecture~\cite{Mac82}. The generalization~\cite{Mac87} of the Macdonald 
constant term conjectures to conjectures for the norms of the $P_\lambda$ was astounding.  
After much work on special cases,
the methods and tools of Heckman, Opdam, Macdonald, and Cherednik yielded a proof of the norm
conjecture (about 1994, see the Notes and References at the end of~\cite[\S5]{Mac03}).  
This proof is exposited beautifully in~\cite[\S5.8 and \S5.9]{Mac03}.  In this section
we provide a type $GL_n$ specific exposition, highlighting the role of the $c$-functions,
and using them to simplify some steps.  The general framework of our exposition follows the proof
found in~\cite[\S5.8]{Mac03}.

\subsection{Adjoints and orthogonality}

For a linear operator $M\colon \CC[X]\to \CC[X]$, the \emph{adjoint of $M$}
is the linear operator $M^*\colon \CC[X] \to \CC[X]$ determined by 
$$(Mf_1, f_2)_{q,t} = (f_1, M^*f_2)_{q,t},
\qquad\hbox{for $f_1,f_2\in \CC[X]$,}
$$
where the inner product on $\CC[X]$ is as defined in~\eqref{innproddefnNEW}. Also, simplify the notation by letting
$$J_{q,t} = \frac{1}{\Delta_\infty^X \Delta_0^X \Delta_{\infty}^{X^{-1}}}
=\prod_{1\le i<j\le n} 
\frac{(x_ix^{-1}_j;q)_\infty}{(tx_ix^{-1}_j;q)_\infty}\cdot
\frac{(qx^{-1}_ix_j;q)_\infty}{(qtx^{-1}_ix_j;q)_\infty}, \quad \text{ and } \quad \Delta_{ij} = \frac{1-tx_ix^{-1}_j}{1-x_ix^{-1}_j}.
$$

\begin{prop} Let $i\in \{1, \ldots, n\}$ and $k\in \{1, \ldots, n-1\}$.  Then,
as operators on $\CC[X]$,
$$x_i^* = x^{-1}_i, \qquad 
s^*_k =\frac{ \Delta_k,k+1}{\Delta_{k+1,k}} s_k, \qquad
T_\pi^* = T_\pi^{-1}, \qquad
T^*_k = T_k^{-1}, \qquad
Y_i^* = Y_i^{-1}.
$$
\end{prop}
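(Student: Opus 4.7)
My strategy is to verify each adjoint formula by unfolding the definition $(f_1,f_2)_{q,t}=\mathrm{ct}(f_1\,\overline{f_2}\,J_{q,t})$ and moving the operator $M$ in $(Mf_1,f_2)_{q,t}$ across the constant term via a suitable invariance. The case $x_i^*=x_i^{-1}$ is immediate: since $\overline{x_i^{-1}}=x_i$ and multiplication by $x_i$ commutes freely inside $\mathrm{ct}$, one has $(x_if_1,f_2)_{q,t}=\mathrm{ct}(f_1\cdot x_i\overline{f_2}\cdot J_{q,t})=(f_1,x_i^{-1}f_2)_{q,t}$.

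For $s_k^*$, I use that $\mathrm{ct}$ is invariant under permutation of variables. Writing $\mathrm{ct}(s_kf_1\cdot\overline{f_2}\cdot J_{q,t})=\mathrm{ct}(s_k[s_kf_1\cdot\overline{f_2}\cdot J_{q,t}])=\mathrm{ct}(f_1\cdot\overline{s_kf_2}\cdot(s_kJ_{q,t}/J_{q,t})\cdot J_{q,t})$ and noting that in the product $J_{q,t}=\prod_{i<j}F_{ij}$ the reflection $s_k$ exchanges all factors indexed by unordered pairs $\{i,j\}\neq\{k,k+1\}$ amongst themselves, the ratio $s_kJ_{q,t}/J_{q,t}$ collapses to $F_{k+1,k}/F_{k,k+1}$. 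Repeated use of $(a;q)_\infty=(1-a)(qa;q)_\infty$ then reduces this to the rational function $\Delta_{k,k+1}/\Delta_{k+1,k}$. A short check confirms this ratio is bar-invariant, yielding $s_k^*=(\Delta_{k,k+1}/\Delta_{k+1,k})\,s_k$.

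For $T_k^*=T_k^{-1}$, I decompose $T_k=\alpha+\beta s_k$ with $\alpha=(t^{-1/2}-t^{1/2})x_{k+1}/(x_k-x_{k+1})$ and $\beta=(t^{1/2}x_k-t^{-1/2}x_{k+1})/(x_k-x_{k+1})$. The results above give $T_k^*=\overline{\alpha}+\overline{(s_k\beta)(s_kJ_{q,t}/J_{q,t})}\,s_k$. A pleasant cancellation---exploiting the factorizations $t^{-1/2}x_k-t^{1/2}x_{k+1}=t^{-1/2}(x_k-tx_{k+1})$ and $s_kJ_{q,t}/J_{q,t}=(tx_k-x_{k+1})/(x_k-tx_{k+1})$---produces $(s_k\beta)(s_kJ_{q,t}/J_{q,t})=\beta$; combined with $\overline{\beta}=\beta$ and the identity $\overline{\alpha}=\alpha-(t^{1/2}-t^{-1/2})$, this yields $T_k^*=T_k-(t^{1/2}-t^{-1/2})=T_k^{-1}$.

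The analogous derivation for $T_\pi^*=T_\pi^{-1}$ uses that $\mathrm{ct}$ is $T_\pi^{-1}$-invariant (since $T_\pi$ is a cyclic shift of the variables up to a $q$-rescaling) and the straightforward identity $\overline{T_\pi^{-1}f_2}=T_\pi^{-1}\overline{f_2}$, and reduces to matching $T_\pi^{-1}J_{q,t}$ with $J_{q,t}$ as formal Laurent series: the factors $F_{ij}$ with $j<n$ transform identically into $F_{i+1,j+1}$, while the factors $F_{in}$ acquire nontrivial $q^{\pm1}$ shifts from $x_n\mapsto qx_1$ that must be reconciled with $F_{1,i+1}$ via repeated use of the $q$-Pochhammer shift $(q^{r}a;q)_\infty=(a;q)_\infty\prod_{s=0}^{r-1}(1-q^sa)^{-1}$. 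I expect this $q$-Pochhammer reconciliation to be the main technical obstacle. Once $T_\pi^*=T_\pi^{-1}$ is in hand, the last formula $Y_i^*=Y_i^{-1}$ is a formal consequence of $Y_i=T_{i-1}^{-1}\cdots T_1^{-1}T_\pi T_{n-1}\cdots T_i$ and the order-reversal rule $(AB)^*=B^*A^*$ applied to the already-established adjoints of the $T_k$ and $T_\pi$.
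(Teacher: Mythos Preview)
Your proposal is correct and follows the same overall strategy as the paper: for each operator, use invariance of the constant term and track how the operator interacts with the kernel $J_{q,t}$. The differences are in execution only.

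For $T_k^*$ the paper takes a shorter route by writing $T_k = -t^{-\frac12} + (1+s_k)\,t^{-\frac12}\Delta_{k+1,k}$ and taking adjoints of the pieces: since $(1+s_k)^* = 1 + \frac{\Delta_{k,k+1}}{\Delta_{k+1,k}}s_k$ and $\overline{\Delta_{k+1,k}} = t^{-1}\Delta_{k+1,k}$, one lands immediately on $-t^{\frac12} + (1+s_k)\,t^{-\frac12}\Delta_{k+1,k} = T_k^{-1}$, bypassing your verification that $(s_k\beta)(s_kJ_{q,t}/J_{q,t}) = \beta$. For $s_k^*$ the paper similarly avoids your $q$-Pochhammer telescoping by observing that $J_{q,t} = (\hbox{symmetric})\cdot\Delta_0^{X^{-1}}$, so $s_kJ_{q,t}/J_{q,t} = s_k\Delta_0^{X^{-1}}/\Delta_0^{X^{-1}} = \Delta_{k,k+1}/\Delta_{k+1,k}$ in one step.

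For $T_\pi^*$ you overestimate the difficulty: the asymmetric placement of the extra factor of $q$ in $J_{q,t}$ (namely $(q x_i^{-1}x_j;q)_\infty$ versus $(x_ix_j^{-1};q)_\infty$) is precisely what makes $T_\pi J_{q,t} = J_{q,t}$ hold on the nose. Under $x_i\mapsto x_{i+1}$ (for $i<n$) and $x_n\mapsto q^{-1}x_1$, each factor $F_{i,n}$ becomes exactly $F_{1,i+1}$, with no leftover Pochhammer shifts to reconcile. So the ``main technical obstacle'' you anticipate does not materialize.
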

\begin{proof} \hspace{0.3cm}
\begin{itemize}[leftmargin=0.5cm]
\item[$\circ$] Adjoint of multiplication by $x_i$:
$$(x_if,g)_{q,t} = \mathrm{ct}(x_if\overline{g} J_{q,t})
= \mathrm{ct}(f\cdot \overline{x_i^{-1}g}\cdot J_{q,t}) = (f, x_i^{-1}g)_{q,t}.$$

\item[$\circ$] Adjoint of $s_k$:
For $i,j\in \{1, \ldots, n\}$ with $i\ne j$, note that
$$\overline{\Delta_{ij}} = \frac{1-t^{-1}x_i^{-1}x_j}{1-x^{-1}_ix_j} = 
\frac{tx_ix^{-1}_j - 1}{t(x_ix^{-1}_j-1)} = t^{-1}\Delta_{ij}.
$$
Therefore
\begin{align*}
s_k J_{q,t} &= s_k\Big(\frac{1}{\Delta_\infty^X\Delta_0^X \Delta_\infty^{X^{-1}} }\Big)
=
s_k\Big(\frac{1}{\Delta_\infty^X\Delta_0^X \Delta_0^{X^{-1}}\Delta_\infty^{X^{-1}} }\cdot \Delta_0^{X^{-1}}\Big)
= \frac{1}{\Delta_\infty^X\Delta_0^X \Delta_0^{X^{-1}}\Delta_\infty^{X^{-1}} }\cdot s_k(\Delta_0^{X^{-1}}) \\
&=
\frac{1}{\Delta_\infty^X\Delta_0^X \Delta_0^{X^{-1}}\Delta_\infty^{X^{-1}} }\cdot \Delta_0^{X^{-1}}
\frac{\Delta_{k,k+1}}{\Delta_{k+1,k}}
=
\Big(\frac{1}{\Delta_\infty^X\Delta_0^X \Delta_\infty^{X^{-1}}}\Big)
\frac{\Delta_{k,k+1}}{\Delta_{k+1,k}}
= J_{q,t} \frac{\Delta_{k,k+1}}{\Delta_{k+1,k}}.
\end{align*}
Then
\begin{align*}
(s_kf_1,f_2)_{q,t}
&= \mathrm{ct}\big( (s_kf_1)\overline{f_2} J_{q,t} \big)
= \mathrm{ct}\big(s_k(f_1(s_k(\overline{f_2} J_{q,t}))) \big)
= \mathrm{ct}\big( f_1(s_k(f_2 J_{q,t})) \big)
\\
&= \mathrm{ct} \Big(f_1(s_k\overline{f_2}) J_{q,t}\frac{\Delta_{k,k+1}}{\Delta_{k+1,k}} \Big)
= \mathrm{ct} \Big( f_1 \overline{\frac{\Delta_{k,k+1}}{\Delta_{k+1,k}} (s_kf_2)} J_{q,t} \Big)
=(f_1, \frac{\Delta_{k,k+1}}{\Delta_{k+1,k}}(s_kf_2))_{q,t}.
\end{align*}

\item[$\circ$] Adjoint of $T_\pi$:  Recall that the action of $T_\pi$ is given by 
$T_\pi = s_1s_2\cdots s_{n-1}y_n$ and so
$$T_\pi x_n = q^{-1}x_1,
\qquad\hbox{and}\qquad
T_\pi x_i = x_{i+1}\ \hbox{for $i\in \{1, \ldots, n-1\}$.}
$$
Thus, if $i\in \{1,\ldots, n-1\}$ then $T_\pi x_ix^{-1}_n = qx_{i+1}x^{-1}_1$ and
$T_\pi qx^{-1}_ix_n = qq^{-1}x^{-1}_{i+1}x_1$.  Therefore
\begin{align*}
T_\pi J_{q,t} 
&= T_\pi 
\Big(\prod_{1\le i<j\le n-1} 
\frac{(x_ix^{-1}_j;q)_\infty}{(tx_ix^{-1}_j;q)_\infty}\cdot
\frac{(qx^{-1}_ix_j;q)_\infty}{(qtx^{-1}_ix_j;q)_\infty}\Big)
\Big( \prod_{i=1}^{n-1} 
\frac{(x_ix^{-1}_n;q)_\infty}{(tx_ix^{-1}_n;q)_\infty} \cdot
\frac{(qx^{-1}_ix_n;q)_\infty}{(qtx^{-1}_ix_n;q)_\infty} \Big)
\\
&=
\Big(\prod_{2\le i<j\le n} 
\frac{(x_ix^{-1}_j;q)_\infty}{(tx_ix^{-1}_j;q)_\infty}\cdot
\frac{(qx^{-1}_ix_j;q)_\infty}{(qtx^{-1}_ix_j;q)_\infty}\Big)
\Big( \prod_{i=2}^{n} 
\frac{(qx_ix^{-1}_1;q)_\infty}{(qtx_ix^{-1}_1;q)_\infty} \cdot
\frac{(qq^{-1}x^{-1}_ix_1;q)_\infty}{(qtq^{-1}x^{-1}_ix_1;q)_\infty} \Big)
\\
&=
\Big(\prod_{2\le i<j\le n} 
\frac{(x_ix^{-1}_j;q)_\infty}{(tx_ix^{-1}_j;q)_\infty}\cdot
\frac{(qx^{-1}_ix_j;q)_\infty}{(qtx^{-1}_ix_j;q)_\infty}\Big)
\Big( \prod_{i=2}^{n} 
\frac{(qx^{-1}_1x_i;q)_\infty}{(qtx^{-1}_1x_i;q)_\infty} \cdot
\frac{(x_1x^{-1}_i;q)_\infty}{(tx_1x^{-1}_i;q)_\infty} \Big)
= J_{q,t}.
\end{align*}
Thus
\begin{align*}
(T_\pi f_1, f_2)_{q,t} &= \mathrm{ct}\big( (T_\pi f_1) \overline{f_2} J_{q,t} \big)
= \mathrm{ct}\big( T_\pi (f_1 T_{\pi^{-1}}(\overline{f_2} J_{q,t}))\big)
= \mathrm{ct}\big( f_1 T_{\pi^{-1}}(\overline{f_2} J_{q,t})\big)
\\
&= \mathrm{ct}\big( f_1\big( T_{\pi^{-1}}(\overline{f_2}) J_{q,t} \big)\big)
= \mathrm{ct}\big( f_1\cdot \overline{T_{\pi^{-1}}f_2}\cdot J_{q,t} \big)
=(f_1, T_{\pi^{-1}}f_2)_{q,t}.
\end{align*}

\item[$\circ$] Adjoint of $T_k$:
\begin{align*}
(T_{s_k})^*
&= \big(-t^{-\frac12}+(1+s_k)t^{-\frac12}\Delta_{k+1,k})\big)^* 
= -t^{\frac12} + t^{\frac12} \overline{\Delta_{k+1,k}} (1+s_k^*) 
\\
&= -t^{\frac12} + t^{-\frac12} \Delta_{k+1,k} \Big(1+\frac{\Delta_{k,k+1}}{\Delta_{k+1,k}} s_k\Big) 
= -t^{\frac12} + (1+s_k)t^{-\frac12}\Delta_{k+1,k} = T^{-1}_{s_k}.
\end{align*}
\item[$\circ$] Adjoint of $Y_j$: 
$$Y_1^* = (T_\pi T_{n-1}\cdots T_1)^* = T_1^{-1}\cdots T^{-1}_{n-1}T_\pi^{-1} = 
(T_\pi T_{n-1}\cdots T_1)^{-1} = Y^{-1}_1,$$
and
if $j\in \{2, \ldots, n\}$ then
$$Y_j^* = (T^{-1}_{j-1} Y_{j-1} T^{-1}_{j-1})^* = T_{j-1}Y^{-1}_{j-1} T_{j-1} = 
(T^{-1}_{j-1} Y_{j-1} T^{-1}_{j-1})^{-1} = Y^{-1}_j.$$

\end{itemize}
\end{proof}

Next, we look at the adjoint operator of the bosonic and fermionic symmetrizers. Since $T_i^{-1}\mathbf{1}^*_0 = T_i^*\mathbf{1}_0^* = (\mathbf{1}_0T_i)^* = (t^{\frac12}\mathbf{1}_0)^*
=t^{-\frac12}\mathbf{1}_0$ and
\begin{equation*}
\mathbf{1}_0^* = T_{w_0}^{-1}+ (\hbox{lower terms})
= T_{w_0} + (\hbox{lower terms})
\qquad\hbox{then}\qquad
\mathbf{1}_0^* = \mathbf{1}_0.
\end{equation*}
Similarly, since $T^{-1}_i\varepsilon_0^* = T_i^* \varepsilon_0^* = (\varepsilon_0T_i)^* 
= (-t^{-\frac12}\varepsilon_0)^* = -t^{\frac12}\varepsilon_0^*$ and
\begin{equation*}
\varepsilon_0^* = T_{w_0}^{-1} + (\hbox{lower terms})
= T_{w_0} + (\hbox{lower terms})
\qquad\hbox{then}\qquad
\varepsilon_0^* = \varepsilon_0.
\end{equation*}

The relations $Y_i^*= Y^{-1}_i$ in combination with the knowledge of the eigenvalues for
the action of the $Y_i$ on the $E_\mu$ give the following orthogonality relations
for Macdonald polynomials.

\begin{prop}
\item[(a)] Let $\lambda, \mu\in \ZZ^n$.  If $\mu \ne \lambda$ then 
$(E_\lambda, E_\mu)_{q,t} = 0$.
\item[(b)] Let $\lambda, \mu\in (\ZZ^n)^+$.  If $\mu \ne \lambda$ then
$(P_\lambda, P_\mu)_{q,t} = 0$.
\item[(c)] Let $\lambda, \mu\in (\ZZ^n)^+$.  If $\mu \ne \lambda$ then
$(A_{\lambda+\rho}, A_{\mu+\rho})_{q,t} = 0$.
\end{prop}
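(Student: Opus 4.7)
The plan is to deduce all three orthogonality statements from the single key input $Y_i^* = Y_i^{-1}$ (just established) together with the eigenvalue formula $Y_i E_\mu = \mathrm{ev}^t_\mu(Y_i) E_\mu$ from Theorem~\ref{Eeigenvalue}. Statement (a) is the main content; statements (b) and (c) then follow by expanding $P_\lambda$ and $A_{\lambda+\rho}$ in the $E$-basis.

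For (a), I would argue as follows. Fix $\lambda, \mu \in \ZZ^n$ with $\lambda \ne \mu$, and let $i \in \{1,\ldots,n\}$. Using sesquilinearity and the adjoint identity $Y_i^* = Y_i^{-1}$,
\begin{equation*}
\mathrm{ev}^t_\lambda(Y_i)\, (E_\lambda, E_\mu)_{q,t}
= (Y_i E_\lambda, E_\mu)_{q,t}
= (E_\lambda, Y_i^{-1} E_\mu)_{q,t}
= \overline{\mathrm{ev}^t_\mu(Y_i^{-1})}\, (E_\lambda, E_\mu)_{q,t}.
\end{equation*}
Since the bar involution sends $q \mapsto q^{-1}$, $t \mapsto t^{-1}$, and
$\mathrm{ev}^t_\mu(Y_i^{-1}) = q^{\mu_i} t^{(v_\mu(i)-1)-\frac12(n-1)}$, one checks directly that
$\overline{\mathrm{ev}^t_\mu(Y_i^{-1})} = \mathrm{ev}^t_\mu(Y_i)$. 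Thus, if $(E_\lambda, E_\mu)_{q,t} \ne 0$ then $\mathrm{ev}^t_\lambda(Y_i) = \mathrm{ev}^t_\mu(Y_i)$ for every $i$. By the explicit formula for the eigenvalues, this equality reads $q^{-\lambda_i} t^{-(v_\lambda(i)-1)} = q^{-\mu_i} t^{-(v_\mu(i)-1)}$, and since $q$ and $t$ are algebraically independent this forces $\lambda_i = \mu_i$ and $v_\lambda(i) = v_\mu(i)$ for all $i$, hence $\lambda = \mu$, contradicting our assumption. So $(E_\lambda, E_\mu)_{q,t} = 0$.

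For (b) and (c), I would invoke Proposition~\ref{Eexpansion}, which expresses
\begin{equation*}
P_\lambda \in \mathrm{span}\{E_\mu \mid \mu \in S_n\lambda\}
\qquad\hbox{and}\qquad
A_{\lambda+\rho} \in \mathrm{span}\{E_\mu \mid \mu \in S_n(\lambda+\rho)\}.
\end{equation*}
If $\lambda \ne \mu$ in $(\ZZ^n)^+$, the orbits $S_n\lambda$ and $S_n\mu$ are disjoint, and similarly $S_n(\lambda+\rho)$ and $S_n(\mu+\rho)$ are disjoint (since $\lambda+\rho$ and $\mu+\rho$ have distinct strictly-decreasing rearrangements when $\lambda \ne \mu$). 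Expanding bilinearly and applying (a) term-by-term gives $(P_\lambda, P_\mu)_{q,t} = 0$ and $(A_{\lambda+\rho}, A_{\mu+\rho})_{q,t} = 0$.

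The main (only) obstacle is really the bookkeeping in the eigenvalue-matching argument of step~(a): verifying that the bar involution neatly converts $\mathrm{ev}^t_\mu(Y_i^{-1})$ back to $\mathrm{ev}^t_\mu(Y_i)$, and then confirming that coincidence of all eigenvalues across $i=1,\ldots,n$ forces the index to agree both in the weight $\mu$ and the associated permutation $v_\mu$. Everything else is formal manipulation using the sesquilinearity proved in Proposition~\ref{nondegherm} and the $E$-expansion of Proposition~\ref{Eexpansion}.
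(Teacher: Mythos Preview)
Your proof is correct and follows essentially the same approach as the paper: use $Y_i^* = Y_i^{-1}$ together with the eigenvalue formula to force $\mathrm{ev}^t_\lambda(Y_i) = \mathrm{ev}^t_\mu(Y_i)$ for all $i$ whenever the pairing is nonzero, and then deduce (b) and (c) from (a) via the $E$-expansions of Proposition~\ref{Eexpansion}. The only cosmetic difference is that you spell out the bar-involution check and the orbit-disjointness, whereas the paper leaves these implicit.
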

\begin{proof} Let $i\in \{1, \ldots, n\}$.  Then, by Theorem~\ref{Eeigenvalue},
\begin{align*}
q^{-\lambda_i}t^{-(v_\lambda(i)-1)+\frac12(n-1)} &(E_\lambda, E_\mu)_{q,t}
= (Y_i E_\lambda, E_\mu)_{q,t}
= ( E_\lambda, Y^{-1}_i E_\mu)_{q,t}
= ( E_\lambda, q^{\mu_i}t^{(v_\mu(i)-1)\frac12(n-1)} E_\mu)_{q,t}
\\
&= \overline{q^{\mu_i}t^{(v_\mu(i)-1)-\frac12(n-1)} }
( E_\lambda,  E_\mu)_{q,t}
= q^{-\mu_i}t^{-(v_\mu(i)-1)+\frac12(n-1)} 
( E_\lambda,  E_\mu)_{q,t}.
\end{align*}
If $( E_\lambda,  E_\mu)_{q,t}\ne 0$ then $q^{-\lambda_i} = q^{-\mu_i}$ for $i\in \{1, \ldots, n\}$.
Thus $\lambda_i = \mu_i$ for $i\in \{1, \ldots, n\}$ and so $\lambda = \mu$ (and $v_\lambda = v_\mu$).

\smallskip\noindent
Parts (b) and (c) follow from (a) and the $E$-expansions in Proposition~\ref{Eexpansion}.
\end{proof}

\subsection{Reductions for norms}

\begin{prop} \label{normreduction} 
\item[(a)] Let $\mu = (\mu_1, \ldots, \mu_n)\in \ZZ^n$.
Then
$$\frac{(E_\mu, E_\mu)_{q,t}}{(1,1)_{q,t}} = \ev^t_0(c_{u_\mu}^Y c_{u_\mu}^{Y^{-1}}).$$
\item[(b)] Let $\lambda \in \left(\ZZ^n\right)^+$ 
and let $\rho = (n-1, n-2, \ldots, 1,0)$.
Then
$$
\frac{(P_\lambda, P_\lambda)_{q,t} }{(E_\lambda, E_\lambda)_{q,t} }
= \frac{W_0(t)}{W_\lambda(t)} 
t^{-\frac12\ell(v_\lambda)} \ev^t_\lambda(c_{v_\lambda}^Y)
\qquad\hbox{and}\qquad
\frac{(A_{\lambda+\rho}, A_{\lambda+\rho})_{q,t}  }{(P_{\lambda+\rho}, P_{\lambda+\rho})_{q,t} }
= \ev^t_{\lambda+\rho}\Big (\frac{ c^{Y^{-1}}_{w_0} }{ c_{w_0}^Y}\Big).
$$
\item[(c)] Let $\lambda \in \left(\ZZ^n\right)^+$. 
Then
$$\frac{ (P_\lambda(q,qt), P_\lambda(q,qt))_{q,qt} }{ (P_{\lambda+\rho}(q,t), P_{\lambda+\rho}(q,t))_{q,t} }
= \frac{W_0(qt)}{W_0(t)} \ev^t_{\lambda+\rho}\Big( t^{\ell(w_0)} \frac{ c_{w_0}^{Y^{-1}} } {c_{w_0}^Y }\Big).
$$
\end{prop}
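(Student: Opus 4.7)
The plan is to assemble part (c) from three ingredients already available: the Weyl character formula for Macdonald polynomials (Theorem~\ref{WCF}), the going up a level formula (Proposition~\ref{ktokp1comp}), and part (b) of the present proposition. No new calculation with $c$-functions or symmetrizers is required; the work is entirely in chaining these identities and simplifying the scalar prefactors.

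First I would apply Theorem~\ref{WCF} to write $A_\rho(t)\, P_\lambda(q,qt) = A_{\lambda+\rho}(q,t)$. Since $P_\lambda(q,qt)\in\CC[X]^{S_n}$, Proposition~\ref{ktokp1comp} with $f=g=P_\lambda(q,qt)$ yields
\begin{equation*}
(P_\lambda(q,qt),P_\lambda(q,qt))_{q,qt}
=\frac{W_0(qt)}{W_0(t^{-1})}\,(A_\rho P_\lambda(q,qt),A_\rho P_\lambda(q,qt))_{q,t}
=\frac{W_0(qt)}{W_0(t^{-1})}\,(A_{\lambda+\rho}(q,t),A_{\lambda+\rho}(q,t))_{q,t}.
\end{equation*}
Next I would use part (b) of Proposition~\ref{normreduction} in the form
$(A_{\lambda+\rho},A_{\lambda+\rho})_{q,t}=\ev^t_{\lambda+\rho}\!\left(c^{Y^{-1}}_{w_0}/c^{Y}_{w_0}\right)(P_{\lambda+\rho},P_{\lambda+\rho})_{q,t}$
to eliminate the fermionic norm in favor of the bosonic norm appearing in the denominator of (c). Dividing by $(P_{\lambda+\rho}(q,t),P_{\lambda+\rho}(q,t))_{q,t}$ gives the claim, up to the replacement of $W_0(t^{-1})$ by $W_0(t)$.

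The only remaining bookkeeping is the identity
\begin{equation*}
W_0(t^{-1}) \;=\; \sum_{w\in S_n} t^{-\ell(w)} \;=\; t^{-\ell(w_0)}\sum_{w\in S_n} t^{\ell(w_0w)}
\;=\; t^{-\ell(w_0)}W_0(t),
\end{equation*}
which absorbs the factor $t^{\ell(w_0)}$ into the expression, producing exactly
$\frac{W_0(qt)}{W_0(t)}\,\ev^t_{\lambda+\rho}\!\bigl(t^{\ell(w_0)} c^{Y^{-1}}_{w_0}/c^{Y}_{w_0}\bigr)$.

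I do not anticipate a genuine obstacle here: the hard analytic work (the adjoint computations and the eigenvalue-based orthogonality arguments behind part (b), together with the kernel manipulation behind Proposition~\ref{ktokp1comp}) is already done. The only subtlety is notational, namely tracking that in Proposition~\ref{ktokp1comp} the $A_\rho$ that appears is $A_\rho(t)$ and coincides with $A_{0+\rho}(q,t)$, so that multiplying $P_\lambda(q,qt)$ by $A_\rho(t)$ genuinely produces the fermionic Macdonald polynomial $A_{\lambda+\rho}(q,t)$ as Theorem~\ref{WCF} asserts; once that identification is made explicit, the proof is a one line assembly.
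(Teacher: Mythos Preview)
Your proposal for part (c) is correct and follows essentially the same route as the paper: apply Proposition~\ref{ktokp1comp} to $f=g=P_\lambda(q,qt)$, invoke the Weyl character formula $A_\rho P_\lambda(q,qt)=A_{\lambda+\rho}(q,t)$, then use the second formula of part (b) and the identity $W_0(t^{-1})=t^{-\ell(w_0)}W_0(t)$ to finish. The paper's proof is just a compressed one-line chain of these same equalities.
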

\begin{proof}
\begin{itemize}
\itemsep0.75em
\item[(a)] Recalling the intertwiners $\tau^\vee_i$ from~\eqref{intwnrops} (see also the
proof of Proposition~\ref{normintwn}),
$$(\tau^\vee_i)^* = \Big(T_i+\frac{t^{-\frac12}-t^{\frac12}}{1-Y^{-1}_iY_{i+1}}\Big)^*
= T_i^* + \frac{t^{\frac12}-t^{-\frac12}}{1-Y_iY^{-1}_{i+1}}
= T_i^{-1} + \frac{(t^{-\frac12}-t^{\frac12})Y^{-1}_iY_{i+1} }{1-Y^{-1}_iY_{i+1}}
=\tau^\vee_i,
$$
and
$$(\tau^\vee_i)^2 = \frac{(1-tY_iY^{-1}_{i+1})(1-tY^{-1}_iY_{i+1})}
{(1-Y_iY^{-1}_{i+1}) (1-Y^{-1}Y_{i+1})} = c_{i,i+1}^Y c_{i,i+1}^{Y^{-1}}.
$$
Then, using the creation formula for $E_\mu$ (Theorem~\ref{creationformulathm}),
\begin{align*}
(E_\mu, E_\mu)_{q,t}
&= (t^{-\frac12\ell(v^{-1}_\mu)}\tau^\vee_{u_\mu}\mathbf{1}_Y, 
t^{-\frac12\ell(v^{-1}_\mu)}\tau^\vee_{u_\mu}\mathbf{1}_Y)_{q,t}
= (\tau^\vee_{u^{-1}_\mu} \tau^\vee_{u_\mu}\mathbf{1}_Y, \mathbf{1}_Y)_{q,t}
\\
&= ( c_{u_\mu}^Y c_{u_\mu}^{Y^{-1}} \mathbf{1}_Y, \mathbf{1}_Y)_{q,t}
= \ev^t_0(c_{u_\mu}^Y c_{u_\mu}^{Y^{-1}}) \cdot ( 1,1)_{q,t}.
\end{align*}

\item[(b)]
Note that
$$
\mathbf{1}_0^2 
= t^{-\frac12\ell(w_0)}W_0(t) \mathbf{1}_0
\qquad\hbox{and}\qquad
\varepsilon_0^2 = (-1)^{\ell(w_0)} t^{-\frac12\ell(w_0)}W_0(t)\varepsilon_0.
$$
By Proposition~\ref{Eexpansion},
\begin{align*}
P_\lambda &= \sum_{\mu\in S_n\lambda} b_\lambda^\mu E_\mu,
&&\hbox{with}&&b_\lambda^\lambda = t^{\frac12\ell(v_\lambda)}\ev^t_\lambda(c_{v_\lambda}^Y),
\\
A_{\lambda+\rho} &= \sum_{\mu\in S_n(\lambda+\rho)} d_{\lambda+\rho}^\mu E_\mu,
&&\hbox{with} &&d_{\lambda+\rho}^{\lambda+\rho} 
= (-t^{\frac12})^{\ell(w_0)} \ev^t_{\lambda+\rho}(c_{w_0}^{Y^{-1}}).
\end{align*}
Since $W_\lambda(t^{-1}) = t^{-\ell(w_\lambda)}W_\lambda(t)$ and 
$\ell(w_0)-\ell(w_\lambda) = \ell(v_\lambda)$ then using $P_\lambda = \frac{t^{\frac12\ell(w_0)}}{W_\lambda(t)} \mathbf{1}_0 E_\lambda$ gives
\begin{align*}
(P_\lambda, P_\lambda)_{q,t} 
&= \Big( \frac{t^{\frac12\ell(w_0)}}{W_\lambda(t)} \mathbf{1}_0 E_\lambda,
\frac{t^{\frac12\ell(w_0)}}{W_\lambda(t)}  \mathbf{1}_0 E_\lambda\Big)_{q,t} 
= \frac{1}{W_\lambda(t)W_\lambda(t^{-1})}
(  \mathbf{1}_0^2 E_\lambda, E_\lambda )_{q,t}
\\
&= \frac{t^{-\frac12 \ell(w_0)} W_0(t)}{W_\lambda(t)W_\lambda(t^{-1})}
(  \mathbf{1}_0 E_\lambda, E_\lambda )_{q,t}
= \frac{t^{-\ell(w_0)} W_0(t)}{W_\lambda(t^{-1})}
(  P_\lambda, E_\lambda )_{q,t}
\\
&= \frac{t^{-\ell(w_0)} W_0(t)}{t^{-\ell(w_\lambda)}W_\lambda(t)} b_\lambda^\lambda
(  E_\lambda, E_\lambda )_{q,t}
= \frac{t^{-\ell(v_\lambda)} W_0(t)}{W_\lambda(t)} t^{\frac12\ell(v_\lambda)} 
\ev^t_\lambda(c_{v_\lambda}^Y) (  E_\lambda, E_\lambda )_{q,t}.
\end{align*}
Similarly, using $A_{\lambda+\rho} = t^{\frac12\ell(w_0)}\varepsilon_0 E_{\lambda+\rho}$ gives
\begin{align*}
(A_{\lambda+\rho}, &\ A_{\lambda+\rho} )_{q,t}
= \big( t^{\frac12\ell(w_0)} \varepsilon_0 E_{\lambda+\rho}, 
\ t^{\frac12\ell(w_0)} \varepsilon_0 E_{\lambda+\rho} \big)_{q,t}
=  \big( \varepsilon^2_0 E_{\lambda+\rho}, \ E_{\lambda+\rho} \big)_{q,t} 
\\
&= (-1)^{\ell(w_0)} t^{-\frac12\ell(w_0)}W_0(t)
\big( \varepsilon_0 E_{\lambda+\rho}, \ E_{\lambda+\rho} \big)_{q,t} 
= (-1)^{\ell(w_0)} t^{-\ell(w_0)}W_0(t)
\big( A_{\lambda+\rho}, \ E_{\lambda+\rho} \big)_{q,t} 
\\
&
= (-1)^{\ell(w_0)} W_0(t^{-1}) d_{\lambda+\rho}^{\lambda+\rho}
\big( E_{\lambda+\rho}, \ E_{\lambda+\rho} \big)_{q,t}
= W_0(t^{-1}) t^{\frac12\ell(w_0)} \ev^t_{\lambda+\rho}(c_{w_0}^{Y^{-1}})
\big( E_{\lambda+\rho}, \ E_{\lambda+\rho} \big)_{q,t}.
\end{align*}
Using
$(P_{\lambda+\rho},P_{\lambda+\rho})_{q,t} = W_0(t)t^{-\frac12\ell(w_0)} \ev^t_{\lambda+\rho} (c_{w_0}^Y)
(E_{\lambda+\rho}, E_{\lambda+\rho})_{q,t}$ gives
$$\frac{ (A_{\lambda+\rho}, A_{\lambda+\rho})_{q,t} }{(P_{\lambda+\rho}, P_{\lambda+\rho})_{q,t} }
= \frac{ W_0(t^{-1})t^{\frac12\ell(w_0)} \ev^t_{\lambda+\rho}(c^{Y^{-1}}_{w_0})}
{W_0(t) t^{-\frac12\ell(w_0)} \ev^t_{\lambda+\rho} (c_{w_0}^Y)} 
= \ev^t_{\lambda+\rho}\Big (\frac{ c^{Y^{-1}}_{w_0} }{ c_{w_0}^Y}\Big).
$$
\item[(c)] Using Proposition~\ref{ktokp1comp} and the Weyl character formula 
(Theorem~\ref{WCF}),
\begin{align*}
&(P_\lambda(q,qt), P_\lambda(q,qt))_{q,qt} 
= \frac{W_0(qt)}{W_0(t^{-1})} (A_\rho(t) P_\lambda(q,qt), A_\rho(t) P_\lambda(q,qt))_{q,t}
\\
&\quad = \frac{W_0(qt)}{t^{-\ell(w_0)}W_0(t)} (A_{\lambda+\rho}(q,t), A_{\lambda+\rho}(q,t))_{q,t}
= \frac{W_0(qt)}{W_0(t)} \ev^t_{\lambda+\rho}\Big( t^{\ell(w_0)}\frac{ c_{w_0}^{Y^{-1}} } {c_{w_0}^Y} \Big)
(P_{\lambda+\rho}(q,t), P_{\lambda+\rho}(q,t))_{q,t},
\end{align*}
where the last equality follows from the second formula in (b). 
\end{itemize}
\end{proof}

Evaluating the $c$-functions appearing in Proposition~\ref{normreduction} gives the following corollary.

\begin{cor}
$$(E_\mu, E_\mu)_{q,t} 
= \Big(\prod_{(r,c)\in \mu} \prod_{i=1}^{u_\mu(r,c)}
\frac{(1-q^{\mu_r-c+1} t^{v_\mu(r)-i+1})
(1-q^{\mu_r-c+1}t^{v_\mu(r)-i-1})}{(1-q^{\mu_r-c+1} t^{v_\mu(r)-i})^2}\Big)\cdot (1,1)_{q,t},
$$
$$(P_\lambda, P_\lambda)_{q,t}  = \frac{W_0(t)}{W_\lambda(t)} \Big(\prod_{i<j} 
\frac{1-q^{\lambda_i-\lambda_j}t^{j-i-1}} {1-q^{\lambda_i-\lambda_j}t^{j-i}}
\Big)\cdot (E_\lambda, E_\lambda)_{q,t}.
$$
$$(A_{\lambda+\rho}, A_{\lambda+\rho})_{q,t}  = W_0(t^{-1}) 
\Big(\prod_{i<j} \frac{1-q^{\lambda_i-\lambda_j+j-i} t^{j-i+1}} {1-q^{\lambda_i-\lambda_j+j-i}t^{j-i}}
\Big)\cdot (E_{\lambda+\rho}, E_{\lambda+\rho})_{q,t}.
$$
$$
\frac{(P_\lambda(q,qt), P_\lambda(q,qt))_{q,qt} }{(P_{\lambda+\rho}(q,t), P_{\lambda+\rho}(q,t))_{q,t}}
= \frac{W_0(qt)}{W_0(t)}\Big(\prod_{i<j} \frac{1-q^{\lambda_i-\lambda_j+j-i}t^{j-i+1}}
{1-q^{\lambda_i-\lambda_j+j-i}t^{j-i-1}}\Big).
$$
\end{cor}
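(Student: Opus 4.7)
\medskip

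\noindent\textbf{Proof proposal.}  Each of the four identities will follow by starting from the $c$-function formulas of Proposition~\ref{normreduction} and evaluating the products of $c$-functions by using the explicit inversion sets~\eqref{Invfortlambda},~\eqref{Invforumu} together with the eigenvalue formula $\ev^t_\mu(Y_iY_j^{-1}) = q^{\mu_j-\mu_i}t^{v_\mu(j)-v_\mu(i)}$.  The computation is parallel to (and in fact slightly more involved than) the evaluation of $t^{\frac12\ell(u_\mu)}\ev^t_0(c^{Y^{-1}}_{u_\mu})$ already carried out in~\eqref{cumueval} at the end of the proof of Corollary~\ref{princspecB}.

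For the $E$-norm, I apply Proposition~\ref{normreduction}(a) and plug in $\mathrm{Inv}(u_\mu)$ from~\eqref{Invforumu}.  The key observation is that for each inversion labelled by a box $(r,c)\in\mu$ and an index $i\in\{1,\dots,u_\mu(r,c)\}$, one has
\begin{align*}
\ev^t_0(c^{Y^{-1}}_{(v_\mu(r),\,i+(\mu_r-c+1)n)})
&= t^{-\frac12}\frac{1-q^{\mu_r-c+1}t^{v_\mu(r)-i+1}}{1-q^{\mu_r-c+1}t^{v_\mu(r)-i}}, \\
\ev^t_0(c^{Y}_{(v_\mu(r),\,i+(\mu_r-c+1)n)})
&= t^{\frac12}\frac{1-q^{\mu_r-c+1}t^{v_\mu(r)-i-1}}{1-q^{\mu_r-c+1}t^{v_\mu(r)-i}},
\end{align*}
where the first line is precisely the factor in~\eqref{cumueval} and the second is obtained by the same affine shift applied to $c^Y$ instead of $c^{Y^{-1}}$.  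Multiplying these two evaluations kills the $t^{\pm\frac12}$ prefactors and gives exactly the product formula for $(E_\mu,E_\mu)_{q,t}/(1,1)_{q,t}$ in the corollary.

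For the $P$-norm, I apply Proposition~\ref{normreduction}(b) and plug in $\mathrm{Inv}(v_\lambda)=\{(i,j):i<j,\ \lambda_i>\lambda_j\}$.  Each factor evaluates as
$$
\ev^t_\lambda(c^Y_{ij})
= \frac{t^{-\frac12}-t^{\frac12}q^{\lambda_j-\lambda_i}t^{v_\lambda(j)-v_\lambda(i)}}{1-q^{\lambda_j-\lambda_i}t^{v_\lambda(j)-v_\lambda(i)}}
= t^{\frac12}\,\frac{1-q^{\lambda_i-\lambda_j}t^{v_\lambda(i)-v_\lambda(j)-1}}{1-q^{\lambda_i-\lambda_j}t^{v_\lambda(i)-v_\lambda(j)}},
$$
after clearing negative exponents by the substitution $1-q^{-a}t^{-b}=-q^{-a}t^{-b}(1-q^at^b)$ on top and bottom.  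Taking the product over $\mathrm{Inv}(v_\lambda)$ absorbs $t^{\ell(v_\lambda)/2}$, which cancels the prefactor in Proposition~\ref{normreduction}(b), so that the ratio $(P_\lambda,P_\lambda)_{q,t}/(E_\lambda,E_\lambda)_{q,t}$ becomes $\frac{W_0(t)}{W_\lambda(t)}$ times the stated product.  For the $A$-norm I compose Proposition~\ref{normreduction}(b) applied to $\lambda+\rho$ (using $v_{\lambda+\rho}=w_0$ and $W_{\lambda+\rho}=\{1\}$, so $\mathrm{Inv}(w_0)=\{(i,j):1\le i<j\le n\}$) with the second formula of Proposition~\ref{normreduction}(b); the identity $W_0(t^{-1})=t^{-\ell(w_0)}W_0(t)$ converts the $t^{\ell(w_0)}\ev^t_{\lambda+\rho}(c^{Y^{-1}}_{w_0}/c^Y_{w_0})$ factor into the stated hook-type product.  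The fourth formula is immediate from Proposition~\ref{normreduction}(c) and the same evaluation of $c^{Y^{-1}}_{w_0}/c^Y_{w_0}$.

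The only real obstacle is bookkeeping.  Each $c^Y$- or $c^{Y^{-1}}$-factor has the form $\frac{t^{-\frac12}-t^{\frac12}a}{1-a}$ rather than $\frac{1-ta}{1-a}$, so each inversion contributes a $t^{\pm\frac12}$ that must be tracked and checked to telescope against the global prefactor $t^{\pm\frac12\ell(w)}$ appearing in Proposition~\ref{normreduction}.  Once this accounting is done carefully (and, for the $A$ and ratio formulas, one uses $W_0(t^{-1})=t^{-\ell(w_0)}W_0(t)$) the four identities drop out with no further input.
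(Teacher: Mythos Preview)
Your proposal is correct and follows essentially the same route as the paper: in each case you invoke the appropriate part of Proposition~\ref{normreduction} and then evaluate the resulting $c$-function product explicitly via the inversion sets and the eigenvalue formula, exactly as the paper does by pointing to \eqref{cumueval}, \eqref{Eexpb}, \eqref{Eexpd}, and the computation~\eqref{receval}. Your write-up supplies more detail on the $t^{\pm\frac12}$ bookkeeping and, for the $A$-norm, you compose the two formulas in the \emph{statement} of Proposition~\ref{normreduction}(b) rather than citing the intermediate identity $(A_{\lambda+\rho},A_{\lambda+\rho})_{q,t} = W_0(t^{-1})\,t^{\frac12\ell(w_0)}\ev^t_{\lambda+\rho}(c^{Y^{-1}}_{w_0})(E_{\lambda+\rho},E_{\lambda+\rho})_{q,t}$ obtained inside its proof, but this amounts to the same computation.
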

\begin{proof}
Using that
$$c^Y_{ij}c^{Y^{-1}}_{ij}
= \Big(\frac{t^{-\frac12}-t^{\frac12}Y_iY^{-1}_j}{1-Y_iY^{-1}_j}\Big)
\Big(\frac{t^{-\frac12}-t^{\frac12}Y^{-1}_iY_j}{1-Y^{-1}_iY_j}\Big)
=\frac{(1-tY_iY^{-1}_j)(1-t^{-1}Y_iY^{-1}_j)}{(1-Y_iY^{-1}_j)^2},
$$
the formula for $(E_\mu, E_\mu)_{q,t}$ follows from Proposition~\ref{normreduction}(a)
and the computation in~\eqref{cumueval}.
The formula for $(P_\lambda, P_\lambda)_{q,t}$ follows from Proposition~\ref{normreduction}(b)
and the computation in~\eqref{Eexpb}.
The formula for $(P_\lambda, P_\lambda)_{q,t}$ follows from Proposition~\ref{normreduction}(b)
and the computation in~\eqref{Eexpd}.

Using 
$\ev^t_\mu(Y_i) = q^{-\mu_i}t^{-(v_\mu(i)-1) + \frac12(n-1)}$ gives
$$
\ev^t_\mu(Y^{-1}_iY_j) 
= q^{\mu_i}t^{(v_\mu(i)-1) - \frac12(n-1)}
q^{-\mu_j}t^{-(v_\mu(j)-1) + \frac12(n-1)}
= q^{\mu_i-\mu_j}t^{v_\mu(i)-v_\mu(j)},
$$
for $i,j\in \{1, \ldots, n\}$ with $i\ne j$.
Then
\begin{equation*}
t\frac{c_{ij}^{Y^{-1}}}{c_{ij}^Y }
= t \frac{\frac{t^{-\frac12}-t^{\frac12}Y^{-1}_iY_j }{1-Y^{-1}_iY_j }}
{\frac{t^{-\frac12} -t^{\frac12}Y_iY^{-1}_j}
{1-Y_iY^{-1}_j }}
= t \frac{\frac{1-tY^{-1}_iY_j }{1-Y^{-1}_iY_j }}
{\frac{Y^{-1}_iY_j- t}
{Y^{-1}_iY_j-1 }}
=  \frac{1-tY^{-1}_iY_j }{t^{-1}(t-Y^{-1}_iY_j)}
=  \frac{1-tY^{-1}_iY_j }{1-t^{-1}Y^{-1}_iY_j}
\end{equation*}
Using that $w_{\lambda+\rho} = w_0$ and $w_0(i) = n-i+1$ then
\begin{equation}
\ev^t_{\lambda+\rho}\Big(t^{\ell(w_0)} \frac{c_{w_0}^{Y^{-1}}}{c_{w_0}^{Y}} \Big)
= 
\ev^t_{\lambda+\rho}\Big(\prod_{i<j} \frac{1-tY^{-1}_iY_j}{1-t^{-1}Y^{-1}_iY_j} \Big)
=
\prod_{i<j} \frac{1-tq^{\lambda_i-\lambda_j+j-i}t^{j-i} }
{1-t^{-1}q^{\lambda_i-\lambda_j+j-i}t^{j-i} }.
\label{receval}
\end{equation}
In view of Proposition~\ref{normreduction}(c), this proves the formula for 
$\dfrac{(P_\lambda(q,qt), P_\lambda(q,qt))_{q,qt} }{(P_{\lambda+\rho}(q,t), P_{\lambda+\rho}(q,t))_{q,t}}$.
\end{proof}

\subsection{Formulas for norms and the constant term}

\begin{thm}  \label{Normformula}
Let $\lambda\in \left(\ZZ^n\right)^+$ 
and let $k\in \ZZ_{\ge0}$.  Then
$$
(P_\lambda(q,q^k), P_\lambda(q,q^k) )_{q,q^k}
= W_0(q^k)
\prod_{r=1}^{k-1} \ev^{q^r}_{\lambda+(k-r)\rho}
\Big(q^{r\cdot\ell(w_0)} \frac{c^{Y^{-1}}_{w_0}(q^r)}{c^Y_{w_0}(q^r)}\Big).
$$
Alternatively,
$$
(P_\lambda(q,q^k), P_\lambda(q,q^k) )_{q,q^k}
= W_0(q^k)\cdot \prod_{i<j} \prod_{r=1}^{k-1} 
\frac{1-q^{\lambda_i-\lambda_j+r} q^{k(j-i)}}
{1-q^{\lambda_i-\lambda_j-r} q^{k(j-i)}}.
$$
\end{thm}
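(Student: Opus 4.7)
The plan is to iterate the ``going up a level'' formula in Proposition~\ref{normreduction}(c), telescoping the level variable from $t = q^k$ down to $t = q^0 = 1$. Applying that proposition with $t$ replaced by $q^{k-1}$ relates $(P_\lambda(q,q^k), P_\lambda(q,q^k))_{q,q^k}$ to $(P_{\lambda+\rho}(q,q^{k-1}), P_{\lambda+\rho}(q,q^{k-1}))_{q,q^{k-1}}$, picking up a factor $\frac{W_0(q^k)}{W_0(q^{k-1})} \ev^{q^{k-1}}_{\lambda+\rho}(q^{(k-1)\ell(w_0)} c^{Y^{-1}}_{w_0}(q^{k-1})/c^{Y}_{w_0}(q^{k-1}))$. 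Iterating, with the weight promoted successively by $\rho$ and the level variable decreased by one factor of $q$ at each step, after $k$ applications I would arrive at
\begin{equation*}
(P_\lambda(q, q^k), P_\lambda(q, q^k))_{q, q^k} = \frac{W_0(q^k)}{W_0(1)} \cdot (P_{\lambda+k\rho}(q, 1), P_{\lambda+k\rho}(q, 1))_{q,1} \cdot \prod_{r=0}^{k-1} \ev^{q^r}_{\lambda+(k-r)\rho}\Big(q^{r\ell(w_0)} \frac{c^{Y^{-1}}_{w_0}(q^r)}{c^{Y}_{w_0}(q^r)}\Big),
\end{equation*}
where the prefactor $W_0(q^k)/W_0(1)$ arises from the telescoping of $W_0(q^{r+1})/W_0(q^r)$.

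Next I would handle the base case and the trivial factor. Assuming $k \geq 1$, the weight $\mu = \lambda + k\rho$ has strictly decreasing entries, and the specialization $P_\mu(q,1) = m_\mu$ follows from the inner product characterization in Proposition~\ref{Pbyinnerprod}: at $t=1$ the pairing becomes $(f,g)_{q,1} = \mathrm{ct}(f\bar g)$, under which the monomial symmetric polynomials are orthogonal and satisfy the required upper-triangularity. Since $\mu$ has distinct parts, its $S_n$-orbit has $n!$ elements, giving $(m_\mu, m_\mu)_{q,1} = n! = W_0(1)$, which exactly cancels the denominator in the telescoping prefactor. Moreover, the $r=0$ term of the product equals $\ev^1_{\lambda+k\rho}(c^{Y^{-1}}_{w_0}(1)/c^{Y}_{w_0}(1)) = 1$ since each $c^{Y^{\pm 1}}_{ij}$ specializes to $1$ at $t=1$. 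The product therefore effectively runs from $r = 1$ to $k-1$, establishing the first displayed formula.

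For the alternative product form, I would unpack each factor using the evaluation in~\eqref{receval}. For $\mu = \lambda + (k-r)\rho$ with $1 \leq r \leq k-1$, the entries of $\mu$ are strictly decreasing, so $v_\mu = w_0$ and $v_\mu(i) - v_\mu(j) = j - i$. The evaluation homomorphism then produces $\ev^{q^r}_\mu(Y^{-1}_i Y_j) = q^{\lambda_i-\lambda_j+(k-r)(j-i)} \cdot q^{r(j-i)} = q^{\lambda_i-\lambda_j+k(j-i)}$, at which point the $r$-dependence in the $q$-exponent disappears and each factor collapses to $\prod_{i<j} (1 - q^{\lambda_i-\lambda_j+r+k(j-i)}) / (1 - q^{\lambda_i-\lambda_j-r+k(j-i)})$, assembling into the stated double product.

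The main obstacle will be the bookkeeping in the iteration: the two moving parts (the weight shifts by $\rho$ while the level variable loses a factor of $q$) must be coordinated so that $\ev^{q^r}_{\lambda+(k-r)\rho}$ emerges with the correct pairing of superscript and subscript, and the base-case factor $(m_{\lambda+k\rho}, m_{\lambda+k\rho})_{q,1}$ must be merged with the telescoping numerator to produce the clean $W_0(q^k)$ prefactor. Beyond these accountings, the proof is essentially substitution; the serious work has already been absorbed into Proposition~\ref{normreduction}(c) and the evaluation identity~\eqref{receval}.
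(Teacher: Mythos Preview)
Your proposal is correct and follows essentially the same strategy as the paper: iterate Proposition~\ref{normreduction}(c) (the paper phrases this as induction on $k$, you phrase it as telescoping, which amounts to the same thing), reduce to the $t=1$ base case where $(m_{\lambda+k\rho},m_{\lambda+k\rho})_{q,1}=n!=W_0(1)$ cancels the telescoped denominator, observe that the $r=0$ factor is $1$, and then unpack the evaluations via~\eqref{receval} to obtain the explicit double product.
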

\begin{proof}  The proof is by induction on $k$.  The base case is $k=0$, where
$$
(P_\lambda(q,q^0),P_\lambda(q,q^0))_{q,q^0}
= (m_\lambda,m_\lambda)_{q,1} = W_\lambda(1).
$$
Using Proposition~\ref{normreduction}(c), the first step of the induction is
\begin{align*}
(P_\lambda(q,q),&P_\lambda(q,q))_{q,q}
=
\frac{(P_\lambda(q,q),P_\lambda(q,q))_{q,q}}
{(P_{\lambda+\rho}(q,q^0),P_{\lambda+\rho}(q,q^0))_{q,q^0}} 
(P_{\lambda+\rho}(q,q^0),P_{\lambda+\rho}(q,q^0))_{q,q^0}
\\
&= \frac{W_0(q)}{W_0(1)}\ev_{\lambda+\rho}^1\Big(1^{\ell(w_0)}\frac{c^{Y^{-1}}_{w_0}(q)}{c^Y_{w_0}(q)}\Big)
(m_{\lambda+\rho},m_{\lambda+\rho})_{q,1}
= \frac{W_0(q)}{W_0(1)}\cdot 1\cdot W_0(1) = W_0(q),
\end{align*}
and the general induction step is
\begin{align*}
(P_\lambda(q,q^k),&P_\lambda(q,q^k))_{q,q^k}
=
\frac{(P_\lambda(q,q^k),P_\lambda(q,q^k))_{q,q^k}}
{(P_{\lambda+\rho}(q,q^{k-1}),P_{\lambda+\rho}(q,q^{k-1}))_{q,q^{k-1}}}
(P_{\lambda+\rho}(q,q^{k-1}),P_{\lambda+\rho}(q,q^{k-1}))_{q,q^{k-1}}
\\
&= \frac{W_0(q^k)}{W_0(q^{k-1})} \ev_{\lambda+\rho}^{q^{k-1}}
\Big( q^{(k-1)\ell(w_0)} \frac{c_{w_0}^{Y^{-1}}(q)}{c_{w_0}^Y(q) }\Big)
W_0(q^{k-1}) \prod_{r=1}^{k-2} \ev^{q^r}_{\lambda+(k-1-r)\rho}
\Big(q^{r\cdot\ell(w_0)} \frac{c^{Y^{-1}}_{w_0}}{c^Y_{w_0}}\Big)
\\
&= W_0(q^k)
\prod_{r=1}^{k-1} \ev^{q^r}_{\lambda+(k-r)\rho}
\Big(q^{r\cdot\ell(w_0)} \frac{c^{Y^{-1}}_{w_0}}{c^Y_{w_0}}\Big).
\end{align*}
In a similar manner to the computation in~\eqref{receval},
$$\ev^{q^r}_{\lambda+(k-r)\rho}\Big(q^{r\cdot \ell(w_0)}\frac{c_{w_0}^Y}{c_{w_0}^{Y^{-1}}}\Big) 
=\prod_{i<j} \frac{1-q^r q^{\lambda_i-\lambda_j+(k-r)(j-i)}q^{r(j-i)}}
{1-q^{-r}q^{\lambda_i-\lambda_j+(k-r)(j-i)}q^{r(j-i)} }
=\prod_{i<j} \frac{1-q^{\lambda_i-\lambda_j+r}q^{k(j-i)}}
{1-q^{\lambda_i-\lambda_j-r}q^{k(j-i)}}.
$$
\end{proof}

Specializing Proposition~\ref{Normformula} 
at $\lambda=0$ provides a proof of Macdonald's constant term conjectures.
This proof follows the same framework as the proof exposited in~\cite[\S5.8]{Mac03}.

\begin{prop}  \label{Norm1}
Let $k\in \ZZ_{\ge0}$ and let $t=q^k$. Then
$$
( 1,1)_{q,q^k} = \mathrm{ct}\left(\frac{1}{\Delta^X_\infty \Delta^X_0 \Delta^{X^{-1}}_{\infty} }\right)
=\prod_{i=2}^n \genfrac[]{0pt}{0}{ik}{k}
$$
and
$$
\frac{1}{W_0(q^k)}(1, 1)_{q,q^k} 
= \mathrm{ct}\left(\frac{1}{\Delta^X_\infty \Delta^X_0 \Delta^{X^{-1}}_0 \Delta^{X^{-1}}_{\infty} }\right)
= \prod_{h=2}^{n-1} \genfrac[]{0pt}{0}{hk-1}{k-1}.
$$
\end{prop}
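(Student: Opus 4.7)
The plan is to deduce both parts of Proposition~\ref{Norm1} from Theorem~\ref{Normformula} by specializing at $\lambda = 0$. Since $P_0(q,q^k) = m_0 = 1$, the inner product $(P_0,P_0)_{q,q^k}$ is exactly $(1,1)_{q,q^k}$, so Theorem~\ref{Normformula} gives, after substitution,
\[
(1,1)_{q,q^k} = W_0(q^k) \prod_{1\le i<j\le n}\prod_{r=1}^{k-1}\frac{1-q^{\,r+k(j-i)}}{1-q^{\,-r+k(j-i)}}.
\]
The first equality in the proposition, expressing $(1,1)_{q,q^k}$ as a constant term of $1/(\Delta_\infty^X\Delta_0^X\Delta_\infty^{X^{-1}})$, is immediate from the definition of $(\,,\,)_{q,t}$ with $f_1 = f_2 = 1$.

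To simplify the right-hand side to the $\prod_{i=2}^n \genfrac[]{0pt}{0}{ik}{k}$ form, I would group pairs by $d = j-i$ (each value $d \in \{1,\ldots,n-1\}$ appearing with multiplicity $n-d$) and track the net exponent of each factor $(1-q^m)$. The relevant $m$-values split into blocks $\{\alpha k+1,\ldots,(\alpha+1)k-1\}$ for $\alpha = 0,\ldots,n-1$: for $1\le \alpha\le n-1$ the numerator contribution (coming from $d=\alpha$, exponent $n-\alpha$) and denominator contribution (from $d=\alpha+1$, exponent $n-\alpha-1$) telescope to a single copy of $(1-q^m)$, while the block $\alpha=0$ sits only in the denominator with net exponent $n-1$. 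Combined with $W_0(q^k) = \prod_{d=1}^n(1-q^{kd})/(1-q^k)^n$, the intermediate multiples $q^{2k},\ldots,q^{(n-1)k}$ cancel with factors from $W_0(q^k)$, and the expression collapses to $(q;q)_{nk}/(q;q)_k^n$. This in turn equals $\prod_{i=2}^n \genfrac[]{0pt}{0}{ik}{k}$ by a direct telescoping of the definition $\genfrac[]{0pt}{0}{ik}{k} = (q;q)_{ik}/\bigl((q;q)_k(q;q)_{(i-1)k}\bigr)$.

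For the second formula, the middle equality follows from the averaging identity used in the proof of Proposition~\ref{ktokp1comp}: for any $S_n$-invariant function $H$,
\[
\mathrm{ct}\bigl(H\,\Delta_0^{X^{-1}}(t)\bigr) = \frac{W_0(t)}{n!}\,\mathrm{ct}(H),
\]
since $\sum_{w\in S_n}w(\Delta_0^{X^{-1}}(t)) = W_0(t)$ by Proposition~\ref{Poinbysymmprop}. I would apply this with $H = 1/(\Delta_\infty^X\Delta_0^X\Delta_0^{X^{-1}}\Delta_\infty^{X^{-1}})$, which is $S_n$-invariant because $\Delta_\infty^X\Delta_\infty^{X^{-1}} = \prod_{i\ne j}(tx_ix_j^{-1};q)_\infty/(x_ix_j^{-1};q)_\infty$ and $\Delta_0^X\Delta_0^{X^{-1}} = \prod_{i\ne j}(1-tx_ix_j^{-1})/(1-x_ix_j^{-1})$ are products over all ordered pairs $i\neq j$. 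The product evaluation $\prod_{h}\genfrac[]{0pt}{0}{hk-1}{k-1}$ then comes from the same $q$-factorial bookkeeping, now applied to $(1,1)_{q,q^k}/W_0(q^k)$ and expressed in terms of $\genfrac[]{0pt}{0}{hk-1}{k-1} = (q;q)_{hk-1}/\bigl((q;q)_{k-1}(q;q)_{(h-1)k}\bigr)$.

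The main obstacle is the telescoping bookkeeping in the second paragraph: identifying how the net exponents of the factors $(1-q^m)$ interact with the factors of $W_0(q^k)$ to leave exactly $(q;q)_{nk}/(q;q)_k^n$, and then re-doing this carefully for the symmetric form to produce the $\prod_h \genfrac[]{0pt}{0}{hk-1}{k-1}$ expression. Once the blocks $\{\alpha k+1,\ldots,(\alpha+1)k-1\}$ are set up, the rest is routine manipulation of $q$-Pochhammer symbols.
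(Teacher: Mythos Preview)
Your approach is essentially the one the paper takes: specialize Theorem~\ref{Normformula} at $\lambda=0$, group the double product by $d=j-i$ (with multiplicity $n-d$), and telescope. The only organizational difference is that the paper first simplifies the bare product $\prod_{i<j}\prod_{r=1}^{k-1}(1-q^{r+k(j-i)})/(1-q^{-r+k(j-i)})$ to the $q$-binomial product (obtaining the second displayed formula), and then multiplies by $W_0(q^k)=\prod_{i=2}^n(1-q^{ik})/(1-q^k)$ to get the first; you propose to go in the opposite order. Either way the bookkeeping is the same.

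One point worth flagging: the paper's proof does not actually justify the middle equality of the second display (the constant-term identity with $\Delta_0^{X^{-1}}$ in the denominator); it only remarks that the first equality of the first display is the definition of $(1,1)_{q,q^k}$. Your averaging argument is the natural way to supply this, but carrying it out literally gives
\[
(1,1)_{q,t}=\mathrm{ct}\!\left(H\,\Delta_0^{X^{-1}}\right)=\frac{W_0(t)}{n!}\,\mathrm{ct}(H),
\qquad H=\frac{1}{\Delta_\infty^X\Delta_0^X\Delta_0^{X^{-1}}\Delta_\infty^{X^{-1}}},
\]
so $\frac{1}{W_0(t)}(1,1)_{q,t}=\frac{1}{n!}\,\mathrm{ct}(H)$, with an extra $1/n!$ compared to the stated formula (consistent with Proposition~\ref{innprodcompA} and the definition of $\langle\,,\,\rangle_{q,t}$). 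This is an issue with the statement rather than with your method; your proposed derivation of the product side is unaffected.
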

\begin{proof}  The first equality is the definition of the inner product $(1,1)_{q,q^k}$.  Then
\begin{align*}
\prod_{i<j} &\prod_{r=1}^{k-1} 
\frac{1-q^r q^{k(j-i)}}
{1-q^{-r} q^{k(j-i)} }
=\prod_{h=1}^{n-1} \prod_{i<j \atop j-i = h} 
\frac{(1-q^{kh+1})\cdots (1-q^{kh+k-1})}{(1-q^{kh-1})\cdots (1-q^{kh-(k-1)})}
\\
&=\prod_{h=1}^{n-1} 
\frac{ \big( (1-q^{kh+1})\cdots (1-q^{kh+(k-1)}) \big)^{n-h} }
{\big( (1-q^{k(h-1)+1})\cdots (1-q^{k(h-1)+(k-1)})\big)^{n-h} }
\\
&= \prod_{h=1}^{n-1} 
\frac{ \big( (1-q^{kh+1})\cdots (1-q^{kh+(k-1)}) \big)^{n-h} }
{\big( (1-q^{k(h-1)+1})\cdots (1-q^{k(h-1)+(k-1)})\big)^{n-(h-1)} }
\cdot
(1-q^{k(h-1)+1})\cdots (1-q^{k(h-1)+(k-1)})
\\
&=
\frac{\big((1-q^{k(n-1)+1})\cdots (1-q^{k(n-1)+(k-1)})\big)^{n-(n-1)} }
{\big( (1-q)\cdots (1-q^{k-1})\big)^{n-(1-1)} }
\Big(\prod_{h=1}^{n-1}  (1-q^{k(h-1)+1})\cdots (1-q^{k(h-1)+(k-1)}) \Big)
\\
&=\Big( 
\frac{(1-q^{kn-(k-1)})\cdots (1-q^{kn-1)}) }
{(q;q)_{k-1}}\Big)
\prod_{h=1}^{n-1}  \frac{(1-q^{kh-(k-1)})\cdots (1-q^{kh-1})} {(q;q)_{k-1}}
\\
&= \prod_{h=2}^{n-1} \genfrac[]{0pt}{0}{hk-1}{k-1}.
\end{align*}
Then, using $1 = P_0(q,q^k)$ and Theorem~\ref{Normformula},
\begin{align*}
(1,1)_{q,q^k} 
&=W_0(q^k)
\prod_{i=2}^n \genfrac[]{0pt}{0}{ik-1}{k-1}
= \Big(\prod_{i=2}^n \frac{1-q^{ik} }{1-q^k}\Big) 
\prod_{i=2}^n \genfrac[]{0pt}{0}{ik-1}{k-1}
=\prod_{i=2}^n \genfrac[]{0pt}{0}{ik}{k}.
\end{align*}
\end{proof}

\begin{remark} \underline{Converting to general $q$ and $t$.}
Define
$$\Delta^Y_\infty(t) 
= \prod_{1\le i<j\le n} \prod_{r=1}^\infty \frac{1-tq^rY_iY^{-1}_j}{1-q^rY_iY^{-1}_j},
\qquad
\Delta^{Y^{-1}}_\infty(t^{-1}) 
= \prod_{1\le i<j\le n} \prod_{r=1}^\infty \frac{1-t^{-1}q^rY^{-1}_iY_j}{1-q^rY^{-1}_iY_j},
$$
$$
\Delta_0^Y(t)
= \prod_{1\le i<j\le n}  \frac{1-tY_iY^{-1}_j}{1-Y_iY^{-1}_j}.
$$
Using the notation $(z;q)_\infty = (1-z)(1-qz)(1-q^2z)\cdots$, then
$$\Delta^Y_\infty(t) = \prod_{i<j} \frac{(qtY_iY^{-1}_j;)_\infty}{(qY_iY^{-1}_j)_\infty}
\qquad\hbox{and}\qquad
\Delta^{Y^{-1}}_\infty(t^{-1}) = \prod_{i<j} \frac{(qt^{-1}Y^{-1}_iY_j;)_\infty}{(qY^{-1}_iY_j)_\infty}.
$$
Define an evaluation homomorphism $\ev_{q^\lambda t^\rho}\colon \CC[Y]\to \CC$ by
$$\ev_{q^\lambda t^\rho}(Y_i) = q^{\lambda_i}t^{n-j},
\qquad\hbox{so that}\qquad
\ev_{q^\lambda t^\rho}(Y^{-1}_iY_j) = q^{\lambda_i-\lambda_j}t^{j-i}.
$$ 

Let $i,j\in \{1, \ldots, n\}$ with $i<j$ and let $t = q^k$.  
Using
$$(x;q)_\infty = (1-x)(qx;q)_\infty, \qquad\qquad
(x;q)_k = \frac{(x;q)_\infty}{(q^kx;q)_\infty},
$$
and
$$
(x;q^{-1})_k = (q^{-(k-1)}x;q)_k = (q^{-k}qx;q)_k = (t^{-1}qx;q),
$$
then
\begin{align*}
\prod_{r=1}^{k-1} \frac{1-xq^r}{1-xq^{-r}}
&=\prod_{r=0}^{k-1} \frac{1-xq^r}{1-xq^{-r}}
= \frac{(x;q)_k}{(x;q^{-1})_k}
=\frac{(x;q)_k}{q^{-k}qx;q)_k}
= \frac{(x;q)_\infty}{(q^kx;q)_\infty} \frac{(q^k q^{-k}qx;q)_\infty}{(q^{-k}qx;q)_\infty}
\\
&= \frac{(x;q)_\infty}{(q^kx;q)_\infty} \frac{(qx;q)_\infty}{(q^{-k}qx;q)_\infty}
= \frac{(qx;q)_\infty}{(q^k qx;q)_\infty} 
\frac{(1-x)}{(1-q^kx)}
\frac{(qx;q)_\infty}{(q^{-k}qx;q)_\infty}
\\
&= \frac{(x;q)_\infty}{(q^kx;q)_\infty} \frac{(qx;q)_\infty}{(q^{-k}qx;q)_\infty}
= \frac{(qx;q)_\infty}{(t qx;q)_\infty} 
\frac{(1-x)}{(1-tx)}
\frac{(qx;q)_\infty}{(t^{-1}qx;q)_\infty}.
\end{align*}
Then using $\ev_{q^\lambda t^\rho}(Y_iY^{-1}_j) 
= q^{\lambda_i-\lambda_j} t^{j-i}  = \ev_{q^{-\lambda} t^{-\rho}}(Y^{-1}_iY_j)$,
gives
\begin{align*}
\prod_{i<j} \prod_{r=1}^{k-1} \frac{1-q^{\lambda_i-\lambda_j+r}t^{j-i}}{1-q^{\lambda_i-\lambda_j-r}t^{j-i}}
&= \prod_{i<j} \frac{(qq^{\lambda_i-\lambda_j}t^{j-i};q)_\infty}{(t qq^{\lambda_i-\lambda_j}t^{j-i};q)_\infty} 
\frac{(1-q^{\lambda_i-\lambda_j}t^{j-i})}{(1-tq^{\lambda_i-\lambda_j}t^{j-i})}
\frac{(qq^{\lambda_i-\lambda_j}t^{j-i};q)_\infty}{(t^{-1}qq^{\lambda_i-\lambda_j}t^{j-i};q)_\infty}
\\
&= \ev_{q^\lambda t^\rho}\Big(\frac{1}{\Delta_\infty^Y(t)}\Big)
\ev_{q^\lambda t^\rho}\Big(\frac{1}{\Delta_0^Y(t)}\Big)
\ev_{q^{-\lambda} t^{-\rho}}\Big(\frac{1}{\Delta_\infty^{Y^{-1}}(t^{-1})}\Big).
\end{align*}
Thus the last statement of Proposition~\ref{Normformula} can be written in the form
$$
(P_\lambda(q,t), P_\lambda(q,t) )_{q,t}
= W_0(t)\cdot 
\ev_{q^\lambda t^\rho}\Big(\frac{1}{\Delta_\infty^Y(t)}\Big)
\ev_{q^\lambda t^\rho}\Big(\frac{1}{\Delta_0^Y(t)}\Big)
\ev_{q^{-\lambda} t^{-\rho}}\Big(\frac{1}{\Delta_\infty^{Y^{-1}}(t^{-1})}\Big),
$$
a formula which, since it holds for all $t= q^k$ with $k\in \ZZ_{>0}$, holds for general $t$.
\qed
\end{remark}

\subsection{The symmetric inner product}

Define involutions 
\begin{align*}
&\overline{\phantom{T}}\colon \CC[X] \to \CC[X]
&&\hbox{by}
&\bar{f}(x_1, \ldots, x_n;q,t) &= f(x_1^{-1}, \ldots, x_n^{-1}; q^{-1}, t^{-1}),
\nonumber \\
&{}^\sigma\colon \CC[X] \to \CC[X]
&&\hbox{by}
&f^\sigma(x_1, \ldots, x_n;q,t) &= f(x_1^{-1}, \ldots, x_n^{-1}; q, t),
\\
&{}^t \colon \CC[X] \to \CC[X]
&&\hbox{by}
&f^t(x_1, \ldots, x_n;q,t) &= f(x_1, \ldots, x_n; q^{-1}, t^{-1}).
\nonumber
\end{align*}

Define two scalar products $(\ ,\ )\colon \CC[X]\times \CC[X] \to \CC(q,t)$ and
$\langle\ ,\ \rangle\colon \CC[X]\times \CC[X] \to \CC(q,t)$ by 
\begin{equation*}
( f_1,f_2)_{q,t} 
= \mathrm{ct}\Big(  \frac{f_1 \overline{f_2}} {\Delta_\infty^X \Delta_0^X \Delta_\infty^{X^{-1}}} \Big) 
\qquad\hbox{and}\qquad
\langle f_1,f_2\rangle_{q,t} 
= \frac{1}{\vert W_0\vert}
\mathrm{ct}\Big(\frac{f_1 f_2^\sigma}{\Delta_\infty^X \Delta_0^X \Delta_0^{X^{-1}} \Delta_\infty^{X^{-1}}}\Big).
\end{equation*}
The following result 
provides a comparison of $(,)_{q,t}$ and $\langle,\rangle_{q,t}$ 
as inner products on symmetric polynomials.

\begin{prop}   \label{innprodcompA}
Let $f,g\in \CC[X]^{S_n}$ so that $f$ and $g$ are symmetric polynomials.  
Then
\begin{equation*}
\langle f,g \rangle_{q,t} = \frac{1}{W_0(t) } (f,g^t)_{q,t}.
\end{equation*}
\end{prop}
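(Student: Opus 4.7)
The plan is to recognize that both scalar products involve essentially the same numerator and differ only by an extra factor $\Delta_0^{X^{-1}}$ in the denominator, and then to insert/remove that factor by an averaging argument. First, a direct unwinding of the three involutions gives $g^\sigma=\overline{g^t}$, so
\[
\langle f,g\rangle_{q,t}
= \frac{1}{\vert W_0\vert}\,\mathrm{ct}\!\left(\frac{f\,\overline{g^t}}{\Delta_\infty^X \Delta_0^X \Delta_0^{X^{-1}} \Delta_\infty^{X^{-1}}}\right),
\]
whereas $(f,g^t)_{q,t}=\mathrm{ct}\!\left(f\,\overline{g^t}/(\Delta_\infty^X \Delta_0^X \Delta_\infty^{X^{-1}})\right)$. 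The proposition thus reduces to showing that removing the factor $1/\Delta_0^{X^{-1}}$ from the integrand multiplies the constant term by $W_0(t)/\vert W_0\vert$.

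Next, set $\tilde F = f\,\overline{g^t}/(\Delta_\infty^X \Delta_\infty^{X^{-1}})$ and observe $\tilde F\in\CC[X]^{S_n}$ (as a rational function): the factors $f$ and $\overline{g^t}$ are symmetric because $g$ is symmetric and neither $\overline{\phantom{T}}$ nor ${}^t$ permutes the variables, while $\Delta_\infty^X\Delta_\infty^{X^{-1}}$ is itself $S_n$-invariant, since its $\{i,j\}$-factor becomes symmetric in $i\leftrightarrow j$ once the two halves $i<j$ and $j<i$ are combined. The same argument shows $\Delta_0^X\Delta_0^{X^{-1}}$ is $S_n$-invariant. Using that $\mathrm{ct}$ is invariant under the $S_n$-action on Laurent polynomials and that $\tilde F$ is symmetric, I would symmetrize inside $\mathrm{ct}$ to get
\[
\mathrm{ct}\!\left(\frac{\tilde F}{\Delta_0^X}\right)
= \frac{1}{\vert W_0\vert}\,\mathrm{ct}\!\left(\tilde F\,\sum_{w\in S_n} w\!\left(\frac{1}{\Delta_0^X}\right)\right).
\]

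The only substantive input is the identity
\[
\sum_{w\in S_n} w\!\left(\frac{1}{\Delta_0^X}\right)
= \frac{W_0(t)}{\Delta_0^X\,\Delta_0^{X^{-1}}},
\]
which I would prove by writing $1/\Delta_0^X = \Delta_0^{X^{-1}}/(\Delta_0^X\Delta_0^{X^{-1}})$, pulling the $S_n$-invariant denominator out of the sum, and invoking Proposition~\ref{Poinbysymmprop}; the latter, after the routine identification $\prod_{i<j}(x_i-tx_j)/(x_i-x_j)=\Delta_0^{X^{-1}}$, yields $\sum_w w(\Delta_0^{X^{-1}})=W_0(t)$. Putting everything together,
\[
(f,g^t)_{q,t}
= \mathrm{ct}\!\left(\frac{\tilde F}{\Delta_0^X}\right)
= \frac{W_0(t)}{\vert W_0\vert}\,\mathrm{ct}\!\left(\frac{\tilde F}{\Delta_0^X\Delta_0^{X^{-1}}}\right)
= W_0(t)\,\langle f,g\rangle_{q,t},
\]
which is the claim. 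No real obstacle arises beyond this bookkeeping; all the content is absorbed into the Poincar\'e-polynomial identity already established in Proposition~\ref{Poinbysymmprop}.
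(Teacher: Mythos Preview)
Your proof is correct and follows essentially the same approach as the paper's: both arguments hinge on the identity $\sum_{w\in S_n} w(\Delta_0^{X^{-1}}) = W_0(t)$ from Proposition~\ref{Poinbysymmprop} together with an $S_n$-symmetrization inside $\mathrm{ct}$, exploiting the symmetry of $f\,\overline{g^t}/(\Delta_\infty^X\Delta_\infty^{X^{-1}})$ (equivalently, of the full integrand with both $\Delta_0$ factors). The only cosmetic difference is direction: the paper starts from $\langle f,g\rangle_{q,t}$, inserts $W_0(t) = \sum_w w(\Delta_0^{X^{-1}})$, and cancels $\Delta_0^{X^{-1}}$ to arrive at $(f,g^t)_{q,t}$, whereas you start from $(f,g^t)_{q,t}$ and symmetrize $1/\Delta_0^X$ to produce the missing factor $1/(\Delta_0^X\Delta_0^{X^{-1}})$.
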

\begin{proof}  Let $f,g\in \CC[X]^{S_n}$.  
Equation~\eqref{Poinbysymm} says 
$$W_0(t) = \sum_{w\in W_0} w (t^{\frac12\ell(w_0)}c_{w_0}^{X^{-1}}),
\qquad\hbox{and using}\qquad 
\Delta_0^{X^{-1}} = t^{\frac12\ell(w_0)}c_{w_0}^{X^{-1}},$$ 
gives
\begin{align*}
\langle f,g \rangle_{q,t}
&= \frac{1}{\vert W_0\vert}
\mathrm{ct}\Big(\frac{f g^\sigma}{\Delta_\infty^X \Delta_0^X \Delta_0^{X^{-1}} \Delta_\infty^{X^{-1}}}\Big)
= \frac{1}{W_0(t)\vert W_0\vert}\mathrm{ct}\Big(\frac{f \overline{g}^t }
{\Delta_\infty^X \Delta_0^X \Delta_0^{X^{-1}} \Delta_\infty^{X^{-1}}} W_0(t) \Big)
\\
&= \frac{1}{W_0(t)\vert W_0\vert}\mathrm{ct}\Big(\frac{f \overline{g}^t}
{\Delta_\infty^X \Delta_0^X \Delta_0^{X^{-1}} \Delta_\infty^{X^{-1}}} 
\big(\sum_{w\in W_0} w (t^{\frac12\ell(w_0)}c_{w_0}^{X^{-1}})\big) \Big)
\\
&= \frac{1}{W_0(t)\vert W_0\vert}\mathrm{ct}\Big( \sum_{w\in W_0} w \big( \frac{f \overline{g}^t}
{\Delta_\infty^X \Delta_0^X \Delta_0^{X^{-1}} \Delta_\infty^{X^{-1}}} t^{\frac12\ell(w_0)}c_{w_0}^{X^{-1}}\big) \Big)
\\
&= \frac{1}{W_0(t)\vert W_0\vert}\mathrm{ct}\Big( \sum_{w\in W_0} w \big( \frac{f \overline{g}^t}
{\Delta_\infty^X \Delta_0^X \Delta_\infty^{X^{-1}}}\big) \Big)
= \frac{1}{W_0(t)}
\mathrm{ct}\Big(  \frac{f \overline{g}^t} {\Delta_\infty^X \Delta_0^X \Delta_\infty^{X^{-1}}} \Big) 
= \frac{1}{W_0(t)} (f,g^t)_{q,t}.
\end{align*}
\end{proof}

\newpage

\end{document}